\newtheorem{theorem}{Theorem}
\numberwithin{theorem}{section}
\newtheorem{proposition}[theorem]{Proposition}
\newtheorem{corollary}[theorem]{Corollary}
\newtheorem{lemma}[theorem]{Lemma}
\newtheorem{conjecture}[theorem]{Conjecture}
\newtheorem{question}[theorem]{Question}
\theoremstyle{definition}
\newtheorem{definition}[theorem]{Definition}
\newtheorem{remark}[theorem]{Remark}
\newtheorem{example}[theorem]{Example}
\newtheorem{problem}[theorem]{Problem}
\newcommand{\Q}{{\mathbb Q}}
\newcommand{\Z}{{\mathbb Z}}
\renewcommand{\k}{\mathbb{k}}
\renewcommand{\O}{\mathcal{O}}
\newcommand{\R}{\mathbb{R}}
\newcommand{\B}{\mathcal{B}}
\newcommand{\BB}{\mathbb{B}}
\newcommand{\F}{\mathcal{F}}
\newcommand{\M}{\mathcal{M}}
\newcommand{\E}{\mathcal{E}}
\newcommand{\EE}{\mathfrak{E}}
\newcommand{\FF}{\mathfrak{F}}
\newcommand{\wt}{\textup{wt}}
\renewcommand{\P}{{\mathbb P}}
\newcommand{\vv}{\mathfrak{v}}
\newcommand{\bv}{{\bf v}}
\newcommand{\e}{\textbf{e}}
\newcommand{\br}{\textbf{r}}
\newcommand{\bs}{\textbf{s}}
\DeclareMathOperator{\Trop}{Trop}
\DeclareMathOperator{\Hom}{Hom}
\DeclareMathOperator{\GL}{GL}
\DeclareMathOperator{\In}{in}
\DeclareMathOperator{\rank}{rank}
\DeclareMathOperator{\PL}{PL}
\DeclareMathOperator{\Berg}{Berg}
\DeclareMathOperator{\Span}{span}
\DeclareMathOperator{\conv}{conv}
\DeclareMathOperator{\ch}{ch}
\DeclareMathOperator{\codim}{codim}
\newcommand{\GF}{\textup{GF}}
\title{Tropical vector bundles and matroids}
\author{Kiumars Kaveh}
\author{Christopher Manon}
\begin{document}

\begin{abstract}
We introduce a notion of tropical vector bundle on a tropical toric variety which is a tropical analogue of a torus equivariant vector bundle on a toric variety. Alternatively it can be called a toric matroid bundle. We define equivariant $K$-theory and characteristic classes of these bundles. As a particular case, we show that any matroid comes with tautological tropical toric vector bundles over the permutahedral toric variety and the corresponding equivariant $K$-classes and Chern classes recover the tautological classes of matroids constructed in \cite{BEST}. In analogy with toric vector bundles, we define sheaf of sections and Euler characteristic as well as positivity notions such as global generation, ampleness and nefness for tropical toric vector bundles. Moreover, we prove a vanishing of higher cohomologies result. Finally, we study the splitting of our tropical toric vector bundles and, in particular, an analogue of Grothendieck's theorem on splitting of vector bundles on $\mathbb{P}^1$. 
\end{abstract}

\maketitle

\tableofcontents

\section{Introduction}
The theory of line bundles/divisors on tropical varieties is a very well studied subject and various aspects of it are well developed. On the other hand, the theory of higher rank vector bundles on tropical varieties is far less developed and a good general framework has been lacking. One of the early works on tropical vector bundles is  \cite{Allermann}. Recently, tropical vector bundles have been introduced and studied on metric graphs in \cite{Ulirsch}. Also, a notion of a tropical vector bundle has been explored by Jun, Mincheva and Tolliver (\cite{Kalina}) from the point of view of tropical scheme theory. In their theory, vector bundles are always direct sums of line bundles.

In this paper, we introduce a notion of a tropical vector bundle over a tropical toric variety. When the matroid data in the definition is representable, these are tropicalizations of toric vector bundles (torus equivariant vector bundles on toric varieties). 

Toric vector bundles of rank $r$ were famously classified by Klyachko (in the remarkable paper \cite{Klyachko}) in terms of compatible systems of filtrations on an $r$-dimensional vector space (the first classification of toric vector bundles goes back to Kaneyma \cite{Kaneyama}).
In \cite{Kaveh-Manon-Building, Kaveh-Manon-TVBs-valuation}, the authors interpret the Klyachko data of a rank $r$ toric vector bundle $\E$ as an \emph{integral piecewise linear map} from the fan $\Sigma$ of the toric variety $X_\Sigma$ to the space $\tilde{\B}(E)$ of valuations on an $r$-dimensional vector space $E$. Then, using the language of piecewise linear maps, they reformulate the Klyachko data in the following two ways (see Section \ref{sec-prelim-toric-vb}):
\begin{itemize}
    \item[(a)] As an \emph{integral piecewise linear map} $\Phi_\E$ from $\Sigma$ to the tropical variety of a linear ideal $L$. Conversely, any such integral piecewise linear map gives rise to a toric vector bundle on $X_\Sigma$ (Section \ref{subsec-tvb-PL-map-trop}).  
    \item[(b)] As a tropical point of a linear tropical ideal, valued over the semifield of integral piecewise linear functions. Conversely, any such point gives rise to a toric vector bundle (Section \ref{subsec-tvb-PL-valuation}).
\end{itemize}

In the present paper we observe that the notions in (a) and (b) above make sense for an arbitrary (not necessarily representable) matroid $\M$. We call the corresponding (equivalent) combinatorial objects \emph{tropical toric vector bundles}.

\begin{definition}[Tropical toric vector bundle]  \label{def-TMB-intro}
Let $\Sigma$ be a fan and $\M$ a matroid. 
By a \emph{tropical toric vector bundle} $\EE$ over $X_\Sigma$ and with matroid $\M$, we mean the following equivalent data:
\begin{itemize}
    \item[(a)] A \emph{piecewise linear map} $\Phi=\Phi_\EE: |\Sigma| \to \Berg(\M)$ (Definition \ref{def-matroid-vb-1}).
    \item[(b)] A \emph{tropical point} $\vv = \vv_\EE$ on $\Berg(\M)$, valued in the semifield of integral piecewise linear functions, that satisfies a compatibility condition with $\Sigma$ (see Definition \ref{def-matroid-vb-2}). We point out that any tropical point on $\Berg(\M)$ is compatible with a sufficiently refined fan $\Sigma$.  
\end{itemize}
\end{definition}

One can pull-back a tropical toric vector bundle to a subfan. We regard this as a tropical vector bundle on a tropical subscheme.

\begin{remark}
We are tempted to think of tropical toric vector bundles as \emph{toric vector bundles over $\mathbb{F}_1$} (the field with one element).
\end{remark}

\begin{remark}  \label{rem-tropical-vb}
In the first version of the paper we used the name \emph{toric matroid bundle} for what we now call a  \emph{tropical toric vector bundle}. As we were working on the first version, we became aware of the overlap of our work with the work of Khan and Maclagan (\cite{Khan-Maclagan}). In particular, in this work, the authors independently introduce (basically) the same object as in this paper (we actually coordinated so that the two papers appeared on the arXiv on the same day, see also Remark \ref{rem-Khan-Maclagan} for more details). In part, we decided to make the name change to tropical toric vector bundle to agree with \cite{Khan-Maclagan}. 
\end{remark}

As mentioned above, a toric vector bundle can be described by the data $(L, \Phi)$ where $L$ is a linear ideal in a polynomial ring $\k[x_1, \ldots, x_m]$. The process of tropicalization replaces $L$ with the matroid $\M(L)$ induced by $L$ on the images of the generators in the quotient $\k[x_1, \ldots, x_m]/L$. In particular, the Bergman fan of this matroid is the tropicalized linear space $\Trop(L)$. From this point of view, as in the tropical lifting problem for tropical linear spaces, the tropical lifting problem for tropical toric vector bundles hinges on the representability of the matroid $\M$. From now on we refer to the tropical toric vector bundle $(\M(L), \Phi)$ obtained from the toric vector bundle $\E$ associated to the data $(L, \Phi)$ as the \emph{tropicalization} of $\E$.
We will see that for a fixed toric vector bundle $\E$, the information preserved by the tropicalization depends on the choice of data $(L, \Phi)$. For any such choice, the characteristic classes of $\E$ will coincide with those of the tropicalization $\EE = (\M(L), \Phi)$ (see Proposition \ref{prop-tropical-characteristic-comparison}). However, other data such as the matroid of global sections will only recover the global sections of $\E$ if $\M(L)$ is a \emph{DJS matroid} of $\E$ (see Definition \ref{def-DJSmatroid-toricvectorbundle} and Proposition \ref{prop-Euler-sheaf-tropicalization}).

\begin{remark}
When the matroid is representable, these tropical toric vector bundles can be obtained as the tropicalizations of toric vector bundles with respect to a natural set of generators for the corresponding Cox module (see Sections \ref{subsec-review-sheaf} and \ref{subsec-sheaf} as well as \cite{Khan-Maclagan}).    
\end{remark}

\begin{remark}
In a future work, we will extend the definition of a tropical toric vector bundle to the non-trivially valued case. Namely, we will consider tropical toric vector bundles on toric schemes over a discrete valuation ring and valuated matroid.      
\end{remark}

{\bf Characteristic classes and tautological bundles.} Motivated by the construction of equivariant Chern classes of toric vector bundles (\cite[Proposition 3.1]{Payne-moduli} and \cite[Corollary 3.5]{Kaveh-Manon-Building}) we introduce the equivariant K-class $[\EE]$ and equivariant Chern classes $c_i^T(\EE)$ of a tropical toric vector bundle $\EE$. These are piecewise exponential (respectively polynomial) functions obtained by composing the piecewise linear map $\Phi_\EE$ with the universal exponential function (respectively universal elementary symmetric functions) on the Bergman fan (Section \ref{sec-K-class-char-class}).  When $\EE$ is obtained from the data of a toric vector bundle $\E$, the Chern classes and $K$-classes of $\EE$ and $\E$ coincide. 

One of our motivations for the study of tropical toric vector bundles is to provide a natural setting for the work \cite{BEST} on tautological classes of matroids. As shown in \cite{BEST}, these tautological classes encode the information of Tutte polynomial of the matroid. Let $X_m$ denote the toric variety associated to the permutahedral fan corresponding to permutations of $m$ indices. In \cite{BEST} two toric vector bundles are built from the data of a representable matroid $\M(L)$: the universal subbundle $\mathcal{S}_{\M(L)}$, and the universal quotient bundle $\mathcal{Q}_{\M(L)}$. Positivity properties of $\mathcal{Q}_{\M(L)}$ and the dual bundle of $\mathcal{S}_{\M(L)}$ allow important properties of matroid invariants, such as a log-concavity statement related to the coefficients of the Tutte polynomial, to be recovered from the intersection theory of the characteristic classes of these bundles.  These properties are then extended to all matroids using the theory of valuative invariants. Finding the correct geometric object underlying these characteristic classes in the non-representable case was a motivational question for the work in this paper. Let $\phi_\chi: X_m \to X_m$ be the Cremona transformation. We show the following (Section \ref{sec-tautological-matroid-bundles}):
\begin{theorem}\label{thm-main-tautological}
To any matroid $\M$ with $m = |\M|$ there naturally corresponds a \emph{tautological tropical toric vector bundle} $\EE_\M$ on $X_m$. In the case of a matroid $\M(L)$ associated to a linear ideal $L$, $\EE_{\M(L)}$ is { the tropicalization of} the dual of the universal subbundle, and $\phi^*_\chi\EE_{\M^\vee(L)}$ is { the tropicalization of} the universal quotient bundle (coming from embedding in a Grassmannian). Moreover, the equivariant tautological classes introduced in \cite{BEST} coincide with the Chern classes of $\EE_\M$ and $\phi^*_\chi\EE_{\M^\vee}$.  \end{theorem}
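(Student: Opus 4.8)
The plan is to construct the piecewise linear map $\Phi_\M \colon |\Sigma_m| \to \Berg(\M)$ directly from the combinatorics of $\M$, to verify that it defines a tropical toric vector bundle in the sense of Definition \ref{def-TMB-intro}(a) (hence also (b)), to match it with the tropicalizations of the universal bundles of \cite{BEST} in the realizable case, and finally to deduce the statement about Chern classes from Proposition \ref{prop-tropical-characteristic-comparison} together with a valuativity argument that passes to arbitrary matroids.

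First I would set up the map. Recall that the rays of the permutahedral fan $\Sigma_m$ are indexed by the proper nonempty subsets $S \subsetneq [m]$, with a maximal cone for each complete flag $\emptyset \subsetneq S_1 \subsetneq \cdots \subsetneq S_{m-1} \subsetneq [m]$, while the cones of $\Berg(\M)$ are spanned by the rays indexed by proper nonempty flats of $\M$, one cone for each flag of flats. I would define $\Phi_\M$ on the ray $\rho_S$ to be the point of $\Berg(\M)$ determined by the flat $\mathrm{cl}_\M(S)$ spanned by $S$, normalized (via $\rank_\M(S)$) so as to reproduce the realizable Klyachko data below, and then extend linearly over each cone of $\Sigma_m$. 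The content of this step is the lemma that this is well defined: applying $\mathrm{cl}_\M$ to a complete flag of subsets yields a (possibly non-strict) flag of flats, so the images of the rays of a maximal cone of $\Sigma_m$ lie in a single cone of $\Berg(\M)$ and the linear extension carries that cone into $\Berg(\M)$; adjacent maximal cones of $\Sigma_m$ give consistent linear pieces on their shared face, and submodularity of $\rank_\M$ is what makes the normalization coherent. This produces the tropical toric vector bundle $\EE_\M$ over $X_m$, and the construction is manifestly functorial in $\M$ under relabelings of the ground set.

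Next I would treat the realizable case. For a linear ideal $L$ with $\M = \M(L)$ of rank $r$, I would recall from \cite{BEST} the Klyachko data of the universal subbundle $\mathcal{S}_{\M(L)}$ and the universal quotient bundle $\mathcal{Q}_{\M(L)}$: along $\rho_S$ the relevant filtration of the fiber jumps from $L$ to $L\cap \k^{[m]\setminus S}$, a jump of size $\rank_\M(S)$ (and dually for the quotient), so the jumping pattern is governed by the rank function alone. Composing the resulting integral piecewise linear map $\Sigma_m \to \tilde{\B}(E)$ with the quotient onto $\Trop(L) = \Berg(\M(L))$, that is, passing to the tropicalization as in Section \ref{subsec-tvb-PL-map-trop}, recovers exactly the combinatorial map $\Phi_{\M(L)}$ defined above for the dual $\mathcal{S}_{\M(L)}^\vee$; this is a direct unwinding of the Klyachko data and depends only on $\M(L)$, not on $L$. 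For the quotient bundle I would use matroid duality, $\rank_{\M^\vee}(S) = |S| - r + \rank_\M([m]\setminus S)$, together with the fact that $\phi_\chi$ acts on $\Sigma_m$ by $v \mapsto -v$ and the identification in \cite{BEST} of $\mathcal{Q}_{\M(L)}$ with the Cremona pullback of the (dual) universal subbundle of $\M^\vee(L) = \M(L^\perp)$; since tropicalization commutes with pullback along the torus-equivariant automorphism $\phi_\chi$, this gives that $\phi_\chi^*\EE_{\M^\vee(L)}$ is the tropicalization of $\mathcal{Q}_{\M(L)}$.

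Finally, the Chern classes. In the realizable case Proposition \ref{prop-tropical-characteristic-comparison} gives that the equivariant Chern classes of a toric vector bundle and of its tropicalization coincide, so the equivariant tautological classes of \cite{BEST}, being by definition Chern classes of $\mathcal{S}_{\M(L)}$ and $\mathcal{Q}_{\M(L)}$, agree with $c_i^T(\EE_{\M(L)})$ and $c_i^T(\phi_\chi^*\EE_{\M^\vee(L)})$. To reach an arbitrary matroid $\M$ I would show that $\M \mapsto c_i^T(\EE_\M)$ is valuative: via the combinatorial formula for Chern classes of tropical toric vector bundles applied to $\Phi_\M$, this class is an explicit expression in the rank function that is additive under matroid polytope subdivisions; since the tautological classes of \cite{BEST} are also valuative and the two agree on all realizable matroids, they agree in general, or alternatively one matches directly against the explicit chain-of-flats formula of \cite{BEST}. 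I expect the main obstacle to be the well-definedness lemma of the second step --- checking that the ray-wise prescription truly yields a piecewise linear map into $\Berg(\M)$, with precisely the normalization that reproduces the Klyachko data of \cite{BEST} rather than a twist of it --- and, to a lesser degree, making the valuativity argument of the last step fully rigorous given that \cite{BEST}'s extension to non-realizable matroids is itself characterized valuatively.
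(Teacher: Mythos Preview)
Your overall strategy is sound, but it diverges from the paper in two places worth noting.

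First, the construction. The paper does not build $\Phi_\M$ ray by ray with a rank normalization; it defines $\Phi_\M:\R^m\to\Berg(\M)$ in one stroke as the canonical retraction $w\mapsto w'$ with $w'_i=\sup\{k\mid i\in\Span\{j:w_j\ge k\}\}$, observes this is the identity on $\Berg(\M)$, and then proves piecewise linearity by identifying the adapted basis over each Gr\"obner cone with the greedy/lex-maximal basis $\B_\sigma$ (Lemma~\ref{lem-bases}). In particular there is no normalization by $\rank_\M(S)$: one simply has $\Phi_\M(\delta_S)=\delta_{\mathrm{cl}_\M(S)}$, and the characters over $\sigma$ are the indicator vectors $\delta_i$ for $i\in\B_\sigma$ (Corollary~\ref{cor-tautologicalGG}). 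Your ray-wise definition reaches the same map, but the extra normalization you mention is not needed and suggests you may be conflating the target $\Berg(\M)\subset\R^m$ with something $r$-dimensional.

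Second, and more substantively, the Chern-class step. You propose to match classes first in the realizable case via Proposition~\ref{prop-tropical-characteristic-comparison} and then extend to all matroids by a valuativity argument. The paper avoids this entirely: since the Chern classes of $\EE_\M$ are, by Definition~\ref{def-Chern-class-matroid-vb}, the elementary symmetric functions in the characters $\{\delta_i:i\in\B_\sigma\}$ over each maximal cone $\sigma$, and since \cite[Definition~3.9]{BEST} defines the tautological classes of \emph{any} matroid by exactly the same local data, the agreement is immediate for all $\M$ by direct comparison of formulas. Your valuativity route can be made to work, but it requires verifying that $\M\mapsto c_i^T(\EE_\M)$ is valuative (the dependence on $\M$ is through the greedy basis $\B_\sigma$, which is not obviously additive under polytope subdivisions) and that realizable matroids suffice to pin down a valuative invariant; both are true but neither is free. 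Your parenthetical ``alternatively one matches directly against the explicit formula of \cite{BEST}'' is in fact the paper's argument and the path of least resistance.

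For the realizable identification with $S_L^\vee$ and $Q_L$, your Klyachko-data computation is equivalent to what the paper does via Lemma~\ref{lem-phiproperty} (a general principle for quotients of split bundles), and either approach is fine.
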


{\bf Sheaf of sections and positivity.}
In Section \ref{subsec-sheaf} we introduce the notion of \emph{sheaf of sections} of a tropical toric vector bundle $\EE$. It extends the geometric notion of the sheaf of sections of a toric vector bundle. To each toric open chart $U_\sigma$, $\sigma \in \Sigma$, the sheaf of sections of $\EE$ assigns a certain matroid. This leads us to define the notion of a \emph{globally generated tropical toric vector bundle} which extends that of toric vector bundles. In particular, for each character $u $, we have the notions of rank of space of global $u$-weight sections $H^0(X_\Sigma, \EE)_u$. The \emph{parliament of polytopes}, introduced by DiRocco, Jabbusch, and Smith in \cite{DJS}, captures global generation of toric vector bundles.  We show this notion makes sense for tropical toric vector bundles and use it to give a criterion for the global generation of a tropical toric vector bundle extending that of \cite{DJS} (Theorem \ref{thm-gg}). 

We expect the Chern classes of a globally generated tropical toric vector bundle to have similar positivity and log-concavity properties as those of a globally generated vector bundle.  For example, the following would generalize the Khovanskii-Teissier inequalities \cite[Theorem 1.6.1]{Lazarsfeld} (see \cite[Theorem 1.3]{RossToma} for the geometric analogue).  

\begin{conjecture}
Let $\EE$ be a globally generated tropical toric vector bundle of rank $r$ on a smooth, projective toric variety $X_\Sigma$ of dimension $d \geq r$, and let $\alpha$ be the Chern class of an ample line bundle on $X_\Sigma$. Then the intersection numbers $c_i(\EE)\alpha^{d-i}$ form a log-concave sequence. 
\end{conjecture}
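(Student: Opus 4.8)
The plan is to reduce the statement to a Hodge--Riemann/hard Lefschetz package for a suitable graded algebra and then treat the representable and non-representable cases separately. First I would record the combinatorial intersection theory: by Section~\ref{sec-K-class-char-class} the classes $c_i(\EE)$ are piecewise polynomial functions on $|\Sigma|$ (equivalently, Minkowski weights on $\Sigma$, or elements of the operational Chow ring of $X_\Sigma$), the class $\alpha$ is the piecewise linear function of an ample torus-invariant divisor, and each product $c_i(\EE)\,\alpha^{d-i}$ is an integer computed by the fan-displacement (stable intersection) rule. Thus everything in the conjecture is encoded in the finite datum $(\Sigma,\M,\Phi_\EE,\alpha)$, and the goal $a_i^2 \geq a_{i-1}a_{i+1}$ for $a_i := c_i(\EE)\alpha^{d-i}$ is, by the standard mechanism through which Hodge--Riemann relations imply log-concavity of intersection numbers, a consequence of Poincar\'e duality in degree $d$ together with hard Lefschetz and the degree-$1$ Hodge--Riemann relations for the ring generated by $\alpha$ and the ``Chern roots'' of $\EE$.

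Second, the representable case. Suppose $\M = \M(L)$ and $\EE$ is the tropicalization of a toric vector bundle $\E$ on $X_\Sigma$. By the comparison of characteristic classes (Proposition~\ref{prop-tropical-characteristic-comparison}) one has $c_i(\EE) = c_i^T(\E)$ for every $i$, so it suffices to prove the inequality for $\E$. Using the parliament-of-polytopes criterion for global generation (Theorem~\ref{thm-gg}), I would arrange the presenting data $(L,\Phi)$ so that $\E$ itself is globally generated, hence nef, on the smooth projective toric variety $X_\Sigma$; since $\alpha$ is ample, the log-concavity of $c_i(\E)\alpha^{d-i}$ is then a special case of the Hodge--Riemann relations for Schur classes of nef vector bundles proved by Ross--Toma \cite[Theorem~1.3]{RossToma}. (Base changing to $\C$ is harmless, so the complex K\"ahler setting of \emph{loc.\ cit.}\ applies.) This settles the conjecture whenever $\M$ is representable.

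Third, and this is the crux, the non-representable case, where the algebro-geometric input vanishes and must be replaced by combinatorial Hodge theory. I would attach to $(\M,\EE)$ a graded Artinian Gorenstein $\R$-algebra $A^\bullet(\M,\EE)$ extending the Chow ring of the matroid of Adiprasito--Huh--Katz: concretely, the subring of piecewise polynomials on $\Sigma$ generated by $\alpha$ and by the degree-one Chern data coming from $\Phi_\EE$ (equivalently, a Minkowski-weight model), and then prove for it hard Lefschetz and the Hodge--Riemann relations in the relevant degrees. The natural route is the inductive one pioneered in the Hodge theory of matroids: semismall / matroid-flip decompositions along deletion--contraction, now carrying the tropical toric vector bundle data, with the positivity needed to remain in the ample cone supplied precisely by the global-generation hypothesis (globally generated $\Rightarrow$ nef, via Theorem~\ref{thm-gg}). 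A less robust alternative is to show that the homogeneous volume polynomial $\mathrm{vol}_\EE(t_0,t_1) := \sum_i \binom{d}{i}\,\big(c_i(\EE)\alpha^{d-i}\big)\,t_0^{\,i}\,t_1^{\,d-i}$ is a valuative function of $\M$ for fixed $\Phi$-type data and is Lorentzian in the representable case (by the second step plus the theory of Lorentzian polynomials); but valuative combinations do not preserve Lorentzianness, so this would require in addition that every matroid's volume polynomial be a limit of representable ones, which fails in general --- hence I expect the flip/Hodge-theoretic route to be the one that works.

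The main obstacle is therefore exactly this non-representable case: formulating the correct Lefschetz operators on $A^\bullet(\M,\EE)$ that incorporate both $\alpha$ and the Chern data, propagating hard Lefschetz and Hodge--Riemann through matroid deletion--contraction compatibly with the bundle structure, and verifying that global generation places the relevant operator in the correct positive cone. A secondary (and more routine) difficulty, already present in the representable reduction, is to match global generation of $\EE$ with that of a presenting $\E$ while keeping $X_\Sigma$ smooth and projective and $\alpha$ ample. Finally, the analogous stronger statement with general Schur classes $s_\lambda(\EE)$ in place of $c_i(\EE)$ requires the full force of Ross--Toma even when $\M$ is representable, and its combinatorial counterpart for arbitrary matroids would be completely open.
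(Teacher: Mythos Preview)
The paper states this as a \emph{conjecture} and offers no proof, so there is no argument to compare against; your proposal must therefore be judged on its own.

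The representable reduction is plausible but has a real gap you downplay as ``secondary.'' Global generation of $\EE$ in the sense of Definition~\ref{def-gl-gen-tmb} is a matroid-theoretic condition on Klyachko \emph{flats}, while global generation of a presenting toric vector bundle $\E$ is a condition on Klyachko \emph{subspaces}. These agree only when $\M(L)$ is (an extension of) a DJS matroid for $\E$ (cf.\ Definition~\ref{def-DJSmatroid-toricvectorbundle} and Proposition~\ref{prop-Euler-sheaf-tropicalization}); you have not shown that every globally generated $(\M(L),\Phi)$ arises from some $(L,\Phi)$ with $\E$ globally generated. Without this, you cannot invoke Ross--Toma \cite{RossToma}. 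Even granting it, note that Proposition~\ref{prop-tropical-characteristic-comparison} matches \emph{equivariant} classes; you still need to pass to ordinary Chern classes and check that the intersection numbers you compute on the tropical side agree with the genuine intersection products on $X_\Sigma$.

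For the non-representable case your proposal is not a proof but a research program. Building an Artinian Gorenstein algebra $A^\bullet(\M,\EE)$ and establishing hard Lefschetz and Hodge--Riemann for it is precisely the open problem; the Adiprasito--Huh--Katz machinery runs on the Chow ring of a \emph{matroid fan}, whereas here the ambient object is the Chow ring of the smooth projective toric variety $X_\Sigma$ together with extra classes coming from $\Phi_\EE$, and there is no known flip/deletion--contraction decomposition that carries the bundle data. You yourself observe that the valuative route fails because Lorentzianness is not preserved, and that limits of representable matroids do not exhaust all matroids. In short, the third step identifies the difficulty correctly but contains no mechanism to overcome it; the conjecture remains open after your proposal, as it does in the paper.
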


We also introduce tentative notions of \emph{ample} and \emph{nef} tropical toric vector bundles following \cite{HMP} (see Definition \ref{def-ample-nef-tmb}). More precisely, we say that a tropical toric vector bundle $\EE$ is \emph{ample} (respectively \emph{nef}) if its restriction to any $1$-dimensional torus orbit closure in $X_\Sigma$ is split and is equivalent to a sum of ample (respectively nef) line bundles (in the sense of Definition \ref{def-tmbec1} and Definition \ref{def-tmbec2}, see also below for equivalence and splitting). This definition remains tentative because it is not yet known, at least to the authors, if splitting of tropical toric vector bundles over $\P^1$ works in the same way as in the case of vector bundles (see Section \ref{sec-splitting}). Regardless, in Sections \ref{sec-tautological-matroid-bundles} and \ref{sec-splitting} we show the following expected facts (Corollary \ref{cor-tautologicalGG} and Theorem \ref{thm-neftautological}): 

\begin{theorem}
The tautological bundle $\EE_\M$ is globally generated and nef.     
\end{theorem}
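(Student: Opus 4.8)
The plan is to establish the two assertions—global generation and nefness—separately, since they correspond to statements of different flavors. For \textbf{global generation}, I would first recall the explicit piecewise linear description of the tautological bundle $\EE_\M$ constructed in Section \ref{sec-tautological-matroid-bundles}: its defining map $\Phi_{\EE_\M}: |\Sigma_m| \to \Berg(\M)$ is assembled from the matroid structure of $\M$, with the value on each maximal cone of the permutahedral fan recorded by a flag of flats. The key step is to invoke the parliament of polytopes criterion, Theorem \ref{thm-gg}: it suffices to show that for every ray (equivalently, for every cocircuit / torus-invariant prime divisor in the relevant combinatorial sense) the associated polytopes in the parliament jointly cover the relevant lattice points, i.e. every maximal cone's local section extends to a global section realizing the prescribed weights. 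Concretely, I expect one shows that the global weight sections $H^0(X_m, \EE_\M)_u$ for $u$ ranging over the appropriate set of characters (the ``ground set'' characters $\mathbf{e}_i$ or their negatives, depending on the subbundle-versus-quotient normalization) already generate the fiber over the maximal cones, which reduces to a purely matroidal statement: every element of the ground set lies in some basis, and compatibility of the flag-of-flats data across adjacent cones. This is essentially the tropical shadow of the fact, used in \cite{BEST}, that the universal subbundle's dual (resp. the universal quotient bundle) is globally generated on the permutahedral variety.

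For \textbf{nefness}, by Definition \ref{def-ample-nef-tmb} I must show that the restriction of $\EE_\M$ to every $1$-dimensional torus orbit closure $\P^1 \hookrightarrow X_m$ splits as a direct sum of nef line bundles, i.e. line bundles of nonnegative degree. The natural route is: (i) identify the invariant curves of $X_m$ combinatorially—they correspond to codimension-one cones of the permutahedral fan, equivalently to walls between adjacent maximal cones—and compute the restriction of $\Phi_{\EE_\M}$ to the $2$-dimensional cone (or the line spanned by the wall together with the two adjacent rays); (ii) use the splitting theory of Section \ref{sec-splitting} for tropical toric vector bundles over $\P^1$, which describes the restriction in terms of how the piecewise linear map behaves on the one-dimensional fan of $\P^1$; (iii) read off the splitting type as a sequence of integers (the analogue of the Grothendieck splitting $\bigoplus \O(a_i)$) and verify each $a_i \geq 0$. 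The integers $a_i$ will be differences of values of a piecewise linear convex-type function determined by the matroid, so nonnegativity should follow from a submodularity/monotonicity property of the rank function of $\M$ (this is the tropical incarnation of the geometric fact in \cite{BEST} that the restriction to each invariant curve has nonnegative splitting type, proven there via the explicit Klyachko filtrations). Alternatively, one can deduce nefness formally from global generation together with the general principle that globally generated tropical toric vector bundles restrict to globally generated—hence nef—bundles on invariant curves, provided the splitting behavior of globally generated bundles over $\P^1$ is as expected; since the excerpt flags that this implication is only tentative in general, I would instead give the direct computation.

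The \textbf{main obstacle} I anticipate is the nefness part, specifically controlling the splitting type of the restriction to invariant curves without appealing to the (still-open) general splitting behavior: I need to verify, by hand, that the piecewise linear data $\Phi_{\EE_\M}$ restricted to each wall of the permutahedral fan does split—i.e. lands in a ``split'' region of $\Berg(\M)$—and that the resulting one-dimensional slopes are nonnegative. The first half (that it splits) should follow because over a $2$-dimensional cone the matroid data degenerates to a nested pair of flats, forcing the local linear space to decompose; the second half reduces to checking that consecutive flats in a flag have non-decreasing ranks, which is automatic, but one must be careful about the precise sign conventions in passing between $\EE_\M$ (dual of the universal subbundle) and the normalization in Definition \ref{def-tmbec1}. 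Global generation, by contrast, I expect to be comparatively routine once Theorem \ref{thm-gg} is in hand, since the parliament of polytopes of $\EE_\M$ should be a collection of (weighted) matroid polytopes / faces thereof whose union manifestly contains the origin and the standard basis vectors.
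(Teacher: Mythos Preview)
Your overall strategy matches the paper's: global generation via the parliament criterion (Theorem \ref{thm-gg}), nefness by restricting to invariant curves. But the concrete mechanisms you propose for each step differ from, and are less sharp than, what the paper actually does.

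For global generation, the paper (Corollary \ref{cor-tautologicalGG}) is more explicit than your sketch. Using Lemma \ref{lem-bases}, the adapted basis over a maximal cone $\sigma$ is the greedy (equivalently lex-maximal) basis $\B_\sigma$, and the character of $e_j \in \B_\sigma$ is simply the indicator vector $\delta_j$. The diagram entry at $(S,i)$ is $1$ if $e_i$ lies in the span of $\{e_j : j \in S\}$ and $0$ otherwise. Then one checks directly that $\langle \delta_{S'}, \delta_j\rangle \leq D_{S',j}$ for every ray $S'$, so $\delta_j$ is a vertex of $P_{\vv(e_j)}$ in the $\sigma$-direction, and Theorem \ref{thm-gg} applies. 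Your ``every element lies in some basis'' and ``parliament contains the standard basis vectors'' are gestures toward this, but the actual verification is this short explicit computation, not a covering statement about polytopes.

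For nefness, your proposed mechanism is off. You suggest splitting over $\P^1$ should come from ``a nested pair of flats forcing the local linear space to decompose'', and nonnegativity from ``submodularity/monotonicity of the rank function''. The paper's argument (Theorem \ref{thm-neftautological}) is different and more concrete. Two adjacent maximal cones $\sigma, \sigma'$ of $\Sigma_m$ are related by a single transposition $(i,\, i{+}1)$; by Lemma \ref{lem-bases} their greedy bases $\B_\sigma, \B_{\sigma'}$ either coincide or differ by swapping $e_i$ for $e_{i+1}$. Hence the matroid induced on $\B_\sigma \cup \B_{\sigma'}$ is either a single basis (already split, restriction is $\O_{\P^1}^{\oplus r}$) or a basis together with one extra element linked by a single circuit---such a matroid is automatically representable, so the restriction splits by the ordinary Grothendieck theorem. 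The splitting type is then read off from the characters: the common basis elements contribute $\O_{\P^1}$, and the exchanged pair contributes $\O(a)$ where $\delta_i - \delta_{i+1} = a\eta_\tau$, giving $a=1$. So the restriction is $\O(1)\oplus \O_{\P^1}^{\oplus r-1}$ or $\O_{\P^1}^{\oplus r}$, and nefness is immediate. The obstacle you flag (does the restriction split?) is dissolved not by a general structural argument about flats but by the observation that the restricted matroid is so small it is representable; you would not reach this without the greedy-basis description of the adapted bases.
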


In Section \ref{subsec-euler}, we introduce the equivariant Euler characteristic $\chi(X_\Sigma, \EE)_u$ of a tropical toric vector bundle $\EE$, where $u$ is any character of the torus $T$.  
Using the results of Khovanskii and Pukhlikov (\cite{Khovanskii-Pukhlikov-1,Khovanskii-Pukhlikov-2}), one can show that, as in the case of geometric vector bundles, the function $\mathcal{L} \mapsto \chi(X_\Sigma, \EE \otimes \mathcal{L})$ is a polynomial on the Picard group of $X_\Sigma$. 

For any character $u$, let $h^0(X_\Sigma, \EE)_u$ denote the rank of the global section matroid $H^0(X_\Sigma, \E)_u$. In Section \ref{subsec-euler} we prove the following. 
\begin{theorem}[A vanishing of higher cohomologies]  \label{thm-intro-sections}
    Let $\EE$ be a tropical toric vector bundle on a smooth, projective toric variety $X_\Sigma$, and let $\mathcal{L}$ be an ample line bundle on $X_\Sigma$, then there is an integer $N_0 >0$ such that for all $N \geq N_0$ and $u \in M$ we have:\[\chi(X_\Sigma,\EE\otimes\mathcal{L}^{\otimes N})_u = h^0(X_\Sigma,\EE\otimes\mathcal{L}^{\otimes N})_u.\]
    Moreover, for $N \geq N_0$, $h^0(X_\Sigma, \E\otimes \mathcal{L}^{\otimes N})$ is computed by an integral polynomial of degree $d=\dim(X_\Sigma)$ in $N$.
\end{theorem}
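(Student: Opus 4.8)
The plan is to prove the weight-graded identity $\chi(X_\Sigma,\EE\otimes\mathcal{L}^{\otimes N})_u=h^0(X_\Sigma,\EE\otimes\mathcal{L}^{\otimes N})_u$ for every $u\in M$ once $N$ is large, and then to sum over $u$. First I would record the finiteness that makes the sum legitimate: since $\mathcal{L}$ is ample, the support function of $\mathcal{L}^{\otimes N}$ is strictly convex and dilates linearly in $N$, so the set of $u$ for which either $\chi(X_\Sigma,\EE\otimes\mathcal{L}^{\otimes N})_u$ or $h^0(X_\Sigma,\EE\otimes\mathcal{L}^{\otimes N})_u$ is nonzero lies inside a lattice polytope of size $O(N)$; in particular it is finite. (When $\M$ is representable by a linear ideal $L$, the weight-graded identity below reduces to Serre vanishing for the toric vector bundle $\E\otimes\mathcal{L}^{\otimes N}$ combined with $\chi(\EE)_u=\chi(\E)_u$; in general it has to be argued combinatorially, as follows.)

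For fixed $u$ and $N$, the piecewise linear map $\Phi_\EE$ together with the polytope $P_\mathcal{L}$ of $\mathcal{L}$ attaches to each cone $\sigma\in\Sigma$ a flat $F_\sigma(u,N)$ of $\M$, namely the tropical analogue of the Klyachko subspace $\bigcap_{\rho\subset\sigma}E^\rho(\langle u,v_\rho\rangle)$ after the shift of jumping numbers induced by $\mathcal{L}^{\otimes N}$. These assemble into a cochain complex $C^\bullet_{N,u}$ indexed by the cones of $\Sigma$ (a tropical version of the \v{C}ech complex computing toric sheaf cohomology) with $\dim_{\mathbb{Q}}C^p_{N,u}=\sum_{\dim\sigma=p}\rank\big(\M|_{F_\sigma(u,N)}\big)$. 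By the definition of the sheaf of sections and of the global section matroid, the degree-zero cohomology of $C^\bullet_{N,u}$ has dimension $h^0(X_\Sigma,\EE\otimes\mathcal{L}^{\otimes N})_u$, while the alternating sum of the $\dim_{\mathbb{Q}}C^p_{N,u}$ equals $\chi(X_\Sigma,\EE\otimes\mathcal{L}^{\otimes N})_u$ (by the definition of the equivariant Euler characteristic, or by comparison with it as in the geometric case). Hence the weight-graded identity is equivalent to the vanishing of the cohomology of $C^\bullet_{N,u}$ in positive degrees. As in Klyachko's computation, $C^\bullet_{N,u}$ computes the reduced cohomology of the polyhedral region in $|\Sigma|$ cut out by the inequalities comparing $\langle u,v_\rho\rangle$ to the dilated jumping numbers of $\Phi_\EE$ along the rays $\rho$, with coefficients in a constructible system of matroid ranks; strict convexity of the support function of $\mathcal{L}^{\otimes N}$ forces, for $N\gg 0$, this region to be either empty or contractible (indeed convex or star-shaped from the cone where the local section data stabilize), so its reduced cohomology — and with it the positive cohomology of $C^\bullet_{N,u}$ — vanishes. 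The threshold $N_0$ is uniform in $u$ because, as $(u,N)$ vary, the pair (fan, region) realizes only finitely many combinatorial types, organized by a chamber decomposition of $M_{\mathbb{R}}$ that scales linearly in $N$, so $N_0$ need only be large enough to rule out, for each of the finitely many chamber shapes, the types that fail to be star-shaped.

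Summing the weight-graded identity over the finitely many contributing $u$ gives $h^0(X_\Sigma,\EE\otimes\mathcal{L}^{\otimes N})=\chi(X_\Sigma,\EE\otimes\mathcal{L}^{\otimes N})$ for all $N\geq N_0$. By the Khovanskii--Pukhlikov theorem already invoked in Section~\ref{subsec-euler}, the function $\mathcal{L}'\mapsto\chi(X_\Sigma,\EE\otimes\mathcal{L}')$ is a polynomial on the Picard group of $X_\Sigma$; restricting to the ray $N\mapsto N[\mathcal{L}]$ yields a polynomial in $N$. It has degree exactly $d=\dim X_\Sigma$, since its leading term is $\rank(\M)\cdot\mathrm{vol}(P_\mathcal{L})\,N^d+O(N^{d-1})$ with positive leading coefficient, and it is integer-valued because for $N\geq N_0$ it agrees with the nonnegative integer $h^0(X_\Sigma,\EE\otimes\mathcal{L}^{\otimes N})$ (a polynomial taking integer values at all large integers is integer-valued, and on a smooth $X_\Sigma$ the Khovanskii--Pukhlikov polynomial is in any case $\mathbb{Z}$-integral in the Ehrhart sense). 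The main obstacle is the middle paragraph: one must set up the tropical \v{C}ech/Ishida complex so that its degree-zero cohomology is genuinely the global section matroid and its Euler characteristic is $\chi_u$, and then establish the combinatorial star-shapedness of the relevant region for $N\gg 0$ — this is precisely the point at which ampleness of $\mathcal{L}$, i.e. strict convexity of its support function, is used, exactly as in the geometric Serre vanishing, but it must be carried out purely combinatorially.
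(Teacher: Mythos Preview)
Your approach has a genuine gap in the middle paragraph, and the paper's proof proceeds by an entirely different route.

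You propose to build a cochain complex $C^\bullet_{N,u}$ of $\mathbb{Q}$-vector spaces with $\dim_{\mathbb{Q}}C^p_{N,u}=\sum_{\dim\sigma=p}\rank\bigl(\M|_{F_\sigma(u,N)}\bigr)$, whose $H^0$ has dimension $h^0_u$ and whose Euler characteristic is $\chi_u$, and then argue that its higher cohomology vanishes for $N\gg0$. The difficulty is not merely technical: for a non-representable matroid there are no Klyachko \emph{vector spaces}, only flats, and matroid rank is only submodular, not additive. So there is no canonical way to manufacture the vector spaces $C^p_{N,u}$ together with boundary maps whose kernel in degree zero recovers $\rank\bigl(\bigcap_\rho F^\rho_{\langle u,\bv_\rho\rangle}\bigr)$. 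Indeed the paper states explicitly (just after Theorem~\ref{thm-intro-sections} in the introduction) that it does not know how to define higher cohomologies for tropical toric vector bundles. Your own final paragraph concedes that setting up such a complex is ``the main obstacle''; but it is not an obstacle so much as the entire content of the theorem, and there is no reason to expect the Klyachko/Ishida machinery to survive the passage from subspace arrangements to flat arrangements.

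The paper avoids any complex altogether. It proves (Lemma~\ref{lem-globalsections} and Corollary~\ref{cor-globalsections}) that the function $u\mapsto h^0_u$ on $M_\R$ can be written as a fixed $\Z$-linear combination $\sum_F c_F\, I_{D,F}$ of indicator functions of the polytopes $P_{D,F}$ attached to flats $F\subset\M$, with coefficients $c_F$ depending only on $\M$. The very same coefficients express $h^0_{D,\sigma}$ in terms of $I_{D,F,\sigma}$ (Lemma~\ref{lem-affineglobalsections}). Once $N$ is large enough that every $P_{D+N\br,F}$ has outer normal fan $\Sigma$ (Lemma~\ref{lem-flatsample}), the Brianchon--Gram identity $I_{D,F}=\sum_{\sigma\in\Sigma}(-1)^{\codim\sigma}I_{D,F,\sigma}$ applies term by term, and resumming gives $h^0_D=\chi_D$ (Proposition~\ref{prop-eulerchiprop}). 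Thus the ``vanishing'' is deduced not from contractibility of a region but from Brianchon--Gram applied to finitely many honest polytopes; ampleness enters only to guarantee that each $P_{D+N\br,F}$ is genuinely a polytope with normal fan $\Sigma$. The polynomiality then follows from Khovanskii--Pukhlikov applied to the lattice-point count of each $P_{D+N\br,F}$ (Corollary~\ref{cor-ppsections}), rather than from a general statement about $\mathcal{L}'\mapsto\chi(X_\Sigma,\EE\otimes\mathcal{L}')$ on the Picard group.
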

The proof of Theorem \ref{thm-intro-sections} in fact gives an effective bound for $N_0$ involving polyhedra associated to the flats of $\M$. These polyhedra can be thought of as higher members of the parliament of polytopes of DiRocco, Jabbusch, and Smith \cite{DJS}. An example of the above theorem is the following. Let $-K_m$ be the anticanonical class of the permutahedral variety $X_m$, and let $\M$ be a matroid with $m$ elements. Then the function $\ell \mapsto h^0(X_m, \EE_\M\otimes \O(-\ell K_m))$ is a polynomial for sufficiently large $\ell$.


Although Theorem \ref{thm-intro-sections} suggests vanishing of ``higher cohomology'', as of now, we do not know how to define higher cohomologies for tropical toric vector bundles.  Our methods are combinatorial, utilizing the theory of convex chains introduced by Khovanskii and Pukhlikov (\cite{Khovanskii-Pukhlikov-1,Khovanskii-Pukhlikov-2}).  When the tropical toric vector bundle $\EE$ is obtained from the data of a toric vector bundle $\E$, the functions which count dimensions of global sections will coincide provided the toric vector bundle is defined using a matroid of DiRocco, Jabbusch, and Smith ( \cite{DJS}, \cite[Section 3.4]{Kaveh-Manon-TVBs-valuation}).

We expect that the higher cohomologies of tautological bundle $\EE_\M$ of a matroid $\M$ vanish. 

\begin{problem}  \label{prob-tautologicalpoly}
For a matroid $\M$ with tautological bundle $\EE_\M$ on the permutahedral variety $X_m$, do $h^0(X_m, \EE_\M)$ and $\chi(X_m, \EE_\M)$ have combinatorial meaning? More generally, does the polynomial $\mathcal{L} \mapsto \chi(\EE_\M \otimes \mathcal{L})$, on the Picard group of the permutahedral variety, have a combinatorial meaning? 
\end{problem}
By results in Section \ref{sec-Euler-char-matroid-vb}, computation of the above polynomial should involve the Ehrhart theory of various permutahedra.

\begin{problem}
Given a matroid $\M$, give a combinatorial description of the set of tropical toric vector bundles on permutahedral toric variety with matroid $\M$ whose higher cohomologies vanish.   \end{problem}

For further study, we also pose the following:
\begin{problem}  \label{prob-HRR-matroid-vb}
For a tropical toric vector bundle $\EE$, formulate a Hirzebruch-Riemann-Roch formula relating the Chern classes of $\EE$ and $\chi(X_\Sigma,\EE)$. \end{problem}

\begin{remark}
In fact, one can associate to each tropical toric vector bundle $\EE$ a convex chain in the sense of \cite{Khovanskii-Pukhlikov-1}. The Euler characteristic of $\EE$ is then the sum of values of this convex chain on lattice points. We expect that the Khovanskii-Pukhlikov Riemann-Roch theorem (\cite{Khovanskii-Pukhlikov-2}) gives an answer to Problem \ref{prob-HRR-matroid-vb}. This is a work in progress.   
\end{remark}

{\bf Matroid extension.} In Section \ref{sec-matroid-extension} we consider tropical toric vector bundles up to extension of matroids.
We recall (Section \ref{subsec-tvb-PL-map-trop} and \cite[Section 4]{Kaveh-Manon-TVBs-valuation}) 
that different linear ideals $L$, and hence different linear matroids, can give rise to the same toric vector bundle on $X_\Sigma$. In particular, if we enlarge the spanning set $\M$ (the ground set of our linear matroid) it gives rise to the same toric vector bundle. This observation motivates considering the notion of a \emph{matroid extension} and study tropical toric vector bundles up to extension of matroids.   

Let $\M_1$ and $\M_2$ be (the ground sets of) matroids of equal rank $r$. We say that a one-to-one map on the underlying sets $\phi:\M_1 \to \M_2$ is a \emph{matroid extension} if the matroid induced on $\phi(\M_1)$ by $\M_2$ is $\M_1$. For a fixed extension $\phi: \M_1 \to \M_2$, a tropical toric vector bundle $\EE$ on $X_\Sigma$ induces a tropical toric vector bundle $\phi_*\EE$ with matroid $\M_2$.  
In the case of representable matroids, this operation corresponds to ``enlarging'' a spanning set of vectors in a vector space, and gives rise to isomorphic toric vector bundles.  

{\bf Splitting.} In the last section, we study splitting of tropical toric vector bundles. An important question in geometry of vector bundles is when a vector bundle can be decomposed into a sum of line bundles. We say that a vector bundle is \emph{split} if it is isomorphic to a sum of line bundles. A celebrated theorem of Grothendieck states that any vector bundle over $\mathbb{P}^1$ splits. 

A toric vector bundle is \emph{equivariantly split} if it is equivariantly isomorphic to a sum of toric line bundles. It can be shown that a toric vector bundle $\E$, with piecewise linear map $\Phi$, is split if and only if the image of $\Phi$ lands in a single apartment (see Definition \ref{def-space-of-val}). Motivated by this we make the following definition.
\begin{definition}
A tropical toric vector bundle $(\M, \Phi)$ is \emph{split} if the image of $\Phi$ lies in a single apartment of $\Berg(\M)$ (see Definition \ref{def-apartment-Bergman-fan}).  Equivalently, there is a basis $B \subset \M$ such that, for any $e \in \M$, $\vv(e) = \min\{\vv(c) \mid c \in C\cap B\}$, where $C \subset \M$ is the unique circuit with $C \setminus \{e\} \subset B$. 
\end{definition}

Strictly speaking, a split tropical toric vector bundle is not isomorphic to a sum of toric line bundles. But it is the case if we consider the extension class of a tropical toric vector bundle. More precisely, let $(\M, \vv)$ be a tropical toric vector bundle. One verifies that the following are equivalent: 
\begin{itemize}
\item[(i)] The class $[(\M, \vv)]$ contains a pair $(M', \vv')$ which is split.     
\item[(ii)] The class $[(\M, \vv)]$ contains a member of the form $(B, \vv)$, where $B$ is a single basis.
\item[(iii)] The class $[(\M, \vv)]$ contains the pair associated to a direct sum of toric line bundles.      
\end{itemize}

Finally, we address the question of splitting of tropical toric vector bundles over $\mathbb{P}^1$.
The equivariant version of Grothendieck's theorem for toric vector bundles is an immediate  corollary of the linear algebra fact that any two flags of subspaces in a finite dimensional vector space are adapted to the same basis. This fact is indeed one of the \emph{building} axioms for the Tits building of the general linear group. Namely, any two simplices in a building lie in the same apartment. Motivated by this we ask the following.

\begin{question}
Does the space of rank $r$ matroids in the same extension class have properties analogous to a building (that is, satisfies analogues of the building axioms)? 
\end{question}

We say that a matroid $\M$ is \emph{modular} if the submodular inequality for the rank function is an equality. We show the following (Corollaries \ref{cor-split-P1-modular} and \ref{cor-tmb-P1-no-split}).
\begin{theorem}
    Suppose $\M$ can be extended to a (possibly infinite) modular matroid $\mathcal{N}$, then any tropical toric vector bundle class $[(\M, \Phi)]$ over $\P^1$ splits. 
\end{theorem}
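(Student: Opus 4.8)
The plan is to reduce the statement to the linear-algebra fact underlying the equivariant Grothendieck theorem, with ``subspaces'' replaced by ``flats of $\mathcal{N}$'' and ``adapted basis'' replaced by ``apartment of $\Berg(\mathcal{N})$''. First I would recall that a tropical toric vector bundle over $\P^1$ is given by a piecewise linear map $\Phi$ from the fan of $\P^1$, which has just two maximal cones (the two rays and the origin), into $\Berg(\M)$. Thus $\Phi$ is determined by two points $\Phi(v_+), \Phi(v_-) \in \Berg(\M)$ (the images of the primitive ray generators) together with the requirement that $\Phi$ be linear on each ray; equivalently, by the Klyachko-type picture, the bundle is described by a pair of compatible ``filtration-like'' combinatorial data indexed by the two torus-fixed points. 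The point of passing to $\mathcal{N}$ is that, by the matroid-extension construction of Section~\ref{sec-matroid-extension}, the class $[(\M,\Phi)]$ has a representative $(\mathcal{N}, \Phi')$ obtained by pushing forward along $\phi: \M \to \mathcal{N}$, and it suffices to split this representative: splitting of any member of the extension class is what we are after by the equivalence (i)--(iii) recorded above.

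Next I would exploit modularity of $\mathcal{N}$. The key geometric translation is: a point of $\Berg(\mathcal{N})$ is (up to lineality) a weighting on the lattice of flats, and an apartment of $\Berg(\mathcal{N})$ corresponds to a maximal chain of flats (equivalently a basis $B \subset \mathcal{N}$), with the point lying in that apartment iff it is ``adapted'' to $B$ in the sense of the displayed splitting criterion $\vv(e) = \min\{\vv(c) \mid c \in C \cap B\}$. So I must show: given two points $p_+ = \Phi'(v_+)$ and $p_- = \Phi'(v_-)$ of $\Berg(\mathcal{N})$, there is a single apartment (single basis $B$) containing both, and that this apartment is compatible with the linear interpolation along the two rays — the latter being automatic once both endpoints lie in one apartment, since an apartment is a linear subspace (a copy of $\R^{\mathcal{N}}/\text{lineality}$ restricted appropriately) and the PL map on each ray is just the line segment from the origin's image to the ray generator's image. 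Modularity is exactly the hypothesis that makes the two-chains-in-a-common-apartment property hold: in a modular lattice any two elements (here, any two flags/chains coming from $p_+$ and $p_-$) are contained in a common maximal chain, mirroring the building axiom ``any two simplices lie in a common apartment.'' Concretely, each $p_\pm$ determines a flag of flats $F^\pm_\bullet$ of $\mathcal{N}$ (its ``support flag''), and by the modular-lattice version of the Jordan–Hölder/Tits argument there is a maximal flag refining information from both; the corresponding basis $B$ is the sought common apartment.

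The main obstacle I anticipate is the bookkeeping in the case where $\mathcal{N}$ is infinite: one must make sense of $\Berg(\mathcal{N})$, its apartments, and modularity for infinite matroids, and check that the ``two flags lie in a common maximal flag'' statement still holds — here I would either cite/adapt the relevant infinite-matroid modularity result, or argue that only the finitely many flats appearing in the supports of $p_+$ and $p_-$ matter, reducing to a finite modular sublattice generated by those flats (using that a sublattice of a modular lattice generated by two finite chains is finite and modular). A secondary technical point is verifying that the push-forward $\phi_*$ and the splitting notion interact correctly: I need that if $(\mathcal{N},\Phi')$ is split then so is $[(\M,\Phi)]$, which is immediate from the definition of the extension class and the equivalence (i)$\Leftrightarrow$(iii) above, but I would state it cleanly as a lemma. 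Once these foundational points are in place, the proof is short: extend to $\mathcal{N}$, observe the two ray-images give two flags of flats, invoke modularity to find a common maximal flag / basis $B$, conclude $\Phi'$ lands in the apartment of $B$, hence $(\mathcal{N}, \Phi')$ splits, hence $[(\M,\Phi)]$ splits.
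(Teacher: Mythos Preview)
Your overall architecture is correct and coincides with the paper's: push the bundle forward along $\M \to \mathcal{N}$, translate the $\P^1$ data into two flags of flats $F_\bullet$, $G_\bullet$ of $\mathcal{N}$, and show these two flags admit a common adapted basis $B$ (equivalently, lie in a common apartment of $\Berg(\mathcal{N})$). That is exactly the reduction the paper makes.

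The gap is in the step where you invoke ``the modular-lattice version of the Jordan--H\"older/Tits argument'' to conclude ``there is a maximal flag refining information from both.'' As literally stated, the claim that two chains in a modular lattice lie in a common maximal chain is false (two distinct maximal chains in the subspace lattice of $\k^2$ already give a counterexample), and the Jordan--H\"older theorem by itself only gives a length-preserving bijection between composition factors, not a common apartment. What you actually need is the stronger, building-flavored statement: \emph{any two complete flags of flats in a modular matroid have a common adapted basis}. This does not drop out of Jordan--H\"older; it is precisely the content requiring proof, and ``maximal flag refining information from both'' is not the right object to look for.

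The paper supplies this missing step by a short direct induction on $r = \rank(\mathcal{N})$: assume a basis $\B_0$ of $F_{r-1}$ adapted both to $F_{r-1} \supset \cdots \supset F_1$ and to $F_{r-1}\cap G_{r-1} \supset \cdots \supset F_{r-1}\cap G_1$, then locate the first index $k$ with $\rank(F_{r-1}\cap G_k) < k$, pick $b \in G_k \setminus F_{r-1}$, and set $\B = \B_0 \cup \{b\}$. Modularity enters only through the identity $\rank(G_m \cap F_{r-1}) = m + (r-1) - r = m-1$ for $m \geq k$, which forces $\B \cap G_m$ to span $G_m$. This argument is elementary, works uniformly for infinite $\mathcal{N}$ (the induction is on the finite rank, and each step chooses a single element), and avoids any appeal to the classification of modular matroids or to building machinery. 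Your worry about the infinite case is therefore not a genuine obstacle once the inductive argument is in hand.
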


\begin{theorem}\label{thm-vamos-intro}
There is a bundle $(M, \Phi)$ over $\P^1$ which cannot be extended to a split bundle. 
\end{theorem}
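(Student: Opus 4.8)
The plan is to use the Vámos matroid $V_8$ (or a closely related non-representable matroid) as the matroid $\M$, together with a carefully chosen piecewise linear map $\Phi \colon |\Sigma| \to \Berg(\M)$ over the fan $\Sigma$ of $\P^1$ (so $|\Sigma| = \R$, with two rays). The key point is that over $\P^1$ the data of $\Phi$ amounts to choosing two points $\vv_+ = \Phi(1)$ and $\vv_- = \Phi(-1)$ in $\Berg(\M)$, i.e.\ two valuations (equivalently, two complete flags of flats weighted by the rays), and the bundle $(\M, \Phi)$ splits if and only if the whole image of $\Phi$ — here just the segment joining $\vv_+$ and $\vv_-$ — lies in a single apartment of $\Berg(\M)$. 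Passing to the extension class, by the equivalence (i)$\Leftrightarrow$(ii)$\Leftrightarrow$(iii) recalled in the Splitting section, $[(\M, \Phi)]$ splits iff there is a matroid extension $\phi \colon \M \to \mathcal N$ such that $\phi_*\vv_\pm$ both lie in a common apartment of $\Berg(\mathcal N)$, equivalently iff $\vv_+$ and $\vv_-$ become ``adapted to a common basis'' after some extension. So I need to exhibit a matroid $\M$ and two valuations on it for which no extension makes the two flags simultaneously splittable.

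First I would make precise the obstruction: a pair of points in $\Berg(\M)$ lies in a common apartment iff there is a basis $B$ of $\M$ such that each $\vv_\pm(e) = \min\{\vv_\pm(c) : c \in C_e \cap B\}$ for the fundamental circuit $C_e$ of $e$ over $B$ (the condition in the displayed Definition). The heart of the argument is then a purely matroid-theoretic statement: there exist a matroid $\M$, two weighted flags $\mathcal F_+, \mathcal F_-$ of flats of $\M$, such that in \emph{no} extension $\mathcal N \supseteq \M$ of the same rank do $\mathcal F_+$ and $\mathcal F_-$ lift to flags adapted to a single common basis of $\mathcal N$. I would build this from the failure of modularity in $V_8$: choose $\mathcal F_+$ and $\mathcal F_-$ so that the flats involved form a non-modular pair (two planes $P_1, P_2$ in $V_8$ with $r(P_1) + r(P_2) > r(P_1 \vee P_2) + r(P_1 \wedge P_2)$), arranged so that a common adapted basis would force $P_1$ and $P_2$ to behave like a modular pair. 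Since modularity is preserved under restriction (if $\mathcal N$ is modular then so is $\M = \mathcal N|_{\phi(\M)}$ — this is essentially the content of the companion Corollary \ref{cor-split-P1-modular} read backwards), and $V_8$ is the classical obstruction to being a restriction of a modular matroid, no extension can repair this. I would then check that the two flags can actually be realized by an honest piecewise linear $\Phi$ compatible with the fan of $\P^1$ (this is automatic: any two points of $\Berg(\M)$ are connected by a PL path, and $\P^1$'s fan is refined enough, as noted in Definition \ref{def-TMB-intro}(b)).

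The remaining steps are: (1) verify $V_8$ (or the chosen variant) has a non-modular pair of flats of the right ranks, and pin down the two flags $\mathcal F_\pm$ realizing it; (2) prove the reduction ``split after extension $\Rightarrow$ the two flats form a modular pair in $\M$'' by tracking the common basis down through the restriction and using that adaptedness to a common basis of $\mathcal N$ forces $r_\mathcal N(P_i) = |B \cap P_i|$ and similarly for joins/meets, which are inherited by $\M$; (3) invoke the known fact that $V_8$ is not a submatroid-with-the-induced-matroid-structure of any modular matroid, so the needed $\mathcal N$ cannot exist. I expect step (2) — the precise bookkeeping showing that a common adapted basis in an arbitrary (possibly infinite) rank-preserving extension forces the modularity equality back in $\M$ — to be the main obstacle, since one must be careful that extension can add elements to flats and reorganize fundamental circuits; the cleanest route is probably to phrase everything in terms of the valuations $\vv_\pm$ directly (the $\min$-formula over a basis) rather than flags, and show that a common splitting basis yields, for the two flats $P_1, P_2$ supporting the jumps of $\vv_+$ and $\vv_-$ respectively, a subset of the basis simultaneously spanning $P_1 \wedge P_2$ and contained in $P_1 \cap P_2$ of the wrong size unless the modular equality holds.
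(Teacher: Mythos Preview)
Your overall strategy---take the Vamos matroid $V$, send the two rays of $\P^1$ to the two-step flags $V\supset F\supset\emptyset$ and $V\supset H\supset\emptyset$ for a non-modular pair of rank-$2$ flats $F,H$, and argue that a splitting after an extension $\phi:V\to\mathcal N$ would force $\phi_*F,\phi_*H$ to be a modular pair---is exactly the route the paper takes (Corollary~\ref{cor-tmb-P1-no-split}, with $F=\{f_1,f_2\}$, $H=\{h_1,h_2\}$). Your basis-counting sketch in the last paragraph is also correct as far as it goes: a common adapted basis $B\subset\mathcal N$ gives $|B\cap\phi_*F|=|B\cap\phi_*H|=2$ and $|B\cap(\phi_*F\cup\phi_*H)|\le r_{\mathcal N}(\phi_*F\vee\phi_*H)=3$, so $|B\cap\phi_*F\cap\phi_*H|\ge 1$ and hence $d_{\mathcal N}(\phi_*F,\phi_*H)=0$.

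There are, however, two genuine gaps. First, the assertion ``modularity is preserved under restriction'' is false (e.g.\ $U_{3,4}$ is a restriction of the modular matroid $\mathrm{PG}(2,q)$ but is not modular), so reading Corollary~\ref{cor-split-P1-modular} backwards does not give what you want. Second, and this is the real issue, your step~(3) invokes the wrong fact. Your own counting argument only yields modularity of the pair $\phi_*F,\phi_*H$ \emph{in $\mathcal N$}, not in $\M$; and the extension $\mathcal N$ in which a split might occur is under no obligation to be globally modular. So the relevant obstruction is not ``$V_8$ embeds in no modular matroid'' but the sharper statement: \emph{there exist flats $F,H\subset V$ such that $d_{\mathcal N}(\phi_*F,\phi_*H)\neq 0$ for every extension $\phi:V\to\mathcal N$}. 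This is Theorem~\ref{thm-vamos} in the paper (a repackaging of \cite[Theorem~8]{Hochstattler-Wilhelmi}), and it is not a soft consequence of non-representability; it requires a specific argument using the other four dependent $4$-gons of $V$. One supposes $r_{\mathcal N}(F_{\mathcal N}\cap H_{\mathcal N})=1$, then uses the already-modular pairs $T_1=H\cup\{e,p\}$, $T_2=F\cup\{e,p\}$ and $B_1=H\cup\{g,q\}$, $B_2=F\cup\{g,q\}$ to force the putative intersection point into both $\Span(e,p)_{\mathcal N}$ and $\Span(g,q)_{\mathcal N}$, and then checks that this violates submodularity in $\mathcal N$ because $\{e,p,g,q\}$ is independent in $V$. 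Your proposal does not supply this argument, and without it the proof does not close.
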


\begin{remark}  \label{rem-Khan-Maclagan}
Here are a few more comments about the overlaps and differences with the work of Khan and Maclagan (\cite{Khan-Maclagan}): in the present paper we work with matroids while Khan and Maclagan work with valuated matroids. 
Similar to the present paper, they also have a notion of Cox module (Section \ref{subsec-sheaf}). They introduce a notion of \emph{(semi)stability} for tropical toric vector bundles which we do not address here. For this they define the first Chern class of such a bundle which we expect to coincide with ours (more generally, we define equivariant total Chern classes and equivariant K-classes). As we do in the present paper, they also address the question of splitting and in particular, splitting over $\mathbb{P}^1$. Likewise, they also realize the relevance of notion of \emph{modularity} of a matroid to this problem.  
\end{remark}

\bigskip
\noindent{\bf Acknowledgements:}
C. Manon was partially supported by Simons Collaboration Grant 587209 and National Science Foundation grant DMS-2101911. K. Kaveh was partially supported by National Science Foundation grant DMS-2101843 and a Simons Collaboration Grant for Mathematicians. We thank University of Bath and Goethe University Frankfurt for their hospitality during the workshops Combinatorial algebraic geometry (Bath, 2022) and Vector bundles and combinatorial algebraic geometry (Frankfurt, 2023). We also thank Diane Maclagan, Bivas Khan, Martin Ulirsch, Felipe Rinc\'on and Alex Fink for useful conversations. We thank Chris Eur for conversations concerning the Euler characteristic which lead to Theorem \ref{thm-intro-sections}. We thank Austin Alderete for conversations about the Vamos matroid which lead to Theorem \ref{thm-vamos-intro}.
\bigskip

\noindent{\bf Notation:}
\begin{itemize}
\item $\k$ denotes the ground field.
\item $T \cong \mathbb{G}_m^d$ denotes a (split) algebraic torus with $M$ and $N$ its character and cocharacter lattices respectively. In general, $M$ and $N$ denote rank $d$ free abelian groups dual to each other. We denote the pairing between them by $\langle \cdot, \cdot \rangle: M \times N \to \Z$. We let $M_\R = M \otimes \R$ and $N_\R = N \otimes \R$ be the corresponding $\R$-vector spaces. 
\item $U_\sigma$ is the affine toric variety corresponding to a (strictly convex rational polyhedral) cone $\sigma \subset N_\R$.
\item $\Sigma$ is a fan in $N_\R$ with corresponding toric variety $X_\Sigma$. We denote the support of $\Sigma$, i.e. the union of cones in it, by $|\Sigma|$. 
\item We fix a point $x_0$ in the open orbit $U_0$ in the toric variety $X_\Sigma$. The choice of $x_0$ identifies $U_0$ with the torus $T$.
\item $\PL(N_\R, \R)$ is the set of piecewise linear functions on the $\R$-vector space $N_\R$. We denote the subset of piecewise linear functions that attain integer values on $N$ by $\PL(N, \Z)$. Finally $\PL(\Sigma, \R)$ (respectively $\PL(\Sigma, \Z)$) denotes the subset of piecewise linear functions (respectively integral piecewise linear functions) that are linear on cones in $\Sigma$. 
\item $\mathcal{E}$ denotes a rank $r$ toric vector bundle on a toric variety $X_\Sigma$.
\item $E$ is an $r$-dimensional vector space which we usually think of as the fiber $\mathcal{E}_{x_0}$ of a rank $r$ toric vector bundle $\mathcal{E}$. 
\item $\Trop(I)$ is the tropical variety of an ideal $I$.
\item $\M$ denotes a matroid with ground set $\{1, \ldots, m\}$. By abuse of notation we use $\M$ to denote the ground set of the matroid as well. We use $r$ to denote the rank of $\M$. (We caution that unlike much of the matroid literature we do not denote the ground set by $E$, rather $E$ in this paper denotes an $r$-dimensional vector space. This is consistent with notation used in toric vector bundle literature.) 
\item $\Berg(\M)$ denotes the Bergman fan of $\M$. 
\item $\GF(\M)$ denotes the Gr\"obner fan of $\M$.
\item $P(\M)$ denotes the matroid polytope of $\M$.
\item $\EE$ denotes a tropical toric vector bundle over $X_\Sigma$ with matroid $\M$. We represent $\EE$ by pairs $(\M, \Phi)$ (where $\Phi$ is an integral piecewise linear map to $\Berg(\M)$) or $(\M, \vv)$ (where $\vv$ is a $\PL(N, \Z)$-valued tropical point on $\Berg(\M)$).
\item $[\EE]$ and $c_i^T(\EE)$ denote the equivariant $K$-class and equivariant Chern classes of a tropical toric vector bundle $\EE$.
\item $[\M]$ denotes the matroid extension class of a matroid $\M$.
\item $[(\M, \Phi)]$ (respectively $[(\M, \vv)]$) denotes the extension class of a tropical toric vector bundle $(\M, \Phi)$ (respectively $(\M, \vv)$).
\end{itemize}
\bigskip

\section{Preliminaries on toric vector bundles} \label{sec-prelim-toric-vb}
In this section we review some background material on toric vector bundles. 
\subsection{Klyachko classification of toric vector bundles}   \label{subsec-prelim-toric-vb}
Let $T \cong \mathbb{G}_m^d$ denote an $d$-dimensional (split) algebraic torus over a field $\k$. We let $M$ and $N$ denote its character and cocharacter lattices respectively. We also denote by $M_\R$ and $N_\R$ the $\R$-vector spaces spanned by $M$ and $N$. For cone $\sigma \in N_\R$ let $M_\sigma$ be the quotient lattice:
$$M_\sigma = M / (\sigma^\perp \cap M).$$
Let $\Sigma$ be a (finite rational polyhedral) fan in $N_\R$ and let $X_\Sigma$ be the corresponding toric variety. We let $U_\sigma$ denote the invariant affine open subset in $X_\Sigma$ corresponding to a cone $\sigma \in \Sigma$. We denote the support of $\Sigma$, that is the union of all the cones in $\Sigma$, by $|\Sigma|$. For each $i$, $\Sigma(i)$ is the subset of $i$-dimensional cones in $\Sigma$. In particular, $\Sigma(1)$ is the set of rays in $\Sigma$. For each ray $\rho \in \Sigma(1)$ we let ${\bf v}_\rho$ be the primitive vector along $\rho$.

We say that $\E$ is a {\it toric vector bundle} on $X_\Sigma$ if $\E$ is a vector bundle on $X_\Sigma$ equipped with a linear action of $T$ on $\E$ which lifts the $T$-action on $X_\Sigma$. 
We fix a point $x_0 \in X_0 \subset X_\Sigma$ in the dense orbit $U_0$. We often identify $U_0$ with $T$ and think of $x_0$ as the identity element in $T$. We let $E = \E_{x_0}$ denote the fiber of $\E$ over $x_0$. It is an $r$-dimensional vector space where $r = \rank(\E)$. 

For each cone $\sigma \in \Sigma$, with invariant open subset $U_\sigma \subset X_\Sigma$, the space of sections $\Gamma(U_\sigma, \E\!\!\mid_{U_\sigma})$ is a $T$-module. We let  $\Gamma(U_\sigma, \E\!\!\mid_{U_\sigma})_u \subseteq \Gamma(U_\sigma, \E\!\!\mid_{U_\sigma})$ be the weight space corresponding to a weight $u \in M$. One has the weight decomposition: 

$$\Gamma(U_\sigma, \E\!\!\mid_{U_\sigma}) = \bigoplus_{u \in M} \Gamma(U_\sigma, \E\!\!\mid_{U_\sigma})_u.$$ Every section in $\Gamma(U_\sigma, \E\!\!\mid_{U_\sigma})_u$ is determined by its value at $x_0$.  Thus, by restricting sections to $E = \E_{x_0}$, we get an embedding $\Gamma(U_\sigma, \E_{|U_\sigma})_u \hookrightarrow E$. Let us denote the image of $\Gamma(U_\sigma, \E\!\!\mid_{U_\sigma})_u$ in $E$ by $E_u^\sigma$. Note that if $u' \in \sigma^\vee \cap M$ then multiplication by the character $\chi^{u'}$ gives an injection $\Gamma(U_\sigma, \E\!\!\mid_{U_\sigma})_u \hookrightarrow \Gamma(U_\sigma, \E\!\!\mid_{U_\sigma})_{u-u'}$. Moreover, the multiplication map by $\chi^{u'}$ commutes with the evaluation at $x_0$ and hence induces an inclusion $E_u^\sigma \subset E_{u-u'}^\sigma$. If $u' \in \sigma^\perp$ then these maps are isomorphisms and thus $E_u^\sigma$ depends only on the class $[u] \in M_\sigma = M/(\sigma^\perp \cap M)$. For a ray $\rho \in \Sigma(1)$ we write $$E^\rho_i = E_u^\rho,$$ for any $u \in M$ with $\langle u, {\bf v}_\rho \rangle = i$ (all such $u$ define the same class in $M_\rho$). Equivalently, one can define $E^\rho_u$ as follows (see \cite[\S 0.1]{Klyachko}). Pick a point $x_\rho$ in the orbit $O_\rho$ and let:
$$E^\rho_u = \{ e \in E \mid \lim_{t \cdot x_0 \to x_\rho} \chi^u(t)^{-1}(t \cdot e) \text{ exists in } \E \},$$
where $t$ varies in $T$ in such a way that $t \cdot x_0$ approaches $x_\rho$. 
We thus have a decreasing filtration of $E$:
\begin{equation}  \label{equ-filt-E-rho}
\cdots \supset E^\rho_{i-1} \supset E^\rho_i \supset E^\rho_{i+1} \supset \cdots
\end{equation}

An important step in the classification of toric vector bundles is that a toric vector bundle over an affine toric variety is {\it equivariantly trivial}. That is, it decomposes $T$-equivariantly as a sum of trivial line bundles (see \cite[Proposition 2.1.1]{Klyachko}). 

For $\sigma \in \Sigma$, we let $u(\sigma) = \{ [u_1], \ldots, [u_r]\} \subset M_\sigma$ be the multiset of characters by which $T$ acts on the trivial bundle $\E_{|U_\sigma}$. One then observes that, for each $\sigma \in \Sigma$, the filtrations $(E^\rho_i)_{i \in \Z}$, $\rho \in \Sigma(1)$, satisfy the following compatibility condition: 
There is a decomposition of $E$ into a direct sum of $1$-dimensional subspaces indexed by a finite multiset $u(\sigma) \subset M_\sigma$:
$$E = \bigoplus_{[u] \in u(\sigma)} L_{[u]},$$
such that for any ray $\rho \in \sigma(1)$ we have:
\begin{equation}  \label{equ-Klyachko-comp-condition}
E^\rho_i = \sum_{\langle u, {\bf v}_\rho \rangle \geq i}  L_{[u]}
\end{equation}

\begin{definition}[Compatible collection of filtrations]
We call a collection of decreasing $\Z$-filtrations $\{(E_i^\rho)_{i \in \Z} \mid \rho \in \Sigma(1) \}$ satisfying condition \eqref{equ-Klyachko-comp-condition} a \emph{compatible collection of filtrations}. 
(Moreover, for each $\rho$, we assume $\bigcap_{i \in \Z} E^\rho_i = \{0\}$ and $\bigcup_{i \in \Z} E^\rho_i = E$.)
\end{definition}

Let $E$, $E'$ be finite dimensional $\k$-vector spaces. Let $\{(E_i^\rho)_{i \in \Z} \mid \rho \in \Sigma(1) \}$ (respectively $\{({E'}_i^\rho)_{i \in \Z} \mid \rho \in \Sigma(1) \}$) be compatible collections of filtrations on $E$ (respectively $E'$). We say that a linear map $F: E \to E'$ is a \emph{morphism} from $\{(E_i^\rho)_{i \in \Z} \mid \rho \in \Sigma(1) \}$ to $\{({E'}_i^\rho)_{i \in \Z} \mid \rho \in \Sigma(1) \}$ if for every $\rho \in \Sigma(1)$ and $i \in \Z$ we have $F(E_i^\rho) \subset {E'}_i^\rho$. 
With this notion of morphism, for a fixed fan $\Sigma$, the compatible collections of filtrations on finite dimensional $\k$-vector spaces form a category.

The following is Klyachko's theorem on the classification of toric vector bundles (\cite[Theorem 2.2.1]{Klyachko}). 
\begin{theorem}[Klyachko]   \label{th-Klyachko}
The category of toric vector bundles on $X_\Sigma$ is equivalent to the category of compatible filtrations on finite dimensional $\k$-vector spaces.
\end{theorem}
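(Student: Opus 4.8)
The plan is to establish the equivalence by exhibiting an explicit quasi-inverse pair of functors. One direction, call it $\mathcal{F}$, is essentially already set up in the preliminary discussion: to a toric vector bundle $\E$ I assign the pair $(E,\{(E^\rho_i)_{i\in\Z}\mid\rho\in\Sigma(1)\})$, where $E=\E_{x_0}$ and the filtrations are the ones in \eqref{equ-filt-E-rho}; equivariant triviality of $\E|_{U_\sigma}$ supplies the decomposition $E=\bigoplus_{[u]\in u(\sigma)}L_{[u]}$ witnessing the compatibility condition \eqref{equ-Klyachko-comp-condition}. On morphisms, a $T$-equivariant bundle map $\E\to\E'$ restricts to a linear map $F\colon E\to E'$ on fibers at $x_0$; since a weight-$u$ section over $U_\sigma$ is determined by its value at $x_0$ together with density of the open orbit, one gets $F(E^\rho_i)\subseteq(E')^\rho_i$, and the same observation shows $\mathcal{F}$ is faithful. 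So the real work is to build the backward functor $\mathcal{G}$ and to check it is quasi-inverse to $\mathcal{F}$.

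For $\mathcal{G}$, I would work cone by cone. Fix a compatible collection $\{(E^\rho_i)\}$; for $\sigma\in\Sigma$ and $u\in M$ put $E^\sigma_u:=\bigcap_{\rho\in\sigma(1)}E^\rho_{\langle u,\bv_\rho\rangle}$. First I would record the purely linear-algebra fact that the restricted family $\{(E^\rho_\bullet)\mid\rho\in\sigma(1)\}$, being compatible, admits a simultaneous splitting, with canonically determined numerical data: the multiset $u(\sigma)\subset M_\sigma$ is intrinsic, and the summands $L_{[u']}$ are intrinsic whenever their multiplicity is one (the graded pieces in general). Given such a splitting, $E^\sigma_u=\bigoplus_{\langle u',\bv_\rho\rangle\ge\langle u,\bv_\rho\rangle\,\forall\rho}L_{[u']}$ depends only on $[u]\in M_\sigma$, and I define $\E_\sigma$ on $U_\sigma$ to be $\O_{U_\sigma}\otimes_\k E$ with the $T$-linearization twisted so that $L_{[u']}$ has weight $u'$, equivalently the equivariantly trivial rank $r$ bundle whose weight-$u$ global sections are $E^\sigma_u$; this is independent of the splitting since the $E^\sigma_u$ are intrinsic. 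For a face $\tau\preceq\sigma$ one has $\tau(1)\subseteq\sigma(1)$, and localizing the module $\bigoplus_u E^\sigma_u$ along the characters of $\tau^\vee$ produces the weight-$u$ space $\bigcap_{\rho\in\tau(1)}E^\rho_{\langle u,\bv_\rho\rangle}=E^\tau_u$; hence there is a canonical equivariant identification $\E_\sigma|_{U_\tau}\cong\E_\tau$, and on triple overlaps these all factor through the same weight spaces $E^\eta_u\subseteq E$, so the cocycle condition is automatic and the $\E_\sigma$ glue to a toric vector bundle $\E=\mathcal{G}(E,\{(E^\rho_i)\})$. A compatible linear map $F$ satisfies $F(E^\sigma_u)\subseteq(E')^\sigma_u$ by the intersection formula, hence induces bundle maps $\E_\sigma\to\E'_\sigma$ compatible with gluing; this makes $\mathcal{G}$ a functor.

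It then remains to produce natural isomorphisms $\mathcal{G}\circ\mathcal{F}\cong\mathrm{id}$ and $\mathcal{F}\circ\mathcal{G}\cong\mathrm{id}$. For the first, given $\E$, equivariant triviality over the affine toric variety $U_\sigma$ together with the weight decomposition gives that the image of $\Gamma(U_\sigma,\E)_u$ in $E$ equals $\bigcap_{\rho\in\sigma(1)}E^\rho_{\langle u,\bv_\rho\rangle}$ — the ray case being the defining description of $E^\rho_i$ — so evaluation at $x_0$ is an equivariant isomorphism $\E|_{U_\sigma}\cong\E_\sigma$, and these patch. For the second, the filtrations extracted from $\mathcal{G}(E,\{(E^\rho_i)\})$ are read off from its weight spaces over the rays, which by construction are exactly the $E^\rho_i$, so $\mathcal{F}\circ\mathcal{G}$ is the identity on the nose; fullness of $\mathcal{F}$ follows because every morphism of compatible filtrations is $\mathcal{G}$ of a bundle map.

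I expect the main obstacle to be the gluing step in the construction of $\mathcal{G}$: making precise the sense in which the compatibility condition \eqref{equ-Klyachko-comp-condition} attaches the equivariantly trivial bundle $\E_\sigma$ \emph{canonically} to $\sigma$, so that the face identifications $\E_\sigma|_{U_\tau}\cong\E_\tau$ require no choices and the cocycle condition on triple overlaps is forced. The heart of this is the linear-algebra lemma that a compatible family of filtrations on $E$ admits a simultaneous splitting with canonical numerical data; without compatibility, the glued sheaf would fail to be locally free, which is precisely why the condition appears in the statement.
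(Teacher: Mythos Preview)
The paper does not give its own proof of this statement; Theorem~\ref{th-Klyachko} is stated as Klyachko's classification theorem and attributed to \cite[Theorem 2.2.1]{Klyachko}, with no argument supplied. So there is nothing in the paper to compare your proposal against.

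That said, your outline is the standard route to the result and is essentially sound. The inverse functor is built exactly as you describe: set $E^\sigma_u=\bigcap_{\rho\in\sigma(1)}E^\rho_{\langle u,\bv_\rho\rangle}$, use the compatibility condition \eqref{equ-Klyachko-comp-condition} to realize these as the weight spaces of an equivariantly trivial bundle over $U_\sigma$, and glue along faces via the inclusions $\tau(1)\subseteq\sigma(1)$. You have also correctly identified the crux: the compatibility condition is precisely what makes $\E_\sigma$ independent of the chosen splitting (because the $E^\sigma_u$ are intrinsic), so that the face identifications are canonical and the cocycle condition is automatic. One point that would merit a line of justification if you were to write this out in full is the assertion that the multiset $u(\sigma)\subset M_\sigma$ is intrinsic to the filtration data; this is what guarantees that adjacent maximal cones induce the same equivariant structure on their common face, and it follows from a dimension count on the $E^\sigma_u$ once a splitting exists.
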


\subsection{Toric vector bundles as piecewise linear maps to space of valuations}  \label{subsec-PL-maps}
We start by recalling the definition of a real valued valuation on a vector space. We will then see how to interpret the Klyachko data of compatible filtrations, for a toric vector bundle $\E$ on $X_\Sigma$ as an (integral) \emph{piecewise linear map} $\Phi$ from $|\Sigma|$ to the space $\tilde{\B}(E)$ of all valuations on $E$. 
We remark that the piecewise linear map $\Phi$ is essentially contained in Payne’s observation in \cite{Payne-cover} that the Klyachko data of a toric vector bundle can be used to construct a filtration-valued function on $|\Sigma|$.
This is also a special case of the main result in \cite{Kaveh-Manon-Building} where torus equivariant principal $G$-bundles over $X_\Sigma$, where $G$ is a reductive algebraic group, are classified in terms of \emph{piecewise linear maps to the (extended) Tits building of $G$}. 

\begin{definition}[Vector space valuation]   \label{def-val}
Let $E$ be a finite dimensional $\k$-vector space.
We call a function $v: E \to \overline{\R} = \R \cup \{\infty \}$ a {\it vector space valuation} (or a \emph{valuation} for short) if the following hold:
\begin{itemize}
\item[(1)] For all $e \in E$ and $0 \neq c \in \k$ we have $v(ce) = v(e)$. 
\item[(2)](Non-Archimedean property) For all $e_1, e_2 \in E$, $v(e_1+e_2) \geq \min\{v(e_1), v(e_2)\}$.
\item[(3)] $v(e)=\infty$ if and only if $e = 0$.
\end{itemize}
We call a valuation $v$ {\it integral} if it attains only integer values, i.e. $v: E \to \overline{\Z}$. 
\end{definition}

\begin{remark}
(i) In commutative algebra the term valuation usually refers to a valuation on a ring or algebra. Throughout most of this paper, we will use the term valuation to mean a valuation on a vector space. 
(ii) In \cite[Section 2.1]{KKh-Annals} (and some other papers) the term \emph{prevaluation} is used for a valuation on a vector space (to distinguish it from valuations on rings). 
\end{remark}

The {\it value set} $v(E)$ of a valuation $v$ is the image of $E \setminus \{0\}$ under $v$. It is easy to verify that $|v(E)| \leq \dim(E)$ and hence $v(E)$ is finite. 
Each integral valuation $v$ on $E$ gives rise to a filtration $E_{v, \bullet} = (E_{v \geq a})_{a \in \Z}$ on $E$ by vector subspaces defined by: 
$$E_{v \geq a} = \{ e \in E \mid v(e) \geq a\}.$$
If $v(E) = \{a_1 > \cdots > a_k\}$ then we have a flag: $$F_{v, \bullet} = (\{0\} \subsetneqq F_1 \subsetneqq \cdots \subsetneqq F_k=E),$$ where $F_i = E_{v \geq a_i}$. We note that the valuation $v$ is uniquely determined by the flag $F_{v, \bullet}$ and the $k$-tuple $(a_1 > \cdots > a_k)$.
Conversely, a decreasing filtration $E_\bullet = (E_a)_{a \in \Z}$ such that 
\begin{equation} \label{equ-flitration-conditions}
\bigcap_{a \in \Z} E_a = \{0\}, \text{ and } \bigcup_{a \in \Z} E_a = E, 
\end{equation}
defines a valuation $v_{E_\bullet}$ by:
$$v_{E_\bullet}(e) = \max\{ a \in \Z \mid e \in E_a\},$$ for all $e \in E$. 
It is straightforward to verify that 
the assignments $v \mapsto E_{v, \bullet}$ and $v \mapsto (F_{v, \bullet}, (a_1 >  \cdots > a_k))$ give one-to-one correspondences between the following sets: 
\begin{itemize}
\item[(i)] The set of integral valuations $v: E \to \overline{\Z}$. 
\item[(ii)] The set of decreasing  $\Z$-filtrations $E_\bullet$ on $E$ satisfying \eqref{equ-flitration-conditions}.
\item[(iii)] The set of flags $F_\bullet = (\{0\} \subsetneqq F_1 \subsetneqq \cdots \subsetneqq F_k = E)$ together with tuples of integers $(a_1 > \cdots > a_k)$. 
\end{itemize}

Recall that a frame $L = \{L_1, \ldots, L_r\}$ for $E$ is a collection of $1$-dimensional subspaces $L_i$ such that $E = \bigoplus_{i=1}^r L_i$. We say that a valuation $v$ is \emph{adapted} to a frame $L$ if every subspace $E_{v \geq a}$ is a sum of some of the $L_i$. This is equivalent to the following: For any $e \in E$ let us write $e = \sum_i e_i$ where $e_i \in L_i$. Then:
\begin{equation}  \label{equ-val-min}
v(e) = \min\{ v(e_i) \mid i=1, \ldots, r \}.
\end{equation}
If a valuation $v$ is adapted to a frame $L$, then $v$ is determined by the $r$-tuple $(v(L_1), \ldots, v(L_r))$. Conversely, any $r$-typle $(a_1, \ldots, a_r) \in \R^r$ determines a unique valuation $v$ adapted to $L$ by requiring that $v(e_i) = a_i$, for all $i=1, \ldots, r$ and $0 \neq e_i \in L_i$. In other words, $v$ is given by $v(e) = \min\{a_i \mid e_i \neq 0\}$.

\begin{definition}[Space of valuations/extended Tits building]   \label{def-space-of-val}
We denote by $\tilde{\B}(E)$ the set of all $\R$-valued valuations $v: E \to \overline{\R}$. We also denote the set of all $\Z$-valued valuations on $E$ (that is, the set of integral valuations on $E$) by $\tilde{\B}_\Z(E)$. For a frame $L$, we denote the set of valuations adapted to $L$ by $\tilde{A}(L)$. Also we denote by $\tilde{A}_\Z(L)$ the set of $\Z$-valued valuations adapted to $L$. As discussed above, $\tilde{A}(L)$ (respectively $\tilde{A}_\Z(L)$) can be identified with $\R^r$ (respectively $\Z^r$). We refer to $\tilde{\B}(E)$ (respectively $\tilde{A}(L)$) as the \emph{extended Tits building of $E$} (respectively an \emph{(extended) apartment}).
\end{definition}
  
The above gives a convenient way to package the Klyachko data (of compatible filtrations) of a toric vector bundle as a piecewise linear map into the space of valuations. 

\begin{definition}[Piecewise linear map to space of valuations/extended Tits building]   \label{def-plm}
With notation as before, a map $\Phi: |\Sigma| \to \tilde{\B}(E)$ is a \emph{piecewise linear map} if the following hold: For any $\sigma \in \Sigma$, there is a frame $L$ for $E$ such that $\Phi(\sigma)$ lands in an (extended) apartment $\tilde{A}(L)$. Moreover, we require that the restriction $\Phi_{|\sigma}: \sigma \to \tilde{A}(L)$ to be linear, i.e. it is the restriction of a linear map from $N_\R$ to $\tilde{A}(L)$. We say that a piecewise linear map $\Phi$ is \emph{integral} if $\Phi$ sends lattice points to lattice points, i.e. $\Phi(N \cap |\Sigma|) \subset \tilde{\B}_\Z(E)$.
\end{definition}

In \cite{Kaveh-Manon-Building, Kaveh-Manon-TVBs-valuation}, the Klyachko classification of toric vector bundles (Theorem \ref{th-Klyachko}) is restated as follows:
\begin{theorem}[Classification of toric vector bundles in terms of piecewise linear maps]   \label{th-Klyachko-plm}
The category of toric vector bundles on $X_\Sigma$ is equivalent to the category of integral piecewise linear maps to $\tilde{\B}(E)$, for all finite dimensional $\k$-vector spaces $E$. 
\end{theorem}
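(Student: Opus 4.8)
The plan is to reduce the statement to Klyachko's original classification (Theorem \ref{th-Klyachko}) by exhibiting an equivalence of categories between integral piecewise linear maps $\Phi\colon |\Sigma|\to\tilde{\B}(E)$ and compatible collections of filtrations, functorial in the obvious way. First I would set up the object-level correspondence. Given an integral piecewise linear map $\Phi$, for each ray $\rho\in\Sigma(1)$ the restriction $\Phi_{|\rho}$ is linear into some apartment $\tilde{A}(L)$, hence is determined by an integral linear functional; composing with ${\bf v}_\rho$ (the primitive generator of $\rho$) produces an integral valuation $\Phi({\bf v}_\rho)\in\tilde{\B}_\Z(E)$, which by the dictionary among items (i)--(iii) in the discussion after Definition \ref{def-val} is the same as a decreasing $\Z$-filtration $(E_i^\rho)_{i\in\Z}$ satisfying \eqref{equ-flitration-conditions}. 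Conversely, from a compatible collection $\{(E_i^\rho)\}$ one builds $\Phi$ cone by cone: on a maximal cone $\sigma$, the Klyachko compatibility condition \eqref{equ-Klyachko-comp-condition} provides a frame $L=\{L_{[u]}\}$ to which the relevant valuations are simultaneously adapted, and one defines $\Phi_{|\sigma}$ to be the linear map $N_\R\supset\sigma\to\tilde{A}(L)\cong\R^r$ sending $n\mapsto(\langle u_1,n\rangle,\ldots,\langle u_r,n\rangle)$. The content to check is that these assignments are mutually inverse and that the output really is a well-defined piecewise linear map — linearity on each cone is built in, and agreement on overlaps of cones follows because on a common face $\tau=\sigma\cap\sigma'$ the two frames induce the same filtrations along the rays of $\tau$, and a valuation adapted to a frame is determined by its values, so both restrictions compute the same valuation-valued function on $\tau$.

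Next I would handle morphisms. A morphism of piecewise linear maps should be a linear map $F\colon E\to E'$ such that $\Phi'$ dominates $F\circ\Phi$ in the valuation sense, i.e. $v'(F(e))\geq v(e)$ for all $e$ and all points of $|\Sigma|$; restricting to rays and translating valuations into filtrations, this is exactly the condition $F(E_i^\rho)\subset (E')_i^\rho$ for all $\rho,i$, which is Klyachko's notion of morphism. So the functor is fully faithful on morphisms essentially by the same dictionary used for objects, now applied pointwise. Essential surjectivity in both directions is exactly the object-level bijection established in the first step, with the caveat that one must record that the frame on each cone is not canonical (different Klyachko decompositions of the same bundle give the same $\Phi$, and conversely) — this is harmless because apartments containing a given valuation are permuted transitively in a way that does not affect $\Phi$ as a function.

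The genuinely non-formal point, and the step I expect to be the main obstacle to write cleanly, is the gluing/compatibility-on-faces verification: showing that a choice of local frames $L_\sigma$ on each maximal cone, coming from \eqref{equ-Klyachko-comp-condition}, patches to a single globally well-defined $\Phi$ on $|\Sigma|$, and dually that a given $\Phi$ restricts on each cone into \emph{some} apartment. Concretely one must argue that for $\rho\in\sigma(1)$ the filtration $E_i^\rho$ recovered from $\Phi_{|\sigma}$ via $E_{v\geq i}$ with $v=\Phi({\bf v}_\rho)$ is independent of whether we used the frame $L_\sigma$ or $L_{\sigma'}$, and more subtly that along a higher-dimensional face $\tau$ the \emph{entire} valuation $\Phi(n)$ for $n\in\tau$ is frame-independent. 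This is where the hypothesis that the data is a compatible collection (not merely a collection of filtrations per ray) is used, together with the fact, recorded in the excerpt, that a valuation adapted to a frame is pinned down by its values on the lines of that frame. Everything else — integrality, the correspondence among valuations/filtrations/flags, functoriality — is bookkeeping on top of the already-stated dictionary and Theorem \ref{th-Klyachko}. I would also remark that the result is stated in \cite{Kaveh-Manon-Building, Kaveh-Manon-TVBs-valuation} and cite those for the routine parts, presenting here only the identification of the combinatorial datum with Klyachko's.
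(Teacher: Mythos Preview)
The paper does not actually prove Theorem \ref{th-Klyachko-plm}; it is stated as a reformulation of Klyachko's classification and attributed to \cite{Kaveh-Manon-Building, Kaveh-Manon-TVBs-valuation} without an in-text argument. Your proposal is correct and is exactly the expected route: use the dictionary between integral valuations and decreasing $\Z$-filtrations (items (i)--(iii) after Definition \ref{def-val}) to translate Klyachko's compatible filtrations into a piecewise linear map, cone by cone via the frame supplied by condition \eqref{equ-Klyachko-comp-condition}, and check the gluing on faces and the morphism correspondence. This matches what the cited references do, and you have correctly flagged the one nontrivial verification (well-definedness of $\Phi$ on overlaps, independent of the choice of frame).
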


\subsection{Toric vector bundles as piecewise linear maps to tropical linear spaces}  \label{subsec-tvb-PL-map-trop}
We begin by reviewing some basic facts as well as some observations about tropicalized linear spaces. 

Let $\M = \{e_1, \ldots, e_m\} \subset E$ be a spanning set. Let $L \subset \k[x_1, \ldots, x_m]$ denote the linear ideal of relations among the $e_i$. We denote by $\GF(L)$ and $\Trop(L)$ the Gr\"obner fan and tropical variety of $L$ respectively. We have $\Trop(L) \subset |\GF(L)| \subset \R^m$. Moreover, the tropical variety is the support of a subfan of the Gr\"obner fan.  

For below, we need a bit of notation. Let $\{\delta_1, \ldots, \delta_m\}$ be the standard basis for $\R^m$. For a subset $J \subset \{1, \ldots, m\}$ we put $\delta_J = \sum_{j \in J} \delta_j$. 


\begin{definition}[Matroid polytope]  \label{def-matroid-polytope}
We recall that the \emph{matroid polytope} $P_\M$ is the convex hull of $\{\delta_B \mid B \subset \M \textup{ is a vector space basis} \}$.
\end{definition}

Observe that $P_\M$ is a subset of the hyperplane $H_r = \{ \sum_{i =1}^m a_i\delta_i \mid \sum_{i =1}^m a_i = r\} \subset \R^m$, where $r$ is the rank of $\M$. 
The following descriptions of the Gr\"obner fan $\GF(L)$ and tropical variety of $\Trop(L)$ are well-known (see \cite[Section 4.1]{MS}):

\begin{theorem}[Gr\"obner fan and tropical variety of a linear ideal]   \label{th-GF-trop-linear}
With notation as above, we have the following:
\begin{itemize}
\item[(a)] The Gr\"obner fan $\GF(L)$ is the outer normal fan to the matroid polytope $P_\M$. 
\item[(b)] The maximal cones in $\GF(L)$ are in one-to-one correspondence with the vector space bases in $\M$. For a basis $B \subset \M$ we denote the corresponding maximal face by $\sigma_B$.
\item[(c)] The tropical variety of $L$ consists of tuples $w=(w_1, \ldots w_m) \in \R^m$ such that for any circuit $C$ in the matroid defined by $\M$, the minimum $\min\{w_i \mid i \in C \}$ is attained at least twice. In other words, the linear polynomials $\sum_{i \in C} x_i$, for all circuits $C \subset \M$, form a tropical basis for $L$. 
\item[(d)] The tropical variety $\Trop(L)$ has a natural fan structure given by flags of flats in $\M$ (see Definition \ref{def-Bergman-fan}).
\end{itemize}
\end{theorem}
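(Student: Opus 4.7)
The plan is to handle (a) and (b) together by analyzing initial ideals, use a circuit computation to derive (c), and read off the fan structure for (d).

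For (a) and (b), fix a weight $w \in \R^m$. Since $L$ is linear, so is $\In_w(L)$. For generic $w$, a Hilbert-function argument (comparing $\k[x_1,\ldots,x_m]/L$ with its flat degeneration) forces $\In_w(L) = \langle x_i : i \notin B\rangle$ for some size-$r$ subset $B \subset \{1,\ldots,m\}$, and then linear independence of the images $\{e_i : i \in B\}$ in $E$ forces $B$ to be a basis of $\M$. A direct check shows that the basis selected by generic $w$ is precisely the one maximizing $\langle w, \delta_B\rangle$ over all bases, so the maximal cones of $\GF(L)$ are the outer normal cones at the vertices $\delta_B$ of $P_\M$. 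Extending to non-generic $w$, the cone of $\GF(L)$ through $w$ collects those weights that select the same set of optimal bases, which matches a face of the outer normal fan of $P_\M$. This yields (a); statement (b) is then immediate since the vertices of $P_\M$ are exactly $\{\delta_B : B \text{ a basis of } \M\}$.

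For (c), the forward direction is immediate: for each circuit $C$ there is (up to scalar) a unique form $f_C = \sum_{i \in C} a_i x_i \in L$ with all $a_i \neq 0$, so $\In_w(f_C)$ is a monomial exactly when the minimum of $w$ on $C$ is attained only once. The reverse direction --- that $\{f_C\}_C$ forms a tropical basis --- is where I expect the main difficulty. My approach is a matroid-exchange argument: suppose $\In_w(f) = \lambda x_j$ for some $f \in L$; pick a small generic perturbation $w'$ of $w$ selecting a basis $B$ with $j \notin B$, and use the fundamental circuit $C(j, B)$. Combining $f$ with a scalar multiple of $f_{C(j,B)}$ to cancel the $x_j$ term, and iterating the elimination via the matroid exchange axiom to kill subsequent basis indices, one arrives at either $f = 0$ or a circuit $C'$ whose $w$-minimum is attained at a unique index --- both of which contradict the hypothesis. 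This inductive cancellation, controlling initial forms on arbitrary elements of $L$ from circuit data alone, is the most delicate step of the argument.

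For (d), I would use (c) directly. Given $w \in \Trop(L)$, let $S_a = \{i : w_i \geq a\}$ as $a$ ranges over the finitely many coordinate values of $w$. Each $S_a$ must be a flat of $\M$: if not, the matroid closure of $S_a$ would contain an element $i \notin S_a$ together with a circuit $C$ satisfying $C \subset S_a \cup \{i\}$, and then $i$ would be the unique minimizer of $w$ on $C$, contradicting (c). Hence $w$ determines a flag of flats $\emptyset \subsetneq F_1 \subsetneq \cdots \subsetneq F_k \subseteq \M$ together with strictly increasing real values on the successive differences $F_\ell \setminus F_{\ell-1}$, which matches the parameterization of the Bergman fan in Definition \ref{def-Bergman-fan}. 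The converse --- that any such flag-with-values satisfies the circuit criterion of (c) --- is clear, since any circuit meeting the minimizing flat $F_\ell$ either is contained in $F_\ell$ or exits through the flat $F_{\ell-1}$ in at least two points by the flat property.
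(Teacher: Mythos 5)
The paper offers no proof of this statement: it is quoted as a well-known fact with a pointer to \cite[Section 4.1]{MS} (and the flat-filtration correspondence underlying (d) is reproved for general matroids in Lemma \ref{lem-filt-Bergman-fan}). So your proposal is being measured against the standard argument rather than against anything in the text, and on those terms it is essentially correct: the Hilbert-function/Gaussian-elimination identification of generic initial ideals with bases, the greedy characterization of the selected basis as the $w$-maximal one (which matches the paper's outer-normal-fan and min-initial-form conventions, cf.\ Lemma \ref{lem-bases}), and the flats-from-superlevel-sets argument for (d) are all the standard route.

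Two remarks. First, your treatment of the reverse direction of (c) is more elaborate than necessary. If $f \in L_1$ is nonzero with $\In_w(f) = \lambda x_j$, then $e_j \in \Span\{e_i \mid i \in \mathrm{supp}(f)\setminus\{j\}\}$, so a minimal spanning subset produces a circuit $C \subseteq \mathrm{supp}(f)$ with $j \in C$; since $w_j < w_i$ for every other $i \in \mathrm{supp}(f)$, the minimum on $C$ is attained only at $j$. This one-line observation replaces the iterated cancellation you flag as the delicate step (one still has to pass from ``no element of $L_1$ has monomial initial form'' to ``$\In_w(L)$ contains no monomial,'' but for a linear ideal $\In_w(L)$ is generated by the initial forms of $L_1$, so this is immediate). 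Second, there is a mild ordering issue: your identification in (a) of the non-maximal Gr\"obner cones with faces of the outer normal fan of $P_\M$ implicitly uses that $\In_w(L)$ is determined by the set of $w$-optimal bases, i.e.\ that the circuits form a universal Gr\"obner basis --- which is exactly the content you only establish in (c). The argument is cleaner if you prove the universal Gr\"obner basis property for circuits first and then read off (a), (b), (c) from it; as written, (a) for non-generic $w$ is not yet justified at the point where you assert it.
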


Motivated by the notion of an apartment in the space of valuations/extended Tits building $\tilde{\B}(E)$, we make the following definition (cf. Section \ref{subsec-PL-maps}):
\begin{definition}[Apartment in $\Trop(L)$]  \label{def-apt-Trop-L}
Let $B \subset \M$ be a basis. In analogy with apartments in the space of valuations $\tilde{\B}(E)$ (Definition \ref{def-space-of-val}), we call the intersection $\Trop(L) \cap \sigma_B$ an \emph{apartment} in $\Trop(L)$ and denote it by $A_B$. \end{definition}

\begin{remark}
When the first draft of this paper was in preparation, we learned that our notion of apartment is not new and has already been introduced by Felipe Rinc\'{o}n under the name \emph{local tropical linear space} (\cite{Rincon}). Nevertheless, for the purposes of the present paper and to emphasize the connection with building theory, we prefer to use the term apartment.   
\end{remark}

The next proposition shows that each apartment can be identified with $\R^r$ in a piecewise linear way. We postpone the proof to later when we introduce the notion of apartment for arbitrary matroids (Proposition \ref{prop-apartment-matroid}).
\begin{proposition}[Apartments are copies of $\R^r$]  \label{prop-apt-linear-space}
With notation as above, let $A_B \subset \Trop(L)$ be an apartment corresponding to a basis $B \subset \M$. For $i \in \M$ let $C_i$ denote the circuit in $B \cup \{i\}$ containing $i$. Define the map $\phi_B: \R^B \to A_B$, $\phi_B(a) = w = (w_1, \ldots, w_m)$ where:
$$
w_i = 
\begin{cases}
a_i \quad \textup{for } e_i \in B\\
\min\{ a_j \mid e_j \in C_i \setminus \{e_i\} \} \quad \textup{for } e_i \notin B.
\end{cases}
$$
Then $\phi_B$ is a piecewise-linear bijection between $\R^r$ and $A_B$. Hence every apartment can be thought of as a copy of $\R^r$.
\end{proposition}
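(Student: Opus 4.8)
The plan is to show $\phi_B$ is well-defined (lands in $A_B$), injective, surjective, and piecewise-linear, handling these in turn. First I would check that $\phi_B(a)$ actually lies in $\Trop(L)\cap\sigma_B$. Membership in $\sigma_B$ should follow because $\sigma_B$ is the normal cone to the vertex $\delta_B$ of $P_\M$, and the formula $w_i = \min\{a_j \mid e_j\in C_i\setminus\{e_i\}\}$ for $e_i\notin B$ is precisely the condition that makes $w$ minimize the linear functional $\langle w,\cdot\rangle$ over $P_\M$ at $\delta_B$ (using Theorem \ref{th-GF-trop-linear}(a),(b)). Membership in $\Trop(L)$ I would verify using the circuit criterion of Theorem \ref{th-GF-trop-linear}(c): for any circuit $C$, one must show $\min\{w_i\mid i\in C\}$ is attained at least twice. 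If $C = C_i$ for the unique non-basis element $e_i\in C$, then by construction $w_i = \min\{w_j\mid e_j\in C\setminus\{e_i\}\} = \min_{j\in C\setminus\{i\}} a_j$ (note $w_j=a_j$ for $j$ in the basis part of $C_i$), so the overall minimum over $C_i$ is attained both at $i$ and at whichever basis index realizes it. A general circuit $C$ can be written via circuit elimination in terms of the fundamental circuits $C_i$ of its non-basis elements, and the min-attained-twice property propagates; alternatively, one observes $w = \phi_B(a)$ is, on the nose, the tropicalization of a point in the linear space (the valuation adapted to the frame determined by $B$ with $v(e_i)=a_i$), which automatically lies in $\Trop(L)$ — this is the cleanest route and is essentially the content I would lean on.

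Next, injectivity is immediate: the first branch of the formula shows $a_i = w_i$ for $e_i\in B$, so $a$ is recovered from $w$. For surjectivity, given $w\in A_B = \Trop(L)\cap\sigma_B$, set $a_i := w_i$ for $e_i\in B$ and I must check $w_i = \min\{a_j\mid e_j\in C_i\setminus\{e_i\}\}$ for $e_i\notin B$. Since $w\in\Trop(L)$, the minimum over the circuit $C_i$ is attained at least twice; since $w\in\sigma_B$ (the cone dual to $\delta_B$, where basis coordinates are "as small as possible"), one shows the minimum over $C_i$ cannot be attained uniquely at the basis part without also being attained at $i$, forcing $w_i\le \min_{j\in C_i\setminus\{i\}} w_j$, and the reverse inequality $w_i \ge \min_{j\in C_i\setminus\{i\}}w_j$ is exactly the statement that the min is attained twice. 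Hence $w = \phi_B(a)$.

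Finally, piecewise-linearity: each coordinate $w_i$ is either a coordinate function $a_i$ or a minimum of finitely many coordinate functions $\{a_j\mid e_j\in C_i\setminus\{e_i\}\}$, and a minimum of linear functions is piecewise linear; the chambers are cut out by the finitely many hyperplanes $a_j = a_k$, so $\phi_B$ is linear on each chamber of this arrangement intersected with $\R^B$. Integrality (lattice points to lattice points) is clear from the same formula. I expect the main obstacle to be the surjectivity step — specifically, extracting from "$w\in\sigma_B$" the precise combinatorial fact that the circuit minimum over $C_i$ must involve the index $i$ — and, relatedly, making the $\Trop(L)$-membership argument for $\phi_B(a)$ fully rigorous for arbitrary circuits rather than just fundamental ones; the slick fix for both is to identify $\phi_B(a)$ with the tropicalization of the valuation on $E$ adapted to the frame $\{\langle e_i\rangle\mid e_i\in B\}$ with prescribed values $a_i$, which sidesteps the circuit bookkeeping entirely and also makes transparent why this matches the apartment picture of Section \ref{subsec-PL-maps}.
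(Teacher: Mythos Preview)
Your proposal is correct but takes a different route from the paper. The paper postpones the proof to Proposition~\ref{prop-apartment-matroid}, where it argues structurally: it partitions $\R^B$ into the Weyl chambers $\sigma_\prec$ of the permutahedral fan (one for each total order $\prec$ on $B$), and shows that $\phi_B$ carries each $\sigma_\prec$ linearly and bijectively onto the Bergman cone $\sigma_\F$ associated to the flag of flats $F_i = \Span\{b_1,\ldots,b_i\}$ (where $b_1 \prec \cdots \prec b_r$). The inverse on each such cone is simply the projection $\pi_B$. By contrast, you verify well-definedness, injectivity, surjectivity, and piecewise-linearity directly, leaning on the identification of $\phi_B(a)$ with the tropicalization of the valuation on $E$ adapted to the frame $B$ with values $a_i$. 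Your approach is more hands-on and perfectly adequate in the representable setting of this proposition; its one limitation is that the ``slick fix'' via valuations does not transfer to a general matroid, whereas the paper's chamber-by-chamber argument does (and this is why the paper defers the proof). The paper's approach also yields the extra structural statements in Proposition~\ref{prop-apartment-matroid}(a),(c): that $A_B$ is a union of Bergman cones and is combinatorially the type $A_{r-1}$ Coxeter complex. One small point to tighten in your surjectivity step: the inequality $w_i \le \min\{w_j \mid j \in C_i\setminus\{i\}\}$ for $w \in \sigma_B$ follows cleanly from basis exchange, since for each $j \in C_i\setminus\{i\}$ the set $B\cup\{i\}\setminus\{j\}$ is a basis and the outer-normal condition gives $w_j \ge w_i$; you gesture at this but do not quite name the mechanism.
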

The map $\phi_B: \R^r \to \Trop(L)$ is a section to the map $\pi_B: \Trop(L) \to \R^r$ given by projection onto the components corresponding to elements in $B$.

Let $L_1$ denote the elements of $L$ of homogeneous degree $1$.
We now see that $\Trop(L)$ naturally sits in the space of valuations $\tilde{\B}(E)$, where $E = \bigoplus_{i =1}^m \k x_i /L_1$. Moreover $\tilde{\B}(E)$ can naturally be projected onto $\Trop(L)$. By the fundamental theorem of tropical geometry (\cite[Section 3.2]{MS}), for any valuation $v: E \to \overline{\R}$, the $m$-tuple $(v(e_1), \ldots, v(e_m))$ lies on $\Trop(L)$. Thus $v \mapsto (v(e_1), \ldots, v(e_m))$ gives us a map $p: \tilde{\B}(E) \to \Trop(L)$. Conversely, for any $w \in \Trop(L) \cap \Q^m$ we can find a valuation $v: E \to \overline{\Q}$ such that $v(e_i) = w_i$, for all $i=1, \ldots, m$. By continuity, we get a map $i: \Trop(L) \to \tilde{\B}(E)$ such that $p \circ i = \textup{id}$ and hence $i$ is an embedding. More precisely, for a basis $B=\{b_1, \ldots, b_r\} \subset \M$, we can explicitly describe the restriction of the map $i$ to $A_B$ and see that it gives and identification of $A_B$ and $\tilde{A}(B)$. 
One computes that $i_B:= i_{|A_B}: A_B \to \tilde{A}(B)$ is given as follows: for any $w \in \Trop(L)$, $i_B(w): E \to \overline{\R}$ is the valuation given by:
$$i_B(w)(\sum_i \lambda_i b_i) = \min\{w_i \mid \lambda_i \neq 0 \}.$$
We have the following commutative diagram:
$$
\begin{tikzcd}
\Trop(L) \arrow[d, hook, "i"] & A_B \arrow[l, hook] \arrow[d, "i_B"] \\
\tilde{\B}(E) & \tilde{A}(B) \arrow[l, hook] & \R^r \arrow[l, "\cong"] \arrow[lu, "\cong"]
\end{tikzcd}
$$

Finally, we give a definition of piecewise linear map $\Phi_L: |\Sigma| \to \Trop(L)$ in the same manner as before (Definition \ref{def-plm}).
\begin{definition}[Piecewise linear map to a tropical linear space]  \label{def-PL-trop-lin}
$\Phi_L: |\Sigma| \to \Trop(L)$ is \emph{piecewise linear} if for any $\sigma \in \Sigma$, there is a basis $B$ such that the image $\Phi_L(\sigma)$ lies in the apartment $A_B$ and the composition $\pi_B\circ \Phi_L\!\!\mid_{\sigma}: |\sigma| \to \R^r$ is a linear map. We say $\Phi_L$ is integral if $\Phi_L(|\sigma| \cap N) \subset \Z^m$. 
\end{definition}

The following is a straightforward corollary of Theorem \ref{th-Klyachko-plm}:
\begin{theorem}[Toric vector bundles as piecewise linear maps to tropical linear spaces]  \label{th-Klyachko-plm-trop} 
Let $\E$ be a toric vector bundle over a toric variety $X_\Sigma$ with corresponding piecewise linear map $\Phi: 
|\Sigma| \to \tilde{\B}(E)$. Then under the embedding $\Trop(L) \hookrightarrow \tilde{\B}(E)$, the map $\Phi$ gives a piecewise linear map $\Phi_L: |\Sigma| \to \Trop(L)$. Conversely, any piecewise linear map $\Phi_L: |\Sigma| \to \Trop(L)$
gives rise to a toric vector bundle on $X_\Sigma$.
\end{theorem}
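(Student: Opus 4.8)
The plan is to deduce Theorem \ref{th-Klyachko-plm-trop} directly from the valuation-theoretic classification in Theorem \ref{th-Klyachko-plm} together with the explicit commutative square relating $\Trop(L)$, $\tilde{\B}(E)$, the apartments $A_B$, $\tilde{A}(B)$, and $\R^r$. The point is that the embedding $i \colon \Trop(L) \hookrightarrow \tilde{\B}(E)$ with its retraction $p$, and the fact that $i$ restricts on each apartment to the isomorphism $i_B \colon A_B \xrightarrow{\sim} \tilde{A}(B)$, lets us translate ``piecewise linear map to $\Trop(L)$'' and ``piecewise linear map to $\tilde{\B}(E)$'' into one another cone by cone.

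First I would prove the forward direction. Given a toric vector bundle $\E$ with associated integral piecewise linear map $\Phi \colon |\Sigma| \to \tilde{\B}(E)$, set $\Phi_L := p \circ \Phi$. The content to check is that $\Phi_L$ is piecewise linear and integral in the sense of Definition \ref{def-PL-trop-lin}. Fix $\sigma \in \Sigma$. By Definition \ref{def-plm} there is a frame $L'$ for $E$ with $\Phi(\sigma) \subset \tilde{A}(L')$ and $\Phi|_\sigma$ linear. The only subtlety is that $L'$ need not consist of coordinate lines $\k \bar{x}_i$; however, since $\M = \{\bar{x}_1,\dots,\bar{x}_m\}$ spans $E$, one can choose a basis $B \subset \M$ refining $L'$ (i.e. such that each line of $L'$ is spanned by some $\bar{x}_b$, $b \in B$) — here I would use that $\E$ arises from the data $(L,\Phi)$, so the relevant frames in the image of $\Phi$ are adapted to subsets of $\M$; alternatively one reduces to this after possibly enlarging $\M$. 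Then $\Phi(\sigma) \subset \tilde{A}(B)$, and under $i_B^{-1}$ we get $\Phi_L(\sigma) = p(\Phi(\sigma)) \subset A_B$, with $\pi_B \circ \Phi_L|_\sigma = $ (the linear identification $\tilde{A}(B) \cong \R^r$) $\circ\, \Phi|_\sigma$ linear by the commutative diagram. Integrality of $\Phi_L$ follows since $\Phi$ sends $N \cap |\Sigma|$ into $\tilde{\B}_\Z(E)$ and $p$ of an integral valuation is the integer tuple $(v(e_1),\dots,v(e_m)) \in \Z^m$.

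For the converse, given an integral piecewise linear $\Phi_L \colon |\Sigma| \to \Trop(L)$, set $\Phi := i \circ \Phi_L$. For $\sigma \in \Sigma$ choose, by Definition \ref{def-PL-trop-lin}, a basis $B$ with $\Phi_L(\sigma) \subset A_B$ and $\pi_B \circ \Phi_L|_\sigma$ linear; then $B$ itself (as a frame of lines $\k b_i \subset E$) witnesses $\Phi(\sigma) = i_B(\Phi_L(\sigma)) \subset \tilde{A}(B)$, and linearity transports across $i_B$. Integrality is immediate from the explicit formula for $i_B$. So $\Phi$ is an integral piecewise linear map to $\tilde{\B}(E)$, and Theorem \ref{th-Klyachko-plm} produces a toric vector bundle; moreover $p \circ i = \mathrm{id}$ shows the two constructions are mutually inverse, so this is a genuine correspondence, not merely an existence statement.

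The main obstacle I anticipate is the frame-refinement issue in the forward direction: Definition \ref{def-plm} only guarantees \emph{some} frame $L'$ for $E$ on each cone, whereas Definition \ref{def-PL-trop-lin} demands an apartment $A_B$ coming from an honest \emph{basis $B \subset \M$}. One must argue that for the piecewise linear maps actually arising from toric vector bundles presented by a spanning set $\M$ (equivalently, after the identification $E = \bigoplus \k x_i / L_1$), the image frames can always be taken adapted to subsets of $\M$ — this is where the choice of generating data $(L,\Phi)$ enters, and it should follow from Theorem \ref{th-GF-trop-linear} identifying maximal Gröbner cones with bases of $\M$, so that $p(\Phi(\sigma))$ lies in a single maximal cone $\sigma_B$ of $\GF(L)$ exactly when $\Phi(\sigma)$ sits in $\tilde{A}(B)$. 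The remaining verifications (linearity and integrality statements) are routine given the commutative diagram, and I would treat them briefly.
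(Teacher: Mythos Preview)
Your approach is exactly what the paper has in mind: the paper does not give a proof at all, simply declaring the theorem ``a straightforward corollary of Theorem~\ref{th-Klyachko-plm}''. The mechanism you spell out---transporting piecewise linear maps across the retraction $p\colon \tilde{\B}(E)\to \Trop(L)$ and the section $i\colon \Trop(L)\hookrightarrow \tilde{\B}(E)$, using the apartment-by-apartment identification $i_B\colon A_B\xrightarrow{\sim}\tilde{A}(B)$ from the commutative diagram---is precisely the content hidden behind that one-line assertion.

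You are also right that the forward direction hides a genuine wrinkle: Definition~\ref{def-plm} only furnishes an arbitrary frame $L'$ on each cone, whereas Definition~\ref{def-PL-trop-lin} demands a basis $B\subset\M$. The paper does not address this, but your proposed resolutions are the standard ones: either one refines $\Sigma$ so that $p\circ\Phi$ maps each cone into a single maximal Gr\"obner cone $\sigma_B$ (which is always possible since $p\circ\Phi$ restricted to $\sigma$ is a minimum of finitely many linear functions, hence piecewise linear), or one enlarges $\M$ to contain a basis adapted to each frame appearing in the Klyachko data. Either way the statement goes through, and your converse direction via $\Phi:=i\circ\Phi_L$ is clean and needs no such adjustment.
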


Finally, we introduce an integral matrix called the diagram, which captures the data of a toric vector bundle.  We assume a fixed bijection between $[n]:=\{1, \ldots, n\}$ and the rays $\Sigma(1)$. 

\begin{definition}[Diagram of a piecewise linear map]
    Let $\Phi: |\Sigma| \to \Trop(L)$ be an integral piecewise-linear map as above, then the \emph{diagram} $D_\Phi$ is the $n\times m$ integral matrix whose rows are the images $\Phi(\bv_\rho)$ of the ray generators of the rays $\rho \in \Sigma(1)$. 
\end{definition}

\begin{corollary}\label{cor-diagram}
    Let $\Sigma$ be a smooth fan in $N_\R \cong\R^d$, and let $\Phi: |\Sigma| \to \Trop(L)$ be an integral piecewise-linear map, then $\Phi$ is determined by the diagram $D_\Phi$. Moreover, if $D$ is an integral $n\times m$ matrix with rows in $\Trop(L)$ satisfying the property that for any $\sigma \in \Sigma$, the rows corresponding to the elements in $\sigma(1)$ all lie in a common apartment, then the data $(L,D)$ determines a toric vector bundle over $X_\Sigma$.
\end{corollary}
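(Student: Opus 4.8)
The plan is to prove the two assertions separately, both resting on the following consequence of smoothness of $\Sigma$: every cone $\sigma \in \Sigma$ is simplicial, its primitive ray generators $\{\bv_\rho : \rho \in \sigma(1)\}$ form part of a $\Z$-basis of $N$, and consequently they are $\R$-linearly independent, they generate $\sigma$ as a cone, and every lattice point of $\sigma$ is a $\Z_{\geq 0}$-linear combination of them.

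\emph{$\Phi$ is determined by $D_\Phi$.} Fix $\sigma \in \Sigma$. By Definition \ref{def-PL-trop-lin} there is a basis $B \subset \M$ with $\Phi(\sigma) \subset A_B$ such that $\pi_B \circ \Phi|_\sigma$ is the restriction of a linear map. Since a linear map is determined by its values on a spanning set, and since the values of $\pi_B \circ \Phi|_\sigma$ at the ray generators $\bv_\rho$, $\rho \in \sigma(1)$, are precisely the $B$-coordinates of the corresponding rows of $D_\Phi$, the map $\pi_B \circ \Phi|_\sigma$ is recovered from $D_\Phi$. By Proposition \ref{prop-apt-linear-space}, $\pi_B$ restricts to a bijection from $A_B$ onto $\R^r$ with inverse $\phi_B$, so $\Phi|_\sigma = \phi_B \circ (\pi_B \circ \Phi|_\sigma)$ is recovered as well; note that this identity holds for \emph{any} admissible choice of $B$ and its left side is the given map, so the reconstruction is unambiguous. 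Ranging over the cones $\sigma$, which cover $|\Sigma|$, shows that $\Phi$ is determined by $D_\Phi$.

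\emph{$(L,D)$ determines a toric vector bundle.} Given $D$ as in the statement, fix $\sigma$ and a basis $B = B(\sigma)$ with $w^\rho \in A_B$ for all $\rho \in \sigma(1)$, where $w^\rho$ denotes the row of $D$ indexed by $\rho$. Since the $\bv_\rho$ ($\rho \in \sigma(1)$) are linearly independent there is a unique linear map $\ell_\sigma$ with $\ell_\sigma(\bv_\rho) = \pi_B(w^\rho)$; put $\Phi|_\sigma := \phi_B \circ \ell_\sigma|_\sigma \colon \sigma \to A_B \subset \Trop(L)$. Then $\pi_B \circ \Phi|_\sigma = \ell_\sigma|_\sigma$ is linear, so $\Phi|_\sigma$ meets the requirement of Definition \ref{def-PL-trop-lin}, and $\Phi|_\sigma$ is integral: if $v = \sum_\rho a_\rho \bv_\rho \in N \cap \sigma$ with $a_\rho \in \Z_{\geq 0}$ (using unimodularity of $\sigma$), then $\ell_\sigma(v) = \sum_\rho a_\rho \pi_B(w^\rho) \in \Z^r$ and $\phi_B$ carries $\Z^r$ into $\Z^m$ by its defining minimum formula. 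To see the pieces $\Phi|_\sigma$ glue to a map $\Phi \colon |\Sigma| \to \Trop(L)$, note that a face $\tau \preceq \sigma$ has $\tau(1) \subset \sigma(1)$, so $B(\sigma)$ is also admissible for $\tau$ and $\Phi|_\sigma|_\tau$ coincides with the $B(\sigma)$-construction on $\tau$; hence it suffices to know that the construction of $\Phi|_\tau$ does not depend on the chosen admissible apartment. Granting this, $\Phi$ is an integral piecewise linear map to $\Trop(L)$, and by Theorem \ref{th-Klyachko-plm-trop} it yields a toric vector bundle on $X_\Sigma$ — the one attached to the data $(L,\Phi)$ — which depends only on $(L,D)$.

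The crux is the independence just mentioned: if the rows $w^\rho$, $\rho \in \tau(1)$, lie in $A_B \cap A_{B'}$ for two bases, one must check $\phi_B \circ \ell^B_\tau = \phi_{B'} \circ \ell^{B'}_\tau$ on $\tau$. I expect the clean argument to go through the building: under $\Trop(L) \hookrightarrow \tilde{\B}(E)$ the apartments $A_B$ are carried to (extended) apartments $\tilde{A}(B)$, and the coincidence then follows from the building structure of $\tilde{\B}(E)$ established in \cite{Kaveh-Manon-Building} — intersections of apartments are convex and the transition maps between apartments are linear, so a non-negative interpolation of points lying in the overlap is computed identically in either apartment. Alternatively one argues directly, using the fundamental-circuit description in Proposition \ref{prop-apt-linear-space}, that $\pi_B(A_B \cap A_{B'})$ is a polyhedral region on which $\phi_B$ is linear and agrees with $\phi_{B'}$ composed with the linear transition map. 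The remaining steps are routine manipulations with the explicit maps $\pi_B$ and $\phi_B$.
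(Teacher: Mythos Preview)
Your argument follows the paper's proof essentially verbatim: on each cone $\sigma$ one recovers (respectively defines) $\Phi|_\sigma$ as $\phi_B$ applied to the linear interpolation of the $\pi_B(w^\rho)$, $\rho\in\sigma(1)$, and then invokes Theorem~\ref{th-Klyachko-plm-trop}. The paper in fact does not verify the gluing step you single out as ``the crux'' (independence of the construction from the choice of admissible $B$); your building-theoretic sketch is the right way to fill this, and it goes through because in the extended affine building $\tilde{\B}(E)$ the intersection $\tilde A(B)\cap\tilde A(B')$ is a convex cone (it contains the trivial valuation and is closed under positive scaling) and the building isomorphism $\tilde A(B)\to\tilde A(B')$ is an affine map fixing this cone, hence linear on it, so non-negative combinations are computed identically in either apartment.
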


\begin{proof}
    Let $B$ be a basis such that $\Phi_L(|\sigma|) \subset A_B \subset \Trop(L)$.  The linearity of $\pi_B\circ\Phi_L\!\!\mid_\sigma: |\sigma| \to \R^r$ implies that the image $\Phi_L(p)$ for any $p \in |\sigma|$ can be computed from the $\Phi_L(\bv_\rho)$ for $\rho \in \sigma(1)$. This implies that if $D_\Phi = D_{\Phi'}$ for two integral piecewise-linear maps $\Phi, \Phi': |\Sigma| \to \Trop(L)$, then we must have $\Phi = \Phi'$.

    Now suppose that $D$ is an $n\times m$ integral matrix with the property that the rows $w_\rho$ corresponding to the rays $\rho \in \sigma(1)$ all lie in a common apartment $A_B \subset \Trop(L)$.  For any $p \in |\sigma|$, we write $p = \sum r_\rho \bv_\rho$, and define $\Phi_D(p) = \phi_B(\sum r_\rho \pi_B(w_\rho))$.  By construction, this map is integral and piecewise-linear with $\Phi_D(|\sigma|) \subset A_B$.
\end{proof}

\begin{remark}
    The restriction that $\Sigma$ be a simplicial fan in Corollary \ref{cor-diagram} is minor.  Corollary \ref{cor-diagram} can be extended to any fan $\Sigma$ if we also require that the rows $w_\rho$ corresponding to $\rho \in \sigma(1)$ satisfy any linear relations which hold among the ray generators $u_\rho$.
\end{remark}

\begin{remark}
    Corollary \ref{cor-diagram} implies that the data $(L, D)$ determines a toric vector bundle over $X_\Sigma$, however a given toric vector bundle can be defined by many such pairs.
\end{remark}

\subsection{Toric vector bundles as tropical points}
\label{subsec-tvb-PL-valuation}
We start by extending the notion of a valuation by allowing the value set to be an idempotent semifield (see \cite{GG}).
Let $\mathcal{O}$ be an idempotent semifield, i.e. that is, $\mathcal{O}$ is equipped with binary operations $\oplus$ and $\otimes$ that satisfy the axioms of a field except that $\oplus$ does not have additive inverses. Instead, for any $a \in \mathcal{O}$ we have $a \oplus a = a$. The idempotent operation defines a partial order on $\mathcal{O}$ as follows: for $a, b \in \mathcal{O}$, we say that $a \leq b$ if $a \oplus b = a$. We let $\infty$ denote the neutral element with respect to $\oplus$. 


\begin{definition}[Vector space valuation]
As before let $E \cong \k^r$ be an $r$-dimensional $\k$-vector space. A map $\vv: E \to \mathcal{O}$ is a \emph{valuation} if:
\begin{itemize}
\item[(a)] $\vv(f+g) \geq \vv(f) \oplus \vv(g)$, for all $f, g \in E$,
\item[(b)] $\vv(Cf)=\vv(f)$, for any $0 \neq C \in \k$ and $f \in E$, 
\item[(c)] $\vv(f)=\infty$ if and only if $f=0$.
\end{itemize}
We say that $\vv$ is a \emph{finite} valuation if $\vv(E)$ is a finite set (we note that unlike the case of valuations with values in a totally ordered set, it is possible to have a valuation on a finite dimensional vector space with an infinite set of values).
\end{definition}

One can also define the notion of a valuation on an algebra with values in an idempotent semifield. 

A classic example of an idempotent  semifield is the set $\overline{\R} = \R \cup \{\infty\}$ with the operations of addition for $\otimes$ and taking minimum for $\oplus$. The semifield $(\overline{\R}, \min, +)$ is usually referred to as the \emph{tropical semifield}. The sets $\overline{\Z}$ and $\overline{\Q}$ are subsemifields. 

Next important example of an idempotent semifield is the semifield of piecewise linear functions. As usual let $N \cong \Z^n$ be a free rank $n$ lattice with $N_\R $. Recall that a function $\phi: N_\R \to \R$ is \emph{piecewise linear} if there exists a complete fan $\Sigma$ in $N_\R$ such that $\phi$ is linear restricted to each cone of $\Sigma$. We denote the set of all piecewise linear functions on $N_\R$ by $\PL(N_\R, \R)$. Moreover, we add a unique ``infinity element'' $\infty$ to $\PL(N_\R, \R)$ which is greater than any other element. It is straightforward to see that $\PL(N_\R, \R)$ together with operations of taking minimum $\min$ and addition of functions $+$ is an idempotent semifield. 
One sees that for $\phi_1, \phi_2 \in \PL(N_\R, \R)$ we have $\phi_1 \leq \phi_2$, that is, $\phi_1 \oplus \phi_2 = \phi_1$, if and only if $\phi_1(x) \leq \phi_2(x)$ for all $x \in N_\R$.
 
We also denote the set of piecewise linear functions that attain integer values on $N$ by $\PL(N,\Z)$. Finally, for a complete fan $\Sigma$, we denote by $\PL(\Sigma, \R)$ the set of piecewise linear functions that are linear on cones in $\Sigma$ and $\PL(\Sigma,\Z)$ the subset of piecewise linear functions that have integer values on $N$.

In \cite{Kaveh-Manon-TVBs-valuation}, a finite valuation with values in $\PL(N, \Z)$ is called a \emph{piecewise linear valuation}.

Let $\Sigma$ be a complete fan. A piecewise linear map $\Phi: |\Sigma| \to \tilde{\B}(E)$ gives a piecewise linear valuation $\vv = \vv_\Phi: E \to \PL(N, \Z)$ as follows: 
$$\vv(e)(x) = \Phi(x)(e), \quad \forall x \in |\Sigma| = N_\R.$$ 
Conversely, one can show that for any piecewise linear valuation $\vv$ on $E$, there exists a piecewise linear map $\Phi$ such that $\vv = \vv_\Phi$. The map $\Phi$ is unique up to refining the fan $\Sigma$. The following is proved in \cite{Kaveh-Manon-TVBs-valuation}:
\begin{theorem}[Toric vector bundles as piecewise linear valuations]  \label{th-tvb-PL-valuation}
The equivalence classes of toric vector bundles over $T$-toric varieties up to pull-back via toric morphisms, are in one-to-one correspondence with the set of piecewise linear valuations $\vv: E \to \PL(N, \Z)$, where as before $E$ is the fiber over the distinguished point $x_0$ in the open $T$-orbit. 
\end{theorem}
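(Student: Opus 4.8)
The plan is to reduce the statement to the classification of toric vector bundles by integral piecewise linear maps to the extended Tits building, Theorem~\ref{th-Klyachko-plm}. Concretely, I will set up a bijection between $\PL(N,\Z)$-valued valuations on $E$ and integral piecewise linear maps $\Phi\colon N_\R\to\tilde{\B}(E)$, considered up to refinement of the (complete) ambient fan, the dictionary being $\vv(e)(x)=\Phi(x)(e)$. The theorem then follows, since Theorem~\ref{th-Klyachko-plm} identifies the latter objects, for a fixed fan, with toric vector bundles on $X_\Sigma$, and ``up to refinement'' is exactly ``up to pull-back along a toric morphism''.

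First I would treat the easy direction. Given $\E$ on $X_\Sigma$ with $\Sigma$ complete, Theorem~\ref{th-Klyachko-plm} supplies an integral piecewise linear $\Phi\colon N_\R\to\tilde{\B}(E)$; set $\vv(e)(x):=\Phi(x)(e)$ and check the valuation axioms, which hold pointwise from those of each $\Phi(x)$. To see $\vv(e)\in\PL(N,\Z)$ and that $\vv$ has finite value set, I would restrict to a maximal cone $\sigma$, where $\Phi$ is linear into an apartment $\tilde{A}(L)$ for some frame $L=\{L_1,\dots,L_r\}$; then $\Phi(x)(L_i)=\ell_i(x)$ for integral linear functionals $\ell_i$, and decomposing $e=\sum e_i$ along $L$ gives $\vv(e)|_\sigma=\min\{\ell_i\mid e_i\neq0\}$. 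Hence each $\vv(e)$ is integral piecewise linear, and on each of the finitely many maximal cones it is one of at most $2^r-1$ functions, so $\vv(E)$ is finite. Pulling $\E$ back along a refinement leaves $\Phi$, hence $\vv$, unchanged, so $\E\mapsto\vv$ descends to equivalence classes.

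The harder direction, and the one I expect to be the main obstacle, is recovering a bundle from a valuation $\vv\colon E\to\PL(N,\Z)$. Here I would put $\Phi(x)(e):=\vv(e)(x)$ (each $\Phi(x)$ visibly lies in $\tilde{\B}_\Z(E)$) and then must produce a complete fan on which $\Phi$ is an integral piecewise linear map in the sense of Definition~\ref{def-plm} --- the subtlety being that ``piecewise linear'' there requires $\Phi$ to be linear \emph{into a single apartment} on each cone, which is not automatic from $\Phi(x)$ varying piecewise linearly in $x$. Using that $\vv(E)$ is a finite set of piecewise linear functions, I would first choose a complete fan on whose cones every member of $\vv(E)$ is linear, and then perform finitely many rational hyperplane subdivisions so that each cone $\sigma$ lies, for every pair $f,g\in\vv(E)$, in $\{f\le g\}$ or in $\{g\le f\}$. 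On $\textup{relint}(\sigma)$ the total preorder on $\vv(E)$ induced by evaluation at $x$ is then constant, whence the subspaces $\{e\mid\vv(e)(x)\ge f(x)\}$, $f\in\vv(E)$, are independent of $x\in\textup{relint}(\sigma)$: the flag of $\Phi(x)$ is constant on $\sigma$, while its jump values, being restrictions of members of $\vv(E)$, vary linearly. Refining this flag to a complete flag and choosing an adapted frame $L$ then exhibits $\Phi|_\sigma$ as a linear map into $\tilde{A}(L)$, and integrality is immediate; by Theorem~\ref{th-Klyachko-plm} the resulting integral piecewise linear map defines a toric vector bundle on $X_\Sigma$ whose associated valuation is $\vv$.

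Finally I would verify that the two assignments are mutually inverse after passing to equivalence classes: a bundle recovers $\Phi$ up to refinement, a valuation recovers $\Phi$ on the nose and hence the bundle up to the choice of refining fan, and under the dictionary $\vv(e)(x)=\Phi(x)(e)$ this identification of equivalence relations is precisely pull-back along toric morphisms (which factor through a linear map on cocharacter spaces and a refinement). Besides the apartment issue, the only other point needing care is that the ``constant combinatorial type'' subdivision used above is a genuine finite rational polyhedral refinement --- clear, since there are finitely many pairs $f,g$ and on any cone where $f,g$ are linear the locus $\{f=g\}$ is a rational hyperplane.
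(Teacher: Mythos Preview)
The paper does not actually prove this theorem; it is quoted from \cite{Kaveh-Manon-TVBs-valuation}, and the paper only records the dictionary $\vv(e)(x)=\Phi(x)(e)$ together with the one-line assertion that, conversely, any piecewise linear valuation arises from some $\Phi$, unique up to refining the fan. Your proposal is a correct expansion of exactly this sketch: you use the same dictionary, reduce to Theorem~\ref{th-Klyachko-plm}, and supply the missing argument that the evaluation map $x\mapsto\Phi(x)$ is piecewise linear into apartments after a suitable refinement. Your refinement argument (linearize all members of the finite set $\vv(E)$, then subdivide along the rational hyperplanes $\{f=g\}$ so that the induced preorder on $\vv(E)$ is constant on each cone, hence the flag of $\Phi(x)$ is constant and any adapted frame gives the required apartment) is the natural one and is correct. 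So your approach is essentially the same as the paper's, only with the details filled in.
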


Finally, in \cite[Section 4]{Kaveh-Manon-TVBs-valuation} the data of a piecewise linear valuation on $E$ is interpreted as a tropical point on a linear ideal over the semifield $\PL(N, \Z)$. Let $\M = \{e_1, \ldots, e_m\} \subset E$ be a finite spanning set. We regard $\M$ as (the ground set of) a linear matroid in the vector space $E$.
Let $L \subset \k[x_1, \ldots, x_m]$ be the linear ideal of relations among the $e_i$. Given $(\phi_1, \ldots, \phi_m) \in \PL(N, \Z)^m$, one can ask when there is a piecewise linear valuation $\vv: E \to \PL(N, \Z)$ with $\vv(e_i) = \phi_i$, for all $i$. The following theorem answers this (see \cite{Kaveh-Manon-TVBs-valuation}):

\begin{theorem}
Let $(\phi_1, \ldots, \phi_m) \in \PL(N, \Z)^m$. The following are equivalent:
\begin{itemize}
\item[(a)] There exists a piecewise linear valuation $\vv: E \to \PL(N, \Z)$ with $\vv(b_i) = \phi_i$ for all $i$ (one shows that $\vv$ is unique, whenever it exists). 
\item[(b)] $(\phi_1, \ldots, \phi_m) \in \Trop_{\PL(N, \Z)}(L)$. 
\item[(c)] For any circuit $C$ in the matroid $\M$ and any $x \in N_\R$, the minimum $\min\{ \phi_i(x) \mid i \in C\}$ is attained twice (see Theorem \ref{th-GF-trop-linear}(c)).  
\end{itemize}
\end{theorem}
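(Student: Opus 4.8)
The plan is to prove the implications $(a)\Rightarrow(c)$, $(c)\Rightarrow(a)$, and the equivalence $(b)\Leftrightarrow(c)$. The guiding observation is that every operation on the semifield $\PL(N,\Z)$ is defined pointwise, so an identity between piecewise linear functions holds if and only if it holds after evaluation at each $x\in N_\R$; and since these functions are positively homogeneous of degree one, it even suffices to test at the primitive generators of a common refining complete fan. This lets me reduce each assertion to its counterpart over the tropical semifield $\overline{\R}$, where the relevant facts are already recorded in Section \ref{subsec-tvb-PL-map-trop}.

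For $(a)\Rightarrow(c)$: given a piecewise linear valuation $\vv$ with $\vv(e_i)=\phi_i$, fix $x$ and set $v_x:=\vv(\cdot)(x):E\to\overline{\R}$. Evaluating the three axioms of $\vv$ at $x$ shows that $v_x$ is an $\overline{\R}$-valued vector space valuation (Definition \ref{def-val}) with $v_x(e_i)=\phi_i(x)$. If $C$ is a circuit of $\M$ with dependence relation $\sum_{i\in C}c_ie_i=0$ — all $c_i\neq 0$ by minimality of $C$ — and the minimum $\mu=\min\{\phi_i(x)\mid i\in C\}$ were attained only at $i_0$, then $c_{i_0}e_{i_0}=\sum_{i\in C\setminus\{i_0\}}(-c_i)e_i$ together with the non-Archimedean inequality would force $\phi_{i_0}(x)=v_x(e_{i_0})\geq\min_{i\in C\setminus\{i_0\}}\phi_i(x)>\mu=\phi_{i_0}(x)$, a contradiction; hence $(c)$. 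For $(b)\Leftrightarrow(c)$: the tropical vanishing (``bending'') of $\textup{trop}(g)$ at a point of $\PL(N,\Z)^m$, for $g\in L$, is expressed by finitely many identities in $\PL(N,\Z)$, hence — by the pointwise principle — it holds if and only if $\textup{trop}(g)$ bends at $(\phi_1(x),\dots,\phi_m(x))\in\overline{\R}^m$ for every $x$. Thus $(\phi_i)\in\Trop_{\PL(N,\Z)}(L)$ if and only if $(\phi_i(x))\in\Trop_{\overline{\R}}(L)$ for all $x$, and by Theorem \ref{th-GF-trop-linear}(c) (the circuit forms are a tropical basis of $L$) this is exactly the pointwise circuit condition $(c)$.

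The substantive direction is $(c)\Rightarrow(a)$. Condition $(c)$ says precisely that $\phi(x):=(\phi_1(x),\dots,\phi_m(x))$ lies in $\Trop(L)$ for every $x$, so $\phi$ is a continuous piecewise linear map $N_\R\to\Trop(L)$. I would next choose a complete fan $\Sigma$ refining the linearity domains of all the $\phi_i$ and fine enough that each cone $\sigma\in\Sigma$ satisfies $\phi(\sigma)\subset A_B=\Trop(L)\cap\sigma_B$ for some basis $B\subset\M$, with $\pi_B\circ\phi|_\sigma$ linear; this exhibits $\phi$ as a piecewise linear map in the sense of Definition \ref{def-PL-trop-lin}. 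Composing with the embedding $i:\Trop(L)\hookrightarrow\tilde{\B}(E)$ of Section \ref{subsec-tvb-PL-map-trop} gives a piecewise linear map $\Phi=i\circ\phi:|\Sigma|\to\tilde{\B}(E)$, and I set $\vv(f)(x):=\Phi(x)(f)$. On a cone with $\phi(\sigma)\subset A_B$ the explicit formula $i_B(w)(\sum_j\lambda_jb_j)=\min\{w_j\mid\lambda_j\neq 0\}$ shows that $\vv(f)|_\sigma$ is a minimum of finitely many integral linear functions, hence, after a further refinement, integral and linear on each cone, so $\vv(f)\in\PL(N,\Z)$; the valuation axioms for $\vv$ hold because they hold pointwise for each $\Phi(x)$; and $\vv(e_i)=\phi_i$ follows from $i_B$ and the description of the section $\phi_B$ in Proposition \ref{prop-apt-linear-space} — immediate when $e_i\in B$, and, when $e_i\notin B$, because $w=\phi(x)\in A_B$ forces $w_i=\min\{w_j\mid e_j\in C_i\setminus\{e_i\}\}$ for the fundamental circuit $C_i\subset B\cup\{e_i\}$. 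Uniqueness is then automatic from the construction: on each cone of a suitable fan a piecewise linear valuation with $\vv(e_i)=\phi_i$ must be the valuation adapted to the basis $B$ singled out by $\phi$, so it is forced to equal $\Phi(x)$ at every $x$. Alternatively, once $\phi$ is recognized as a piecewise linear map to $\Trop(L)$, one may invoke Theorems \ref{th-Klyachko-plm-trop} and \ref{th-tvb-PL-valuation} to produce $\vv$ and only verify $\vv(e_i)=\phi_i$.

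I expect the one genuinely delicate point to be the fan-refinement step inside $(c)\Rightarrow(a)$: one must produce a single complete fan adapted simultaneously to all of the $\phi_i$ and to the apartment decomposition $\Trop(L)=\bigcup_B A_B$, so that $\vv$ becomes, cone by cone, a minimum of finitely many linear functionals and hence visibly piecewise linear and integral. Everything else is either a formal consequence of the pointwise nature of the operations on $\PL(N,\Z)$ or a verbatim transcription of a statement already available over $\overline{\R}$ (the circuit tropical basis of Theorem \ref{th-GF-trop-linear}(c) and the apartment identifications of Section \ref{subsec-tvb-PL-map-trop}).
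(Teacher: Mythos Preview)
The paper does not actually prove this theorem; it is stated in the preliminaries (Section \ref{subsec-tvb-PL-valuation}) with the attribution ``see \cite{Kaveh-Manon-TVBs-valuation}''. So there is no proof in the paper to compare against.

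Your argument is correct and is precisely the one the surrounding material is set up to support: reduce everything to pointwise statements over $\overline{\R}$, use that circuits form a tropical basis (Theorem \ref{th-GF-trop-linear}(c)) for $(b)\Leftrightarrow(c)$, and for $(c)\Rightarrow(a)$ pass through the embedding $i:\Trop(L)\hookrightarrow\tilde{\B}(E)$ and the apartment description. The fan-refinement step you flag as delicate is fine: once all $\phi_i$ are linear on a common complete fan $\Sigma_0$, the map $\phi|_\sigma$ is linear into $\R^m$, so pulling back the (finite, complete) Gr\"obner fan $\GF(L)$ along each linear $\phi|_\sigma$ gives a polyhedral subdivision of $\sigma$; a common refinement over all $\sigma$ produces the required $\Sigma$. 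One small point you leave implicit is that the resulting $\vv$ is a \emph{finite} valuation (as required by the paper's definition of ``piecewise linear valuation''): on each cone $\sigma$ with adapted basis $B$, the value $\vv(f)|_\sigma$ depends only on the support of $f$ in $B$, so $\vv$ takes at most $(2^r-1)\cdot|\Sigma|$ values.
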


\begin{corollary}[Toric vector bundles as tropical points]
With notation as above, the points in $\Trop_{\PL(N,\Z)}(L)$ correspond to toric vector bundles (up to pull-back by toric blowups). Moreover, every toric vector bundle arises in this way.   
\end{corollary}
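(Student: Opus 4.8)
The plan is to deduce this formally from the immediately preceding theorem together with Theorem \ref{th-tvb-PL-valuation}, by composing the two correspondences and then checking that the equivalence relations match up. Throughout I would fix the spanning set $\M = \{e_1,\dots,e_m\} \subset E$ and the linear ideal $L$ of relations among the $e_i$, so that $E \cong \bigoplus_{i=1}^m \k x_i / L_1$ via $x_i \mapsto e_i$.

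First I would record the bijection between $\Trop_{\PL(N,\Z)}(L)$ and the set of piecewise linear valuations $\vv \colon E \to \PL(N,\Z)$. In one direction, a point $(\phi_1,\dots,\phi_m) \in \Trop_{\PL(N,\Z)}(L)$ produces, by the equivalence (a)$\Leftrightarrow$(b) of the preceding theorem, a unique piecewise linear valuation $\vv$ with $\vv(e_i) = \phi_i$ for all $i$. In the other direction, any piecewise linear valuation $\vv$ on $E$ restricts to the tuple $(\vv(e_1),\dots,\vv(e_m))$, which satisfies the circuit condition (c) of that theorem simply because $\vv$ is a valuation (for a circuit $C$ with $\sum_{i\in C}\lambda_i e_i = 0$, all $\lambda_i \neq 0$, solving for any element of $C$ forces the minimum of the $\phi_i$ over $i \in C$ to be attained at least twice), hence lies in $\Trop_{\PL(N,\Z)}(L)$. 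Since a valuation is determined by its values on the spanning set $\M$, these two assignments are mutually inverse. Composing this bijection with the one of Theorem \ref{th-tvb-PL-valuation} between piecewise linear valuations on $E$ and equivalence classes of toric vector bundles then yields the first assertion.

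For the ``moreover'' part, I would start from an arbitrary toric vector bundle $\E$ on some $X_\Sigma$, let $E = \E_{x_0}$, and let $\vv$ be the associated piecewise linear valuation from Theorem \ref{th-tvb-PL-valuation}. Choosing any finite spanning set $\M = \{e_1,\dots,e_m\} \subset E$ (a basis will do), with linear ideal of relations $L$, the tuple $(\vv(e_1),\dots,\vv(e_m))$ lies in $\Trop_{\PL(N,\Z)}(L)$ by the previous step, and by construction the toric vector bundle attached to it is $\E$. The one point requiring a sentence of care is reconciling ``up to pull-back by toric morphisms'' in Theorem \ref{th-tvb-PL-valuation} with ``up to toric blowups'' in the statement: a piecewise linear valuation records only a $\PL(N,\Z)$-valued function and no fan data, so two toric vector bundles inducing the same valuation become equivariantly isomorphic on a common refinement of their fans, which in the complete case (implicit in the use of $\PL(N,\Z)$) is reached by a sequence of toric blowups. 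I expect this bookkeeping about the equivalence relations, rather than any substantive new argument, to be the only mildly delicate step.
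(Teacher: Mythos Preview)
Your proposal is correct and follows exactly the route the paper intends: the corollary is stated without proof because it is an immediate consequence of combining the preceding theorem (the equivalence of (a), (b), (c)) with Theorem \ref{th-tvb-PL-valuation}, which is precisely the composition you spell out. Your remarks on uniqueness of $\vv$ on a spanning set and on reconciling ``toric morphisms'' with ``toric blowups'' are the only details left implicit, and you handle them appropriately.
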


\section{Preliminaries on matroids and Bergman fans}
\label{sec-prelim-matroids}

Throughout $\M$ denotes a (not necessarily representable) loop-free matroid with ground set $\{1, \ldots, m\}$. By abuse of notation we denote the ground set also by $\M$. We denote the rank of $\M$ by $r$.

Recall that $\{\delta_1, \ldots, \delta_m\}$ denotes the standard basis for $\R^\M$ and for a subset $J \subset \{1, \ldots, m\}$ we put $\delta_J = \sum_{j \in J} \delta_j$. 
Motivated by the case of linear matroids one defines the following:
\begin{definition}   \label{def-GF-polytope-matroid}
Recall that the \emph{matroid polytope} $P_\M$ is the convex hull of $\{\delta_B \mid B \subset \M \text{ is a basis}\}$. The \emph{Gr\"obner fan} $\GF(\M)$ is the outer normal fan of the matroid polytope $P_\M$.
\end{definition}



One shows that every $\delta_B$ is a vertex of $P_\M$. By definition of normal fan, the cones in the Gr\"obner fan $\GF(\M)$ are in one-to-one correspondence with the faces of $P_\M$. In particular, maximal cones in $\GF(\M)$ correspond to bases of $\M$.  Let $\sigma_F$ denote the face of $\GF(\M)$ corresponding to a face $F$ of the matroid polytope $P_\M$.  One shows that the bases of $\M$ corresponding to the vertices of $F$ define a matroid $\M_F$ (on the ground set $\M$) called the \emph{initial matroid} of $\M$ associated to $F$. We let $\In_F(\M)$ denote the initial matroid associated to a face $F$. 


For $e \in \M$ let $\pi_e: \R^\M \to \R$ be projection on the $e$-th coordinate. For a circuit $C \subset \M$, let $\pi_C = \min\{\pi_i \mid i \in C\}$.  Let $\sigma \in \GF(\M)$ be a face, and take $w \in \sigma^\circ$, the relative interior of $\sigma$. Then for any circuit $C$, there are \emph{winner} coordinates in $w$, that is, $i \in C$ such that $w_i = \pi_C(w) = \min\{w_j \mid j \in C\}$.  The collections of winners, for all possible circuits $C$, uniquely determines a cone $\sigma$ in $\GF(\M)$.

Next we recall the Bergman fan of a matroid $\M$ which is a generalization of the tropical variety of a linear ideal (cf. Theorem \ref{th-GF-trop-linear}). 
\begin{definition}[Bergman fan]  \label{def-Bergman-fan}
Let $\F=(F_1 \subsetneqq \cdots \subsetneqq F_k = \M)$ be a flag of flats of $\M$. We define the convex polyhedral cone $\sigma_\F$ by: $$\sigma_\F = \operatorname{cone}\{ \e_F \mid F \in \F \}.$$
The cone $\sigma_\F$ can be described as the set of all points $w \in \R^\M$ satisfying the following conditions: the coordinates $w_i$, $i \in F_1$, are all equal to each other. The coordinates $w_i$, $i \in F_2 \setminus F_1$ are equal to each other and greater than or equal to those in $F_1$. The coordinates $w_i$, $i \in F_3 \setminus F_2$, are all equal to each other and greater than or equal to those in $F_2$ and so on. 

The \emph{Bergman fan} $\Berg(\M)$ is the (usually non-complete) fan in $\R^\M$ consisting of the cones $\sigma_\F$ for all the flags of flats in $\M$. 
\end{definition}

The following is well-known (see \cite[Proposition 2.5]{Feichtner-Sturmfels}):
\begin{proposition} 
The support of $\Berg(\M)$ is the support of a subfan of the Gr\"obner fan $\GF(\M)$. The Bergman fan consists of cones $\sigma \in \GF(\M)$ such that  the initial matroid for corresponding faces $F$ in $P_\M$ are loop-free.
\end{proposition}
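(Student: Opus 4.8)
The plan is to deduce the proposition from the familiar circuit description of the Bergman fan, matched cone-by-cone against $\GF(\M)$, the bridge being the greedy algorithm for matroids. \textbf{First} I would record the circuit description of the support: for $w\in\R^\M$,
\[ w\in|\Berg(\M)| \iff \text{for every circuit } C \text{ of } \M,\ \min_{i\in C}w_i \text{ is attained at least twice.}\]
This is the matroid analogue of Theorem~\ref{th-GF-trop-linear}(c),(d); I would either cite it (e.g.\ \cite{MS}, \cite{Feichtner-Sturmfels}) or prove it directly from Definition~\ref{def-Bergman-fan}. For ``$\subseteq$'', if $w\in\sigma_\F$ and $C$ is a circuit, the minimum of $w$ over $C$ is attained exactly on $C\cap F_\ell$, where $F_\ell$ is the smallest member of the flag $\F$ meeting $C$, and $|C\cap F_\ell|\ge 2$ because a flat $F$ and a circuit $C\not\subseteq F$ always satisfy $|C\setminus F|\ge 2$ (if $C\setminus F=\{i\}$ then $C\setminus\{i\}\subseteq F$ forces $i\in\operatorname{cl}(C\setminus\{i\})\subseteq F$, a contradiction). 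For ``$\supseteq$'', given such a $w$ one builds a flag of flats from the closures of the sublevel sets of $w$ and checks the cone condition of Definition~\ref{def-Bergman-fan}.

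\textbf{Next} I would fix $w$ in the relative interior of a Gr\"obner cone $\sigma_F$, so that (by Definition~\ref{def-GF-polytope-matroid} and the outer-normal-fan description) $F$ is the face of $P_\M$ on which $\langle w,\cdot\rangle$ is maximal and the bases of $\In_F(\M)$ are exactly the bases $B$ of $\M$ maximizing $w(B)=\sum_{i\in B}w_i$; I would then prove the key equivalence
\[\In_F(\M)\text{ is loop-free}\iff\text{for every circuit }C,\ \min_{i\in C}w_i\text{ is attained at least twice.}\]
This is a basis-exchange argument. For the contrapositive of ``$\Leftarrow$'': if a circuit $C$ has a unique minimum, at $i_0$, then for any basis $B\ni i_0$ the hyperplane $\operatorname{cl}(B\setminus\{i_0\})$ misses $i_0$, hence cannot contain all of $C\setminus\{i_0\}$, so some $j\in C\setminus\{i_0\}$ has $(B\setminus\{i_0\})\cup\{j\}$ a basis; as $j\notin B$ and $w_j>w_{i_0}$, this basis is strictly heavier, so no basis containing $i_0$ is $w$-maximal and $i_0$ is a loop of $\In_F(\M)$. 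Conversely, if $i_0$ is a loop of $\In_F(\M)$ (and $\M$ loop-free, so $i_0$ lies in some basis of $\M$), take a $w$-maximal basis $B^*$ (necessarily $i_0\notin B^*$) and the fundamental circuit $C\subseteq B^*\cup\{i_0\}$: for each $j\in C\setminus\{i_0\}$ the basis $(B^*\setminus\{j\})\cup\{i_0\}$ contains $i_0$, hence is not $w$-maximal, forcing $w_{i_0}<w_j$; thus $i_0$ is the strict minimum of $w$ on $C$.

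\textbf{Finally} I would assemble the statement. Combining the two previous steps, a point $w$ lies in $|\Berg(\M)|$ if and only if $\In_F(\M)$ is loop-free, where $\sigma_F$ is the unique Gr\"obner cone containing $w$ in its relative interior; since $\In_F(\M)$ depends only on the cone $\sigma_F$, this identifies $|\Berg(\M)|$ with the union of those $\sigma_F\in\GF(\M)$ whose initial matroid is loop-free. That collection is closed under passing to faces --- if $\sigma_{F'}$ is a face of $\sigma_F$ then $F\subseteq F'$ as faces of $P_\M$, so the bases of $\In_F(\M)$ are among those of $\In_{F'}(\M)$ and loop-freeness passes up --- hence it is a subfan of $\GF(\M)$, which gives the first assertion. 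To identify the fan $\Berg(\M)$ itself (built from flags of flats) with this subfan, one further checks that each $\sigma_\F$ is the normal cone of the matroid-polytope face determined by $\F$, whose initial matroid is the direct sum of the minors $(\M|_{F_j})/F_{j-1}$ and so loop-free, and conversely that every Gr\"obner cone with loop-free initial matroid, being by the above a union of equidimensional cones $\sigma_\F$, coincides with one of them --- a point I would take from \cite{Feichtner-Sturmfels}. The main obstacle is the equivalence of the second paragraph: turning ``a circuit with a unique minimum coordinate'' into ``an element in no maximum-weight basis'' is exactly where the matroid axioms (fundamental circuits, basis exchange, the greedy algorithm) do the real work; the passage from the support statement to a subfan and the normal-fan identification are routine by comparison.
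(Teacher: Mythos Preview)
The paper does not prove this proposition at all: it simply records it as well-known and cites \cite[Proposition 2.5]{Feichtner-Sturmfels}. So there is no ``paper's own proof'' to compare against; your proposal supplies a genuine argument where the paper gives only a reference.

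Your argument is correct. The circuit description of $|\Berg(\M)|$ is established cleanly, and the heart of the matter---the equivalence between ``every circuit minimum is attained twice'' and ``$\In_F(\M)$ is loop-free''---is handled by a sound basis-exchange argument in both directions. The closure of the loop-free locus under passing to faces is fine. One small point of phrasing: the proposition's second sentence is really a statement about supports (the flag-of-flats fan structure on $\Berg(\M)$ is in general a proper refinement of the $\GF(\M)$ structure, not equal to it cone-by-cone), and you rightly flag this at the end by deferring the finer cone-by-cone comparison back to \cite{Feichtner-Sturmfels}. That is exactly the right place to stop, since the paper only needs the support identification anyway.
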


\begin{lemma}   \label{lem-filt-Bergman-fan}
Let $w \in \Berg(\M)$. Then for any $r \in \R$, the set:

$$F^w_r = \{ i \in \M \mid w_i \geq r\},$$
is a flat in $\M$. Thus, $(F^w_r)_{r \in \R}$ is a decreasing $\R$-filtration by flats, and $w \in \Berg(\M)$ is determined by this filtration. Conversely, a decreasing $\R$-filtration by flats of $\M$ determines a point $w \in \Berg(\M)$ by 

\begin{equation}  \label{equ-w-F-k}
w_i = \sup\{r \in \R \mid i \in F_r\}.
\end{equation}
Here we assume that for $r$ sufficiently small, $F_r = \M$ and for $k$ sufficiently large, $F_r = \emptyset$.
\end{lemma}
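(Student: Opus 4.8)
The plan is to use the explicit description of the cones $\sigma_\F$ given in Definition \ref{def-Bergman-fan} together with the standard fact that the flats of $\M$ are exactly the intersections of the closed sets (flats) appearing in a flag, and that a flag of flats is a chain of flats. First I would fix $w \in \Berg(\M)$, so that $w$ lies in the relative interior of some cone $\sigma_\F$ for a flag of flats $\F = (F_1 \subsetneqq \cdots \subsetneqq F_k = \M)$ (after possibly refining, $w$ lies in some $\sigma_\F$, and the flag $\F$ can be taken so that $w$ is in its relative interior). From the coordinate description in Definition \ref{def-Bergman-fan}, the value $w_i$ depends only on which ``layer'' $F_j \setminus F_{j-1}$ contains $i$, and these layer-values are weakly increasing in $j$ (with the convention $F_0 = \emptyset$). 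Hence for any threshold $r \in \R$, the super-level set $F^w_r = \{i \mid w_i \ge r\}$ is precisely one of the $F_j$ (namely the smallest $F_j$ whose layer-value is $\ge r$), or is empty, or is all of $\M$. In every case $F^w_r$ is a flat of $\M$, which proves the first assertion.

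Next I would check that $r \mapsto F^w_r$ is a decreasing $\R$-filtration: if $r \le r'$ then $\{w_i \ge r'\} \subseteq \{w_i \ge r\}$, so $F^w_{r'} \subseteq F^w_r$; and for $r$ very negative $F^w_r = \M$ while for $r$ very large $F^w_r = \emptyset$, by finiteness of the coordinate values of $w$. To see that $w$ is recovered from this filtration, observe directly from the definition that $\sup\{r \mid i \in F^w_r\} = \sup\{r \mid w_i \ge r\} = w_i$, so formula \eqref{equ-w-F-k} holds. This also shows the map $w \mapsto (F^w_r)_r$ is injective.

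For the converse, I would start from a decreasing $\R$-filtration by flats, say with distinct nonempty proper terms occurring at a finite set of thresholds $r_1 < \cdots < r_k$ (finiteness is forced once we require $F_r = \M$ for $r$ small and $F_r = \emptyset$ for $r$ large, since flats form a finite lattice); writing $G_j$ for the flat that is the value of the filtration just below $r_j$, we get a strictly decreasing chain $\M = G_0 \supsetneqq G_1 \supsetneqq \cdots$, equivalently a flag of flats after reindexing. Defining $w$ by \eqref{equ-w-F-k} gives a point whose coordinates are constant on each ``difference layer'' of this flag and strictly increasing across layers, which is exactly a point of the relative interior of the corresponding cone $\sigma_\F$; hence $w \in \Berg(\M)$. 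Finally I would note that the two constructions are mutually inverse, which is immediate from \eqref{equ-w-F-k} in one direction and from the layer description of $\sigma_\F$ in the other.

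The only genuinely delicate point is bookkeeping: matching up the ``flag of flats'' data of Definition \ref{def-Bergman-fan} with the ``$\R$-filtration by flats'' data, being careful about the boundary conventions ($F_r = \M$ for $r \ll 0$, $F_r = \emptyset$ for $r \gg 0$) and about the fact that a general $w$ in a cone $\sigma_\F$ may have coinciding layer-values, so that the filtration $(F^w_r)_r$ records a possibly coarser flag than $\F$ — this is harmless since the coarser flag still consists of flats. No matroid theory beyond ``flats of $\M$ form a finite lattice'' and ``the terms of a flag of flats are flats'' is needed; the content is essentially the observation, already used in Lemma \ref{lem-filt-Bergman-fan}'s analogue for valuations (the correspondence (i)--(iii) in Section \ref{subsec-PL-maps}), that a monotone step function valued in flats is the same data as a point of the Bergman fan.
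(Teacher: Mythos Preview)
Your approach is correct and takes a genuinely different route from the paper. You argue directly from the cone description in Definition~\ref{def-Bergman-fan}: a point $w \in \sigma_\F$ has coordinates constant on each layer $F_j \setminus F_{j-1}$ and monotone across layers, so every super-level set $\{i : w_i \ge r\}$ coincides with one of the $F_j$, hence is a flat; for the converse you rebuild a cone $\sigma_\F$ from the distinct terms of the filtration and place $w$ in it. The paper instead uses the \emph{circuit characterization} of $\Berg(\M)$ (the minimum over any circuit is attained at least twice): if $j \in \Span(F^w_r) \setminus F^w_r$, choose a circuit $C$ with $C \setminus \{j\} \subset F^w_r$; then $w_j < r \le w_\ell$ for all other $\ell \in C$, so the minimum over $C$ is attained only at $j$, a contradiction. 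The converse is argued dually. Your argument is more structural and stays entirely within Definition~\ref{def-Bergman-fan}, while the paper's is shorter and makes transparent exactly why closedness under span is forced by the tropical condition on circuits.

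One caution on bookkeeping: for the super-level sets $\{i : w_i \ge r\}$ to equal the $F_j$ (rather than their complements $\M \setminus F_{j-1}$), the layer-values must be weakly \emph{decreasing} in $j$, not increasing as you wrote --- with cone generators $\e_{F_j}$, elements of the smallest flat $F_1$ carry the largest coordinate, since they lie in every $F_j$. Your parenthetical ``the smallest $F_j$ whose layer-value is $\ge r$'' is inconsistent with ``weakly increasing in $j$''; fix the orientation and the argument goes through cleanly.
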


\begin{proof}
Let $j \in \M$ be in the span of $F^w_r$. Then there is a circuit $C$ such that $j \in C$ and $C \setminus \{j\} \subset F^w_r$. We would like to show $j \in F^w_r$. If not, then $w_j < r$. But $w_\ell \geq r$, for all other $\ell \in C$. This contradicts the fact that $\min\{w_i \mid i \in C\}$ is attained twice. To prove the converse, we need to show that given a decreasing filtration $(F_r)_{r \in \R}$ by flats of $\M$, the corresponding $w \in \R^\M$, defined by \eqref{equ-w-F-k}, lies in $\Berg(\M)$. To this end, let $C$ be a circuit and suppose by contradiction that $\min\{w_i \mid i \in C\}$ is attained once at $j \in C$. Then we can find $r \in \R$ such that $j \notin F_r$ but $C \setminus \{j\} \subset F_r$. This contradicts that $F_r$ is a flat.
\end{proof}

\begin{remark}
In the case where $\M$ is a representable matroid corresponding to a linear ideal $L$, each $w \in \Trop(L)$ corresponds to a valuation $v: E \to \overline{\R}$. The valuation $v$ on $E$ is determined by the decreasing $\R$-filtration of vector subspaces $(E_{v \geq r})_{r \in \R}$ and this filtration uniquely determines $v$ and hence $w$ (Section \ref{subsec-tvb-PL-map-trop}). The above filtration $(F^w_r)_{r \in \R}$ is an extension of this situation to all matroids. 
\end{remark}

In analogy with the theory of buildings and classification of toric vector bundles in \cite{Kaveh-Manon-TVBs-valuation, Kaveh-Manon-Building} in terms of piecewise linear maps to buildings, we introduce the notion of an ``apartment'' in the Bergman fan. Each apartment is a subset of the Bergman fan obtained by intersecting it with a maximal face of the Gr\"obner fan. Below we show that each apartment is piecewise linearly isomorphic (hence homeomorphic) to a real vector space of dimension equal to $\rank(\M)$. 
\begin{definition}[Apartment in Bergman fan] \label{def-apartment-Bergman-fan}
Let $B \subset \M$ be a basis, and let $\sigma_B$ be the corresponding maximal cone in the Gr\"obner fan $\GF(\M)$. We define the \emph{apartment} $A_B$ to be the intersection $$A_B = \Berg(\M) \cap \sigma_B.$$
\end{definition}

\begin{proposition}\label{prop-apartment-matroid}
Let $B$ be a basis in $\M$ with corresponding apartment $A_B$. We have the following:
\begin{itemize}
\item[(a)] $A_B$ is a union of cones in the Bergman fan (and hence has structure of a simplicial complex where each simplex is a cone). \item[(b)] $A_B$ is piecewise linearly isomorphic to $\R^r$ where $r = \rank(\M)$.
\item[(c)] As a simplicial complex, $A_B$ is isomorphic to the Coxeter complex of type $A_{r-1}$ where $r=\rank(\M)$. 
\end{itemize}
\end{proposition}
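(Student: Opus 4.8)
The plan is to give an explicit piecewise-linear parametrization of $A_B$ by $\R^r$ and read off the simplicial-complex structure from it. For part (b), fix the basis $B = \{b_1, \ldots, b_r\}$ and for each $i \in \M$ let $C_i$ denote the unique circuit contained in $B \cup \{i\}$ and containing $i$ (with $C_i = \{i\}$ interpreted as the trivial case when $i \in B$, so that the min below is just $a_i$). Define $\phi_B \colon \R^B \to \R^\M$ by
$$
\phi_B(a)_i = \begin{cases} a_i & i \in B \\ \min\{ a_j \mid j \in C_i \setminus \{i\}\} & i \notin B. \end{cases}
$$
First I would check that $\phi_B(a) \in \Berg(\M)$: by Theorem \ref{th-GF-trop-linear}(c)/Lemma \ref{lem-filt-Bergman-fan} it suffices to verify that for every circuit $C$ the minimum $\min\{\phi_B(a)_i \mid i \in C\}$ is attained at least twice, which is a short combinatorial argument using circuit elimination against the circuits $C_i$ (this is exactly the representable computation of Proposition \ref{prop-apt-linear-space} with linear algebra replaced by matroid circuit axioms). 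Next I would check that $\phi_B(a) \in \sigma_B$, i.e. that $\delta_B$ is a vertex of the face of $P_\M$ normal to $\phi_B(a)$; equivalently $B$ is the basis minimizing $\sum_{i\in B'} \phi_B(a)_i$ among all bases $B'$, which follows since each non-basis element $i$ has $\phi_B(a)_i \ge \phi_B(a)_j$ for some $j \in C_i \cap B$ (a greedy/exchange argument). Conversely, projection $\pi_B \colon \R^\M \to \R^B$ onto the $B$-coordinates restricts to a map $A_B \to \R^r$, and the key point is $\pi_B \circ \phi_B = \mathrm{id}$ and $\phi_B \circ \pi_B\big|_{A_B} = \mathrm{id}$; the second identity is the substantive one and amounts to: any $w \in A_B$ already satisfies $w_i = \min\{w_j \mid j \in C_i\setminus\{i\}\}$, which again is the "attained twice" condition applied to $C_i$ together with $w \in \sigma_B$ forcing $w_i$ not to be the strict unique minimum on $C_i$. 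Piecewise-linearity of $\phi_B$ is clear since it is built from coordinate projections and minima, and it is linear on each chamber cut out by fixing which element of $C_i \setminus \{i\}$ achieves the minimum.

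For part (a), $A_B$ is cut out of $\Berg(\M)$ by the closed cone $\sigma_B$, and since $\sigma_B$ is a union of (closures of) Gröbner-fan cones and $\Berg(\M)$ is a subfan of $\GF(\M)$, the intersection $A_B = \Berg(\M)\cap\sigma_B$ is a union of those Bergman cones $\sigma_\F$ whose relative interior meets $\sigma_B^\circ$; combinatorially, $\sigma_\F \subset \sigma_B$ precisely when the flag of flats $\F$ is "adapted to $B$" in the sense that each $F_j \cap B$ is a basis of $F_j$ (equivalently $\mathrm{rk}(F_j) = |F_j \cap B|$). This is the analogue of a flag of subspaces being adapted to a frame. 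For part (c), I would identify this poset of $B$-adapted flags of flats with the Coxeter complex of type $A_{r-1}$: its simplices are ordered chains $\emptyset \subsetneq S_1 \subsetneq \cdots \subsetneq S_k \subsetneq B$ (setting $S_j = F_j \cap B$), since the map $\F \mapsto (F_1\cap B \subsetneq \cdots \subsetneq F_{k}\cap B)$ is a bijection onto proper chains in $B$ — injectivity because a flat adapted to $B$ is the closure $\overline{F_j \cap B}$, surjectivity because for any subset $S \subsetneq B$ the closure $\overline S$ is a flat with $\overline S \cap B = S$ (as $B$ is independent). Proper chains of subsets of an $r$-set, ordered by refinement, form exactly the barycentric subdivision of the boundary of the $(r-1)$-simplex, i.e. the Coxeter complex $\Sigma(A_{r-1})$; under $\phi_B$ the cone $\sigma_\F$ maps to the cone in $\R^r$ where the coordinates are weakly ordered according to the chain $(S_j)$, matching the standard realization of the $A_{r-1}$ Coxeter fan after quotienting by the line $\R(1,\ldots,1)$ — more precisely $\phi_B$ conjugates the Bergman cone structure on $A_B$ to the fan of "ordered partitions of $[r]$", which is the Coxeter complex.

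The main obstacle I anticipate is part (b), specifically proving $\phi_B \circ \pi_B = \mathrm{id}$ on $A_B$ — that is, that a point of the Bergman fan lying in the Gröbner chamber $\sigma_B$ is genuinely reconstructible from its $B$-coordinates via the circuit formula. The two conditions ($w \in \Berg(\M)$, i.e. every circuit-min attained twice; and $w \in \sigma_B$, i.e. $B$ is a $w$-minimal basis) must be combined correctly: the first alone gives $w_i \le \min_{j \in C_i\setminus\{i\}} w_j$ only after knowing $w_i$ isn't the unique strict minimizer on $C_i$, and the second is what rules that out. Getting this interplay precisely right — and handling the edge cases where several coordinates coincide, so the "chamber" decomposition of $\phi_B$ overlaps on boundaries — is the delicate part; everything else reduces to standard facts about matroid closures, flats, and the combinatorics of the type $A$ Coxeter complex. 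I would likely isolate the identity $\phi_B\circ\pi_B = \mathrm{id}$ as a lemma and prove it by the circuit-elimination argument sketched above, then deduce that $\pi_B$ and $\phi_B$ are inverse piecewise-linear bijections, which immediately yields (b) and sets up the identification in (c).
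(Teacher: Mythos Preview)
Your approach is essentially the paper's: the same piecewise-linear map $\phi_B$ with inverse $\pi_B$, and the same identification of the cone decomposition of $A_B$ with the type-$A_{r-1}$ Coxeter fan via flags of flats spanned by subsets of $B$. The paper packages (a)--(c) in one stroke by checking directly that $\phi_B$ carries each permutahedral chamber $\sigma_\prec \subset \R^B$ (for a total order $\prec$ on $B$) linearly and bijectively onto the Bergman cone $\sigma_{\F}$ with $F_i = \overline{\{b_1,\dots,b_i\}}$, rather than first establishing the global bijection and then matching cones. One sign to fix: under the outer-normal convention for $\GF(\M)$ (Definition~\ref{def-GF-polytope-matroid}), $w \in \sigma_B$ means $B$ has \emph{maximal} (not minimal) $w$-weight, so the inequality should read $\phi_B(a)_i \le \phi_B(a)_j$ for every $j \in C_i \cap B$; with that correction your key step $\phi_B\circ\pi_B|_{A_B}=\mathrm{id}$ goes through exactly as you outline.
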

\begin{proof}
Let $B \subset \M$ be a basis. For a total ordering $\prec$ on the set $B$, let $\sigma_\prec \subset \R^B$ be the subset of those $w \in \R^B$ such that $w_b \leq w_{b'}$ whenever $b \prec b'$. The set $\sigma_\prec$ is a closed cosimplicial cone and the collection of the $\sigma_\prec$ and their faces, for all orderings $\prec$, form the permutahedral fan in $\R^B$.
As in Section \ref{subsec-tvb-PL-map-trop}, we define a piecewise-linear map $\phi_B: \R^B \to \Berg(\M)$. For $i \in \M \setminus B$, let $C_i$ be the unique circuit containing $i$ and such that $C_i \setminus \{i\} \subset B$. Now, for $a \in \R^B$ we define $\phi_B(a) = w \in \R^\M$ where:
\begin{equation}  \label{equ-phi-B}
w_i = 
\begin{cases}
a_i \quad \textup{for } i \in B\\
\min\{ a_j \mid e_j \in C_i \setminus \{e_i\} \} \quad \textup{for } e_i \notin B.
\end{cases}
\end{equation}
We now show that, for any total ordering on $B$, $\phi_B$ maps $\sigma_\prec$ linearly and bijectively onto a cone $\sigma_\F$ in $\Berg(\M)$ for some flag of flats $\F$ (see Definition \ref{def-Bergman-fan}). 
Let us order elements of the basis $B$ as $b_1 \prec b_2 \prec \cdots \prec b_r$ and define a flag of flats $\F = (F_1 \subsetneqq \cdots \subsetneqq F_r = \M)$ by taking $F_i$ to be the span of $\{b_1, \ldots, b_i\}$. Then one verifies that for for $a \in C_\prec$, the point $\phi_B(a)$ lies in the cone $\sigma_\F$. Conversely, 
for any $w \in \sigma_\F$, we have $w = \phi_B(a)$ where $a=\pi_B(w)$. Recall that $\pi_B: \R^\M \to \R^B$ is the projection onto coordinates in $B$. This finishes the proof.
\end{proof}

\section{Tropical toric vector bundles}\label{sec-matroid-vb}
In this section we introduce the main concept of the paper, namely a \emph{tropical toric vector bundle}. We give two equivalent definitions for this concept inspired by the classification of toric vector bundles (see Sections \ref{subsec-tvb-PL-map-trop} and \ref{subsec-tvb-PL-valuation}). 

\subsection{Two equivalent definitions of a tropical toric vector bundle} \label{subsec-def-matroid-vb}
Our first definition of a tropical toric vector bundle is an extension of the description of a toric vector bundle as a piecewise linear map to a tropical linear ideal (Section \ref{subsec-tvb-PL-map-trop}).

Let $\Sigma$ be a fan in $N_\R$ and let $\M$ be a matroid. For simplicity we assume $\Sigma$ is a complete fan.
\begin{definition}[Tropical toric vector bundle as a piecewise linear map] \label{def-matroid-vb-1}
A \emph{tropical toric vector bundle} over the toric variety $X_\Sigma$ is the data of a  map $\Phi: |\Sigma| \to \Berg(\M)$ with the following property: for any cone $\sigma \in \Sigma$ there is a (not necessarily unique) basis $B_\sigma$ in $\M$ such that $\Phi(\sigma)$ lands in the apartment $A_{B_\sigma}$ (Definition \ref{def-apartment-Bergman-fan}), and moreover $\Phi\!\!\mid_{\sigma}: \sigma \to A_{B_\sigma}$ is the restriction of an $\R$-linear map $\Phi_\sigma: \Span{\sigma} \to A_{B_\sigma}$, where we identify the apartment $A_{B_\sigma}$ with the vector space $\R^r$ as in Proposition \ref{prop-apartment-matroid}(b).  
\end{definition}

Next, we present an (equivalent) definition of a tropical toric vector bundle which is an extension of the description of a toric vector bundle as a $\PL(N, \Z)$-valued point on a tropical linear ideal (Section \ref{subsec-tvb-PL-valuation}).
\begin{definition}[Tropical toric vector bundle as a tropical point]  \label{def-matroid-vb-2}
A \emph{tropical toric vector bundle} is the data of a map $\vv: \M \to \PL(N, \Z)$ such that:
\begin{enumerate}
\item for any $x \in N$, any any circuit $C$, the minimum of the values $\vv(e)(x)$ for $e \in C$ is attaned twice,  
\item For any cone $\sigma \in \Sigma$ there is a basis $B_\sigma$ such that for any $x \in \sigma$ and any circuit $C = \{e, I\}$ with $I \subset B_\sigma$ and $e \notin B_\sigma$ we have:
$$\vv(e)(x) = \min\{\vv(b)(x) \mid b \in I \}.$$
\end{enumerate}

\end{definition}

One immediate consequence of Definition \ref{def-matroid-vb-2} is that for any $b \in B_\sigma$, the restriction $\vv(b)\!\!\mid_{\sigma}$ is an integral linear function on $\sigma$, namely $\vv(b)\!\!\mid_{\sigma} \in M_\sigma := M / (M \cap \sigma^\perp)$. Thus, when $\sigma$ is full dimensional we consider $\vv(b)\!\!\mid_{\sigma} \in M$.

\begin{remark}
We would have liked to call $\vv:\M \to \PL(N, \Z)$ a \emph{matroid valuation} with values in $\PL(N, \Z)$, but this terminology is already taken and valuation on a matroid means something else in the literature. 
\end{remark}

\begin{proposition}
Definitions \ref{def-matroid-vb-1} and \ref{def-matroid-vb-2} are equivalent.   
\end{proposition}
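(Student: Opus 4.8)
The plan is to set up a dictionary between a piecewise linear map $\Phi\colon|\Sigma|\to\Berg(\M)$ and a map $\vv\colon\M\to\PL(N,\Z)$ via the ``evaluation/transpose'' correspondence $\vv(e)(x)=\Phi(x)_e$ (the $e$-th coordinate of $\Phi(x)\in\R^\M$), exactly as in the toric case recalled in Section~\ref{subsec-tvb-PL-valuation}, and then check that the two sets of axioms translate into each other cone by cone. I would first fix a complete fan $\Sigma$ and, given $\Phi$ as in Definition~\ref{def-matroid-vb-1}, define $\vv(e)\colon N_\R\to\R$ by $\vv(e)(x)=\Phi(x)_e$. Since $\Phi$ restricted to each $\sigma$ is $\R$-linear into the apartment $A_{B_\sigma}\subset\R^\M$ (under the identification $A_{B_\sigma}\cong\R^r$ of Proposition~\ref{prop-apartment-matroid}(b)), and the inclusion $A_{B_\sigma}\hookrightarrow\R^\M$ is linear on each cone $\sigma_\F$ by the explicit formula \eqref{equ-phi-B}, the composite $\sigma\to\R^\M\to\R$ (projection to the $e$-th coordinate) is linear on each piece of a refinement of $\Sigma$; hence $\vv(e)\in\PL(N,\R)$, and integrality of $\Phi$ gives $\vv(e)\in\PL(N,\Z)$.

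Next I would verify the two conditions of Definition~\ref{def-matroid-vb-2}. Condition~(1) is immediate: for each $x\in N$ the point $\Phi(x)$ lies in $\Berg(\M)$, so by the characterization of the Bergman fan (the ``winner attained twice'' description following Definition~\ref{def-GF-polytope-matroid}, equivalently Lemma~\ref{lem-filt-Bergman-fan}) the minimum $\min\{\Phi(x)_e\mid e\in C\}=\min\{\vv(e)(x)\mid e\in C\}$ is attained at least twice for every circuit $C$. Condition~(2) is exactly the content of the formula \eqref{equ-phi-B} describing the apartment section $\phi_{B_\sigma}$: if $x\in\sigma$ then $\Phi(x)\in A_{B_\sigma}$ equals $\phi_{B_\sigma}(a)$ with $a=\pi_{B_\sigma}(\Phi(x))$, so for $e\notin B_\sigma$ and the unique circuit $C_e=\{e\}\cup I$ with $I\subset B_\sigma$, \eqref{equ-phi-B} gives $\Phi(x)_e=\min\{\Phi(x)_b\mid b\in I\}$, which is precisely $\vv(e)(x)=\min\{\vv(b)(x)\mid b\in I\}$.

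For the converse, given $\vv\colon\M\to\PL(N,\Z)$ as in Definition~\ref{def-matroid-vb-2}, define $\Phi(x)=(\vv(e)(x))_{e\in\M}\in\R^\M$. Condition~(1) says that $\Phi(x)$ satisfies the tropical circuit relations, hence $\Phi(x)\in\Trop$-support of the matroid, i.e.\ $\Phi(x)$ lies in $|\Berg(\M)|$ (using Theorem~\ref{th-GF-trop-linear}(c)/Lemma~\ref{lem-filt-Bergman-fan} in the combinatorial form). Fix $\sigma\in\Sigma$ with its basis $B_\sigma$; I need to show $\Phi(\sigma)\subset A_{B_\sigma}$ and that $\Phi|_\sigma$ is linear. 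For the first, $A_{B_\sigma}=\Berg(\M)\cap\sigma_{B_\sigma}$, and membership in the maximal Gr\"obner cone $\sigma_{B_\sigma}$ is equivalent to $B_\sigma$ being a set of ``winners'' — which is guaranteed by condition~(2): for each $e\notin B_\sigma$ the value $\vv(e)(x)$ equals the minimum over $I\subset B_\sigma$, so no non-basis element strictly beats all basis elements in its fundamental circuit, which is the winner condition selecting $\sigma_{B_\sigma}$. For linearity, condition~(2) shows $\Phi(x)=\phi_{B_\sigma}(\pi_{B_\sigma}\Phi(x))$ for $x\in\sigma$, and $x\mapsto\pi_{B_\sigma}\Phi(x)=(\vv(b)(x))_{b\in B_\sigma}$ is linear on $\sigma$ by the remark following Definition~\ref{def-matroid-vb-2} (each $\vv(b)|_\sigma\in M_\sigma$); composing with $\phi_{B_\sigma}$, which is linear on each Coxeter cone of $\R^{B_\sigma}$, and noting that $\pi_{B_\sigma}(\Phi(\sigma))$ lands in a single Coxeter cone (again by condition~(2), which forces a fixed ordering of the $\vv(b)(x)$ dictated by the flag of flats attached to $B_\sigma$ and $\sigma$), we conclude $\Phi|_\sigma$ is the restriction of an $\R$-linear map into $A_{B_\sigma}$. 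Finally one checks the two assignments are mutually inverse, which is immediate from $\vv(e)(x)=\Phi(x)_e$. The main obstacle I anticipate is the bookkeeping in showing $\pi_{B_\sigma}(\Phi(\sigma))$ stays in a single Coxeter cone — i.e.\ that condition~(2) not only pins down the non-basis coordinates but also forces a consistent total preorder on the basis coordinates across all of $\sigma$; this should follow because a change of Coxeter cone would change the flag of flats $F^{\Phi(x)}_\bullet$ of Lemma~\ref{lem-filt-Bergman-fan} discontinuously while $\Phi|_\sigma$ is continuous and $\sigma$ is connected, but making this rigorous is the point that needs care.
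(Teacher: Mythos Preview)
Your approach is essentially the paper's: both set up the transpose dictionary $\vv(e)(x)=\Phi(x)_e$ and verify the axioms cone by cone, with condition~(1) coming from $\Phi(x)\in\Berg(\M)$ and condition~(2) from the apartment formula~\eqref{equ-phi-B}.

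Your anticipated obstacle, however, is a non-issue. Definition~\ref{def-matroid-vb-1} does not ask that $\Phi|_\sigma$ be linear as a map into $\R^\M$; it asks that $\Phi|_\sigma$ be linear \emph{after} identifying $A_{B_\sigma}$ with $\R^r$ via the bijection of Proposition~\ref{prop-apartment-matroid}(b), whose inverse is $\pi_{B_\sigma}$. So the requirement is precisely that $\pi_{B_\sigma}\circ\Phi|_\sigma=(\vv(b)|_\sigma)_{b\in B_\sigma}$ be linear on $\sigma$, which you have already obtained by invoking the remark following Definition~\ref{def-matroid-vb-2}. There is no need for $\pi_{B_\sigma}(\Phi(\sigma))$ to lie in a single Coxeter cone: the inverse $\phi_{B_\sigma}$ is only piecewise linear, and $\Phi|_\sigma$ as a map into $\R^\M$ is allowed to bend across the Coxeter walls --- Definition~\ref{def-matroid-vb-1} never forbids this. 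You may simply drop the final step about Coxeter cones and the argument is complete, matching the paper's (terser) proof.
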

\begin{proof}
This is a straightforward consequence of definitions. First, let $\Phi: |\Sigma| \to \Berg(\M)$ satisfy the condition of Definition \ref{def-matroid-vb-1}. We build a function $\vv: \M \to \PL(N, \Z)$ as follows: for any $e \in \M$ and any ray $\rho$ with corresponding primitive vector $\bv_\rho$, $\vv(e)$ is the piecewise linear function whose value on $\bv_\rho$ is given by:
$$\vv(e)(\bv_\rho) = \pi_e(\Phi(\bv_\rho)).$$
Recall that $\pi_e: \Berg(\M) \to \R$ is the projection on the $e$-th coordinate.  By definition, $\vv(e) \in \PL(N, \Z)$.  Let $C \subset \M$ be a circuit. For any $x \in |\Sigma|$ we have $\Phi(x) \in \Berg(\M)$, so we must have that the minimum of $\{\Phi(x)(c) \mid c \in C\}$ occurs twice. It follows that the function $\min\{\vv(c) \mid c \in C\} \in \PL(N, \Z)$ is unchanged by the removal of any element $c \in C$.  This means that $\vv: \M \to \PL(N, \Z)$ is a $\PL(N, \Z)$-valued point on $\Berg(\M)$. Now for any cone $\sigma \in \Sigma$ let $\B_\sigma$ be a basis as in Definition \ref{def-matroid-vb-2}. Then for any circuit $\{e, I\}$ with $I \subset B_\sigma$, and $x \in \sigma$, we must have $\Phi(\bv_\rho) \in A_{B_\sigma}$. By Definition \ref{def-apartment-Bergman-fan}, this implies that $\pi_e(\Phi(x)) = \min\{\pi_b(\Phi(x)) \mid b \in I\}$.  As a consequence, we must have $\vv(e)(\bv_\rho) = \min\{v(b)(x) \mid b \in I\}$. 

Conversely, we may run these arguments in reverse. Supposing that $\vv:\M \to \Berg(\M)$ is a $\PL(N, \Z)$-valued point on $\Berg(\M)$. We define $\Phi(x) \in \R^\M$ to be the tuple obtained by evaluating $\vv(e)(x)$ for all $e \in \M$. By Definition \ref{def-matroid-vb-2}, $\Phi(x) \in \Berg(\M)$. Also, for any cone $\sigma \in \Sigma$, the basis $B_\sigma \subset \M$ serves to define the apartment $A_{B_\sigma} \subset \Berg(\M)$, and the condition $\vv(e)(x) = \min\{v(b)(x) \mid b \in I\}$ implies that $\Phi(x) \in A_{B_\sigma}$ for any $x \in \sigma$. 
\end{proof}

We will refer to either of the data in Definitions \ref{def-matroid-vb-1} or \ref{def-matroid-vb-2} as a \emph{tropical toric vector bundle} $\EE$ with \emph{piecewise linear map} $\Phi$ and \emph{$\PL(N, \Z)$-valued tropical point} $\vv$. We also address $\EE$ by $(\M, \Phi)$ or $(\M, \vv)$.  

\begin{definition}[Klyachko flats and diagram of a tropical toric vector bundle]  \label{def-Klyachko-flats}
For each ray $\rho \in \Sigma(1)$, let $w_\rho := \Phi(\bv_\rho) \in \Berg(\M)$. To $w_\rho$ there corresponds a $\Z$-filtration $\cdots F^{\rho}_{r} \supseteq F^\rho_{r+1} \cdots$ by flats of $\M$. This is the analogue of Klyachko filtrations for toric vector bundles (Section \ref{subsec-prelim-toric-vb}). 

Let $n = |\Sigma(1)|$ be the number of rays of $\Sigma$, then for $r_1, \ldots, r_n \in \Z$, the corresponding Klyachko flat is the intersection $F^{\rho_1}_{r_1}\cap \cdots \cap F^{\rho_n}_{r_n} \subseteq \M$.

We let $D_\Phi$ be the $n \times m$ integral matrix with rows the $w_\rho$ for $\rho \in \Sigma(1)$, where $m = |\M|$. We call the matrix $D_\Phi$ the \emph{diagram} of the tropical toric vector bundle determined by $\Phi$.
\end{definition}

\begin{proposition}
    If $\Phi$, $\Phi': |\Sigma| \to \Berg(\M)$ are piecewise-linear maps with $D_\Phi = D_{\Phi'}$, then $\Phi = \Phi'$. Let $\Sigma$ be a simplicial fan. If $D$ is any $n\times m$ integral matrix with rows in $\Berg(\M)$ such that any rows corresponding to rays of a face $\sigma \in \Sigma$ lie in a common apartment of $\Berg(\M)$, then $D$ determines a piecewise-linear map $\Phi_D: |\Sigma| \to \Berg(\M)$. 
\end{proposition}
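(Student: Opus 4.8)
The plan is to transcribe the proof of Corollary~\ref{cor-diagram} from tropical linear spaces to Bergman fans, with the apartments $A_B\subseteq\Berg(\M)$ and the piecewise linear charts $\phi_B$ of Proposition~\ref{prop-apartment-matroid} playing the role of $\Trop(L)$ and $A_B\subseteq\Trop(L)$. I write $\pi_B\colon\R^\M\to\R^B$ for the coordinate projection; by Proposition~\ref{prop-apartment-matroid} it restricts to a bijection $A_B\to\R^B$ with inverse $\phi_B$, and $\phi_B$ is positively homogeneous.

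For the uniqueness assertion I would argue cone by cone. Fix $\sigma\in\Sigma$ and a basis $B=B_\sigma$ with $\Phi(\sigma)\subseteq A_B$, as in Definition~\ref{def-matroid-vb-1}. Since $\pi_B\circ\Phi|_\sigma$ is the restriction of an $\R$-linear map $\Span(\sigma)\to\R^B$ and the primitive generators $\bv_\rho$, $\rho\in\sigma(1)$, positively span $\sigma$, for every $x\in\sigma$, writing $x=\sum_{\rho\in\sigma(1)}r_\rho\bv_\rho$ with $r_\rho\ge 0$, one has $\pi_B(\Phi(x))=\sum_\rho r_\rho\,\pi_B(w_\rho)$, where $w_\rho:=\Phi(\bv_\rho)$ is the row of $D_\Phi$ indexed by $\rho$ (the right-hand side is independent of the chosen expression, being $\pi_B(\Phi(x))$). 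As $\Phi(x)\in A_B$ and $\phi_B\circ\pi_B=\mathrm{id}$ on $A_B$, this gives $\Phi(x)=\phi_B\bigl(\sum_\rho r_\rho\,\pi_B(w_\rho)\bigr)$, so $\Phi|_\sigma$ is recovered from the rows of $D_\Phi$ indexed by $\sigma(1)$. Running over all cones shows $D_\Phi$ determines $\Phi$; granting that this recovery is independent of the admissible basis chosen on $\sigma$ (see the last paragraph), $D_\Phi=D_{\Phi'}$ forces $\Phi=\Phi'$.

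For the existence assertion, assume $\Sigma$ simplicial, so each $x\in\sigma$ has a unique expression $x=\sum_{\rho\in\sigma(1)}r_\rho\bv_\rho$ with $r_\rho\ge 0$. For every maximal cone $\sigma$ fix a basis $B_\sigma$ with $\{w_\rho:\rho\in\sigma(1)\}\subseteq A_{B_\sigma}$ and set
\[
\Phi_D(x)\ :=\ \phi_{B_\sigma}\Bigl(\,\sum_{\rho\in\sigma(1)}r_\rho\,\pi_{B_\sigma}(w_\rho)\Bigr)\ \in\ A_{B_\sigma}\subseteq\Berg(\M),\qquad x=\sum_{\rho\in\sigma(1)}r_\rho\bv_\rho\in\sigma .
\]
I would then check: (i) $\pi_{B_\sigma}\circ\Phi_D|_\sigma$ is the linear map $\bv_\rho\mapsto\pi_{B_\sigma}(w_\rho)$ and $\Phi_D(\sigma)\subseteq A_{B_\sigma}$, so $\Phi_D$ is piecewise linear in the sense of Definition~\ref{def-matroid-vb-1}; (ii) $\Phi_D$ is integral, since $D$ is integral and $\pi_{B_\sigma}$, $\phi_{B_\sigma}$ preserve lattice points; (iii) $\Phi_D(\bv_\rho)=\phi_{B_\sigma}(\pi_{B_\sigma}(w_\rho))=w_\rho$ because $w_\rho\in A_{B_\sigma}$, so $D_{\Phi_D}=D$; and (iv) for a common face $\tau=\sigma\cap\sigma'$ of two maximal cones, the rows indexed by $\tau(1)$ lie in $A_{B_\sigma}\cap A_{B_{\sigma'}}$, so the two formulas agree on $\tau$ by the basis-independence below, making $\Phi_D$ well defined on $|\Sigma|$.

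The step I expect to be the main obstacle, entering both parts, is this basis-independence: if $w_1,\dots,w_k\in A_B\cap A_{B'}$ and $r_1,\dots,r_k\ge 0$, then $\phi_B\bigl(\sum_i r_i\pi_B(w_i)\bigr)=\phi_{B'}\bigl(\sum_i r_i\pi_{B'}(w_i)\bigr)$. The natural approach uses $A_B\cap A_{B'}=\Berg(\M)\cap(\sigma_B\cap\sigma_{B'})$ together with the explicit formula for $\phi_B$ in Proposition~\ref{prop-apartment-matroid}: writing $q=\sum_i r_i w_i$, the retraction $\phi_B\circ\pi_B\colon\R^\M\to A_B$ sends $q$ to the coordinatewise-extreme point $v\in\Berg(\M)$ with $v|_B=q|_B$, and $\phi_B(\pi_B(q))$ is comparable to $q$ coordinatewise since $q\in\sigma_B$. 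Using that each $w_i$ lies in $A_B$, so $(w_i)_e=\min\{(w_i)_b:b\in C_e\setminus\{e\}\}$ for the fundamental circuit $C_e$ of $e\notin B$, together with the inequality $\sum_i r_i\min_b x_{ib}\le\min_b\sum_i r_i x_{ib}$, one deduces that $\phi_B(\pi_B(q))$ also agrees with $q$ in the $B'$-coordinates; symmetrically $\phi_{B'}(\pi_{B'}(q))$ agrees with $q$ in the $B$-coordinates. As each of the two points is then the coordinatewise-extreme element of $\Berg(\M)$ with its prescribed coordinates, they coincide. Equivalently, one may phrase the whole argument via Lemma~\ref{lem-filt-Bergman-fan}, describing $\Phi(x)$ by its filtration of flats directly from the rows $w_\rho$.
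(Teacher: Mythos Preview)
Your approach matches the paper's exactly: its entire proof of this proposition is the single sentence ``The proof is the same as the proof of Corollary~\ref{cor-diagram},'' and you carry out precisely that transcription, using the charts $\phi_B$ of Proposition~\ref{prop-apartment-matroid} in place of those for $\Trop(L)$. You in fact go further than the paper by isolating and sketching the basis-independence step on overlaps of maximal cones, a point that both the paper's proof here and its proof of Corollary~\ref{cor-diagram} leave entirely implicit.
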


\begin{proof}
    The proof is the same as the proof of Corollary \ref{cor-diagram}.
\end{proof}

\begin{definition}[Parliament of polytopes] \label{def-parliament}
    Let $\Phi: |\Sigma| \to \Berg(\M)$ define a tropical toric vector bundle $\EE$ with diagram $D$. 
    For each $e \in \M$ we let $P_{\vv(e)} \subset M_\R$ be the Newton polytope of the divisor on $X_\Sigma$ defined by the $e$-th column of $D$:  
    \begin{equation} \label{equ-P-v(e)}
P_{\vv(e)} = \{ y \in M_\R \mid \langle y, \bv_\rho \rangle \leq \vv(e)(\bv_\rho),~\forall \rho \in \Sigma(1) \}.
\end{equation}
The \emph{parliament} of $\EE$ is defined to be the collection of polyhedra $\{P_{\vv(\e)} \mid e \in \M\}$.
\end{definition}

{\begin{definition}[Tropicalization of a toric vector bundle]\label{def-tropicalization}
    Let $X_\Sigma$ be a toric variety, let $\E$ be a toric vector bundle over $X_\Sigma$, and suppose $\E$ is determined by the data $(L, \Phi)$ as in Definition \ref{def-PL-trop-lin}. The \emph{tropicalization} of $\E$ is the pair $(\M(L), \Phi)$. 
\end{definition}

As is usually the case in tropical geometry, the tropicalization depends on the choice $(L, \Phi)$ of presentation data for $\E$.  For the following notion see \cite{DJS}.

\begin{definition}[DJS matroid of a toric vector bundle]\label{def-DJSmatroid-toricvectorbundle}
    Let $\E$ be a toric vector bundle determined by the data $(L, \Phi)$, we say $\M(L)$ is a \emph{DJS matroid} if the ranks of the Klyachko flat $F^{\rho_1}_{r_1}\cap \cdots \cap F^{\rho_n}_{r_n} \subseteq \M(L)$ always coincides with the dimension of the Klyachko space $E^{\rho_1}_{r_1} \cap \cdots \cap E^{\rho_n}_{r_n}$, and $\M(L)$ is minimal with respect to this property.  If $\M(L)$ has this property, but is not minimal, we say that $\M(L)$ is an extension of a DJS matroid (see Section \ref{sec-matroid-extension}). 
\end{definition}
}


\begin{example}\label{ex-Fano}
    We describe a tropical toric vector bundle $\FF$ over $\P^2$ built from the Fano plane $\mathcal{F}$. (Figure \ref{fig-Fano}).

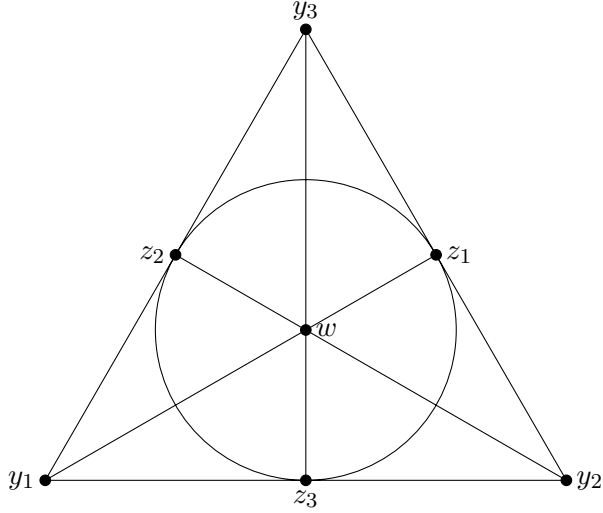
\begin{figure}[ht]
\begin{tikzpicture}
\begin{scope}

\draw (30:2)  -- (210:4)
        (150:2) -- (330:4)
        (270:2) -- (90:4)
        (90:4)  -- (210:4) -- (330:4) -- cycle
        (0:0)   circle (2);

\filldraw[black] (0:0) circle (2pt) node[anchor=west]{$w$};

\filldraw[black] (30:2) circle (2pt) node[anchor=west]{$z_1$};
\filldraw[black] (150:2) circle (2pt) node[anchor=east]{$z_2$};
\filldraw[black] (270:2) circle (2pt) node[anchor=north]{$z_3$};

\filldraw[black] (90:4) circle (2pt) node[anchor=south]{$y_3$};
\filldraw[black] (210:4) circle (2pt) node[anchor=east]{$y_1$};
\filldraw[black] (330:4) circle (2pt) node[anchor=west]{$y_2$};

\end{scope}
\end{tikzpicture}
\caption{The Fano plane.} \label{fig-Fano}
\end{figure}
   
    Let $\rho_1, \rho_2, \rho_3$ be the rays of the fan of $\P^2$. To describe a matroid bundle over $\P^2$ it suffices to find three flags of flats in the Fano plane such that any pair of flags shares a common adapted basis.  We let $F^{\rho_i}_r = \mathcal{F}$ for $r \leq 0$, $F^{\rho_i}_r = \{w,y_i,z_i\}$ for $0 < r \leq 1$, $F^{\rho_i}_r = \{y_i\}$ for $1 < r \leq 2$, and $F^{\rho_i}_r = \emptyset$ for $2 < r$.  The flags for rays $\rho_i$ and $\rho_j$ share the basis $\BB_{ij} = \{y_i, y_j, w\}$.  This information is encoded in the the following diagram:

\begin{center}
\begin{tabular}{ c|ccccccc } 
  & $y_1$ & $y_2$ & $y_3$ & $z_1$ & $z_2$ & $z_3$ & $w$\\ 
 \hline
 $\rho_1$ & 2 & 0 & 0 & 1 & 0 & 0 & 1\\ 
 $\rho_2$ & 0 & 2 & 0 & 0 & 1 & 0 & 1\\ 
 $\rho_3$ & 0 & 0 & 2 & 0 & 0 & 1 & 1\\ 
\end{tabular}
\end{center}
\bigskip
    The parliament of polytopes for $\FF$ is composed of the moment polyhedra for 1 divisor, 3 degree 2 divisors, and 3 degree 1 divisors on $\P^2$.  The toric divisors for these polyhedra can be read off the columns of the diagram.

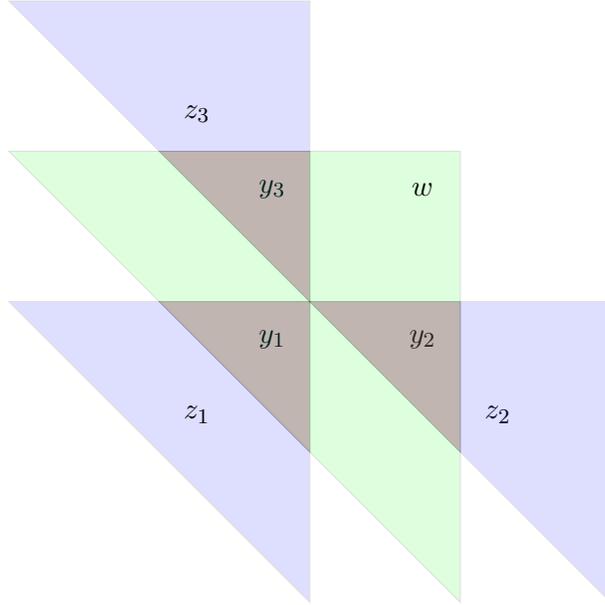
\begin{figure}[ht]
\begin{tikzpicture}
\begin{scope}
\draw[fill=red,opacity=0.23] (-2,0) -- (0,0) -- (0,-2) -- cycle;
\draw (-.5,-.5) node{$y_1$};
\draw[fill=red,opacity=0.23] (0,2) -- (0,0) -- (-2,2) -- cycle;
\draw (1.5,-.5) node{$y_2$};
\draw[fill=red,opacity=0.23] (0,0) -- (2,0) -- (2,-2) -- cycle;
\draw (-.5,1.5) node{$y_3$};
\draw[fill=blue,opacity=0.13] (-4,0) -- (0,0) -- (0,-4) -- cycle;
\draw (-1.5,-1.5) node{$z_1$};
\draw[fill=blue,opacity=0.13] (0,4) -- (0,0) -- (-4,4) -- cycle;
\draw (2.5,-1.5) node{$z_2$};
\draw[fill=blue,opacity=0.13] (0,0) -- (4,0) -- (4,-4) -- cycle;
\draw (-1.5,2.5) node{$z_3$};
\draw[fill=green,opacity=0.13] (-4,2) -- (2,2) -- (2,-4) -- cycle;
\draw (1.5,1.5) node{$w$};
\end{scope}
\end{tikzpicture}
\caption{The parliament of polytopes for the bundle $\FF$.} \label{fig-PARLIAMENT}
\end{figure}
The integral point $(1, 0)$ lies in the polytopes associated to $w, y_2, z_2$. As a consequence, we see that the global section matroid $H^0(\P^2,\FF)_{(1,0)}$ is the rank $2$ flat $\{w, y_2, z_2\} \subset \mathcal{F}$. 
\end{example}

\section{Equivariant $K$-class and characteristic classes of tropical toric vector bundles}
\label{sec-K-class-char-class}
Throughout this section, $\Sigma$ is a complete fan. In this section, generalizing the equivariant $K$-class and equivariant Chern classes of toric vector bundles (see \cite[Proposition 3.1]{Payne-moduli}, \cite[Corollary 3.5]{Kaveh-Manon-Building}), we associate a piecewise exponential and piecewise polynomial functions to a tropical toric vector bundle $\EE$. We interpret them as \emph{equivariant $K$-class} and \emph{equviariant Chern classes} of the tropical toric vector bundle $\EE$.

In construction of both equivariant classes, a crucial step is to construct universal real-valued continuous functions on $\Berg(\M)$ such that the required equivariant classes are obtained by composing the piecewise linear map $\Phi: |\Sigma| \to \Berg(\M)$ with these universal functions. The next lemma is the key to this construction.

\begin{lemma}\label{lem-universal-function}
For any continuous function $g: \R^r \to \R$ that is invariant under the action of symmetric group on $r$ letters there is a unique (continuous) function $g_\M: \Berg(\M) \to \R$, with the property that $\phi_B^*g_\M = g$ for any basis $B$. Here $\phi_B$ is the piecewise linear bijection as in \eqref{equ-phi-B}.
\end{lemma}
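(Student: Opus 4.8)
The plan is to construct $g_\M$ cone-by-cone on the Bergman fan and then check the definitions agree on overlaps. For each basis $B$ of $\M$, Proposition \ref{prop-apartment-matroid}(b) gives a piecewise-linear bijection $\phi_B \colon \R^r \to A_B$, so I would simply \emph{define} $g_\M$ on the apartment $A_B$ by $g_\M|_{A_B} = g \circ \phi_B^{-1}$. Since every cone $\sigma_\F$ of $\Berg(\M)$ lies in at least one apartment (indeed, choosing a basis $B$ adapted to the flag of flats $\F$, one has $\sigma_\F \subset A_B$), this prescribes a value of $g_\M$ at every point of $\Berg(\M)$. The content of the lemma is that this value does not depend on the choice of apartment through a given point, and that the resulting function is continuous.

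First I would prove well-definedness. Suppose $w \in A_B \cap A_{B'}$ for two bases $B, B'$. I need $g(\phi_B^{-1}(w)) = g(\phi_{B'}^{-1}(w))$. The point $w$ lies in some cone $\sigma_\F$ of $\Berg(\M)$ with $\F = (F_1 \subsetneq \cdots \subsetneq F_k = \M)$, and both $B$ and $B'$ restrict to bases adapted to this flag. Recall from the proof of Proposition \ref{prop-apartment-matroid} that $\phi_B^{-1} = \pi_B$, the coordinate projection onto $B$. So I must show that the multiset of coordinates of $w$ indexed by $B$ equals the multiset indexed by $B'$ (a multiset, since $g$ is symmetric). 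By Lemma \ref{lem-filt-Bergman-fan}, the coordinates of $w$ take constant values $c_1 < c_2 < \cdots < c_k$ on the successive differences $F_1, F_2 \setminus F_1, \ldots, F_k \setminus F_{k-1}$. A basis adapted to $\F$ must contain exactly $\rank(F_i) - \rank(F_{i-1}) = \dim F_i - \dim F_{i-1}$ elements from each difference $F_i \setminus F_{i-1}$ (this is the defining property of the flag of flats giving the cone, as recorded in the construction of $\sigma_\F$ in Definition \ref{def-Bergman-fan} together with the flag-to-basis correspondence in the proof of Proposition \ref{prop-apartment-matroid}). Hence both $\pi_B(w)$ and $\pi_{B'}(w)$ consist of the value $c_i$ repeated $\dim F_i - \dim F_{i-1}$ times, so they are equal as multisets and $g$ takes the same value on them. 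This also shows the value of $g_\M(w)$ depends only on the flag $\F$ through $w$ and the tuple $(c_1, \ldots, c_k)$, not on any basis at all.

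Next, continuity. On each closed apartment $A_B$ the function $g_\M|_{A_B} = g \circ \pi_B$ is continuous since $g$ is continuous and $\pi_B$ is a continuous bijection with continuous inverse $\phi_B$ (Proposition \ref{prop-apartment-matroid}(b)). The apartments $A_B$ are finitely many closed subsets of $\Berg(\M)$ whose union is all of $\Berg(\M)$, and we have just shown the definitions agree on pairwise intersections; by the pasting lemma for closed covers, $g_\M$ is continuous on $\Berg(\M)$. Finally, uniqueness: any continuous $h \colon \Berg(\M) \to \R$ with $\phi_B^* h = g$ for all $B$ satisfies $h|_{A_B} = g \circ \pi_B = g_\M|_{A_B}$ for every $B$, and the $A_B$ cover $\Berg(\M)$, so $h = g_\M$. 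The main obstacle here is the well-definedness step — the bookkeeping that any two bases adapted to the same flag of flats induce the same multiset of coordinate values — but as sketched this reduces cleanly to the structure of cones in the Bergman fan given in Lemma \ref{lem-filt-Bergman-fan} and the rank count for adapted bases; everything else is routine topology.
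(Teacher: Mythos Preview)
Your proof is correct, and takes a somewhat different route from the paper's. The paper argues by reducing, via the basis exchange property, to the case of two apartments $A_{B_1}, A_{B_2}$ whose bases differ in a single element $x$ versus $y$; it then uses the circuit $C = \{x,y\} \cup I$ with $I \subset B_1 \cap B_2$ to show directly that $a_x = a_y$ for any $a \in A_{B_1} \cap A_{B_2}$, so the $B_1$- and $B_2$-coordinate tuples of $a$ agree as multisets. Your argument instead exploits the fan structure of $\Berg(\M)$: you locate $w$ in the relative interior of a cone $\sigma_\F$, observe (via Proposition~\ref{prop-apartment-matroid}(a) and its proof) that $w \in A_B$ forces $B$ to be adapted to $\F$, and then count that any adapted basis meets each layer $F_i \setminus F_{i-1}$ in exactly $\rank(F_i) - \rank(F_{i-1})$ elements (you write $\dim$, but $\rank$ is what is meant for a general matroid). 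This has the advantage of making explicit what $g_\M(w)$ actually computes --- a symmetric function of the level values $c_i$ taken with multiplicity the rank jumps --- and avoids the implicit step in the paper's reduction that the bases $B$ with $w \in A_B$ are connected under single-element exchange. The paper's approach, on the other hand, is more self-contained in that it touches only the circuit axiom and does not unpack the cone structure of $\Berg(\M)$.
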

\begin{proof}
We show that the function $g_\M$ whose restriction to each apartment coincides with $g$ is well-defined. By the exchange property, it is enough to show that $g_\M$ is well-defined and continuous on the union of two apartments $A_1, A_2$ whose corresponding bases differ $B_1, B_2$ differ by a single element. We let $B_1 = \{b_1, \ldots, b_{r-1}, x\}, B_2 = \{b_1, \ldots, b_{r-1}, y\}$, then there is a circuit $C = \{x, y\} \cup I$, where $I \subset \{b_1, \ldots, b_{r-1}\}$. Let $a = (a_1, \ldots, a_m) \in A_1 \cap A_2$.  The basis $B_1$ identifies $A_1$ with $\R^r$ by sending $a$ to the entries $a_1, \ldots, a_{r-1}, a_x$ corresponding to $b_1, \ldots, b_{r-1}$ and $x$ (see Equation \eqref{equ-phi-B}). Likewise, the basis for $B_2$ is used to identify $a$ with the point in $\R^r$ whose coordinates are its $a_1, \ldots, a_{r-1}, a_y$ entries. The circuit $C$ tells us that $a_x = \min\{a_y, a_i \mid b_i \in I \} \leq a_y$.  Likewise, $a_y \leq a_x$. We thus conclude that $a_x = a_y$, and hence ${g_\M}_{|A_1 \cup A_2}$ is well-defined and continuous. 
\end{proof}

\subsection{Equivariant $K$-class of a tropical toric vector bundle}
\label{subsec-equiv-K-class}
By \cite{Anderson-Payne}, a class $[\E] \in K^0_T(X_\Sigma)$ corresponding to a tropical toric vector bundle $\E$ on $X_\Sigma$ should be a piecewise exponential function on the support $|\Sigma|$. By the localization theorem, there is an injection $i: K^0_T(X_\Sigma) \hookrightarrow \prod_{x_\sigma \in X_\Sigma} \Z[T]$, where the product is over the torus fixed points $x_\sigma \in X_\Sigma$ corresponding to the maximal cones $\sigma \in \Sigma(n)$. Each copy of the polynomial ring $\Z[T]$ is viewed as the representation ring of the torus $T$ (which itself can be identified with the polynomial ring $\Z[M]$ on the character lattice $M$ of $T$). The image of the injection $i$ consists of tuples of virtual representations of $T$ that satisfy the following compatibility condition: if two maximal cones $\sigma$ and $\sigma'$ have a codimension $1$ face $\tau$, that is, if the fixed points $x_\sigma$ and $x_{\sigma'}$ are connected by a $T$-invariant curve, then the corresponding virtual representations of $T$ agree on the stabilizer of this $T$-curve.

We recall the computation of the equivariant $K$-class of a toric vector bundle. Let $\E$ be a (usual) toric vector bundle over a toric variety $X_\Sigma$ with the corresponding piecewise linear map $\Phi: |\Sigma| \to \tilde{\B}(E)$ (see Section \ref{subsec-prelim-toric-vb}). The image of the class $[\E]$ under the localization map $i$ is the tuple $(f_\sigma)_{\sigma \in \Sigma(n)}$ where $f_\sigma$ is defined as follows. Let $B_\sigma$ and $u(\sigma)$ be the equivariant trivialization data of $\E_{|U_\sigma}$. Then:
$$f_\sigma = \sum_{u_i \in u(\sigma)} \exp(u_i).$$

Let $\exp: \R^r \to \R$ be the sum of exponentials of the coordinates, that is:
$$\exp(x_1, \ldots, x_r) = \sum_{i=1}^r \exp(x_i).$$
In light of Lemma \ref{lem-universal-function}, for any matroid $\M$, we have a well-defined function $\exp_\M: \Berg(\M) \to \R$ which on each apartment coincides with $\exp$. 

\begin{definition}[Equivariant $K$-class of a tropical toric vector bundle] \label{def-K-class-matroid-vb}
Let $\EE$ be a tropical toric vector bundle given by the data of a piecewise linear map $\Phi: |\Sigma| \to \Berg(\M)$. The equivariant $K$-class $[\EE]$ of $\EE$ is the piecewise exponential function $f: |\Sigma| \to \R$ given by:
$$f = \exp_\M \circ \Phi.$$
\end{definition}

More explicitly, the function $f$ can be described as follows. By definition of a piecewise linear map, we know that for each maximal cone $\sigma \in \Sigma$, there exists a basis $B_\sigma$ of $\M$ and characters $u_{\sigma, 1}, \ldots, u_{\sigma, r}$, such that $\Phi_{|\sigma}: \sigma \to A_\sigma \cong \R^r$ is given by: 
\begin{equation}  \label{equ-Phi-sigma}
\Phi_{|\sigma}(x) = (\langle u_{\sigma, 1}, x \rangle, \ldots, \langle u_{\sigma, r}, x \rangle), \quad \forall x \in \sigma.
\end{equation}
Thus, $f_\sigma = f_{|\sigma}$ is given by:
$$f_\sigma(x) = \sum_{i=1}^r \exp(\langle u_{\sigma, i}, x \rangle), \quad \forall x \in \sigma.$$


\subsection{Equivariant Chern classes of a tropical toric vector bundle}
\label{subsec-equiv-Chern-class}
Similarly to the construction of the equivariant $K$-class, we can construct equivariant Chern classes of a tropical toric vector bundle. They generalize those of toric vector bundles which correspond to the case when the matroid is a linear matroid.

Let us recall the description of equivariant Chern classes of a toric vector bundle from \cite[Proposition 3.1]{Payne-moduli} and \cite[Corollary 3.5]{Kaveh-Manon-Building}. Let $\epsilon_i: \R^r \to \R$ be the $i$-th elementary symmetric function in $r$ variables. As shown in \cite[Section 3]{Kaveh-Manon-Building} there is a well-defined map, denoted by the same letter, $\epsilon_i: \tilde{\B}(E) \to \R$ which coincides with the $i$-th elementray symmmetric function on each apartment (which ia copy of $\R^r$). Then for a toric vector bundle $\E$ with corresponding piecewise linear map $\Phi$, the $i$-the equivariant Chern class of $\E$ is represented by the piecewise polynomial function $\epsilon_i \circ \Phi: |\Sigma| \to \R$. 

As above let $\epsilon_i: \R^r \to \R$ be the $i$-th elementary symmetric function and let $\epsilon_{\M, i}: \Berg(\M) \to \R$ be the function (as in Lemma \ref{lem-universal-function}) that coincides with $\epsilon_i$ on each apartment. 

\begin{definition}[Equivariant Chern classes of a tropical toric vector bundle] \label{def-Chern-class-matroid-vb}
Let $\EE$ be a tropical toric vector bundle given by the data of a piecewise linear map $\Phi: |\Sigma| \to \Berg(\M)$. The $i$-th equivariant Chern class of $\EE$ is the piecewise polynomial function $c^T_i(\EE): |\Sigma| \to \R$ given by:
$$c^T_i(\EE) = \epsilon_{\M, i} \circ \Phi.$$
\end{definition}

 For a maximal cone $\sigma \in \Sigma$, take a basis $B_\sigma$ of $\M$ and characters $u_{\sigma, 1}, \ldots, u_{\sigma, r}$ determining the linear map $\Phi_{|\sigma}: \sigma \to \tilde{A}(B_\sigma) \cong \R^r$, as in \eqref{equ-Phi-sigma}. Then, $c^T_{i, \sigma}(\EE) = {c^T_i}(\EE)_{|\sigma}$ is given by:
$$c^T_{i, \sigma}(\EE)(x) = \epsilon_i(\langle u_{\sigma, 1}, x \rangle, \ldots, \langle u_{\sigma, r}, x \rangle), \quad \forall x \in \sigma.$$

\subsection{Equivariant Chern character of a tropical toric vector bundle}
\label{subsec-equiv-Chern-char}
For a toric vector bundle $\E$, the equivariant Chern character $\ch_T(\E)$ is a piecewise polynomial function computed by expanding the exponentials in each restriction ${f_\E}_{|\sigma}$, $\sigma \in \Sigma$, and taking terms of degree less than or equal to $n$ (see \cite[Section 3.6 ]{Brion-Vergne}). This makes sense for a tropical toric vector bundle $\EE$ (over $X_\Sigma$) as well, as we have also defined $[\EE] \in K_T^0(X_\Sigma)$ to be a piecewise-exponential function on $|\Sigma|$. That is, we define the piecewise polynomial function $\ch_T(\EE)$ by:
$$\ch_T(\EE)_{|\sigma}(x) = \textup{sum of terms of degree } \leq n \textup{ in the expansion of } \sum_{i=1}^r \exp(\langle u_{\sigma, i}, x \rangle), \quad \forall \sigma \in \Sigma.$$
One can also give an efficient expression for the equivariant Chern character in terms of power sum functions on $\Berg(\M)$. We have already introduced the elementary symmetric functions $\epsilon_{\M, i}: \Berg(\M) \to \R$ and the exponential function $\exp_\M: \Berg(\M) \to \R$. For an integer $k \geq 0$, let $p_k: \R^r \to \R$ be the $k$-th power function, that is:
$$p_k(x_1, \ldots, x_r) = \sum_{i=1}^r x_i^k.$$
Then, by Lemma \ref{lem-universal-function}, we have power sum functions $p_{\M, k}: \Berg(\M) \to \R$ which are continuous on $\Berg(\M)$. The following identity of functions on $\Berg(\M)$ is then immediate:$$\exp_\M = \sum_{k=1}^r \frac{1}{k!}p_{\M, k}.$$ 
Rewriting the power sum functions $p_{\M, i}$ in terms of the elementary symmetric functions $\epsilon_{\M, i}$ and then composing with $\Phi_\EE$ provides an expression for the Chern character of $\EE$ in terms of the equivariant Chern classes $c^T_i(\EE)$. 

{We conclude this section by recording the connection between the $K$-class and Chern classes of a toric vector bundle and those of a tropicalization. 

\begin{proposition}\label{prop-tropical-characteristic-comparison}
    Let $\E$ be the toric vector bundle over a toric variety $X_\Sigma$ determined by the data $(L, \Phi)$. The equivariant $K$-class and equivariant Chern classes of $\E$ coincide with those of its tropicalization $(\M(L), \Phi)$. In particular, all tropicalizations of $\E$ have the same equivariant $K$-classes and Chern classes.  
\end{proposition}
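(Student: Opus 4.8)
The plan is to observe that the geometric invariants $[\E]$, $c_i^T(\E)$ and the tropical invariants $[(\M(L),\Phi)]$, $c_i^T(\M(L),\Phi)$ are both obtained by post-composing the \emph{same} piecewise linear map $\Phi$ with universal symmetric functions --- on the extended building $\tilde{\B}(E)$ in the first case (see \cite[Section~3]{Kaveh-Manon-Building}), and on $\Berg(\M(L))$ in the second (Definitions~\ref{def-K-class-matroid-vb} and \ref{def-Chern-class-matroid-vb}) --- and that for a linear ideal $L$ these two ambient spaces and their universal functions are identified compatibly. Indeed, for a linear ideal one has $\Trop(L)=\Berg(\M(L))$ as fans, and by Theorem~\ref{th-Klyachko-plm-trop} the piecewise linear map attached to $\E$ already factors through $\Trop(L)\hookrightarrow\tilde{\B}(E)$; so the definition of the tropicalization $(\M(L),\Phi)$ (Definition~\ref{def-tropicalization}) uses exactly this same $\Phi$. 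Thus both constructions compare functions living on literally the same piecewise linear space.

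\textbf{Key steps.} First I would recall the cone-by-cone formulas: for each maximal cone $\sigma\in\Sigma$, $\E|_{U_\sigma}$ is equivariantly trivial with character multiset $u(\sigma)=\{u_{\sigma,1},\dots,u_{\sigma,r}\}$, and $\Phi|_\sigma\colon\sigma\to\tilde A(B_\sigma)\cong\R^r$ is the linear map $x\mapsto(\langle u_{\sigma,1},x\rangle,\dots,\langle u_{\sigma,r},x\rangle)$ as in \eqref{equ-Phi-sigma}. Next, the central point: under the embedding $\iota\colon\Trop(L)\hookrightarrow\tilde{\B}(E)$, each apartment $A_{B}$ maps into $\tilde A(B)$ via the map $i_B$, and the two identifications of $A_B$ and $\tilde A(B)$ with $\R^r$ --- the matroid one via $\pi_B$ in Proposition~\ref{prop-apartment-matroid}(b), and the vector-space one via a frame adapted to $B$ --- are intertwined by $i_B$; this is precisely the commutativity of the diagram displayed in Section~\ref{subsec-tvb-PL-map-trop}, whose two composites down to $\R^r$ are isomorphisms. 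Consequently the same formula \eqref{equ-Phi-sigma} also computes $\Phi|_\sigma$ into the apartment $A_{B_\sigma}\cong\R^r$ of $\Berg(\M(L))$. Therefore $\iota^*\exp=\exp_{\M(L)}$ and $\iota^*\epsilon_i=\epsilon_{\M(L),i}$ (both sides are continuous on $\Berg(\M(L))$ and agree on every apartment, which cover), and post-composing with $\Phi$ gives, restricted to each $\sigma$,
\[
f_\sigma(x)=\sum_{i=1}^r\exp(\langle u_{\sigma,i},x\rangle),\qquad c^T_{i,\sigma}(x)=\epsilon_i(\langle u_{\sigma,1},x\rangle,\dots,\langle u_{\sigma,r},x\rangle),
\]
which are exactly the expressions computing $[\E]$ and $c_i^T(\E)$ in \cite[Section~3]{Kaveh-Manon-Building}. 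Gluing over the cones of $\Sigma$ yields $[\E]=[(\M(L),\Phi)]$ and $c_i^T(\E)=c_i^T(\M(L),\Phi)$ as piecewise exponential (resp.\ piecewise polynomial) functions on $|\Sigma|$. Since $[\E]$ and $c_i^T(\E)$ depend only on $\E$ and not on the presentation $(L,\Phi)$, the ``in particular'' is immediate: any two tropicalizations of $\E$ have the same equivariant $K$-class and Chern classes.

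\textbf{Main obstacle.} The only step carrying genuine content is the compatibility of the two identifications of an apartment with $\R^r$ under $i_B$, i.e.\ that the matroid-theoretic section $\phi_B$ of Proposition~\ref{prop-apartment-matroid}(b) and the frame-coordinate description of $\tilde A(B)$ in $\tilde{\B}(E)$ are intertwined; everything else (covering by apartments, continuity, gluing over cones) is formal. I expect this to reduce cleanly to the already-recorded commutative diagram of Section~\ref{subsec-tvb-PL-map-trop} together with the explicit description of $i_B$ given there, so no substantial new computation is needed.
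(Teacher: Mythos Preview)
Your proposal is correct and follows essentially the same approach as the paper: both argue that the classes depend only on the piecewise linear map $\Phi$ into $\tilde{\B}(E)$, which factors through $\Trop(L)=\Berg(\M(L))$, so the geometric and tropical definitions compute the same piecewise exponential/polynomial functions. The paper's proof is a one-line remark to this effect, while you have usefully unpacked the compatibility of apartment identifications via the commutative diagram of Section~\ref{subsec-tvb-PL-map-trop}; but the underlying idea is identical.
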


\begin{proof}
    The equivariant $K$-class and the Chern classes both depend on the map from $|\Sigma|$ to the building $\tilde{\mathcal{B}}(E)$.  This map can be recovered from $\Phi: |\Sigma| \to \Trop(L)$, and coincides with $\Phi: |\Sigma| \to \Berg(\M(L))$. 
\end{proof}
}

\section{Sheaf of sections and Euler characteristic of a tropical toric vector bundle}
\label{sec-Euler-char-matroid-vb}

In this section we define a sheaf of matroids on the toric open cover of $X_\Sigma$ associated to the data of a tropical toric vector bundle.  This allows us to make sense of global generation for a tropical toric vector bundle. 

\subsection{Review of the toric vector bundle case}
\label{subsec-review-sheaf}
Let $\E$ be a toric vector bundle over a complete smooth toric variety $X_\Sigma$. For a character $u \in M$, we let  $H^0(X_\Sigma, \E)_u$ denote the $u$-weight space in the space of global sections $H^0(X_\Sigma, \E)$. Similarly, for any cone $\sigma \in \Sigma$, $H^0(U_\sigma, \E)_u$ denotes the $u$-weight space in the space of sections of $\E$ on the affine toric chart $U_\sigma$. Let $\chi(X_\Sigma, \E)_u$ denote the Euler characteristic of the sheaf of $u$-weight sections, that is:
$$\chi(X_\Sigma, \E)_u = \sum_{i=0}^r (-1)^i \dim H^i(X_\Sigma, \E)_u.$$
We can compute $\chi(X_\Sigma, \E)_u$ using \v{C}ech cohomology with respect to the open cover $\{U_\sigma \mid \sigma \in \Sigma\}$ as follows:
$$\chi(X_\Sigma, \E)_u = \sum_{\sigma \in \Sigma} (-1)^{\codim(\sigma)} \dim H^0(U_\sigma, \E\!\!\mid_{U_\sigma})_u.$$

Let $\vv: E \to \PL(|\Sigma|, \Z)$ be the finite piecewise linear valuation associated to the toric vector bundle $\E$ (see Section \ref{subsec-tvb-PL-valuation}). We recall from \cite[Section 3.4]{Kaveh-Manon-TVBs-valuation} how to read off the dimension of space of $u$-weight sections from the piecewise linear valuation $\vv$.

Following \cite{DJS, Kaveh-Manon-TVBs-valuation}, these dimensions can be computed in terms of matroid data. To see this, we use the expression for the module $H^0(U_\sigma, \E\!\!\mid_{U_\sigma})$ in terms of the Klyachko spaces of $\E$.  Let $\sigma$ be a maximal cone in $\Sigma$ and let $\sigma(1) = \{\rho_1, \ldots, \rho_d\}$, and let $\bv_i\in N$ be the ray generator of the ray $\rho_i$, then:

$$H^0(U_\sigma, \E\!\!\mid_{U_\sigma}) = \bigoplus_{\br \in \Z^d} E^{\rho_1}_{r_1} \cap \cdots \cap E^{\rho_d}_{r_d}.$$
The polynomial ring $H^0(U_\sigma, \O\!\!\mid_{U_\sigma}) \cong \k[x_1, \ldots, x_d]$ acts on this module by the rule $x_i\cdot (E^{\rho_1}_{r_1} \cap \cdots \cap E^{\rho_d}_{r_d}) \subseteq E^{\rho_1}_{r_1} \cap \cdots\cap E^{\rho_i}_{r_i-1} \cap \cdots \cap E^{\rho_d}_{r_d}$. The graded component $H^0(U_\sigma, \E\!\!\mid_{U_\sigma})_u$ is the space $E^{\rho_1}_{\langle \bv_1, u\rangle} \cap \cdots \cap E^{\rho_d}_{\langle \bv_d, u\rangle}$. The specialization map from $H^0(U_\sigma, \E\!\!\mid_{U_\sigma})$ to the fiber $\E_{\sigma}$ over the torus fixed point of $X_\Sigma$ corresponding to $\sigma$ is the quotient map $H^0(U_\sigma, \E\!\!\mid_{U_\sigma}) \to H^0(U_\sigma, \E\!\!\mid_{U_\sigma})/ \langle x_1, \ldots, x_d\rangle H^0(U_\sigma, \E\!\!\mid_{U_\sigma})$. In terms of Klyachko spaces, this fiber has the expression $\bigoplus_{\br \in \Z^d} E^{\rho_1}_{r_1}\cap \ldots \cap E^{\rho_d}_{r_d}/ \sum_{i = 1}^d E^{\rho_1}_{r_1} \cap \cdots\cap E^{\rho_i}_{r_i+1} \cap \cdots \cap E^{\rho_d}_{r_d}$, and the specialization map is just the quotient map on each graded component. 

For a cone $\sigma \in \Sigma$ and $e \in E$ we can associate 
a polyhedron $P_{\vv(e), \sigma} \subset M_\R$ defined by:
\begin{equation}
P_{\vv(e), \sigma} = \{ y \in M_\R \mid \langle y, \bv_\rho \rangle \leq \vv(e)(\bv_\rho),~\forall \rho \in \sigma(1)\}.
\end{equation}
It follows from the definition that for any character $u \in M$ we have:
\begin{equation}   \label{equ-dim-u-sec-sigma}
\dim H^0(U_\sigma, \E\!\!\mid_{U_\sigma})_u = \dim \{ e \in E \mid u \in P_{\vv(e), \sigma} \}.
\end{equation}

Let $\M \subset E$ be the matroid associated to the subspace arrangement given by:
$$\mathcal{A}_\vv = \{ E_{\vv \geq \phi} \mid \phi \in \PL(N, \Z) \},$$ then 

$$\dim H^0(U_\sigma, \E\!\!\mid_{U_\sigma})_u = \dim (E^{\rho_1}_{\langle \bv_1, u\rangle} \cap \cdots \cap E^{\rho_d}_{\langle \bv_d, u\rangle}) = \rank \{ e \in \M \mid u \in P_{\vv(e), \sigma}\}.$$
This is \cite[Theorem 3.14]{Kaveh-Manon-TVBs-valuation}.

Similarly, for any $e \in E$, we define the polyhedron from the parliament of $\E$:
\begin{equation}
P_{\vv(e)} = \{ y \in M_\R \mid \langle y, \bv_\rho \rangle \leq \vv(e)(\bv_\rho),~\forall \rho \in \Sigma(1) \}.
\end{equation}
We also have $H^0(X_\Sigma, \E)_u = \bigcap_{\rho \in \Sigma(1)} E^\rho_{\langle p, u\rangle}$, so that:
$$\dim H^0(X_\Sigma, \E)_u = \dim(\bigcap_{\rho \in \Sigma(1)} E^\rho_{\langle p, u\rangle}) = \rank \{ e \in \M \mid u \in P_{\vv(e), \sigma}\}.$$
Thus, we get the following formula for the Euler characteristic $\chi(X_\Sigma, \E)_u$:
\begin{equation}   \label{equ-chi-matroid-polyhedral}
\chi(X_\Sigma, \E)_u = \sum_{\sigma \in \Sigma} (-1)^{\codim(\sigma)} \rank\{e \in \M \mid u \in P_{\vv(e), \sigma} \}.
\end{equation}
We note that this formula is a mix of matroidal and polyhedral data. 

More generally, the $T$-equivariant maps from a $T-$linearized line bundle $\O(\psi)$, corresponding to a piecewise-linear function $\psi \in \PL(\Sigma, \Z)$, to $\E$ can be computed with Klyachko spaces:
$$\Hom_{X_\Sigma}^T(\O(\psi), \E) = E^{\rho_1}_{\psi(\bv_1)} \cap \ldots \cap E^{\rho_n}_{\psi(\bv_n)}$$
As a consequence, the entire Cox module of $\E$ has an expression in terms of Klyachko spaces:
$$M(\E) = \bigoplus_{\br \in \Z^n} E^{\rho_1}_{r_1} \cap \ldots \cap E^{\rho_n}_{r_n},$$
where the $i$-th generator of the Cox ring of $X_\Sigma$ acts by the expected rule:  $x_i \cdot E^{\rho_1}_{r_1} \cap \cdots \cap E^{\rho_n}_{r_n} \subseteq E^{\rho_1}_{r_1} \cap \cdots\cap E^{\rho_i}_{r_i-1} \cap \cdots \cap E^{\rho_n}_{r_n}$.

\subsection{The sheaf of sections of a tropical toric vector bundle} \label{subsec-sheaf}

Now we observe that the expressions in the previous section all make sense for any matroid vector bundle $\EE$ over a smooth, complete toric variety $X_\Sigma$.  As with vector bundles, the basic building blocks are the Klyachko flats $F^{\rho}_r = \{e \mid \vv(e)[p] \geq r\} \subset \M$.  We define the matroids $H^0(U_\sigma, \EE\!\!\mid_{U_\sigma})_u$, $H^0(X_\Sigma, \EE)_u$, and $M(\EE)$ in analogy with the vector bundle case. In particular, the Cox matroid of $\EE$ is the direct sum:

$$M(\EE) = \bigoplus_{\br \in \Z^n} F^{\rho_1}_{r_1}\cap \ldots \cap F^{\rho_n}_{r_n}.$$

\noindent
The Cox matroid is naturally a submatroid of the infinite matroid $\bigoplus_{\br \in \Z^n} \M$. For very large $r_i$, the flat $F^{\rho_i}_{r_i}$ is empty. Similarly, if $r_i$ is sufficiently negative, $F^{\rho_i}_{r_i} = \M$, so most of the summands of $M(\EE)$ are empty or $\M$. There is a natural inclusion $F^{\rho_1}_{r_1}\cap \ldots \cap F^{\rho_i}_{r_i} \cap \ldots \cap F^{\rho_n}_{r_n} \subseteq F^{\rho_1}_{r_1}\cap \ldots \cap F^{\rho_i}_{r_i-1} \cap \ldots \cap F^{\rho_n}_{r_n}$. The latter can be viewed as an action of the monoid $\Z_{\leq 0}^n$ on $M(\E)$.  In the representable case, this lifts to an action by the Cox ring of $X_\Sigma$.  The matroid of global sections $H^0(X_\Sigma, \EE)$ is naturally the submatroid of $\M(\EE)$ composed of those summands $F^{\rho_1}_{r_1}\cap \ldots \cap F^{\rho_n}_{r_n}$ where $r_i = \langle \bv_{\rho_i}, u\rangle$ for some $u \in M$.

For a face $\sigma \in \Sigma$ with $\sigma(1) = \{\rho_1, \ldots, \rho_d\}$, the \emph{matroid fiber} $\EE_\sigma$ over the torus fixed point corresponding to $\sigma \in \Sigma$ is the sum of quotient matroids $\bigoplus_{\br \in \Z^d} F^{\rho_1}_{r_1} \cap \cdots \cap F^{\rho_d}_{r_d} / \Span(\bigcup_{i =1}^d F^{\rho_1}_{r_1} \cap \cdots \cap F^{\rho_i}_{r_i +1}\cap \cdots \cap F^{\rho_d}_{r_d})$.  Recall that for a flat $F \subset \M$, the quotient $\M/F$ is the matroid on the complement $\M \setminus F$ where a subset $S$ is declared a basis if there is a basis $T$ of $F$ such that $S \cup T$ is a basis of $\M$.  While it is unclear to us what the quotient map $H^0(U_\sigma, \EE\!\!\mid_{U_\sigma}) \to \EE_\sigma$ might mean, we can still make sense of the notion of surjectivity for this map.  

\begin{definition}
    We say that $S \subset H^0(U_\sigma, \EE\!\!\mid_{U_\sigma})$ generates $\EE_\sigma$ if the span of the union of subsets $S_\br = S \cap F^{\rho_1}_{r_1} \cap \cdots \cap F^{\rho_d}_{r_d} \setminus \Span(\bigcup_{i =1}^d F^{\rho_1}_{r_1} \cap \cdots \cap F^{\rho_i}_{r_i +1}\cap \cdots \cap F^{\rho_d}_{r_d})$ spans $\EE_\sigma$. 
\end{definition}

For $e \in \M$, the class $[e] \in \EE_\sigma$ is defined by the corresponding copy of $e$ in $F^{\rho_1}_{\vv(e)(\bv_1)} \cap \cdots \cap F^{\rho_d}_{\vv(e)(\bv_d)}$.  


\begin{lemma}\label{lem-gradedbasisadapted}
    Let $\overline\B \subset \EE_\sigma$ be a basis, then the corresponding elements $\B \subset \M$ form a basis of $\M$ and $\Phi(\sigma) \subset A_\B$.  Moreover, any basis for which $\Phi(\sigma) \subset A_\B$ determines a basis of $\EE_\sigma$.  
\end{lemma}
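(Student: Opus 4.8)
The plan is to unwind the definition of the matroid fiber $\EE_\sigma$ as a direct sum of quotient matroids indexed by $\br \in \Z^d$, and show that a basis of $\EE_\sigma$ is precisely a set $\overline{\B}$ whose underlying elements $\B \subset \M$ form a basis adapted to the filtrations $F^{\rho_i}_\bullet$ for all $\rho_i \in \sigma(1)$, which by Lemma \ref{lem-filt-Bergman-fan} and the definition of the apartment (Definition \ref{def-apartment-Bergman-fan}) is equivalent to $\Phi(\sigma) \subset A_\B$. First I would record that for each $i$ the sequence $(F^{\rho_i}_r)_{r \in \Z}$ is a decreasing filtration of $\M$ by flats (Lemma \ref{lem-filt-Bergman-fan}); intersecting these gives, for each $\br = (r_1, \ldots, r_d)$, the flat $G_\br := F^{\rho_1}_{r_1} \cap \cdots \cap F^{\rho_d}_{r_d}$, and these flats are nested as $\br$ decreases coordinatewise. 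The fiber $\EE_\sigma$ is $\bigoplus_\br G_\br / \Span\!\big(\bigcup_{i=1}^d G_{\br + \e_i}\big)$, so its rank equals $\sum_\br \big(\rank(G_\br) - \rank(\bigcup_i G_{\br+\e_i})\big)$, and a telescoping/inclusion-exclusion argument over the $d$ filtrations shows this equals $\rank(\M) = r$.

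Next I would identify bases explicitly. Given $\br$, the quotient $G_\br / \Span(\bigcup_i G_{\br+\e_i})$ has rank $= \rank(G_\br) - \rank(\bigcup_i G_{\br + \e_i})$; by submodularity of the rank function applied to the nested flats $G_{\br + \e_i}$, this is exactly the number of basis elements of $\M$ that one must add at ``multidegree $\br$'' when building a flag-adapted basis. Concretely: choose a basis $\B$ of $\M$ adapted to the joint filtration, meaning that for each $\br$, $\B \cap G_\br$ is a basis of $G_\br$. (Such a basis exists: since the $F^{\rho_i}_\bullet$ are coordinatewise-compatible filtrations by flats — this is exactly the condition that $\Phi(\sigma)$ lies in a common apartment, guaranteed by Definition \ref{def-matroid-vb-1} — one can build $\B$ greedily along a refinement of the product order, and any two of the $F^{\rho_i}_\bullt$ are simultaneously adapted to a common basis inside the apartment $A_{B_\sigma}$.) For such $\B$, each element $b \in \B$ sits in a unique ``cell'': the unique $\br(b)$ with $b \in G_{\br(b)}$ but $b \notin G_{\br(b) + \e_i}$ for any $i$, where $r_i(b) = \vv(b)(\bv_i)$. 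Then the image $[b] \in G_{\br(b)}/\Span(\bigcup_i G_{\br(b)+\e_i})$ is nonzero, and I would check that $\{[b] : b \in \B\}$, placed in the appropriate summands, is a basis of $\EE_\sigma$: spanning follows because $\B \cap G_\br$ spans $G_\br$ for every $\br$, and independence follows by counting (the cardinality is $r = \rank(\EE_\sigma)$) together with the fact that no nontrivial relation can mix distinct summands.

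For the converse direction — an arbitrary basis $\overline{\B}$ of $\EE_\sigma$ pulls back to a basis $\B$ of $\M$ with $\Phi(\sigma) \subset A_\B$ — I would argue as follows. Each element of $\overline{\B}$ lies in some summand $G_\br/\Span(\bigcup_i G_{\br+\e_i})$ and lifts to an element of $G_\br \subset \M$; call the resulting multiset $\B$. Since $\rank(\EE_\sigma) = r = \rank(\M)$, it suffices to show $\B$ spans $\M$, equivalently that $\B \cap G_\br$ spans $G_\br$ for all $\br$. This is where I expect the main obstacle: I must rule out ``collisions'' where a lift lands in the span of lower summands in a way that destroys the flag-adaptedness, and I must handle that the quotient matroid construction only sees rank data, not the subspaces themselves. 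The key is that the rank identity $\rank(\EE_\sigma) = \sum_\br (\rank G_\br - \rank \bigcup_i G_{\br+\e_i})$ is tight, so a basis of $\EE_\sigma$ must distribute exactly $\rank G_\br - \rank\bigcup_i G_{\br+\e_i}$ independent elements into each summand, and an element independent in the quotient $G_\br/\Span(\bigcup_i G_{\br+\e_i})$ must lift to an element of $G_\br$ not in $\Span(\bigcup_i G_{\br+\e_i})$; assembling these over all $\br$ by downward induction on the partial order on $\Z^d$ shows $\B \cap G_\br$ is a spanning (hence basis) set of $G_\br$ for every $\br$. Flag-adaptedness of $\B$ to all the filtrations $F^{\rho_i}_\bullet$ then says precisely that the point $\Phi(\bv_i)$ lies in $\sigma_{\B}$ for every ray, and since $\Phi$ is linear on $\sigma$ with image in a single apartment, $\Phi(\sigma) \subset A_\B$ by Proposition \ref{prop-apartment-matroid} and Definition \ref{def-apartment-Bergman-fan}.
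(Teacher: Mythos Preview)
Your proposal is correct and follows essentially the same route as the paper's proof: both arguments first use the adapted basis $B_\sigma$ guaranteed by Definition~\ref{def-matroid-vb-1} to see that $\rank(\EE_\sigma)=\rank(\M)$ and that any adapted basis descends to a basis of $\EE_\sigma$, and then for the converse run the same downward induction on the componentwise partial order of $\br\in\Z^d$ to show that the lifts $\B$ satisfy $\B\cap G_\br$ spans $G_\br$ for every $\br$, whence $\B$ is a basis of $\M$. Your extra telescoping remark for $\rank(\EE_\sigma)=r$ is harmless but redundant (it only works once you already know an adapted basis exists, which you then invoke anyway), and your closing sentence deducing $\Phi(\sigma)\subset A_\B$ from adaptedness at the rays plus piecewise linearity is slightly more explicit than what the paper writes, though neither version spells that passage out in full detail.
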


\begin{proof}
    Suppose that $\Phi(\sigma) \subset A_{\B'}$ for a basis $\B' \subset \M$. Then $\B' \cap F^{\rho_1}_{r_1} \cap \cdots \cap F^{\rho_d}_{r_d}$ is a basis for any $\br \in \Z^d$. An element $b' \in \B'$ then appears in precisely one summand of $\EE_\sigma$, so $\bar\B'$ is a basis.  This shows that the rank of $\EE_\sigma$ coincides with the rank of $\M$. 

    Now let $\B \subset \M$ define a basis $\bar\B \subset \EE_\sigma$. We can write $\B$ as a disjoint union of the sets $\B_\br$, where the classes $\bar\B_\br$ give a basis of $F^{\rho_1}_{r_1} \cap \cdots \cap F^{\rho_d}_{r_d} / \Span(\bigcup_{i =1}^d F^{\rho_1}_{r_1} \cap \cdots \cap F^{\rho_i}_{r_i +1}\cap \cdots \cap F^{\rho_d}_{r_d})$.  
           
      We claim that $\bigcup_{\bs \geq \br} \B_\bs$ spans $F^{\rho_1}_{r_1} \cap \cdots \cap F^{\rho_d}_{r_d}$, where $\bs \geq \br$ is component-wise. First, if $\br$ has the property that $\Span(\bigcup_{i =1}^d F^{\rho_1}_{r_1} \cap \cdots \cap F^{\rho_i}_{r_i +1}\cap \cdots \cap F^{\rho_d}_{r_d}) = \emptyset$, then clearly $\B_\br$ is a basis for $F^{\rho_1}_{r_1} \cap \cdots \cap F^{\rho_d}_{r_d}$. As $F^{\rho}_r = \emptyset$ for $r > > 0$, this forms the base case of induction on the intersection lattice of the Klyachko flats.  Now suppose that the statement holds for all $\B_\bs$ with $\bs > \br$; this implies that $\bigcup_{\bs > \br} \B_\bs$ spans  $\Span(\bigcup_{i =1}^d F^{\rho_1}_{r_1} \cap \cdots \cap F^{\rho_i}_{r_i +1}\cap \cdots \cap F^{\rho_d}_{r_d})$. This assumption, taken with the fact that $\bar\B_\br$ is a basis of $F^{\rho_1}_{r_1} \cap \cdots \cap F^{\rho_d}_{r_d} / \Span(\bigcup_{i =1}^d F^{\rho_1}_{r_1} \cap \cdots \cap F^{\rho_i}_{r_i +1}\cap \cdots \cap F^{\rho_d}_{r_d})$ implies that $\bigcup_{\bs \geq \br} \B_\bs$ spans $F^{\rho_1}_{r_1} \cap \cdots \cap F^{\rho_d}_{r_d}$.  If $r < < 0$ then $F^\rho_r = \M$, so we have shown that $\B$ spans $\M$.  As $\bar\B$ is a basis of $\EE_\sigma$, it follows that $\B$ must also be a basis of $\M$.  
      
\end{proof}

The values of $\vv$ on $\B$ as in Lemma \ref{lem-gradedbasisadapted} over $\sigma$ must match a (possibly multi)set of characters $u_1, \ldots, u_r$.  We let $u(e_i)$ be the character of $e_i \in \B$.  The polyhedra $P_{\vv(e), \sigma}$ and $P_{\vv(e)}$ for $e \in \M$ still make sense as defined, and we have $H^0(U_\sigma, \EE\!\!\mid_{U_\sigma})_u = \{e \mid u \in P_{\vv(e), \sigma}\} \subset \M$ and $H^0(X_\sigma, \EE)_u = \{e \mid u \in P_{\vv(e)}\} \subset \M$ as before. 

\begin{lemma}\label{lem-basissurvives}
    Fix $u \in M$, and let $e \in H^0(X_\Sigma, \EE)_u$, then the following are equivalent:
    \begin{enumerate}
    \item $e$ defines an element in $\EE_\sigma$.
    \item $u$ is the vertex of $P_{\vv(e)}$ in the $\sigma$ direction.
    \end{enumerate}
    In this case, $u = u(e_i)$ for some basis member $e_i \in \B$ for any basis for which $\Phi(\sigma) \subset A_\B$.
\end{lemma}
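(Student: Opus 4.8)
\emph{Plan.} First I fix a basis $\B\subset\M$ with $\Phi(\sigma)\subset A_\B$ (this exists by Definition \ref{def-matroid-vb-1}); let $\rho_1,\ldots,\rho_d$ be the rays of the maximal cone $\sigma$ with primitive generators $\bv_1,\ldots,\bv_d$, and for $b\in\B$ let $u(b)\in M$ be the character with $\vv(b)|_\sigma=\langle u(b),\cdot\rangle$. For $e\notin\B$ let $C_e$ be the fundamental circuit of $e$ relative to $\B$, so $\vv(e)|_\sigma=\min\{\langle u(b),\cdot\rangle\mid b\in C_e\setminus\{e\}\}$ by Definition \ref{def-matroid-vb-2}(2); for $e\in\B$ set $C_e=\{e\}$. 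Put $r_j=\langle u,\bv_j\rangle$. From $e\in H^0(X_\Sigma,\EE)_u$, i.e.\ $u\in P_{\vv(e)}$, we get $\vv(e)(\bv_j)\ge r_j$ for every $j$; hence $e$ lies in the flat $G:=F^{\rho_1}_{r_1}\cap\cdots\cap F^{\rho_d}_{r_d}$, and $u(b)-u\in\sigma^\vee$ for every $b\in C_e\setminus\{e\}$.

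\emph{Reformulating (1).} The image of the weight-$u$ section $e$ in $\EE_\sigma$ sits in the summand indexed by $\br=(r_1,\ldots,r_d)$, and it is a non-loop exactly when $e\notin\overline{\bigcup_{i=1}^d F^{\rho_1}_{r_1}\cap\cdots\cap F^{\rho_i}_{r_i+1}\cap\cdots\cap F^{\rho_d}_{r_d}}$. By Lemma \ref{lem-gradedbasisadapted}, $\B_G:=\B\cap G$ is a basis of $G$, and for $b\in\B$ one has $b\in F^{\rho_i}_{r_i+1}$ iff $\langle u(b),\bv_i\rangle>r_i$; since $\bv_1,\ldots,\bv_d$ is a lattice basis of $N$ this gives $\B\cap\bigl(\bigcup_i F^{\rho_1}_{r_1}\cap\cdots\cap F^{\rho_i}_{r_i+1}\cap\cdots\bigr)=\B_G\setminus\{b\in\B_G\mid u(b)=u\}$. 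As $\B_G$ is independent, $e$ lies in the closure of $S\subseteq\B_G$ iff $e\in S$ (when $e\in\B$) or $C_e\setminus\{e\}\subseteq S$ (when $e\notin\B$) — and in the latter case $C_e$ is also the fundamental circuit of $e$ relative to $\B_G$, since $C_e\setminus\{e\}\subseteq\B_G$. Hence $(1)$ holds iff there is some $b\in C_e$ with $u(b)=u$.

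\emph{Reformulating (2) and combining.} Because $\sigma$ is a smooth $d$-dimensional cone, condition $(2)$ says that $u$ is the point with $\langle u,\bv_j\rangle=\vv(e)(\bv_j)$ for all $j$; together with $u\in P_{\vv(e)}$ and the fact (Definition \ref{def-matroid-vb-1}) that $\Phi|_\sigma$ is the restriction of a linear map into the apartment $A_\B$, this is equivalent to $\vv(e)|_\sigma$ being the linear function $\langle u,\cdot\rangle$. Now $\vv(e)|_\sigma=\min\{\langle u(b),\cdot\rangle\mid b\in C_e\setminus\{e\}\}$ is a minimum of finitely many linear forms, and a minimum of linear forms that is linear on a full-dimensional cone coincides with one of them on the whole cone; so $\vv(e)|_\sigma=\langle u,\cdot\rangle$ forces $u(b^*)=u$ for some $b^*\in C_e$. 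Conversely, if $u(b)=u$ for some $b\in C_e$ then $\vv(e)|_\sigma\le\langle u,\cdot\rangle$, while the reverse inequality on $\sigma$ is just $u\in P_{\vv(e)}$ combined with superadditivity (and homogeneity) of the concave function $\vv(e)|_\sigma$. So $(2)$ too is equivalent to ``$u(b)=u$ for some $b\in C_e$'', whence $(1)\Leftrightarrow(2)$. When $(1)$ and $(2)$ hold, the image of $e$ is a non-loop in the summand $\br=(r_1,\ldots,r_d)$ of $\EE_\sigma$; given any basis $\B'$ with $\Phi(\sigma)\subset A_{\B'}$, Lemma \ref{lem-gradedbasisadapted} shows this (non-zero) summand has a basis consisting of classes of elements $e'_i\in\B'$, and each such $e'_i$ satisfies $\langle u(e'_i),\bv_j\rangle=r_j=\langle u,\bv_j\rangle$ for all $j$, i.e.\ $u(e'_i)=u$.

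\emph{Main obstacle.} The delicate step is the reformulation of $(2)$ — identifying ``$u$ is the vertex of $P_{\vv(e)}$ in the $\sigma$ direction'' with the linearity statement $\vv(e)|_\sigma=\langle u,\cdot\rangle$, and in particular the implication $(2)\Rightarrow(1)$, which needs that $\vv(e)$ is linear, not merely concave, on $\sigma$. I expect to extract this from the linearity requirement in Definition \ref{def-matroid-vb-1}, but since the apartment is identified with $\R^r$ only piecewise-linearly (via $\phi_\B$), pushing that linearity back to the coordinate function $\vv(e)$ requires attention to how $\sigma$ meets the Coxeter chambers of $A_\B$. The remaining parts are routine closure- and fundamental-circuit bookkeeping.
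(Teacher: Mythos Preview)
Your reading of (1) is not the paper's. The sentence immediately preceding the lemma defines $[e]\in\EE_\sigma$ as the copy of $e$ in the graded piece indexed by $(\vv(e)(\bv_1),\ldots,\vv(e)(\bv_d))$, and the paper takes ``$e$ (at weight $u$) defines an element in $\EE_\sigma$'' to mean simply that the weight-$u$ copy of $e$ lands in \emph{that} grade, i.e.\ $\langle u,\bv_j\rangle=\vv(e)(\bv_j)$ for every $j\in\sigma(1)$. This is literally condition (2), so the equivalence is one line. You instead read (1) as ``the image of $e$ in $\EE_\sigma$ is a non-loop'', which is strictly stronger and forces you through the fundamental-circuit calculus.

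The obstacle you flag in your last paragraph is then a genuine gap that cannot be closed. For $e\notin\B$ the function $\vv(e)|_\sigma=\min\{\langle u(b),\cdot\rangle : b\in C_e\setminus\{e\}\}$ is in general only \emph{piecewise} linear on $\sigma$, because the identification $\phi_\B:\R^r\to A_\B$ is itself only piecewise linear; so equality with $\langle u,\cdot\rangle$ at the ray generators does not force equality on all of $\sigma$, and hence does not force $u=u(b)$ for some $b\in C_e$. Concretely: take $\M=U_{2,3}=\{a,b,c\}$ over $\P^2$ with $F^{\rho_1}_1=\{a\}$, $F^{\rho_2}_1=\{b\}$, $F^{\rho_3}_1=\emptyset$. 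Over $\sigma=\langle\bv_1,\bv_2\rangle$ the adapted basis is $\{a,b\}$ with $u(a)=(1,0)$, $u(b)=(0,1)$, while $\vv(c)|_\sigma(x_1,x_2)=\min(x_1,x_2)$. For $e=c$ and $u=(0,0)$ one has $c\in H^0(\P^2,\EE)_u$ and (2) holds (both ray equalities read $0=0$), yet no element of $C_c\cap\B=\{a,b\}$ has character $(0,0)$, and indeed $c$ \emph{is} a loop in the $(0,0)$-summand since $\Span(\{a\}\cup\{b\})=\M$. Your circuit analysis of the non-loop condition is correct and does deliver the final sentence of the lemma under your reading of (1); but with that reading the equivalence (1)$\Leftrightarrow$(2) is simply false, and the paper's intended meaning is the weaker one.
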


\begin{proof}
    The fact that $e \in H^0(X_\Sigma, \EE)_u$ implies that the linear function defined by $u$ bounds $\vv(e)$ from below.  This is equivalent to $\langle \bv_\rho, u\rangle \leq \vv(e)[\bv_\rho]$ holding for all $\rho \in \sigma(1)$. The statement $(1)$ then means that these are equalities, which is equivalent to $(2)$.  Finally, $e$ must lie in the complement $F^{\rho_1}_{r_1} \cap \cdots \cap F^{\rho_d}_{r_d} \setminus \Span(\bigcup_{i =1}^d F^{\rho_1}_{r_1} \cap \cdots \cap F^{\rho_i}_{r_i +1}\cap \cdots \cap F^{\rho_d}_{r_d})$.  It follows that $u = u(e_i)$ for any $e_i \in \B_\br$. 
\end{proof}

\begin{definition}  \label{def-gl-gen-tmb}
    We say that $\EE$ is \emph{globally generated} if the elements of $H^0(X_\sigma, \EE)$ suffice to define a basis in each $\EE_\sigma$.
\end{definition}

The next theorem is now immediate from Lemmas \ref{lem-basissurvives} and \ref{lem-gradedbasisadapted}.

\begin{theorem}\label{thm-gg}
    A tropical toric vector bundle $\EE$ is globally generated if and only if for each $\sigma$ there is a basis $\B \subset \M$ such that the characters $u_i$ for the fiber $\E_\sigma$ are the vertices in the $\sigma$ direction of the polyhedra $P_{\vv(e)}$ $e \in \B$.  
\end{theorem}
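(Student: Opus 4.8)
The plan is to assemble the statement directly from the two preceding lemmas, keeping careful track of what "globally generated" means via Definition \ref{def-gl-gen-tmb}. First I would fix a cone $\sigma \in \Sigma$ with rays $\sigma(1) = \{\rho_1, \ldots, \rho_d\}$ and primitive generators $\bv_1, \ldots, \bv_d$. By definition, $\EE$ is globally generated precisely when, for every such $\sigma$, the elements of $H^0(X_\Sigma, \EE)$ suffice to define a basis of the matroid fiber $\EE_\sigma$. So the content of the theorem is a translation of this condition into the combinatorial-polyhedral language of the parliament $\{P_{\vv(e)} \mid e \in \M\}$.

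The key step is to combine Lemma \ref{lem-gradedbasisadapted} and Lemma \ref{lem-basissurvives}. By Lemma \ref{lem-gradedbasisadapted}, a subset $\B \subset \M$ determines a basis of $\EE_\sigma$ if and only if $\B$ is a basis of $\M$ with $\Phi(\sigma) \subset A_\B$, and in that case $\B$ breaks up as a disjoint union $\bigsqcup_{\br} \B_\br$ with the classes $\bar\B_\br$ giving a basis of the $\br$-graded piece of $\EE_\sigma$; the characters $u(e_i)$ attached to $e_i \in \B$ are exactly the multiset of fiber characters $u_1, \ldots, u_r$. Now suppose $\EE$ is globally generated. Then there is a basis $\overline{\B} \subset \EE_\sigma$ whose members all come from global sections; by Lemma \ref{lem-gradedbasisadapted} the corresponding $\B \subset \M$ is a basis of $\M$ adapted to $\sigma$, and by Lemma \ref{lem-basissurvives} each $e_i \in \B$, being a global section defining an element of $\EE_\sigma$, has the property that $u(e_i)$ is the vertex of $P_{\vv(e_i)}$ in the $\sigma$ direction. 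This gives exactly the asserted basis $\B$. Conversely, if for some $\sigma$ there is a basis $\B \subset \M$ whose members $e_i$ have $u(e_i)$ equal to the vertex of $P_{\vv(e_i)}$ in the $\sigma$ direction, then each $e_i$ lies in $H^0(X_\Sigma, \EE)_{u(e_i)}$ (the vertex condition is precisely the inequalities $\langle \bv_\rho, u\rangle \le \vv(e_i)(\bv_\rho)$ for all $\rho \in \Sigma(1)$), and by the "in this case" clause of Lemma \ref{lem-basissurvives}, together with Lemma \ref{lem-gradedbasisadapted}, these elements define a basis of $\EE_\sigma$; since this holds for every $\sigma$, $\EE$ is globally generated.

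The main subtlety I anticipate is bookkeeping rather than a genuine obstacle: one must be careful that the vertex-in-the-$\sigma$-direction condition for $P_{\vv(e)}$ (which involves all rays of $\Sigma$) correctly implies membership of $e$ in $H^0(X_\Sigma, \EE)_u$ and also the local statement that $e$ survives to $\EE_\sigma$, and that the "$\sigma$ direction" is well-defined even when $\sigma$ is not full-dimensional — here one restricts attention to the characters modulo $M_\sigma$, exactly as in the remark following Definition \ref{def-matroid-vb-2}. Once these identifications are made, the equivalence is a direct consequence of the two lemmas, and indeed the theorem statement already flags that it is "immediate from Lemmas \ref{lem-basissurvives} and \ref{lem-gradedbasisadapted}," so the proof can be kept to a short paragraph citing them.
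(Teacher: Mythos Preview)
Your proposal is correct and matches the paper's approach exactly: the paper's proof is the single sentence ``immediate from Lemmas \ref{lem-basissurvives} and \ref{lem-gradedbasisadapted},'' and you have simply unpacked that citation carefully, tracking the forward and converse directions through the two lemmas as intended. The only cosmetic slip is the phrase ``if for some $\sigma$'' in your converse paragraph, which should read ``for each $\sigma$'' (as your concluding clause already makes clear).
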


Let $\br = (r_1, \ldots, r_n) \in \Z^n$, and let $\mathcal{L}$ denote the corresponding $T$-linearized line bundle bundle on $X_\Sigma$.  We may define the tensor product $\EE \otimes \mathcal{L}$ to be the tropical toric vector bundle with the same matroid as $\EE$ and diagram the matrix obtained by adding $r_i$ to the $i$-th row of the diagram of $\EE$ for each $1 \leq i \leq n$.  This is shown to coincide with the corresponding operation on toric vector bundles in \cite{KM-PL}.  Next we show that tropical toric vector bundles can be made globally generated by tensoring with a sufficiently high power of an ample line bundle.  
\begin{theorem}\label{thm-makegg}
    Let $\EE$ be a tropical toric vector bundle, and let $\mathcal{L}$ be a $T$-linearized ample line bundle on $X_\Sigma$, then there is an $N_0 > 0$ depending on $\EE$ and $\O(\psi)$ such that the bundle $\EE \otimes \mathcal{L}$ is globally generated for all $N \geq N_0$.  
\end{theorem}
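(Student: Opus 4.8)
The plan is to reduce the statement to the combinatorial global-generation criterion of Theorem \ref{thm-gg} (via Lemmas \ref{lem-gradedbasisadapted} and \ref{lem-basissurvives}) and then to verify a finite list of linear inequalities in $N$ that hold eventually because $\mathcal{L}$ is ample. First I would record how the data changes under the twist. Writing $\mathcal{L} = \O(\psi)$ with $\psi \in \PL(\Sigma, \Z)$, the bundle $\EE \otimes \mathcal{L}^{\otimes N}$ has the same matroid $\M$ and tropical point $\vv_N(e) = \vv(e) + N\psi$ for $e \in \M$ (equivalently, its diagram is that of $\EE$ with $N\psi(\bv_{\rho_i})$ added to the $i$-th row), by the description of the tensor product preceding the theorem. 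Since the matroid polytope lies in an affine hyperplane, the Gr\"obner fan $\GF(\M)$ has lineality in the all-ones direction, so $\Berg(\M)$ and every apartment $A_B$ are stable under adding a common real number to all coordinates; consequently a basis $B$ with $\Phi(\sigma) \subseteq A_B$ still satisfies $\Phi_N(\sigma) \subseteq A_B$. I would therefore fix, once and for all, for every maximal cone $\sigma$, a basis $\B_\sigma \subseteq \M$ adapted to $\Phi(\sigma)$ (it exists by Definition \ref{def-matroid-vb-1}); it remains adapted to $\Phi_N(\sigma)$ for all $N$, the characters of the fibers $(\EE \otimes \mathcal{L}^{\otimes N})_\sigma$ are read off $\B_\sigma$, and for $b \in \B_\sigma$ the restrictions $\vv(b)|_\sigma = \langle u_\sigma(b), \cdot\rangle$ and $\psi|_\sigma = \langle m_\sigma, \cdot\rangle$ are linear, with $u_\sigma(b), m_\sigma \in M$ ($\sigma$ being full-dimensional).

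Next I would translate Theorem \ref{thm-gg} into a concrete condition. For fixed $N$, $\sigma$, and $b \in \B_\sigma$, the point $u_\sigma(b) + N m_\sigma$ makes all constraints $\langle\,\cdot\,, \bv_\rho\rangle \le \vv_N(b)(\bv_\rho)$ with $\rho \in \sigma(1)$ tight, and since $X_\Sigma$ is smooth these constraints are linearly independent; hence $u_\sigma(b) + N m_\sigma$ is the candidate vertex of $P_{\vv_N(b)}$ in the $\sigma$-direction. By Lemmas \ref{lem-basissurvives} and \ref{lem-gradedbasisadapted} together with Theorem \ref{thm-gg}, $\EE \otimes \mathcal{L}^{\otimes N}$ is globally generated as soon as, for every maximal $\sigma$ and every $b \in \B_\sigma$, this candidate vertex lies in $P_{\vv_N(b)}$, i.e.
\[
\langle u_\sigma(b) + N m_\sigma, \bv_\rho\rangle \ \le\ \vv(b)(\bv_\rho) + N\psi(\bv_\rho) \qquad \text{for all } \rho \in \Sigma(1).
\]
For $\rho \in \sigma(1)$ this is an equality; for $\rho \notin \sigma(1)$ it rearranges to $N\bigl(\psi(\bv_\rho) - \langle m_\sigma, \bv_\rho\rangle\bigr) \ge \langle u_\sigma(b), \bv_\rho\rangle - \vv(b)(\bv_\rho)$, and ampleness of $\mathcal{L}$ — strict convexity of $\psi$ — gives $\psi(\bv_\rho) - \langle m_\sigma, \bv_\rho\rangle > 0$ for every ray $\rho \notin \sigma(1)$. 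So each such inequality holds once $N$ exceeds a ratio depending only on $\EE$ and $\mathcal{L}$. Since there are finitely many maximal cones $\sigma$, finitely many $b \in \B_\sigma$, and finitely many rays $\rho \notin \sigma(1)$, I would take $N_0$ to be the maximum of $1$ and all these (rounded-up) ratios, and conclude that $\EE \otimes \mathcal{L}^{\otimes N}$ is globally generated for all $N \ge N_0$.

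I do not expect a serious obstacle here: once the global-generation criterion is set up and the twist is identified with the additive shift $\vv \mapsto \vv + N\psi$, the claim is a finite system of linear inequalities in $N$ with positive leading coefficients. The one point to be careful about is that a single basis $\B_\sigma$ works simultaneously for all the twists $\Phi_N$ (so that the fiber characters are exactly $u_\sigma(b) + N m_\sigma$), which is handled by the lineality remark above, and that "vertex in the $\sigma$-direction'' genuinely means "the point cut out by the $\sigma$-constraints, provided it lies in the polytope'' (Lemma \ref{lem-basissurvives}). This argument is the parliament-of-polytopes shadow of the classical fact that a coherent sheaf becomes globally generated after twisting by a sufficiently positive line bundle.
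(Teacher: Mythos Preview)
Your proof is correct and follows essentially the same route as the paper: both reduce to the criterion of Theorem \ref{thm-gg} and then use strict convexity of $\psi$ (ampleness of $\mathcal{L}$) to force the finitely many required inequalities to hold for large $N$. The paper packages the endgame a bit differently—it observes that if every column of the diagram defines an ample class on $X_\Sigma$ then the criterion is automatically met, and then simply invokes that any divisor becomes ample after adding a large multiple of an ample one—whereas you write out the inequalities explicitly and check them only for the basis elements $b\in\B_\sigma$; you also make explicit the lineality argument showing the same adapted basis works after twisting, a point the paper leaves implicit.
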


\begin{proof}
    Let $D$ be the diagram of $\EE$.  Suppose that $e \in \M$ is part of an adapted basis over $\sigma \in \Sigma$, then the $\sigma(1)$ entries of the $e$-th column of $D$ coincide with the inner products $\langle \bv_\rho, m(e)\rangle$, for $\rho \in \sigma(1)$. It follows that if the $e$-th column of $D$ defines an ample class over $X_\sigma$, $m(e)$ is a vertex of $P_{\vv(e)}$. Moreover, if every column of $D$ defines an ample class, the criteria of \ref{thm-gg} must be satisfied, as every $\sigma$ has an adapted basis among the elements of $\M$.  Now the theorem follows from the fact that any divisor on $X_\Sigma$ can be made ample by tensoring with a sufficiently high multiple of $\mathcal{L}$. 
\end{proof}

\begin{remark}
    In the of proof of Theorem \ref{thm-makegg} we have shown that if every column of the diagram of a tropical toric vector bundle defines an ample class, then $\EE$ is globally generated. 
\end{remark}

\subsection{The equivariant Euler characteristic}\label{subsec-euler}

We finish this section by defining and discussing the equivariant Euler characteristic of a tropical toric vector bundle. 

\begin{definition}[Equivariant Euler characteristic of a tropical toric vector bundle]   \label{def-equiv-Euler-matriod-vb}
For a character $u$ we define the Euler characteristic $\chi(X_\Sigma, \EE)_u$ by:
\begin{align*}
\chi(X_\Sigma,\EE)_u &= \sum_{\sigma \in \Sigma} (-1)^{\codim(\sigma)} \rank H^0(U_\sigma, \EE\!\!\mid_{U_\sigma})_u,\\
&= \sum_{\sigma \in \Sigma} (-1)^{\codim(\sigma)} \rank\;\{e \in \M \mid u \in P_{\vv(e), \sigma} \}.    
\end{align*}
{Moreover, we let:
$$\chi(X_\Sigma,\EE) = \sum_{u \in M} \chi(X_\Sigma,\EE)_u.$$}
\end{definition}

{Definition \ref{def-equiv-Euler-matriod-vb} is made to coincide with the Euler characteristic of a toric vector bundle when the matroid is a ``DJS matroid" as in \cite{DJS} and Definition \ref{def-DJSmatroid-toricvectorbundle}.  

\begin{proposition}\label{prop-Euler-sheaf-tropicalization}
   Let $\E$ be a toric vector bundle over a toric variety $X_\Sigma$ determined by the data $(L, \Phi)$, and let $\EE = (\M(L), \Phi)$ be its tropicalization. If $\M(L)$ is an extension of a DJS matroid of $\E$, then $\chi(X_\Sigma, \E)_u = \chi(X_\Sigma, \EE)_u$ and $H^0(X_\Sigma, \E)_u = H^0(X_\Sigma, \EE)_u$.  
\end{proposition}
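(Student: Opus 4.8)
The plan is to reduce the statement, cone by cone, to the single hypothesis defining a DJS matroid (Definition \ref{def-DJSmatroid-toricvectorbundle}): that for the presentation data $(L,\Phi)$ the rank in $\M(L)$ of every intersection of Klyachko flats equals the dimension of the corresponding intersection of Klyachko spaces of $\E$. Everything else is bookkeeping with the Klyachko/parliament dictionary already set up in Sections \ref{subsec-review-sheaf} and \ref{subsec-sheaf}.

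First I would unwind the two sides. Fix $\sigma\in\Sigma$ with $\sigma(1)=\{\rho_1,\dots,\rho_k\}$ and a character $u\in M$. On the geometric side, $H^0(U_\sigma,\E\!\!\mid_{U_\sigma})_u=E^{\rho_1}_{\langle\bv_1,u\rangle}\cap\cdots\cap E^{\rho_k}_{\langle\bv_k,u\rangle}$ and $H^0(X_\Sigma,\E)_u=\bigcap_{\rho\in\Sigma(1)}E^{\rho}_{\langle\bv_\rho,u\rangle}$, as recalled in Section \ref{subsec-review-sheaf}. On the tropical side, $H^0(U_\sigma,\EE\!\!\mid_{U_\sigma})_u=\{e\in\M\mid u\in P_{\vv(e),\sigma}\}$ and $H^0(X_\Sigma,\EE)_u=\{e\in\M\mid u\in P_{\vv(e)}\}$, with the matroid structure on each inherited from $\M(L)$. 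The elementary observation is that $\vv(e)(\bv_\rho)$ is precisely the filtration level of $e$ with respect to $E^{\rho}_\bullet$, so $u\in P_{\vv(e),\sigma}$ iff $\langle\bv_\rho,u\rangle\le\vv(e)(\bv_\rho)$ for all $\rho\in\sigma(1)$ iff $e\in E^{\rho}_{\langle\bv_\rho,u\rangle}$ for all $\rho\in\sigma(1)$. Hence the ground set of $H^0(U_\sigma,\EE\!\!\mid_{U_\sigma})_u$ is exactly the Klyachko flat $F^{\rho_1}_{\langle\bv_1,u\rangle}\cap\cdots\cap F^{\rho_k}_{\langle\bv_k,u\rangle}$, i.e. the set of those generators $e_i$ lying in $E^{\rho_1}_{\langle\bv_1,u\rangle}\cap\cdots\cap E^{\rho_k}_{\langle\bv_k,u\rangle}$; likewise for $H^0(X_\Sigma,\EE)_u$. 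Moreover, since $\M(L)$ is the linear matroid of the vectors $e_1,\dots,e_m\in E$, the matroid it induces on this ground set is the matroid of those vectors, i.e. the matroid attached to the subspace $E^{\rho_1}_{\langle\bv_1,u\rangle}\cap\cdots\cap E^{\rho_k}_{\langle\bv_k,u\rangle}$.

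Now I would invoke the hypothesis that $\M(L)$ is (an extension of) a DJS matroid: the rank in $\M(L)$ of every intersection of Klyachko flats equals the dimension of the corresponding intersection of Klyachko spaces. Applying this with filtration indices $\langle\bv_\rho,u\rangle$ for $\rho\in\sigma(1)$ and arbitrarily negative for $\rho\notin\sigma(1)$ (so $F^{\rho}=\M$, $E^{\rho}=E$) gives $\rank H^0(U_\sigma,\EE\!\!\mid_{U_\sigma})_u=\dim H^0(U_\sigma,\E\!\!\mid_{U_\sigma})_u$ for every $\sigma$ and $u$; this works verbatim for non-maximal cones since both descriptions involve only the rays in $\sigma(1)$. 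Multiplying by $(-1)^{\codim(\sigma)}$ and summing over $\sigma\in\Sigma$ — using that $X_\Sigma$ is smooth and complete, so that $\chi(X_\Sigma,\E)_u=\sum_\sigma(-1)^{\codim(\sigma)}\dim H^0(U_\sigma,\E\!\!\mid_{U_\sigma})_u$ — and comparing with Definition \ref{def-equiv-Euler-matriod-vb} yields $\chi(X_\Sigma,\E)_u=\chi(X_\Sigma,\EE)_u$. For the global sections, the same hypothesis applied with indices $\langle\bv_\rho,u\rangle$ for all $\rho\in\Sigma(1)$ shows that the vectors of $\M(L)$ lying in $H^0(X_\Sigma,\E)_u$ span it, so the linear matroid they determine is a faithful model of the Klyachko space; by the previous paragraph this matroid is exactly $H^0(X_\Sigma,\EE)_u$, which is the precise sense in which $H^0(X_\Sigma,\E)_u=H^0(X_\Sigma,\EE)_u$.

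The only real subtlety — the step I would flag as the main obstacle — is pinning down the meaning of the equality $H^0(X_\Sigma,\E)_u=H^0(X_\Sigma,\EE)_u$, since one side is a vector space and the other a matroid: the DJS-extension hypothesis is exactly what forces the generators lying in the Klyachko space to span it, so that the induced linear matroid is an honest representation of the geometry rather than a strictly coarser object. Once that is granted, the comparison of Euler characteristics is a term-by-term identity and the matroid of global sections is literally the restriction of $\M(L)$ to the predicted ground set. A minor point to verify in passing is that all Klyachko flats appearing really are flats of $\M(L)$ and that the rank equality in Definition \ref{def-DJSmatroid-toricvectorbundle} is insensitive to whether $\M(L)$ is the minimal DJS matroid or a proper extension, which is immediate from that definition.
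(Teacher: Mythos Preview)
Your argument is correct and is precisely the intended one: the paper does not supply a separate proof of this proposition, treating it as immediate from the setup in Sections~\ref{subsec-review-sheaf}--\ref{subsec-sheaf} and Definition~\ref{def-DJSmatroid-toricvectorbundle} (the sentence preceding the proposition says the Euler characteristic was ``made to coincide'' under the DJS hypothesis). Your cone-by-cone reduction to the rank--dimension equality for Klyachko flats, together with the padding trick of taking sufficiently negative indices on rays outside $\sigma(1)$, is exactly the verification the paper leaves implicit.
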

}

We will show Theorem \ref{thm-intro-sections}, which we restate below. 
\begin{theorem}\label{thm-EulerSections}
    Let $\EE$ be a tropical toric vector bundle on a smooth, projective toric variety $X_\Sigma$, and let $\mathcal{L}$ be an ample line bundle on $X_\Sigma$, then there is an integer $N_0 >0$ such that for all $N \geq N_0$ and $u \in M$ we have:\[\chi(X_\Sigma,\EE\otimes\mathcal{L}^{\otimes N})_u = \rank H^0(X_\Sigma,\EE\otimes\mathcal{L}^{\otimes N})_u.\]
    Moreover, for $N \geq N_0$, $\rank H^0(X_\Sigma, \EE\otimes \mathcal{L}^{\otimes N})$ is computed by an integral polynomial of degree $d=\dim(X_\Sigma)$ in $N$.
\end{theorem}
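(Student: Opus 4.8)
The plan is to turn both sides into the combinatorics of the (higher) parliament polyhedra and then invoke the convex‑chain calculus of Khovanskii and Pukhlikov \cite{Khovanskii-Pukhlikov-1, Khovanskii-Pukhlikov-2}. Write $\mathcal{L}=\O(\psi)$ with $\psi\in\PL(\Sigma,\Z)$ strictly convex (possible since $\mathcal L$ is ample and $\Sigma$ smooth complete); tensoring by $\mathcal L^{\otimes N}$ replaces the tropical point $\vv$ by $\vv+N\psi$, i.e.\ adds $N\psi(\bv_\rho)$ to the $\rho$‑th row of the diagram. For a flat $F\subseteq\M$ let $\psi_F\in\PL(\Sigma,\Z)$ be the integral piecewise linear function determined on rays by $\psi_F(\bv_\rho)=\min_{e\in F}\vv(e)(\bv_\rho)$ (linear interpolation on each cone; well defined and integral because $\Sigma$ is smooth and the ray values are intrinsic to $\rho$). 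Then $P_{F,\sigma}:=\bigcap_{e\in F}P_{\vv(e),\sigma}=\{\,y:\langle y,\bv_\rho\rangle\le\psi_F(\bv_\rho)\ \forall\rho\in\sigma(1)\,\}=m_\sigma(\psi_F)-\sigma^\vee$, and after tensoring $P^{(N)}_{F,\sigma}=m_\sigma(\psi_F+N\psi)-\sigma^\vee$, while $P^{(N)}_F:=\bigcap_{\sigma\in\Sigma(d)}P^{(N)}_{F,\sigma}=P_{\psi_F+N\psi}$ is a genuine polyhedron. These $P^{(N)}_F$, one per flat, are the ``higher members of the parliament'' of Definition \ref{def-parliament} (for a rank‑one flat $\{e\}$ one recovers $P_{\vv(e)}$, cf.\ \cite{DJS}).

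Next I decompose ranks over the lattice of flats. Let $c_F:=\sum_{F'\le F}\mu(F',F)\,\rank(F')$ be the Möbius inversion of the rank function of $\M$, so that $\sum_{F'\le F}c_{F'}=\rank(F)$ for every flat $F$; these are integers and $\sum_{F}c_F=\rank(\M)=r$. Over a maximal cone $\sigma$, the weight‑$u$ local sections of $\EE\otimes\mathcal L^{\otimes N}$ form the Klyachko flat $G^\sigma_u=\bigcap_{\rho\in\sigma(1)}F^{\rho}_{\langle u,\bv_\rho\rangle-N\psi(\bv_\rho)}$, and $u\in P^{(N)}_{F,\sigma}$ exactly when $F\le G^\sigma_u$; hence $\rank H^0(U_\sigma,\EE\otimes\mathcal L^{\otimes N})_u=\rank(G^\sigma_u)=\sum_{F:\,u\in P^{(N)}_{F,\sigma}}c_F$. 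The same holds for the zero cone (no constraints, $G^{\{0\}}_u=\M$). Summing over all $\sigma\in\Sigma$ with signs and interchanging the two finite sums gives
\[\chi(X_\Sigma,\EE\otimes\mathcal L^{\otimes N})_u=\sum_{F\ \mathrm{flat}}c_F\Bigl(\sum_{\sigma\in\Sigma:\ u\in P^{(N)}_{F,\sigma}}(-1)^{\codim\sigma}\Bigr).\]
Likewise $u\in P^{(N)}_F$ iff $F$ lies in the global‑section flat $H^0(X_\Sigma,\EE\otimes\mathcal L^{\otimes N})_u=\{e:u\in P_{(\vv+N\psi)(e)}\}$, so $\rank H^0(X_\Sigma,\EE\otimes\mathcal L^{\otimes N})_u=\sum_{F:\,u\in P^{(N)}_F}c_F$. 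Thus it suffices to prove, for every flat $F$ and all $u\in M$ once $N$ is large,
\[\sum_{\sigma\in\Sigma}(-1)^{\codim\sigma}\,\mathbb{1}\bigl[u\in P^{(N)}_{F,\sigma}\bigr]=\mathbb{1}\bigl[u\in P^{(N)}_F\bigr].\]

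This identity is exactly the Khovanskii--Pukhlikov statement that the virtual polytope $\sum_\sigma(-1)^{\codim\sigma}[\,m_\sigma(h)-\sigma^\vee\,]$ attached to $h\in\PL(\Sigma,\Z)$ equals the indicator chain of the honest polytope $P_h$ as soon as $h$ is convex with respect to $\Sigma$. Apply it with $h=\psi_F+N\psi$: convexity of a piecewise linear function on a complete fan is a finite system of linear inequalities, one per wall, and since $\psi$ is \emph{strictly} convex, $\psi_F+N\psi$ is convex for all $N\ge N_0(\psi_F)$, a bound computable from the convexity defects of $\psi_F$ along the walls of $\Sigma$ and the strict‑convexity gaps of $\psi$. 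Taking $N_0:=\max_F N_0(\psi_F)$ over the finitely many flats of $\M$ makes the identity hold for all $F$ simultaneously, hence $\chi(X_\Sigma,\EE\otimes\mathcal L^{\otimes N})_u=\rank H^0(X_\Sigma,\EE\otimes\mathcal L^{\otimes N})_u$ for every $u\in M$ and every $N\ge N_0$; note $N_0$ does not depend on $u$. I expect this to be the one genuinely substantive step: checking that for $N\gg0$ the local polyhedra $P^{(N)}_{F,\sigma}$ really do assemble into the normal‑fan decomposition of an honest convex polytope (equivalently, that adding a large multiple of a strictly convex $\PL$‑function to the support datum of each higher parliament member forces convexity), and that a single $N_0$ is uniform in both $F$ and $u$.

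Finally, summing over $u$ (legitimate because each $P^{(N)}_F$ is bounded) gives, for $N\ge N_0$,
\[\rank H^0(X_\Sigma,\EE\otimes\mathcal L^{\otimes N})=\sum_{u\in M}\chi(X_\Sigma,\EE\otimes\mathcal L^{\otimes N})_u=\sum_{F\ \mathrm{flat}}c_F\cdot\#\bigl(M\cap P^{(N)}_F\bigr).\]
For $N\ge N_0$ one has $P^{(N+1)}_F=P^{(N)}_F+P_\psi$ (Minkowski sum, since both $\psi_F+N\psi$ and $\psi$ are convex w.r.t.\ $\Sigma$), so $P^{(N)}_F=P^{(N_0)}_F+(N-N_0)P_\psi$ with $P^{(N_0)}_F$ and $P_\psi$ fixed lattice polytopes; by the Ehrhart/McMullen theory of mixed lattice‑point enumerators, $N\mapsto\#(M\cap P^{(N)}_F)$ is then an integer‑valued polynomial in $N$ of degree $d=\dim X_\Sigma$ with leading coefficient $\operatorname{vol}(P_\psi)$. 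Hence $\rank H^0(X_\Sigma,\EE\otimes\mathcal L^{\otimes N})$ is an integral polynomial in $N$ of degree $\le d$, with leading coefficient $\bigl(\sum_F c_F\bigr)\operatorname{vol}(P_\psi)=r\operatorname{vol}(P_\psi)\neq 0$, so the degree is exactly $d$. (Polynomiality of $\chi(X_\Sigma,\EE\otimes\mathcal L^{\otimes N})$ in $N$ may alternatively be quoted directly from Khovanskii--Pukhlikov, as in the remark following Problem \ref{prob-HRR-matroid-vb}, and then combined with the vanishing statement above.)
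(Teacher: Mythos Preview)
Your proposal is correct and follows essentially the same architecture as the paper's proof: decompose the rank function as $\rank(G)=\sum_{F\le G}c_F$ over flats, rewrite both $\chi$ and $h^0$ as $\sum_F c_F$ times indicator functions of the higher parliament polyhedra $P_{F,\sigma}$ (resp.\ $P_F$), reduce the equality $\chi=h^0$ to the Brianchon--Gram identity for each $P_F$ once its normal fan is $\Sigma$, and force this by tensoring with $\mathcal L^{\otimes N}$ for $N$ large. The only noteworthy difference is how the coefficients $c_F$ are produced: you take the direct M\"obius inversion $c_F=\sum_{F'\le F}\mu(F',F)\rank(F')$, whereas the paper builds them via an inclusion--exclusion over rank-$r$ flats (writing $I_{D,\ge r}$ as an alternating sum over subsets of $J_r$); both satisfy the same defining relation $\rank(G)=\sum_{F\le G}c_F$ and hence coincide. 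Your Ehrhart/Minkowski-sum argument for the polynomiality (with the explicit leading coefficient $r\cdot\mathrm{vol}(P_\psi)$) is a slight sharpening of the paper's appeal to Khovanskii--Pukhlikov, but not a different idea.
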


We prove Theorem \ref{thm-EulerSections} by proving that a more general identity holds among various functions on the real vector space $M_\R$.  Fix a matroid $\M$ and a smooth, projective fan $\Sigma$. We consider the set $\mathcal{F}(\Sigma, \M) \subset \Berg(\M)^n \subset \R^{m\times n}$ of $m \times n$ matrices which satisfy the tropical and apartment conditions to be a diagram of a tropical toric vector bundle for $\M$ over $X_\Sigma$. The integral points of $\mathcal{F}(\Sigma, \M)$ define tropical toric vector bundles over $X_\Sigma$.  

Fix $D \in \mathcal{F}(\Sigma, \M)$; this data determines a valuation $\vv$ and polyhedra $P_{\vv(e)}, P_{\vv(e), \sigma}$ for $e \in \M$. To emphasize the dependence on the diagram $D$ we denote these polyhedra by $P_{D, e}$ and $P_{D, e, \sigma}$, respectively. In particular, the column $d_i$ of $D$ corresponding to $e_i \in \M$, and the the polytope $P_{D,e_i} \subset M_\R$ is defined by the inequalities $\langle u, \bv_j \rangle \leq d_{ij}$.  We let $I_{D,e_i}: M_\R \to \R$ denote the corresponding indicator function.  It is possible that $I_{D,e_i}(u) = 0$ for all $u \in M_\R$, e.g. if the column $d_i$ does not define an effective divisor. 

Similarly, for a set $S \subset \M$, we let $I_{D,S}$ be the indicator function of the intersection $P_{D, S}$ of the $P_{D,e_i}$ with $e_i \in S$.  This operation corresponds to taking the row-wise $\min$ of the $d_i$ for $e_i \in S$.  It is straightforward to check that $I_{D,S} = I_{D,\Span(S)}$, so we only even consider support functions corresponding to flats of $\M$.

The global sections functor defines a real-valued function $h_D^0:M_\R \to \R$, where $h_D^0(u) = \rank H^0(X_\Sigma, \EE)_u = \rank F^{\rho_1}_{\langle u, \bv_1 \rangle}\cap \cdots \cap F^{\rho_1}_{\langle u, \bv_n \rangle}$.  We express $h_D^0$ as a linear combination of functions of the form $I_{D,F}$ for flats $F \subset \M$. 

Observe that $I_{D,F}(u) = 1$ only if $F^{\rho_1}_{\langle u, \bv_1 \rangle}\cap \cdots \cap F^{\rho_1}_{\langle u, \bv_n \rangle} \supseteq F$, and $F = F^{\rho_1}_{\langle u, \bv_1 \rangle}\cap \cdots \cap F^{\rho_1}_{\langle u, \bv_n \rangle}$ precisely when $I_{D,F}(u) = 1$ but $I_{D,G}(u) = 0$ for all $G \supsetneq F$. Let $T_{D,\geq r}$ denote the set of $u \in M_\R$ where $h_D^0(u) \geq r$, and we let $I_{D,\geq r}$ be the indicator function for $T_{D,\geq r}$.  Finally, let $J_r$ be the set of flats of rank $r$.

\begin{lemma}\label{lem-globalsections}
For any $D \in \mathcal{F}(\Sigma, \M)$ we have:\\

\[I_{D,\geq r} = \sum_{\emptyset \neq B \subseteq J_r} (-1)^{|B|+1} I_{D,\Span\{\cup F_j \mid j \in B\}}.\]  

\end{lemma}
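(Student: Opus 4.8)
The plan is to prove the claimed identity pointwise: fix $u \in M_\R$ and show both sides take the same value at $u$. Recall that $h_D^0(u)$ equals the rank of the Klyachko flat $G(u) := F^{\rho_1}_{\langle u, \bv_1 \rangle}\cap \cdots \cap F^{\rho_n}_{\langle u, \bv_n \rangle}$, which is a flat of $\M$ by Lemma \ref{lem-filt-Bergman-fan} (each $F^{\rho_i}_{\langle u, \bv_i\rangle}$ is a flat, and an intersection of flats is a flat). By definition $I_{D,\geq r}(u) = 1$ exactly when $\rank G(u) \geq r$. On the other hand, unwinding the definitions, $I_{D,F}(u) = 1$ iff $u \in P_{D,F} = \bigcap_{e \in F} P_{D,\vv(e)}$, which holds iff $\langle u, \bv_j\rangle \leq \vv(e)(\bv_j)$ for every $e \in F$ and every $j$, i.e. iff $F \subseteq G(u)$. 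So the right-hand side at $u$ is $\sum_{\emptyset \neq B \subseteq J_r} (-1)^{|B|+1}\, \mathbb{1}\!\left[\Span\{\cup F_j \mid j \in B\} \subseteq G(u)\right]$, and $\Span(\cup_{j\in B} F_j) \subseteq G(u)$ iff $F_j \subseteq G(u)$ for all $j \in B$ (using that $G(u)$ is a flat, so it is closed under span).

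Thus, letting $\mathcal{J}(u) := \{F \in J_r \mid F \subseteq G(u)\}$ be the set of rank-$r$ flats contained in $G(u)$, the right-hand side becomes $\sum_{\emptyset \neq B \subseteq \mathcal{J}(u)} (-1)^{|B|+1} = 1$ if $\mathcal{J}(u) \neq \emptyset$, and $0$ if $\mathcal{J}(u) = \emptyset$, by the standard inclusion-exclusion identity $\sum_{\emptyset\neq B\subseteq S}(-1)^{|B|+1} = 1 - (1-1)^{|S|}$ for a finite set $S$. The remaining point is purely matroid-theoretic: $\mathcal{J}(u) \neq \emptyset$ iff $\rank G(u) \geq r$. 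Indeed, if $\rank G(u) \geq r$, choose an independent set $I \subseteq G(u)$ of size $r$; its closure $\overline{I}$ is a flat of rank $r$ contained in the flat $G(u)$, so $\overline{I} \in \mathcal{J}(u)$. Conversely, if some rank-$r$ flat $F$ is contained in $G(u)$, then $\rank G(u) \geq \rank F = r$ by monotonicity of the rank function. Hence the right-hand side equals $\mathbb{1}[\rank G(u) \geq r] = I_{D,\geq r}(u)$, completing the proof.

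The only mild subtlety — and the step I would be most careful about — is the interplay between "span/closure" and the Klyachko flat: one must use that $G(u)$ is genuinely a \emph{flat} (not merely a subset) so that the conditions $\Span(\cup_{j\in B}F_j)\subseteq G(u)$ and $F_j\subseteq G(u)\ \forall j\in B$ coincide, and so that one can produce a rank-$r$ flat inside it. Both facts follow from Lemma \ref{lem-filt-Bergman-fan} together with the elementary observation that flats are closed under intersection. Everything else is the elementary inclusion-exclusion computation, which requires no case analysis beyond whether $\mathcal{J}(u)$ is empty.
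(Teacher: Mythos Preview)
Your proof is correct and follows essentially the same approach as the paper's: both argue pointwise at a fixed $u$, identify the Klyachko flat $G(u)$, observe that the only nonzero summands on the right-hand side come from subsets $B$ of the rank-$r$ flats contained in $G(u)$, and then invoke the inclusion--exclusion identity $\sum_{\emptyset\neq B\subseteq S}(-1)^{|B|+1}=1$ for $S\neq\emptyset$. Your write-up is somewhat more explicit than the paper's---in particular you spell out that $G(u)$ is a flat (needed to produce a rank-$r$ flat inside it and to pass between $\Span(\cup F_j)\subseteq G(u)$ and $F_j\subseteq G(u)$ for all $j$)---but the underlying argument is the same.
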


\begin{proof}
    One of the functions $I_{D,\Span\{\cup F_j \mid j \in B\}}(u) \neq 0$ only if $I_{D, F}(u) \neq 0$ for some flat of rank $\geq r$.  This in turn implies that $h_D^0(u) \geq r$, so the support of the right hand side is contained in $T_{D, \geq r}$. Now suppose $u \in T_{D, \geq r}$, this implies that $I_{D, F}(u) = 1$ for all rank $r$ flats $F \in J_r$ contained in $F^{\rho_1}_{\langle u, \bv_1 \rangle}\cap \cdots \cap F^{\rho_1}_{\langle u, \bv_n \rangle}$ and moreover that $I_{D, G}(u) = 0$ for any flat containing a rank $r$ flat which is not contained in $F^{\rho_1}_{\langle u, \bv_1 \rangle}\cap \cdots \cap F^{\rho_1}_{\langle u, \bv_n \rangle}$.  Let us denote the former by $F_1, \ldots, F_\ell$. Evaluating the right hand side at $u$, we only pick up non-zero terms among the summands built from the $F_1, \ldots, F_\ell$.  These terms sum to $1$.
\end{proof}

Now we define $T_{D,r} = T_{D,\geq r} \setminus T_{D,\geq r+1}$ to be the set of points where $h_D^0(u) = r$, and let $I_{D, r} = I_{D, \geq r} - I_{D, \geq r+1}$ be the indicator function of $T_{D, r}$.

\begin{corollary}\label{cor-globalsections}
With $D \in \mathcal{F}(\Sigma, \M)$ as above we have:\\

\[h_D^0 = \sum_{r = 1}^{\rank(\M)} rI_{D,r}.\]
    
\end{corollary}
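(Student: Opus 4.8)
The identity is the ``layer-cake'' (distribution function) decomposition of the integer-valued function $h_D^0$, so the plan is essentially to unwind definitions and telescope. First I would record the elementary fact that $h_D^0(u)$ is a well-defined nonnegative integer bounded above by $\rank(\M)$: by Lemma \ref{lem-filt-Bergman-fan} each $F^{\rho_i}_{\langle u, \bv_i\rangle}$ is a flat of $\M$, an intersection of flats is a flat, and the rank of any flat lies in $\{0, 1, \ldots, \rank(\M)\}$. Hence for every $u \in M_\R$ there is exactly one $r \in \{0, \ldots, \rank(\M)\}$ with $u \in T_{D,r}$, where $T_{D,r} = T_{D,\geq r}\setminus T_{D,\geq r+1}$.

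The first approach is direct. Fix $u \in M_\R$ and let $r_0 = h_D^0(u)$. If $r_0 = 0$ then $u \notin T_{D,r}$ for all $r \geq 1$, so the right-hand side evaluates to $0 = h_D^0(u)$. If $r_0 \geq 1$ then $u$ belongs to $T_{D,r_0}$ and to no other $T_{D,r}$ with $r \geq 1$, so the sum $\sum_{r=1}^{\rank(\M)} r\, I_{D,r}(u)$ collapses to the single term $r_0 \cdot I_{D,r_0}(u) = r_0 = h_D^0(u)$. Since $u$ was arbitrary, the two functions agree pointwise.

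Alternatively, one can phrase it via Abel summation, which also makes transparent why the truncation at $\rank(\M)$ is harmless. Since $h_D^0(u) = \#\{r \geq 1 : h_D^0(u) \geq r\}$, we have $h_D^0 = \sum_{r\geq 1} I_{D,\geq r}$, a finite sum because $I_{D,\geq r} \equiv 0$ once $r > \rank(\M)$ (indeed $T_{D,\geq r}=\emptyset$ then, by the rank bound above). Using the definition $I_{D,r} = I_{D,\geq r} - I_{D,\geq r+1}$ and summing,
\[
\sum_{r=1}^{\rank(\M)} r\, I_{D,r} \;=\; \sum_{r=1}^{\rank(\M)} r\bigl(I_{D,\geq r} - I_{D,\geq r+1}\bigr) \;=\; \sum_{r=1}^{\rank(\M)} I_{D,\geq r} \;-\; \rank(\M)\, I_{D,\geq \rank(\M)+1} \;=\; h_D^0,
\]
the middle equality being the standard telescoping of $\sum r(a_r - a_{r+1})$ and the last term vanishing by the rank bound.

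There is no real obstacle here: the only thing to be careful about is the finiteness/boundedness of the index range, which is exactly where the matroid-theoretic input (flats have rank at most $\rank(\M)$, and an intersection of flats is a flat) enters. Everything else is a pointwise combinatorial bookkeeping argument, and it does not use the apartment or tropical conditions defining $\mathcal{F}(\Sigma,\M)$ beyond what is already packaged into Lemma \ref{lem-globalsections}.
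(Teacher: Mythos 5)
Your proposal is correct, and it matches the paper's intent: the paper gives no proof of Corollary \ref{cor-globalsections} at all, treating it as immediate from the definitions $T_{D,r} = T_{D,\geq r}\setminus T_{D,\geq r+1}$ (``the set of points where $h_D^0(u) = r$'') and $I_{D,r} = I_{D,\geq r} - I_{D,\geq r+1}$. Your direct pointwise argument is precisely the intended unwinding, and the Abel-summation variant and the remark about where the bound $\rank(\M)$ enters are harmless elaborations.
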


\noindent
The above expression for $h_D^0$ can be rewritten as a sum of indicator functions for the polyhedra $P_{D, F}$:\[h_D^0 = \sum_{F \subseteq \M} c_F I_{D, F}.\] Notably, by definition the coefficients $c_F$ are integral and depend only on the matroid $\M$.  Let $F$ be a flat, and let $\Upsilon(F)$ be the set of sets $S$ of flats such that the elements of $S$ are all flats of the same rank and the span of the union of the flats in $S$ is $F$. Then it is straightforward to show from the definition of $I_r$ that $c_F = \sum_{S \in \Upsilon(F)} (-1)^{|S|+1}$. 

The Khovanskii-Pukhlikov theory of integration on convex chains \cite{Khovanskii-Pukhlikov-1, Khovanskii-Pukhlikov-2} implies that the function which counts the number of lattice points in $P_{D, F}$ is piecewise polynomial in the entries of $D$ when $D$ lies in the set of diagrams for which $P_{D, F}$ has outer normal fan $\Sigma$. This is a consequence of \cite[Theorem 1]{Khovanskii-Pukhlikov-1} and the fact that the half-space inequalities defining $P_{D, F}$ depend in a piecewise-linear way on the entries of $D$. Let $\mathcal{F}^+(\Sigma, \M) \subset \mathcal{F}(\Sigma, \M)$ be the subset of diagrams for which each $P_{D, F}$ has outer normal fan $\Sigma$.  Let $\mathfrak{h}_{\Sigma, \M}: \mathcal{F}^+(\Sigma, \M) \to \Z$ be the function $\mathfrak{h}_{\Sigma, \M}(D) = \sum_{u \in M} h^0_D(u)$.

\begin{corollary}\label{cor-ppsections}
    For a matroid $\M$ and fan $\Sigma$, the function $\mathfrak{h}_{\Sigma, \M}$ is piecewise polynomial of degree $\dim(X_\Sigma)$ in the entries of $D$. 
\end{corollary}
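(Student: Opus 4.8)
The plan is to reduce $\mathfrak{h}_{\Sigma,\M}$ to a fixed integral linear combination of lattice-point counting functions of the polyhedra $P_{D,F}$ and then apply the Khovanskii--Pukhlikov theory. The starting point is the expression for the global sections function obtained just above: combining Corollary \ref{cor-globalsections} with the rewriting $h_D^0 = \sum_{F \subseteq \M} c_F\, I_{D,F}$, where the sum is over flats $F$ of $\M$, the integers $c_F = \sum_{S \in \Upsilon(F)}(-1)^{|S|+1}$ depend only on $\M$, and $I_{D,F}$ is the indicator function of $P_{D,F}$.

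First I would sum this identity over $u \in M$. For a fixed $D \in \mathcal{F}^+(\Sigma,\M)$ each $P_{D,F}$ has outer normal fan $\Sigma$, hence is a bounded lattice polytope, so $h_D^0$ has bounded support and every sum below is finite. Interchanging the finite sum over flats with the summation over lattice points gives
\[
\mathfrak{h}_{\Sigma,\M}(D) \;=\; \sum_{u \in M} h_D^0(u) \;=\; \sum_{F \subseteq \M} c_F \cdot \#\bigl(P_{D,F} \cap M\bigr),
\]
a fixed integral linear combination of lattice-point counting functions of the $P_{D,F}$, with coefficients depending only on $\M$.

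Next I would analyze each counting function $D \mapsto \#(P_{D,F}\cap M)$ on $\mathcal{F}^+(\Sigma,\M)$. The polytope $P_{D,F}$ is cut out by inequalities of the form $\langle u, \bv_\rho\rangle \le (d_F)_\rho$, where each right-hand side $(d_F)_\rho$ is a row-wise minimum of entries of $D$ and hence depends in a piecewise-linear way on $D$. I would subdivide $\mathcal{F}^+(\Sigma,\M)$ into finitely many polyhedral chambers on which all of these finitely many $\min$-functions are simultaneously linear, so that on a given chamber every $P_{D,F}$ retains a fixed combinatorial type with facet heights linear in $D$. On such a chamber, \cite[Theorem 1]{Khovanskii-Pukhlikov-1} shows that $\#(P_{D,F}\cap M)$ is a polynomial in the facet heights, hence a polynomial of degree $\dim P_{D,F} \le \dim(X_\Sigma)$ in the entries of $D$. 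Summing, $\mathfrak{h}_{\Sigma,\M}$ is piecewise polynomial in the entries of $D$ of degree at most $d := \dim(X_\Sigma)$.

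Finally, to see the degree is exactly $d$ I would use homogeneity: $P_{tD,F} = t\,P_{D,F}$ and $h_{tD}^0(u) = h_D^0(u/t)$, so under $D \mapsto tD$ (with $D$ in the interior of a top-dimensional chamber) one has $\mathfrak{h}_{\Sigma,\M}(tD) = t^d \int_{M_\R} h_D^0(v)\,dv + O(t^{d-1})$, the leading coefficient being $\sum_F c_F \operatorname{vol}(P_{D,F}) = \int_{M_\R} h_D^0$. Since $D \in \mathcal{F}^+(\Sigma,\M)$, for any rank-one flat $F$ the polytope $P_{D,F}$ is $d$-dimensional and lies in $\{\,h_D^0 \ge 1\,\}$, so $\int_{M_\R} h_D^0 > 0$ and the degree equals $d$. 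The only real subtlety I anticipate is the bookkeeping needed to produce a single chamber decomposition of $\mathcal{F}^+(\Sigma,\M)$ compatible with \emph{all} the $\min$-functions at once, together with the (routine) check that Khovanskii--Pukhlikov polynomiality applies uniformly as the facet heights vary within one chamber; everything else is a finite combination of ingredients already established.
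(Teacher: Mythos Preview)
Your proposal is correct and follows essentially the same route as the paper: both arguments sum the identity $h_D^0 = \sum_{F}c_F I_{D,F}$ over $M$, reduce $\mathfrak{h}_{\Sigma,\M}$ to a fixed integral combination of lattice-point counts $\#(P_{D,F}\cap M)$, and then invoke \cite[Theorem~1]{Khovanskii-Pukhlikov-1} together with the piecewise-linear dependence of the facet heights on $D$. The paper in fact leaves the corollary as an immediate consequence of that preceding discussion and does not spell out the degree argument; your added homogeneity/positivity computation showing the leading term $\int_{M_\R} h_D^0>0$ is a welcome bit of extra detail that the paper omits.
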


\begin{example}
 We compute $\mathfrak{h}_{\P^2, U_{2,3}}$.  Let $\{x_0, x_1, x_2\}$ be the ground set of $U_{2,3}$, then the flats are $\emptyset, \{x_0\}, \{x_1\}, \{x_2\}, \{x_0, x_1, x_2\}$. The coefficients $c_F$ for these flats are $c_\emptyset = 0$, $c_{x_0} = c_{x_1} = c_{x_2} = 1$, and $c_{x_0,x_1,x_2} = -1$. 

 The set of diagrams $\mathcal{F}(\P^2,U_{2,3})$ is precisely the set of $3\times 3$ matrices $D = [d_{ij}]$ with the property that the minimum of any row $\{d_{i0}, d_{i1}, d_{i2}\}$ occurs twice. In other words, $\mathcal{F}(\P^2,U_{2,3})$ is the support of the fan $\Berg(U_{2,3})^3 \subset \R^{3\times 3}$. The set $\mathcal{F}^+(\P^2,U_{2,3})$ is composed of those $D$ where $P_{D, x_0}, P_{D, x_1}, P_{D, x_2}$, and $P_{D, \{x_0,x_1,x_2\}}$ are all ample with respect to the fan of $\P^2$.  This occurs when the sums of columns $d_{0j} + d_{1j} + d_{2j}$ and the sum of the minimal entries of the rows $\sum_{i=0}^2 \min\{d_{i0}, d_{i1}, d_{i2}\}$ are positive.  The number of integral points in the polytope associated to $a, b, c$ with $a + b + c > 0$ is $\binom{a+b+c+2}{2}$. Putting this together, we see that 

 \[\mathfrak{h}_{\P^2, U_{2,3}}(D) = \sum_{j = 0}^2 \binom{d_{0j} + d_{1j} + d_{2j} + 2}{2} - \binom{\sum_{i=0}^2 \min\{d_{i0}, d_{i1}, d_{i2}\} +2}{2}.\]
\end{example}

Now we fix a face $\sigma \in \Sigma$ and consider the polyhedron $P_{D, e_i, \sigma} \subset M_\R$ defined by the inequalities $\langle u, \vv_j \rangle \leq d_{ij}$, where $j \in \sigma(1)$. The indicator functions  $I_{D,F,\sigma}$, $I_{D,\geq r,\sigma},$ and $I_{D,r,\sigma}$ are defined accordingly. We let $h^0_{D,\sigma} = \sum_{r = 1}^{\rank(\M)} rI_{D,r, \sigma}$. As before, we may rewrite this sum in terms of the indicator functions $I_{D, F, \sigma}$: \[h^0_{D, \sigma} = \sum_{F \subset \M} c_F I_{D, F, \sigma}.\] We let $\chi_D(u) = \chi(X_\Sigma, \EE)_u$.

\begin{lemma}\label{lem-affineglobalsections}
    For $u \in M$ we have $h^0_{D,\sigma}(u) = \rank H^0(U_\sigma, \EE_D\mid_{U_\sigma})_u$. Moreover, we have:\[\chi_D = \sum_{\sigma \in \Sigma} (-1)^{\codim(\sigma)} h^0_{D,\sigma}.\]
\end{lemma}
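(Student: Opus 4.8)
The plan is to prove the two assertions in turn, deriving both from the combinatorics already set up in the global case. For the first identity, I would observe that the entire inclusion--exclusion apparatus — Lemma~\ref{lem-globalsections} and Corollary~\ref{cor-globalsections} — goes through verbatim after replacing the full ray set $\Sigma(1)$ by the rays $\sigma(1)$ of the fixed cone $\sigma$; that is exactly how $I_{D,\geq r,\sigma}$, $I_{D,r,\sigma}$, and $h^0_{D,\sigma}=\sum_{r=1}^{\rank(\M)} r\, I_{D,r,\sigma}$ were defined. Running that argument gives, for every $u \in M_\R$, that $I_{D,\geq r,\sigma}(u)=1$ precisely when the Klyachko-flat intersection $\bigcap_{\rho \in \sigma(1)} F^{\rho}_{\langle u,\bv_\rho\rangle}$ has rank $\geq r$, and hence that $h^0_{D,\sigma}(u)$ equals the rank of that intersection.

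It then remains to identify this intersection with $H^0(U_\sigma,\EE_D\!\!\mid_{U_\sigma})_u$. By the definition of $P_{\vv(e),\sigma}$ one has $u \in P_{\vv(e),\sigma}$ if and only if $\langle u,\bv_\rho\rangle \leq \vv(e)(\bv_\rho)$ for all $\rho \in \sigma(1)$, i.e. if and only if $e \in F^{\rho}_{\langle u,\bv_\rho\rangle}$ for all such $\rho$; therefore $\{e \in \M \mid u \in P_{\vv(e),\sigma}\} = \bigcap_{\rho \in \sigma(1)} F^{\rho}_{\langle u,\bv_\rho\rangle}$, which is the flat recorded as $H^0(U_\sigma,\EE_D\!\!\mid_{U_\sigma})_u$ in Section~\ref{subsec-sheaf}. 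Its rank is $h^0_{D,\sigma}(u)$ by the previous paragraph, which is the first claim. The only point one must check with some care here is this dictionary between membership in $P_{\vv(e),\sigma}$ and membership in the Klyachko flats, together with the standard matroid fact that an intersection of flats is a flat so that ``rank'' is unambiguous; everything else is a transcription of the global argument.

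For the ``moreover'', I would sum the first identity over $\sigma \in \Sigma$ with the signs $(-1)^{\codim(\sigma)}$ and compare with Definition~\ref{def-equiv-Euler-matriod-vb}, which writes $\chi(X_\Sigma,\EE)_u = \sum_{\sigma \in \Sigma}(-1)^{\codim(\sigma)}\rank\{e \in \M \mid u \in P_{\vv(e),\sigma}\}$; this yields $\chi_D(u)=\sum_{\sigma\in\Sigma}(-1)^{\codim(\sigma)} h^0_{D,\sigma}(u)$ for every lattice point $u \in M$, and the displayed equality of functions on $M_\R$ then holds because both sides are the same finite integral linear combination $\sum_{\sigma}(-1)^{\codim(\sigma)}\sum_{F\subseteq\M} c_F\, I_{D,F,\sigma}$ of indicator functions of the polyhedra $P_{D,F,\sigma}$, with the coefficients $c_F$ as computed after Corollary~\ref{cor-globalsections}. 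I do not expect a genuine obstacle in either part; the work is entirely in organizing the localized version of the inclusion--exclusion identity and the polytope/flat correspondence above.
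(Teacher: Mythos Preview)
Your proposal is correct and follows essentially the same approach as the paper. The paper's own proof is a two-line sketch: it says the first identity is proved ``similar to the proof of Lemma~\ref{lem-globalsections}'' and the second ``follows from the definition of the Euler characteristic''; you have simply unpacked both of these remarks, including the polytope/Klyachko-flat dictionary needed to identify $\{e \mid u \in P_{\vv(e),\sigma}\}$ with $\bigcap_{\rho \in \sigma(1)} F^\rho_{\langle u,\bv_\rho\rangle}$.
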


\begin{proof}
    The proof that $h^0_{D,\sigma}(u) = \rank H^0(U_\sigma, \EE_D\mid_{U_\sigma})_u$ is similar to the proof of Lemma \ref{lem-globalsections}.  The identity $\chi_D = \sum_{\sigma \in \Sigma} (-1)^{\codim(\sigma)} h^0_{D,\sigma}$ follows from the definition of the Euler characteristic.
\end{proof}

Let $\Delta$ be a polytope with outer normal fan equal to $\Sigma$; in particular there are $d_j$ for which $\Delta = \{u \mid \langle u, p_j \rangle \leq d_j, j \in \Sigma(1)\}$. Similarly, for $\sigma \in \Sigma$ let $\Delta_\sigma = \{u \mid \langle u, p_j \rangle \leq d_j, j \in \sigma(1)\}$, and let $I_\Delta$ and $I_{\Delta, \sigma}$ be the indicator functions of these polyhedra.  For any convex polytope $\Delta$ with outer normal fan $\Sigma$ the Brianchon-Gram formula (see \cite[p. 297--303]{Grunbaum}) gives:\[I_\Delta = \sum_{\sigma \in \Sigma} (-1)^{\codim(\sigma)} I_{\Delta, \sigma}.\]

\begin{proposition}\label{prop-eulerchiprop}
    Let $D \in \mathcal{F}^+(\Sigma, \M)$, then $\chi_D = h^0_D$. 
\end{proposition}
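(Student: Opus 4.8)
The plan is to deduce $\chi_D = h^0_D$ directly from the Brianchon--Gram formula, exactly as the decomposition $h^0_{D,\sigma} = \sum_{F \subseteq \M} c_F I_{D,F,\sigma}$ and Lemma~\ref{lem-affineglobalsections} were arranged to permit. First I would write, using Lemma~\ref{lem-affineglobalsections},
\[
\chi_D \;=\; \sum_{\sigma \in \Sigma} (-1)^{\codim(\sigma)} h^0_{D,\sigma} \;=\; \sum_{\sigma \in \Sigma} (-1)^{\codim(\sigma)} \sum_{F \subseteq \M} c_F\, I_{D,F,\sigma},
\]
and then interchange the two (finite) sums to obtain $\chi_D = \sum_{F \subseteq \M} c_F \bigl( \sum_{\sigma \in \Sigma} (-1)^{\codim(\sigma)} I_{D,F,\sigma} \bigr)$. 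Since the coefficients $c_F$ depend only on $\M$ and are the same ones appearing in $h^0_D = \sum_{F \subseteq \M} c_F I_{D,F}$, it then suffices to prove, for each flat $F$, the single-polytope identity $\sum_{\sigma \in \Sigma} (-1)^{\codim(\sigma)} I_{D,F,\sigma} = I_{D,F}$.

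For this I would apply the Brianchon--Gram formula recalled above to $\Delta = P_{D,F}$. The hypothesis $D \in \mathcal{F}^+(\Sigma,\M)$ enters precisely here: by definition of $\mathcal{F}^+(\Sigma,\M)$ the outer normal fan of $P_{D,F}$ equals $\Sigma$, and since $\Sigma$ is complete this forces $P_{D,F}$ to be a bounded polytope, so Brianchon--Gram applies to it with the fan $\Sigma$. It then remains to check that the $\sigma$-truncation $\Delta_\sigma$ occurring in that formula coincides with $P_{D,F,\sigma}$: both are cut out from $M_\R$ by the half-spaces $\langle u, \bv_\rho \rangle \le d_{F,\rho}$ for $\rho \in \sigma(1)$, where $(d_{F,\rho})_\rho$ is the row-wise minimum of the columns of $D$ indexed by the elements of $F$. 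With this identification Brianchon--Gram gives exactly $\sum_{\sigma} (-1)^{\codim(\sigma)} I_{D,F,\sigma} = I_{D,F}$, and substituting back yields $\chi_D = \sum_{F \subseteq \M} c_F I_{D,F} = h^0_D$.

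I do not expect a serious obstacle; the real content is that the set-up (introducing $\mathcal{F}^+(\Sigma,\M)$ and invoking Brianchon--Gram) has been arranged so that the argument is essentially bookkeeping. The one point I would be careful about is matching conventions: that $P_{D,F,\sigma}$ is genuinely the polyhedron obtained from the chosen half-space presentation of $P_{D,F}$ by retaining only the inequalities indexed by $\rho \in \sigma(1)$, so that the Brianchon--Gram ``$\Delta_\sigma$'' really is our ``$P_{D,F,\sigma}$''; and that the degenerate flat $F = \emptyset$ causes no trouble, since $c_\emptyset = 0$ and that term never contributes. A minor point worth recording is that all the sums are finite (only finitely many flats have $c_F \neq 0$, and $\Sigma$ has finitely many cones), which legitimizes the interchange of summation, and that the equality is an equality of functions on all of $M_\R$, which is where Brianchon--Gram is being used rather than merely on lattice points.
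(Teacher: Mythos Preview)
Your proposal is correct and is essentially identical to the paper's proof: both expand in terms of the flat-indexed indicator functions with coefficients $c_F$, apply Brianchon--Gram to each $P_{D,F}$ (using the hypothesis $D \in \mathcal{F}^+(\Sigma,\M)$), and interchange the two finite sums. The only cosmetic difference is direction---the paper starts from $h^0_D$ and ends at $\chi_D$, whereas you start from $\chi_D$ and end at $h^0_D$.
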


\begin{proof}
    We compute: \[h^0_D = \sum_{F \subseteq \M} c_F I_{D, F} = \sum_{F \subseteq \M} c_F (\sum_{\sigma \in \Sigma} (-1)^{\codim(\sigma)} I_{D, F, \sigma})\] \[= \sum_{\sigma \in \Sigma} (-1)^{\codim(\sigma)} (\sum_{F \subseteq \M} c_F I_{D, F, \sigma}) = \sum_{\sigma \in \Sigma} (-1)^{\codim(\sigma)} h^0_{D, \sigma} = \chi_D.\]
\end{proof}

Now let $D \in \mathcal{F}(\Sigma, \M)$, and let $\mathcal{L}$ be an ample line bundle on $X_\Sigma$.  We write the moment polytope of $\mathcal{L}$ as $\Delta_\mathcal{L} = \{ u \mid \langle u, p_j\rangle \leq r_j\}$.  In particular, we use the convention that $\Sigma$ is the outer normal fan of $\Delta_\mathcal{L}$. The diagram of $\EE_D \otimes \mathcal{L}^N$ has entries $d_{ji} + Nr_j$. We let $D + N\br$ denote this diagram.

\begin{lemma}\label{lem-flatsample}
    For $D$ and $\mathcal{L}$ as above, there is an $N_0$ such that for all $N \geq N_0$, the polyhedra $P_{D + N\br, F}$ all have outer normal fan $\Sigma$. 
\end{lemma}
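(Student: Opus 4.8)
The plan is to deduce the statement from the standard fact that ampleness is an open condition on the facet coefficients of a polyhedron, via a rescaling argument. Write the moment polytope of $\mathcal L$ as $\Delta_{\mathcal L}=\{u\in M_\R\mid \langle u,\bv_\rho\rangle\le r_\rho,\ \rho\in\Sigma(1)\}$; since $\mathcal L$ is ample and $\Sigma$ is its outer normal fan, $\Delta_{\mathcal L}$ is a full-dimensional polytope whose outer normal fan is exactly $\Sigma$, equivalently the support function on $N_\R$ determined by $(r_\rho)_{\rho\in\Sigma(1)}$ is strictly convex with respect to $\Sigma$. For a nonempty flat $F\subseteq\M$ put $d_{F,\rho}=\min\{d_{e,\rho}\mid e\in F\}$, where $d_{e,\rho}$ is the entry of $D$ for the element $e$ and ray $\rho$, so that $P_{D,F}=\{u\mid \langle u,\bv_\rho\rangle\le d_{F,\rho},\ \rho\in\Sigma(1)\}$. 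Tensoring by $\mathcal L^{\otimes N}$ adds $Nr_\rho$ to the $\rho$-row of the diagram, hence $P_{D+N\br,F}=\{u\mid \langle u,\bv_\rho\rangle\le Nr_\rho+d_{F,\rho}\}$.

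Now I would divide by $N>0$, which does not change the outer normal fan of a polytope, to get
\[
\tfrac1N\,P_{D+N\br,F}=\bigl\{u\in M_\R\ \big|\ \langle u,\bv_\rho\rangle\le r_\rho+\tfrac1N d_{F,\rho},\ \rho\in\Sigma(1)\bigr\}.
\]
This is a polyhedron defined by the fixed facet normals $\bv_\rho$ with right-hand side vector $c^{(N,F)}:=\bigl(r_\rho+\tfrac1N d_{F,\rho}\bigr)_{\rho\in\Sigma(1)}$, and $c^{(N,F)}\to (r_\rho)_{\rho}$ as $N\to\infty$, uniformly in $F$ because $\M$ has only finitely many flats and $\Sigma$ only finitely many rays. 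So it suffices to invoke that the set of vectors $c\in\R^{\Sigma(1)}$ for which $\{u\mid\langle u,\bv_\rho\rangle\le c_\rho\}$ is a polytope with outer normal fan $\Sigma$ is open and contains $(r_\rho)_{\rho}$. Granting this, pick $\varepsilon>0$ with the $\varepsilon$-ball around $(r_\rho)_\rho$ contained in that set, set $N_0>\varepsilon^{-1}\max\{|d_{F,\rho}|\mid F\text{ a nonempty flat of }\M,\ \rho\in\Sigma(1)\}$ (a finite quantity), and conclude that for $N\ge N_0$ every $c^{(N,F)}$ lies in the $\varepsilon$-ball, hence $\tfrac1N P_{D+N\br,F}$, and therefore $P_{D+N\br,F}$, has outer normal fan $\Sigma$.

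The one genuine input, and the place I would be most careful, is the openness claim. Having outer normal fan $\Sigma$ is equivalent to the piecewise linear function on $N_\R$ cut out by $c$ being strictly convex with respect to $\Sigma$; since $\Sigma$ is complete (as $X_\Sigma$ is projective) this is the conjunction of finitely many strict linear inequalities on $c$, one across each wall separating two adjacent maximal cones, which defines an open (in fact open convex) subset of $\R^{\Sigma(1)}$, and completeness of $\Sigma$ also forces the resulting polyhedron to be bounded and full-dimensional. I would cite the standard description of the ample cone of a projective toric variety in terms of support functions rather than reprove it. Note that the argument uses neither the tropical nor the apartment conditions defining $\mathcal F(\Sigma,\M)$, nor anything about $\M$ beyond finiteness of its flat lattice, so it applies simultaneously to all the $P_{D+N\br,F}$; the $F=\emptyset$ case is vacuous since then $P_{D,\emptyset}=M_\R$.
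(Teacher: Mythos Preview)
Your proof is correct and follows essentially the same approach as the paper's: both write out $P_{D+N\br,F}$ explicitly in terms of the shifted facet data $\min\{d_{ji}\mid e_i\in F\}+Nr_j$ and then use that a divisor becomes ample after adding a large multiple of an ample divisor, finishing by taking a uniform bound over the finitely many flats. The paper's proof is terser—it simply asserts the existence of $N_F$ for each flat and takes the maximum—whereas you spell out the underlying reason via the rescaling $\tfrac{1}{N}P_{D+N\br,F}$ and the openness of the ample cone in $\R^{\Sigma(1)}$; your handling of the vacuous $F=\emptyset$ case is also a nice observation the paper leaves implicit.
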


\begin{proof}
    First we observe that $\Delta_{D + N\br, F} = \{u \mid \langle u, p_j\rangle \leq \min\{d_{ji} \mid \e_i \in F\} + Nr_j\}$.  It follows that there is a number $N_F$ such that for all $N \geq N_F$, $P_{D + N_F\br,F}$ has outer normal fan $\Sigma$.  Now we take $N_0$ to be the maximum of the $N_F$ for flats $F \subset \M$. 
\end{proof}

Theorem \ref{thm-EulerSections} now follows by taking $N_0$ to be the bound from Lemma \ref{lem-flatsample}. We also obtain the following from Corollary \ref{cor-ppsections}. This completes the proof of Theorem \ref{thm-intro-sections}.

\begin{corollary}\label{cor-ppsections2}
    For $D \in \mathcal{F}(\Sigma, \M)$ with corresponding tropical toric vector bundle $\EE$, and an ample line bundle $\mathcal{L}$, the function which computes the number of global sections of the bundle $\E\otimes \mathcal{L}^{\otimes N}$ is eventually a polynomial of degree $\dim(X_\Sigma)$ in $N$.
\end{corollary}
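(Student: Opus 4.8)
The plan is to assemble the statement directly from the pieces already established in this section, so the proof is essentially a bookkeeping argument that combines Corollary \ref{cor-ppsections}, Proposition \ref{prop-eulerchiprop}, and Lemma \ref{lem-flatsample}. First I would recall that for a diagram $D \in \mathcal{F}^+(\Sigma, \M)$ — that is, one for which every polyhedron $P_{D,F}$ associated to a flat $F$ has outer normal fan exactly $\Sigma$ — the function $\mathfrak{h}_{\Sigma, \M}(D) = \sum_{u \in M} h^0_D(u)$ is piecewise polynomial of degree $\dim(X_\Sigma)$ in the entries of $D$. This is precisely the content of Corollary \ref{cor-ppsections}, which in turn rests on the Khovanskii--Pukhlikov theory of convex chains applied to the decomposition $h^0_D = \sum_{F \subseteq \M} c_F I_{D,F}$, together with the fact that the half-space inequalities cutting out each $P_{D,F}$ vary piecewise-linearly in $D$.

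Next I would fix an ample line bundle $\mathcal{L}$ on $X_\Sigma$ with moment polytope $\Delta_\mathcal{L} = \{u \mid \langle u, p_j\rangle \le r_j\}$, chosen so that $\Sigma$ is the outer normal fan of $\Delta_\mathcal{L}$. Tensoring by $\mathcal{L}^{\otimes N}$ shifts the diagram to $D + N\br$, whose entries are $d_{ji} + N r_j$. By Lemma \ref{lem-flatsample}, there is an $N_0$ (the maximum over flats $F$ of the thresholds $N_F$) such that for all $N \ge N_0$ the diagram $D + N\br$ lies in $\mathcal{F}^+(\Sigma, \M)$; indeed $\Delta_{D+N\br, F} = \{u \mid \langle u, p_j \rangle \le \min\{d_{ji} \mid \e_i \in F\} + N r_j\}$ is a translate-perturbation of the dilate $N\Delta_\mathcal{L}$, so its normal fan stabilizes to $\Sigma$. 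For such $N$, Proposition \ref{prop-eulerchiprop} gives $\chi_{D+N\br} = h^0_{D+N\br}$ as functions on $M_\R$, and summing over $u \in M$ yields $\chi(X_\Sigma, \EE \otimes \mathcal{L}^{\otimes N})_u = \rank H^0(X_\Sigma, \EE \otimes \mathcal{L}^{\otimes N})_u$ for every $u$, which is the first assertion of the theorem.

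For the polynomiality claim, I would argue that for $N \ge N_0$ the entries of $D + N\br$ trace out an affine-linear path inside the single region of $\mathcal{F}^+(\Sigma, \M)$ on which $\mathfrak{h}_{\Sigma, \M}$ is given by a fixed polynomial of degree $\dim(X_\Sigma)$ (here one uses that the chamber decomposition underlying Corollary \ref{cor-ppsections} is a fan refinement, so a ray eventually stays in one chamber). Composing that polynomial with the affine substitution $d_{ji} \mapsto d_{ji} + N r_j$ produces a polynomial in the single variable $N$; its degree is $\dim(X_\Sigma)$ because the leading term is governed by the volume of $N\Delta_\mathcal{L}$, which grows like $N^{\dim X_\Sigma}$ with a positive leading coefficient. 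Integrality of the coefficients follows from the integrality of the $c_F$ and the fact that each $\mathfrak{h}$ counts lattice points in lattice polytopes. This gives $\rank H^0(X_\Sigma, \EE \otimes \mathcal{L}^{\otimes N})$ as an integral polynomial of degree $d = \dim(X_\Sigma)$ in $N$, completing the proof; the one point requiring care — and the main obstacle — is verifying that the linear path $N \mapsto D + N\br$ does eventually remain in a single polynomiality chamber and that its exit from $\mathcal{F}(\Sigma,\M) \setminus \mathcal{F}^+(\Sigma,\M)$ happens only finitely often, which is exactly what Lemma \ref{lem-flatsample} secures.
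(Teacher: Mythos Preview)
Your argument is correct and is essentially a fleshed-out version of the paper's one-line proof, which simply cites Corollary \ref{cor-ppsections} after having established Lemma \ref{lem-flatsample} and Proposition \ref{prop-eulerchiprop}. The only minor over-caution is your concern about the path $N \mapsto D + N\br$ eventually staying in a single polynomiality chamber: since $\min\{d_{ji} + Nr_j \mid e_i \in F\} = Nr_j + \min\{d_{ji} \mid e_i \in F\}$, the piecewise-linear structure of the facet distances does not change with $N$ at all, so once Lemma \ref{lem-flatsample} places you in $\mathcal{F}^+(\Sigma,\M)$ the substitution is already affine-linear in $N$ without any further chamber-crossing to worry about.
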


\section{Tautological tropical toric vector bundles on the permutahedral variety}   \label{sec-tautological-matroid-bundles}

In this section we see that each matroid $\M$ comes with a canonical tropical toric vector bundle $\EE_\M$ on the permutahedral toric variety and we prove Theorem \ref{thm-main-tautological}. Recall that the $m$-dimensional \emph{permutahedron} is the convex hull:
$$P_m = \conv\{ \pi(1, \ldots, m) \mid \pi \in S_m\}.$$
The \emph{permutahedral fan} $\Sigma_m$ is the normal fan of the permutahedron. The corresponding toric variety $X_{\Sigma_m}$ is the \emph{permutahedral toric variety}.


Let $\M$ be a loop-free matroid with rank $r$ and $|\M| = m$. 

\begin{proposition}  \label{prop-perm-fan-Grobner-fan}
The permutahedral fan $\Sigma_m$ refines the Gr\"obner fan $\GF(\M)$.
\end{proposition}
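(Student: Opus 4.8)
The plan is to reduce the claim to a statement about maximal cones and then settle that statement with the matroid greedy algorithm. Since the normal fan of any polytope is complete, both $\Sigma_m$ and $\GF(\M)$ are complete fans in $\R^m$, so it is enough to show that every maximal cone of $\Sigma_m$ is contained in some maximal cone of $\GF(\M)$; the refinement then follows because every cone of $\Sigma_m$ lies in a maximal one. I would first recall the two relevant cone descriptions. Because the permutahedron $P_m$ is centrally symmetric, its normal fan $\Sigma_m$ is the braid (type $A_{m-1}$ Coxeter) fan, whose maximal cones are the closed chambers $C_\pi = \{ w \in \R^m \mid w_{\pi(1)} \leq \cdots \leq w_{\pi(m)}\}$ for $\pi \in S_m$; the relative interior of $C_\pi$ consists of those $w$ whose coordinates are pairwise distinct and ordered according to $\pi$. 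On the other side, $\GF(\M)$ is the outer normal fan of $P_\M$, so $w$ lies in the interior of the maximal cone $\sigma_B$ attached to a basis $B$ exactly when $\delta_B$ is the unique point of $P_\M$ maximizing $\langle w, \cdot \rangle$, i.e.\ when $B$ is the unique $w$-maximum-weight basis of $\M$.

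Next I would identify this maximum-weight basis combinatorially. For $w$ with pairwise distinct coordinates, list $\{1,\dots,m\} = \{j_1,\dots,j_m\}$ so that $w_{j_1} > \cdots > w_{j_m}$ and run the greedy algorithm: scan $j_1,\dots,j_m$ in this order and adjoin $j_k$ to the set built so far whenever doing so keeps it independent in $\M$. By the classical theory of matroids the resulting set $B$ is a basis of maximum $w$-weight, and it is the unique such basis precisely because the weights are distinct. The essential point is that this procedure consults only the linear order of the coordinates of $w$, not their numerical values, so $B$ is the same for every $w$ in the relative interior of a fixed chamber $C_\pi$; denote it $B(\pi)$.

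Combining the two steps, fix $\pi \in S_m$. Every $w$ in the relative interior of $C_\pi$ lies in the interior of $\sigma_{B(\pi)}$, hence in $\sigma_{B(\pi)}$; since a closed polyhedral cone is the closure of its relative interior and $\sigma_{B(\pi)}$ is closed, we conclude $C_\pi \subseteq \sigma_{B(\pi)}$. Thus each maximal cone of $\Sigma_m$ sits inside a maximal cone of $\GF(\M)$, which gives the refinement (and, incidentally, reproves that matroid polytopes are generalized permutohedra). I do not expect a genuine obstacle here: the only things requiring care are bookkeeping — reconciling the inner/outer normal-fan conventions, which is harmless because $P_m$ is symmetric, and confirming that both fans are complete with the same support so that maximal-cone containment upgrades to refinement — while the one substantive ingredient is the order-dependence of the greedy algorithm.
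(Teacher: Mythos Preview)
Your argument is correct. The reduction to maximal cones is sound, the description of the braid chambers is right, and the key observation---that for a generic weight $w$ the greedy algorithm picks out the unique maximum-weight basis and depends only on the total order of the coordinates of $w$---is exactly what is needed to place each chamber $C_\pi$ inside a single maximal normal cone $\sigma_{B(\pi)}$ of $P_\M$.

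By contrast, the paper's proof is a single sentence: ``The vertices of the matroid polytope $P_\M$ are among those of the permutahedron $P_m$.'' Taken literally this is false---the vertices of $P_\M$ are $0$--$1$ indicator vectors $\delta_B$, while the vertices of $P_m$ are permutations of $(1,\ldots,m)$---so the paper is really just invoking the well-known fact that matroid polytopes are generalized permutohedra without spelling out why. Your greedy-algorithm argument is precisely the standard proof of that fact, so you have supplied the content the paper's one-liner gestures at. The trade-off is that the paper's version signals ``this is folklore'' and moves on, whereas yours is self-contained and would survive scrutiny from a reader who doesn't already know the generalized-permutohedron story.
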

\begin{proof}
The vertices of the matroid polytope $P_\M$ are among those of the permutahedron $P_m$.    
\end{proof}

\subsection{The map $\Phi_\M$ and the bundle $\EE_\M$}
Let $w \in \R^\M$. Consider the associated $\R$-filtration $(\M_{w\geq k})_{k \in \R}$ by flats on $\M$ where
$$\M_{w \geq k} = \Span\{ i \in \M \mid w_i \geq k\}.$$
The filtration $(\M_{w\geq k})_{k \in \R}$ determines a point $w' \in \Berg(\M) \subset \R^\M$ by:
$$w'_i = \sup\{k \mid i \in \M_{w \geq k} \}.$$

Thus we obtain a canonical projection map:
$$\Phi_\M: |\GF(\M)| \to \Berg(\M), \quad \Phi_\M(w) = w'.$$

Moreover, by definition $\Phi_\M$ is the identity when restricted to $\Berg(\M)$, so $\Phi_\M\circ \Phi_\M = \Phi_\M$. Now we can show that $\Phi_\M$ is in fact a piecewise linear map. For $\sigma \in \GF(\M)$ consider the map $\Phi_\sigma: \sigma \to A_{B_\sigma} = \sigma \cap \Berg(\M)$ defined as follows. For $e_i \in \M$ let $C_i$ be the circuit in $\{e_i\} \cup B_\sigma$ containing $e_i$. Then for $w=(w_1, \ldots, w_m) \in \sigma$ we put:
$$\Phi_\sigma(w)_i = 
\begin{cases} 
w_i,  \quad e_i \in B \\
\min\{w_j \mid j \in C_i \setminus \{i\} \}, \quad i \notin B_\sigma
\end{cases}$$
The following is straightforward to verify:
\begin{proposition}
The map $\Phi_\M$ restricted to the cone $\sigma$ coincides with $\Phi_\sigma$.
\end{proposition}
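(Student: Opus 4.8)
The plan is to reduce to a single maximal cone and then compare superlevel sets. Every cone of $\GF(\M)$ is a face of a maximal cone $\sigma_B$ (with $B$ a basis), and the rule defining $\Phi_\sigma$ uses only the basis, so it suffices to prove $\Phi_\M(w) = \Phi_{\sigma_B}(w)$ for every basis $B$ and every $w \in \sigma_B$. The first observation is that $\Phi_{\sigma_B}$ is exactly the map $\phi_B$ of Proposition \ref{prop-apartment-matroid}(b) precomposed with the coordinate projection $\pi_B\colon \R^\M \to \R^B$: for $e_i \in B$ both send $w$ to $w_i$, and for $e_i \notin B$ both take the value $\min\{w_j \mid e_j \in C_i\setminus\{e_i\}\}$, where $C_i$ is the fundamental circuit of $e_i$ with respect to $B$. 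In particular, writing $w' := \Phi_{\sigma_B}(w) = \phi_B(\pi_B(w))$, Proposition \ref{prop-apartment-matroid} gives $w' \in A_B \subseteq \Berg(\M)$.

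Next I would record the coordinatewise inequality $w' \geq w$. For $e_i \in B$ it is an equality. For $e_i \notin B$ and $e_j \in C_i\setminus\{e_i\} \subseteq B$, the set $(B\setminus\{e_j\})\cup\{e_i\}$ is again a basis (basis exchange along the fundamental circuit $C_i$), so $\delta_{(B\setminus\{e_j\})\cup\{e_i\}}$ is a vertex of $P_\M$; since $w \in \sigma_B$ lies in the outer normal cone of the vertex $\delta_B$ (Definition \ref{def-GF-polytope-matroid}), we get $\langle w,\delta_B\rangle \geq \langle w,\delta_{(B\setminus\{e_j\})\cup\{e_i\}}\rangle$, i.e. $w_j \geq w_i$. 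Taking the minimum over $e_j \in C_i\setminus\{e_i\}$ (a nonempty set, as $\M$ is loop-free) yields $w'_i \geq w_i$. This is the one step where the Gr\"obner-fan hypothesis on $w$ is genuinely used, and it is the only place that requires care.

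The heart of the argument is then the set identity $\M_{w\geq k} = \{\,i \in \M \mid w'_i \geq k\,\}$ for all $k \in \R$, where $\M_{w\geq k} = \Span\{i \mid w_i \geq k\}$. For ``$\subseteq$'': since $w' \geq w$, the right-hand side contains $\{i \mid w_i \geq k\}$, and being of the form $F^{w'}_k$ with $w' \in \Berg(\M)$ it is a flat of $\M$ by Lemma \ref{lem-filt-Bergman-fan}, hence it contains the span. For ``$\supseteq$'': if $w'_i \geq k$ and $e_i \in B$ then $w_i = w'_i \geq k$, so $e_i \in \M_{w\geq k}$; if $w'_i \geq k$ and $e_i \notin B$ then $\min\{w_j \mid e_j \in C_i\setminus\{e_i\}\} = w'_i \geq k$, so $C_i\setminus\{e_i\} \subseteq \{j \mid w_j \geq k\}$ and therefore $e_i \in \Span(C_i\setminus\{e_i\}) \subseteq \M_{w\geq k}$. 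Given this, one concludes immediately
\[
\Phi_\M(w)_i \;=\; \sup\{\,k \mid e_i \in \M_{w\geq k}\,\} \;=\; \sup\{\,k \mid w'_i \geq k\,\} \;=\; w'_i \;=\; \Phi_{\sigma_B}(w)_i,
\]
which proves the proposition. Everything except the inequality $w' \geq w$ is formal, which is why the statement may be asserted as straightforward.
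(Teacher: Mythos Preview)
Your proof is correct. The paper itself does not give a proof of this proposition: it simply asserts that ``the following is straightforward to verify'' and moves on. Your argument is a clean and complete way to fill in those details.

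A brief comment on structure: your key step is the inequality $w' \geq w$, which you obtain from the outer-normal-cone description of $\sigma_B$ together with basis exchange along the fundamental circuit. This is exactly the right ingredient, and it is indeed the only place where the hypothesis $w \in \sigma_B$ enters. Once that is in hand, the identification $\M_{w\geq k} = F^{w'}_k$ follows by combining Lemma~\ref{lem-filt-Bergman-fan} (the superlevel sets of a Bergman point are flats) with the observation that elements outside $B$ lie in the span of their fundamental cocircuit. Your reduction to maximal cones is also valid: for a face $\sigma$ of $\sigma_B$, the formula defining $\Phi_\sigma$ depends only on the choice of adapted basis, and any basis $B$ with $\sigma \subseteq \sigma_B$ will do, so proving the identity on every $\sigma_B$ suffices.

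One could alternatively argue more directly, coordinate by coordinate, but that route requires separately showing that for $e_i \in B$ one cannot have $e_i \in \M_{w\geq k}$ with $k > w_i$ (which again comes down to the same normal-cone inequality). Your packaging via $w' \in \Berg(\M)$ and the flat property is tidier and avoids repeating that argument.
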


\begin{remark}
When $\M$ is a representable matroid, the canonical map $\Phi_\M$ is a special case of a general construction in Gr\"obner theory and tropical geometry (see \cite[Section 3.2]{Kaveh-Manon-SIAGA}). 
\end{remark}

We recall that the Gr\"obner fan of $\M$ is a complete fan which is refined by the permutahedral fan $\Sigma_m$. Thus we can consider $\Phi_\M$ as a piecewise linear map from $|\Sigma_m|$ to $\Berg(\M)$.

\begin{definition}[Tautological tropical toric vector bundle of a matroid]  \label{def-tautological-mb}
For a matroid $\M$, We call the tropical toric vector bundle $\EE_\M$ given by the piecewise linear map $\Phi_\M: |\Sigma_m| \to \Berg(\M)$, the \emph{tautological tropical toric vector bundle of $\M$}. 
\end{definition}

The function $\Phi_\M$ provides a mechanism to create toric tropical toric vector bundles.  For any matroid $\M$ and integral piecewise-linear functions $\psi_1, \ldots, \psi_m: N \to \Z$ we can find a fan $\Sigma$ and a tropical toric vector bundle $\EE$ over $X_{\Sigma}$ with matroid $\M$ where the functions $\vv(e_i)$ for $e_i \in \M$ are as ``close as possible" to the $\psi_i$. 

\begin{proposition} \label{prop-matroid-vb-upto-pull-back}
Let $\M$ be a matroid and $\psi_1, \ldots, \psi_n: N \to \Z$ be integral and piecewise-linear, and let $\Psi = (\psi_1, \ldots, \psi_m): N \to \R^m$, then there is a fan $\Sigma$ such that the composition $\Phi_\M\circ \Psi: N \to \Berg(\M)$ defines a toric tropical toric vector bundle over $X_\Sigma$. 

Moreover, if $(\psi_1, \ldots, \psi_m)$ defines a $\PL(|\Sigma|, \Z)$-valued point of $\Berg(\M)$, then $\vv(e_i) = \psi_i$ for this bundle.    
\end{proposition}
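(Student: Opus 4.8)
The plan is to build $\Sigma$ by pulling back the permutahedral fan along $\Psi$. Since each $\psi_i$ is integral piecewise linear, there is a complete fan $\Sigma_1$ in $N_\R$ on which every $\psi_i$, and hence $\Psi = (\psi_1, \dots, \psi_m)\colon N_\R \to \R^m$, restricts to an integral linear map on each cone. For $\sigma \in \Sigma_1$ let $\ell_\sigma\colon N_\R \to \R^m$ be the integral linear map extending $\Psi|_\sigma$. The preimage $\ell_\sigma^{-1}(\Sigma_m)$ is a complete fan, and intersecting its cones with the cones of $\Sigma_1$ lying in $\sigma$ subdivides $\sigma$; since $\Psi$ is continuous and the $\ell_\sigma$ agree on overlaps of cones of $\Sigma_1$, these subdivisions patch together to a complete fan $\Sigma$ refining $\Sigma_1$ with the property that $\Psi(\tau)$ is contained in a single cone $\delta_\tau$ of $\Sigma_m$ for every $\tau \in \Sigma$.

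Next I would verify that $\Phi := \Phi_\M \circ \Psi\colon |\Sigma| \to \Berg(\M)$ satisfies Definition~\ref{def-matroid-vb-1}. Fix $\tau \in \Sigma$ with $\Psi(\tau) \subseteq \delta_\tau \in \Sigma_m$. Because $\Phi_\M\colon |\Sigma_m| \to \Berg(\M)$ is a piecewise linear map (Definition~\ref{def-tautological-mb}), there is a basis $B$ of $\M$ with $\Phi_\M(\delta_\tau) \subseteq A_B$ and $\Phi_\M|_{\delta_\tau}$ the restriction of a linear map to $A_B \cong \R^r$. Then $\Phi(\tau) = \Phi_\M(\Psi(\tau)) \subseteq A_B$, and $\Phi|_\tau = \Phi_\M|_{\delta_\tau} \circ \ell_\sigma|_\tau$ is a composition of restrictions of linear maps (note $\ell_\sigma(\Span(\tau)) \subseteq \Span(\delta_\tau)$), hence the restriction of a linear map $\Span(\tau) \to A_B \cong \R^r$. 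Integrality of $\Phi$ is inherited: $\Psi$ maps $N$ into $\Z^m$ as the $\psi_i$ are integral, and $\Phi_\M$ maps $\Z^m$ into $\Z^m$ since each coordinate of $\Phi_\M(w)$ is either a coordinate of $w$ or a minimum of coordinates of $w$. Thus $\Phi$ defines a tropical toric vector bundle $\EE$ over $X_\Sigma$.

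For the last claim, recall from the proof that Definitions~\ref{def-matroid-vb-1} and~\ref{def-matroid-vb-2} are equivalent that the $\PL(N,\Z)$-valued point attached to $\EE$ is $\vv(e_i)(x) = \pi_{e_i}(\Phi(x))$. If $(\psi_1, \dots, \psi_m)$ already defines a $\PL(|\Sigma|,\Z)$-valued point of $\Berg(\M)$, i.e.\ $\Psi(x) \in \Berg(\M)$ for all $x$, then since $\Phi_\M$ restricts to the identity on $\Berg(\M)$ we get $\Phi = \Phi_\M \circ \Psi = \Psi$, and therefore $\vv(e_i) = \psi_i$ for all $i$.

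The main obstacle I anticipate is the fan-theoretic bookkeeping in the first step: one must check carefully that pulling back $\Sigma_m$ along the merely piecewise-linear $\Psi$ and intersecting with $\Sigma_1$ produces an honest fan (cones meeting along common faces, rationality, finiteness) rather than just a covering of $N_\R$ by polyhedral cones. Once $\Sigma$ is in hand, everything else is the formal observation that a composite of two maps that are ``linear on cones'' is again ``linear on cones'', together with routine tracking of integrality.
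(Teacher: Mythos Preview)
Your proposal is correct and follows essentially the same route as the paper: build $\Sigma$ as a common refinement of a fan of linearity for the $\psi_i$ and a pullback of the permutahedral fan, then invoke that $\Phi_\M$ is the piecewise linear map of the tautological bundle to verify Definition~\ref{def-matroid-vb-1}, and use $\Phi_\M|_{\Berg(\M)} = \mathrm{id}$ for the last claim. The only cosmetic difference is that you pull back $\Sigma_m$ along $\Psi$ while the paper pulls it back along $\Phi_\M\circ\Psi$; since $\Phi_\M$ is already linear on each cone of $\Sigma_m$, these give equivalent refinements, and the patching concern you flag is indeed routine.
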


\begin{proof}
We let $\Sigma_i$ denote the polyhedral fan formed by the domains of linearity for the piecewise-linear function $\psi_i$, and we take $\Sigma_0$ to be a fan which refines each of the $\Sigma_i$.  Let $\Sigma$ be a fan which refines $\Sigma_0$ and the pullback of the permutahedral fan $\Sigma_m$ under $\Phi_\M\circ\Psi$, then it is straightforward to verify that $\Phi_\M\circ \Psi: |\Sigma| \to \Berg(\M)$ satisfies the criteria of Definition \ref{def-matroid-vb-1}. If $(\psi_1, \ldots, \psi_m)$ is a point of $\Berg(\M)$, then $\Psi(N) \subset \Berg(\M)$, so applying $\Phi_\M$ is identity, and in turn $\vv(e_i) = \psi_i$.  
\end{proof}

\begin{remark}
The $\PL(N, \Z)$-valued points on $\Berg(\M)$ are the analogue of toric vector bundles up to pull-back by toric morphisms (see Theorem \ref{th-tvb-PL-valuation}).
\end{remark}

Before we delve into the structure of $\EE_\M$ we need a lemma on various distinguished bases of the matroid $\M$ determined by a maximal face $\sigma \in \Sigma_m$.  Recall that we have set the convention that the initial form $\In_w(C)$ of a circuit $C$ with respect to a weighting (or term order) $w$ is the set of \emph{minimal} elements of $C$.  For $w \in \Q^m$, and a set $S \subset \M$ we let $\wt_w(S) = \sum_{e \in S} w(e)$.

A \emph{greedy basis} of a point $w \in \GF(\M)$ is constructed by induction on the flag of flats defined by $\Phi_\M(w)$.  In particular, the basis of $F^w_r$ is constructed from that of $F^w_{r+1}$ by adding a member of $F^w_r$ not in $F^w_{r+1}$ with the largest possible $w$-weight. Observe that if $w$ is general then this basis is unique. By construction the greedy basis is adapted to the filtration defined by $\Phi_\M(w)$. Moreover, if $F^w_{r+1} \neq F^w_r$, the new element added to the greedy basis must have weight $r$.  

\begin{lemma}\label{lem-bases}
    Fix a maximal face $\sigma \in \Sigma_m$ and a basis $\B$, the following are equivalent: 

    \begin{enumerate}
        \item $\B$ is the complement of those elements of $\M$ which can be realized as initial forms of circuits of $\M$ with respect to the order on the elements of $\M$ determined by $\sigma$ (equivalently, with respect to any general element of $\sigma$).
        \item $\B$ is the lex-maximal basis of $\M$ with respect to $\sigma$.
        \item The function $\wt_w$ is maximized among bases at $\B$.
        \item $\B$ is the greedy basis with respect to any general element of $\sigma$. 
    \end{enumerate}
    
\end{lemma}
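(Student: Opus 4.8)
The plan is to show that all four conditions describe the same basis, namely the one produced by the matroid greedy algorithm run with respect to the weight order coming from $\sigma$. First I would observe that a maximal cone $\sigma \in \Sigma_m$ corresponds to a generic weight vector $w \in \sigma^\circ$; since the walls of $\Sigma_m$ are the hyperplanes $\{w_i = w_j\}$, such a $w$ has pairwise distinct coordinates and hence determines a total order on $\M$. All objects appearing in the statement --- the initial forms $\In_w(C)$, the lex order on bases, the weight functional $\wt_w$, and the greedy basis --- depend only on this order and not on the particular generic $w$, so it suffices to prove the equivalences for one such $w$. Let $\B_{\mathrm{gr}}$ denote the output of the scan that processes the elements of $\M$ in order of decreasing $w$-weight and adds each one to the growing set whenever this keeps the set independent; by the classical theory of the matroid greedy algorithm, $\B_{\mathrm{gr}}$ is a basis, it is the unique basis maximizing $\wt_w$, and it is the lexicographically largest basis in the order where heavier elements come first. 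This identifies conditions $(2)$ and $(3)$ with $\B_{\mathrm{gr}}$.

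Next I would check that the greedy basis of $\Phi_\M(w)$, as defined just before the lemma, coincides with $\B_{\mathrm{gr}}$, which gives $(4)$. Because $w$ is generic, each strict jump $F^w_{r+1}\subsetneq F^w_r$ in the flag of flats determined by $\Phi_\M(w)$ makes available exactly one new element --- the unique element of $w$-weight $r$ --- and "adding the available element of largest $w$-weight" is simply adding that element, which happens precisely when it is not already spanned by the previously chosen elements. Thus building the basis up this flag is literally the scan defining $\B_{\mathrm{gr}}$, and the resulting basis is adapted to the flag of $\Phi_\M(w)$. For $(2)$ I would, if needed, recall the one-line induction: since $\M$ is loop-free, the $w$-heaviest element $e$ lies in $\B_{\mathrm{gr}}$ and in every lexicographically largest basis, and one concludes by induction on $\rank(\M)$ after contracting $e$.

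The substantive point is $(1)$: I would show $\M\setminus A=\B_{\mathrm{gr}}$, where $A=\bigcup_C \In_w(C)$ ranges over all circuits $C$ and each $\In_w(C)$ is the single $w$-lightest element of $C$ (using genericity of $w$, and noting $A$ depends only on $\sigma$). For $\B_{\mathrm{gr}}\cap A=\emptyset$: suppose $e\in\B_{\mathrm{gr}}$ were the $w$-lightest element of a circuit $C$. Each $f\in C\setminus\{e\}$ is heavier than $e$, hence processed before $e$ in the scan; if $f$ was chosen, it is among the elements chosen before $e$, and if not, then $f$ lies in the span of the elements chosen before $f$, all of which are heavier than $f$ and so are also chosen before $e$. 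Either way $f$ lies in the span of the elements chosen before $e$, so $C\setminus\{e\}$, and with it $e$, lies in that span, and the scan would not have chosen $e$ --- a contradiction. Conversely, if $e\notin\B_{\mathrm{gr}}$ then $e$ lies in the span of the elements of $\B_{\mathrm{gr}}$ that are heavier than $e$, so there is a circuit $C\ni e$ every other element of which is heavier than $e$; then $e$ is the $w$-lightest element of $C$, i.e. $e\in A$. Hence $\M\setminus A=\B_{\mathrm{gr}}$; in particular the complement of $A$ is a basis. Combining the four identifications proves the lemma.

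I expect the only real difficulty to be convention-tracking --- the direction of the total order induced by a maximal cone of the (outer) normal fan, and matching "$\In_w$ is the set of \emph{minimal} elements of a circuit" with "greedy adds the heaviest available element" and with the chosen meaning of "lex-maximal" --- rather than anything deep; the remaining content is the standard theory of the matroid greedy algorithm. I would verify these sign conventions against the paper's normal-fan and term-order conventions before finalizing.
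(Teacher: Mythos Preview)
Your proposal is correct and essentially follows the same strategy as the paper: both arguments pivot on the greedy basis and identify it with each of the four descriptions. The organization differs slightly --- you take the standard greedy scan as the hub and cite classical matroid greedy theory for $(2)=(3)$, then verify the paper's flag-based greedy construction agrees with the scan for $(4)$, and finally give a direct element-by-element argument for $(1)$; the paper instead uses the set from $(1)$ as the hub, shows it is independent in one line, shows the greedy basis from $(4)$ sits inside it (using that greedy elements satisfy $\Phi_\M(w)(e)=w(e)$), and then runs short basis-exchange arguments to get $(1)=(3)$ and $(1)=(2)$. Your treatment of $(1)$ is more explicit than the paper's, while the paper's exchange arguments for $(2)$ and $(3)$ are self-contained where you invoke the classical theorem; neither route has any real advantage over the other, and your closing remark about convention-tracking is apt, since the paper's proof also leans on the chosen sign conventions for $\In_w$ and the outer normal fan.
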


\begin{proof}
    First we show that the set determined by $(1)$ is independent.  Let $C$ be a circuit which holds among the elements in the complement of the set of initial forms; then $\In_w(C)$ cannot be in the set determined by $(1)$, a contradiction.  Next, we show that the basis determined by $(4)$ is contained in $(1)$. Let $\B$ be the greedy  basis, and suppose $e \in \B$ is $\In_w(C)$ for some circuit.  Then $w(e) < w(c)$ for all $c \in C$; this implies that $\Phi_\M(w)(e) > w(e)$, which contradicts this property of the greedy basis. This shows both that $(1)$ determines a basis, and that this basis coincides with the one determined by $(4)$.  Now let $\B$ be the basis determined by $(1)$ and $\B'$ be the basis determined by $(3)$. Suppose $e \in \B$, $e \notin \B'$, then there is a circuit $C = \{e, D\}$ where $D \subset \B'$. We cannot have $e = \In_w(C)$, so there is an element $e' \in \B'$ with smaller $w$-weight than $e$ such that $B' \setminus \{e'\} \cup \{e\}$ is a basis.  This contradicts that $\B'$ has maximal total weight.  The proof that the bases determined by $(1)$ and $(2)$ are the same is identical.    
\end{proof}

Following Lemma \ref{lem-bases}, we adopt the convention that $\GF(\M)$ is the \emph{outer} normal fan of of the matroid polytope of $\M$.  In particular, the maximal faces of $\GF(\M)$ are in bijection with bases $\B$, and the basis $\B_C$ of a maximal face $C \in \GF(\M)$ is the set of tuples $w$ for which $\B$ is the basis of maximal total weight.  We let $\B_\sigma$ denote the basis associated to $\sigma \in \Sigma_m$. Recall that $\delta_i$ denotes the $i$-th standard basis vector of $\R^m$ and $\delta_S = \sum_{i \in S} \delta_i$ for $S \subset [m]$.  The following is the existence statement of Theorem \ref{thm-main-tautological}.

\begin{corollary}\label{cor-tautologicalGG}
    For $\sigma \in \Sigma_m$, the adapted basis for $\Phi_\M$ restricted to $\sigma$ is $\B_\sigma$. For $e_i \in \B_\sigma$, the character $m_\sigma(e_i)$ is $\delta_i$. The diagram $D(\EE_\M)$ is the matrix with $S, i$-th entry equal to $1$ if $e_i$ is in the span of the $e_j$ for $j \in S$, and $0$ otherwise.  The bundle $\EE_\M$ is globally generated. 
\end{corollary}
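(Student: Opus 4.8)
The plan is to read all four assertions off the structural results already established, the only genuine computation being the evaluation of $\Phi_\M$ on the ray generators of $\Sigma_m$. For the \emph{adapted basis}: fix a maximal cone $\sigma\in\Sigma_m$ and a general point $w\in\sigma^\circ$. By the discussion preceding Lemma~\ref{lem-bases} the greedy basis of $w$ is adapted to the flag of flats defining $\Phi_\M(w)$, so $\Phi_\M(w)$ lies in the apartment of that basis; and by the equivalence of the four descriptions in Lemma~\ref{lem-bases} this greedy basis is independent of the choice of general $w\in\sigma^\circ$ and equals $\B_\sigma$. Hence $\Phi_\M(w)\in A_{\B_\sigma}$ for all general $w\in\sigma^\circ$, and since $\Phi_\M$ is continuous and $A_{\B_\sigma}$ is a finite union of cones (Proposition~\ref{prop-apartment-matroid}), we get $\Phi_\M(\sigma)\subseteq A_{\B_\sigma}$; linearity after the identification $A_{\B_\sigma}\cong\R^r$ is automatic, since the defining formula $\Phi_\sigma(w)_b=w_b$ for $b\in\B_\sigma$ makes $\pi_{\B_\sigma}\circ\Phi_\M|_\sigma$ the linear map $w\mapsto(w_b)_{b\in\B_\sigma}$. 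So $\B_\sigma$ is an adapted basis over $\sigma$ in the sense of Definition~\ref{def-matroid-vb-1}.

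Next, the \emph{characters and the diagram}. For $e_i\in\B_\sigma$ and any ray $\rho$ of $\sigma$, the formula $\Phi_\sigma(w)_i=w_i$ gives $\vv(e_i)(\bv_\rho)=\pi_{e_i}(\Phi_\M(\bv_\rho))=\langle\delta_i,\bv_\rho\rangle$, so $\vv(e_i)|_\sigma$ is the character $\delta_i$, i.e. $m_\sigma(e_i)=\delta_i$. For the diagram, the rays of $\Sigma_m$ (the outer normal fan of the permutahedron, which refines $\GF(\M)$ by Proposition~\ref{prop-perm-fan-Grobner-fan}) are indexed by proper nonempty subsets $S\subsetneq[m]$ with primitive generator $\bv_{\rho_S}=\delta_S$. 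Evaluating $\Phi_\M(\delta_S)$ through the filtration $\M_{\delta_S\ge k}=\Span\{j:(\delta_S)_j\ge k\}$, which is $\M$ for $k\le 0$, the flat $\Span\{e_j\mid j\in S\}$ for $0<k\le 1$, and empty for $k>1$, shows $\Phi_\M(\delta_S)_i=1$ if $e_i\in\Span\{e_j\mid j\in S\}$ and $0$ otherwise. This is exactly the asserted description of $D(\EE_\M)$.

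For \emph{global generation} I would apply Theorem~\ref{thm-gg} with the basis $\B=\B_\sigma$ for each $\sigma$. By the previous paragraph the fibre characters are the $\delta_i$, $e_i\in\B_\sigma$, and the parliament polytope $P_{\vv(e_i)}$ (Definition~\ref{def-parliament}) is cut out by $\langle y,\delta_S\rangle\le 1$ when $e_i\in\Span\{e_j\mid j\in S\}$ and $\langle y,\delta_S\rangle\le 0$ otherwise. Since $\langle\delta_i,\delta_S\rangle$ is $1$ for $i\in S$ and $0$ for $i\notin S$, and $i\in S$ forces $e_i\in\Span\{e_j\mid j\in S\}$, the point $\delta_i$ lies in $P_{\vv(e_i)}$, so $e_i\in H^0(X_{\Sigma_m},\EE_\M)_{\delta_i}$. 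Combined with $\Phi_\M(\sigma)\subseteq A_{\B_\sigma}$, Lemma~\ref{lem-gradedbasisadapted} shows $\B_\sigma$ maps to a basis of the fibre matroid $(\EE_\M)_\sigma$, so in particular each $e_i\in\B_\sigma$ defines an element of $(\EE_\M)_\sigma$; Lemma~\ref{lem-basissurvives} then gives that $\delta_i$ is the vertex of $P_{\vv(e_i)}$ in the $\sigma$-direction. This verifies the hypothesis of Theorem~\ref{thm-gg} (and directly the condition of Definition~\ref{def-gl-gen-tmb}, since $\B_\sigma\subseteq H^0(X_{\Sigma_m},\EE_\M)$ maps onto a basis of $(\EE_\M)_\sigma$), so $\EE_\M$ is globally generated.

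I expect the only real obstacle to be bookkeeping rather than mathematics: matching the sign and lattice conventions for $\Sigma_m$ and for the character lattice $M$ with those of \cite{BEST}, so that the primitive ray generators are literally the $\delta_S$, the weights are literally the $\delta_i$, and the diagram has genuine $0/1$ entries. Everything substantive is already contained in Lemma~\ref{lem-bases} and in Lemmas~\ref{lem-gradedbasisadapted} and~\ref{lem-basissurvives}.
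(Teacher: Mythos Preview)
Your proposal is correct and follows essentially the same approach as the paper: both read the adapted basis and characters off Lemma~\ref{lem-bases} and the definition of $\Phi_\M$, compute the diagram by evaluating $\Phi_\M$ on the ray generators $\delta_S$, and verify global generation by checking that $\delta_i$ is the $\sigma$-vertex of $P_{\vv(e_i)}$ for $e_i\in\B_\sigma$. Your version is more explicit in two places---you derive the diagram entries directly from the filtration $\M_{\delta_S\ge k}$ rather than via the greedy-basis description, and you route the vertex verification through Lemmas~\ref{lem-gradedbasisadapted} and~\ref{lem-basissurvives} instead of checking the defining inequalities by hand---but these are expository choices, not different arguments.
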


\begin{proof}
    The first two statements of this corollary are restatements of Lemma \ref{lem-bases} and the definition of $\Phi_\M$.  For the third statement, recall that the parliament member $P_{\vv(e)}$ for $e_i \in \M$ is the polyhedron of the divisor on $\Sigma_m$ determined by the $i$-th column of the diagram. As $\B_\sigma$ is the greedy basis for $\sigma$, for $\delta_S \in \sigma(1)$, we see that the $S$-th entry of the $j$-th column is $1$ if and only if $j \in S$; this shows that the character of $e_j$ at $\sigma$ is the indicator vector $\delta_j$.  Moreover, for any other ray $\delta_{S'}$, the inner product $\langle \delta_{S'}, \delta_j \rangle $ is $1$ if $j \in S'$ and $0$ otherwise, which is always less than or equal to the $S', j$-th entry of the diagram.  This shows that $\delta_j$ is a vertex of $P_{\vv(e_j)}$ when $e_j \in \B_\sigma$.     
\end{proof}

\subsection{Pullback under the Cremona transformation}\label{subsec-cremona}

We study an involution of $X_m$ defined from the inverse map $t \to t^{-1}$ on the torus $T$.  The inverse defines a linear map $\chi: N \to N$ which takes each maximal face $\sigma \in \Sigma_m$ isomorphically onto $\sigma^\vee$. In particular, $\chi(\delta_S) = -\delta_S = \delta_{S^c} - \delta_{[m]}$.  The induced map $\phi_\chi: X_m \to X_m$ is the \emph{Cremona} transformation of the permutahedral variety. 

\begin{proposition}\label{prop-chipullback}
      For the bundle $\phi_\chi^*\EE_\M$, the basis $\B_{\sigma^\vee}$ is an adapted basis of the face $\sigma \in \Sigma_m$.  For $e_j \in \B_{\sigma^\vee}$ the character $m_\sigma(e_j)$ is $-\delta_j$.  The diagram $D(\phi_\chi^*\EE_\M)$ is the matrix with $S, j$-th entry equal to $0$ if $e_j$ is in the span of the set $\{e_k \mid k \in S^c\}$ and $-1$ otherwise. The bundle $\phi_\chi^*\EE_\M$ is globally generated.
\end{proposition}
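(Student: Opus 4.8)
The plan is to pin down the piecewise-linear map defining $\phi_\chi^*\EE_\M$ and then read the adapted bases, characters, diagram, and global generation off it. Recall that pulling back a tropical toric vector bundle along the toric morphism induced by a linear lattice map amounts to precomposing the defining piecewise-linear map with that map; since $\chi$ acts by $w\mapsto -w$ and carries each maximal cone $\sigma\in\Sigma_m$ isomorphically onto $\sigma^\vee\in\Sigma_m$, the bundle $\phi_\chi^*\EE_\M$ is the one associated to $\Phi_\M\circ\chi\colon|\Sigma_m|\to\Berg(\M)$. Because $\EE_\M$ is already a tropical toric vector bundle on $X_m$, the restriction of $\Phi_\M$ to the maximal cone $\sigma^\vee$ satisfies Definition \ref{def-matroid-vb-1} with adapted basis $\B_{\sigma^\vee}$, and its $e_j$-component for $e_j\in\B_{\sigma^\vee}$ is the linear functional $w'\mapsto w'_j$ with $m_{\sigma^\vee}(e_j)=\delta_j$ (Corollary \ref{cor-tautologicalGG}); precomposing with the linear isomorphism $\chi|_\sigma\colon\sigma\to\sigma^\vee$ then shows that $\Phi_\M\circ\chi$ satisfies Definition \ref{def-matroid-vb-1} over $\sigma$ with the same basis $\B_{\sigma^\vee}$, and that the $e_j$-component becomes $w\mapsto(\chi w)_j=-w_j$. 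Hence $\B_{\sigma^\vee}$ is an adapted basis over $\sigma$ and $m_\sigma(e_j)=-\delta_j$, as claimed.

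For the diagram, the entry in the row indexed by a ray $\delta_S$ of $\Sigma_m$ and the column $e_j$ is $(\Phi_\M\circ\chi)(\delta_S)_j=\Phi_\M(-\delta_S)_j$. Writing $-\delta_S=\delta_{S^c}-\delta_{[m]}$ and using that $\R\delta_{[m]}$ is the lineality space of $\Berg(\M)$, along which $\Phi_\M$ is equivariant (immediate from the definition of $\Phi_\M$ via filtrations by flats, $\Phi_\M(w+c\delta_{[m]})=\Phi_\M(w)+c\delta_{[m]}$), this equals $\Phi_\M(\delta_{S^c})_j-1$. Applying the definition of $\Phi_\M$ to $w=\delta_{S^c}$, the flats $\M_{w\ge k}$ are $\M$ for $k\le 0$, $\Span\{e_k\mid k\in S^c\}$ for $0<k\le 1$, and empty for $k>1$, so $\Phi_\M(\delta_{S^c})_j=1$ if $e_j\in\Span\{e_k\mid k\in S^c\}$ and $0$ otherwise. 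Subtracting $1$ gives exactly the asserted matrix: the $S,j$-entry is $0$ when $e_j\in\Span\{e_k\mid k\in S^c\}$ and $-1$ otherwise.

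For global generation I would apply Theorem \ref{thm-gg} with the basis $\B_{\sigma^\vee}$ over each maximal cone $\sigma$: it remains to check that, for $e_j\in\B_{\sigma^\vee}$, the character $-\delta_j$ is the vertex of $P_{\vv(e_j)}$ in the $\sigma$ direction. That $-\delta_j\in P_{\vv(e_j)}$ follows directly from the diagram: the constraint from the ray $\delta_S$ reads $-[j\in S]\le -[e_j\notin\Span\{e_k\mid k\in S^c\}]$, which is automatic when $j\in S$ and is an equality (hence a tight constraint) when $j\notin S$, since then $j\in S^c$ forces $e_j\in\Span\{e_k\mid k\in S^c\}$. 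To see that $\sigma$ lies in the outer normal cone of $-\delta_j$, I would describe $\sigma^\vee$ through its chain of subsets $[m]=T_0\supsetneq T_1\supsetneq\cdots\supsetneq T_{m-1}$ with $T_i=S_i^c$, so the rays of $\sigma$ are the $\delta_{S_i}$, and use the greedy description of $\B_{\sigma^\vee}$ (Lemma \ref{lem-bases}): the element $e_j$ enters the greedy basis at the unique level $\ell$ for which $e_j\in\Span(T_\ell)\setminus\Span(T_{\ell+1})$, forcing $j\in T_\ell$. A short case split on $i\le\ell$ versus $i>\ell$ then shows that for every ray $\delta_{S_i}$ of $\sigma$ the corresponding constraint is tight at $-\delta_j$ (when $i\le\ell$ one has $j\in T_i$, and when $i>\ell$ one has $e_j\notin\Span(T_i)$), so $\sigma$ is contained in the normal cone of $-\delta_j$ and global generation follows. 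Alternatively, and more conceptually, $\phi_\chi$ is a toric automorphism with $(\phi_\chi^*\EE_\M)_\sigma=(\EE_\M)_{\sigma^\vee}$ and with matroid of global sections identical to that of $\EE_\M$ (its Klyachko flats and admissible weight vectors are merely relabelled by $\chi$), so global generation of $\EE_\M$ from Corollary \ref{cor-tautologicalGG} transfers directly.

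The step I expect to require the most care is global generation in the direct approach: bookkeeping exactly which facet constraints of $P_{\vv(e_j)}$ are active at $-\delta_j$ and confirming that $\sigma$ sits in the resulting normal cone, which rests on the interplay between the greedy basis $\B_{\sigma^\vee}$ and the subset-chain description of $\sigma^\vee$. The conceptual shortcut sidesteps this bookkeeping but requires checking that the combinatorial notions of matroid fiber and global-section matroid behave functorially under toric isomorphisms.
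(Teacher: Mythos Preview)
Your proof is correct and follows essentially the same route as the paper: identify $\phi_\chi^*\EE_\M$ with $\Phi_\M\circ\chi$, read off the adapted basis $\B_{\sigma^\vee}$ and characters $-\delta_j$ from the $\EE_\M$ data over $\sigma^\vee$, compute the diagram from $\Phi_\M(-\delta_S)$, and deduce global generation either by the pullback-of-globally-generated argument or by verifying that $-\delta_j$ is the $\sigma$-vertex of $P_{\vv(e_j)}$. Your version is more explicit in a couple of places (the lineality-space shift $-\delta_S=\delta_{S^c}-\delta_{[m]}$ for the diagram, and the greedy-basis case split for tightness), but the logic matches the paper's proof.
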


\begin{proof}
    The bundle $\phi_\chi^*\EE_\M$ is defined by the function $\Phi_\M\circ \chi: \Sigma_m \to \Berg(\M)$. It follows that the face $\sigma \in \Sigma_m$ is mapped into the apartment corresponding to $\B_{\sigma^\vee}$. The map $\Phi_\M\circ \chi$ takes $\delta_S$ to the weight vector of the flat defined by the $0$-entries of $-\delta_S$. The latter is precisely the span of those elements in $S^c$.  This implies the formula for $D(\phi_\chi^*\EE_\M)$.  We have that $\langle \delta_S, m_\sigma(e_j)\rangle$ is $0$ when $j \in S^c$ and $-1$ otherwise, this means that $m_\sigma(e_j) = -\delta_j$.  We observe that pullback of any globally generated bundle under a linear map will be globally generated, but we can also see directly that the inner product of $-\delta_j$ with any ray generator $u_S$ is always less than or equal to the corresponding entry of $D(\phi_\chi^*\EE_\M)$.  This means that $-\delta_j$ is a vertex of the polyhedron $P_{\vv(e_j)}$ in the parliament of $\phi_\chi^*\EE_\M$.   
 \end{proof}

Now observe that if we apply $\chi$ to $\Phi_{\M^\vee}$ we obtain a bundle $\phi_\chi^*\EE_{\M^\vee}$ with adapted basis $\B_{\sigma^\vee}(\M^\vee)$ over the face $\sigma$ with characters $-\delta_j$.  By Lemma \ref{lem-bases}, $\B_{\sigma^\vee}(\M^\vee)$ is the \emph{minimally weighted} basis with respect to a general element of $\sigma$, in particular it is the basis labelled by the complement of the indices of $\B_\sigma(\M)$ in $[m]$.

As a direct consequence of Corollary \ref{cor-tautologicalGG} and \ref{subsec-equiv-Chern-class}, the $K$-class $[\phi_\chi^*\EE_{\M^\vee}] \in K_0^T(X_m)$ and Chern classes $c_i^T(\phi_\chi^*\EE_{\M^\vee}) \in H^i_T(X_m)$ are the tautological $K$-class $[Q_\M]$ and Chern classes $c_i^T(Q_\M)$ defined in \cite[Definition 3.9]{BEST}.
Moreover, $[\EE_\M] \in K_0^T(X_m)$ and $c_i^T(\EE_\M) \in H^i_T(X_m)$ are $[S_\M^\vee]$ and $c_i^T(S_\M^\vee)$.  This proves the characteristic class statement in Theorem \ref{thm-main-tautological}.

\subsection{Relationship with the bundles $S_L$ and $Q_L$}
In this section we make a connection with the tautological bundles and the toric vector bundles constructed in \cite{BEST}.  This requires to switch the convention of how a matroid is associated to a linear ideal $L$. Let $L$ be a linear subspace of $\k^m$.  We let $\M^\vee(L)$ be the matroid on $e_i$ $1 \leq i \leq m$ whose circuits are the minimally supported members of $L$.  We let $\M(L)$ denote the matroid whose bases arise as the sets of initial forms for weights $w \in \R^m$ with distinct entries. Note that this is dual to the convention used in the rest of this paper. 

In terms of Gr\"obner theory, the bases of $\M^\vee(L)$ arise as the standard monomial bases with respect to these weights. Standard monomials and initial forms for a fixed weight are complementary, so $\M(L)$ and $\M^\vee(L)$ are in fact dual matroids. 

For $u \in M$ let $\O_u$ be the $T$-linearization of the trivial line bundle corresponding to the character $u$. Following \cite{BEST} we define $S^\circ_L \subset \bigoplus_{i =1}^m \O_{\delta_i}$ to be the unique toric subbundle with general fiber $L \subset \k^m$, and $Q^\circ_L$ to be the associated quotient. We have an exact sequence:
$$0 \to S^\circ_L \to \bigoplus_{i = 1}^m \O_{\delta_i} \to Q^\circ_L \to 0.$$
To place these bundles in our treatment of tautological bundles we require the following Lemma. 
\begin{lemma}\label{lem-phiproperty}
Let $\E = \bigoplus_{i = 1}^m \mathcal{L}_{\psi_i}$ be a split bundle on a toric variety $X_\Sigma$, where $\psi_1, \ldots, \psi_m \in \PL(\Sigma, \Z)$ define $T$-linearized divisors on $X_\Sigma$.  Suppose that $\F$ is a toric vector bundle and a quotient of $\E$, and that the general fibers of these bundles are related by $F = E/L$ for $L \subset E$ a linear subspace of dimension $r$.  The piecewise-linear map $\Phi_\F: |\Sigma| \to \Trop(L)$ is then computed by the composition of $\Phi_{\M^\vee(L)}$ with the linear map $N \to \Z^m$ given by $p \to (\psi_1(p), \ldots, \psi_m(p))$. 
\end{lemma}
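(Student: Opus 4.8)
The plan is to evaluate both sides of the claimed identity on the distinguished spanning set $\bar e_1,\ldots,\bar e_m$ of $F=E/L$ and check that they agree. Recall from Section~\ref{subsec-tvb-PL-map-trop} that if $\M^\vee(L)$ is the matroid on $[m]$ whose circuits are the minimally supported nonzero vectors of $L$, then $\Trop(L)=\Berg(\M^\vee(L))$, and the embedding $\Trop(L)\hookrightarrow\tilde{\B}(F)$ sends a point $w$ to the valuation $v$ on $F$ with $v(\bar e_j)=w_j$; in particular the projection $\tilde{\B}(F)\to\Trop(L)$ is $v\mapsto(v(\bar e_1),\ldots,v(\bar e_m))$. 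Thus a piecewise linear map into $\Trop(L)$ is determined by its values on the $\bar e_j$, and I would reduce the lemma to the pointwise identity
$$
\Phi_\F(p)(\bar e_j)=\bigl[\,\Phi_{\M^\vee(L)}\bigl(\psi_1(p),\ldots,\psi_m(p)\bigr)\,\bigr]_j,\qquad p\in|\Sigma|,\ 1\le j\le m,
$$
which makes sense because $|\GF(\M^\vee(L))|=\R^m$.

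For the left side: since $\E=\bigoplus_{i=1}^m\mathcal L_{\psi_i}$, its piecewise linear map sends $p$ to the valuation $v_p$ on $E=\k^m$ adapted to the standard frame with $v_p(e_i)=\psi_i(p)$; thus $v_p(x)=\min\{\psi_i(p)\mid x_i\ne 0\}$. Writing $\F=\E/\mathcal S$ with $\mathcal S=\ker(\E\to\F)$ the toric subbundle, which has general fibre $L$, the Klyachko filtrations of the quotient $\F$ are the images of those of $\E$ under $E\to F$; equivalently, $\Phi_\F(p)$ is the push-forward valuation $\bar v_p$ on $F=E/L$, given by $\bar v_p(\bar e)=\max_{\ell\in L}v_p(e+\ell)$ (see \cite{Klyachko, Kaveh-Manon-Building}). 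Splitting off $\ell=0$ and noting that a coordinate $i$ can cancel in $e_j+\ell$ only when $i=j$ and $\ell_j=-1$, one gets
$$
\Phi_\F(p)(\bar e_j)=\max\Bigl\{\,\psi_j(p),\ \ \sup_{\ell\in L,\ \ell_j=-1}\ \min\{\psi_i(p)\mid i\in\mathrm{supp}(\ell)\setminus\{j\}\}\,\Bigr\},
$$
the inner supremum over the empty set (which happens iff $j$ lies in no circuit of $\M^\vee(L)$) being read as $-\infty$.

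For the right side: by the construction of $\Phi_{\M^\vee(L)}$ in Section~\ref{sec-tautological-matroid-bundles}, $[\Phi_{\M^\vee(L)}(w)]_j=\sup\{k\in\R\mid j\in\Span_{\M^\vee(L)}\{i\mid w_i\ge k\}\}$. Since the rank function of $\M^\vee(L)$ is $S\mapsto|S|-\dim(L\cap\k^S)$, for $j\notin S$ one has $j\in\Span_{\M^\vee(L)}(S)$ exactly when there is $\ell\in L$ with $\ell_j\ne 0$ and $\mathrm{supp}(\ell)\subseteq S\cup\{j\}$; rescaling to $\ell_j=-1$ and taking $S=\{i\mid w_i\ge k\}$ turns the displayed supremum into exactly the right-hand expression of the previous display with each $\psi_i(p)$ replaced by $w_i$. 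Substituting $w=(\psi_1(p),\ldots,\psi_m(p))$ then yields the desired identity for all $p$ and $j$, hence $\Phi_\F=\Phi_{\M^\vee(L)}\circ(\psi_1,\ldots,\psi_m)$.

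The only real delicacy is bookkeeping: one must keep track that $L$ enters as the space of linear relations among the $\bar e_i$, so that the relevant matroid is $\M^\vee(L)$ (with $\Trop(L)=\Berg(\M^\vee(L))$) and not its dual; and one must invoke the correct description of the Klyachko data of a quotient bundle, namely the image filtration, equivalently the push-forward of valuations. I would present the comparison in terms of the $\PL(N,\Z)$-valued point $\vv$ of $\F$, for which $\vv(\bar e_j)$ is precisely the piecewise linear function $p\mapsto\Phi_\F(p)(\bar e_j)$ computed above; this makes the identification with $\Phi_{\M^\vee(L)}\circ(\psi_1,\ldots,\psi_m)$ transparent and avoids any bookkeeping with apartments.
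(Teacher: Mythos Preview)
Your proof is correct and follows the same strategy as the paper's: both identify the Klyachko data of the quotient $\F$ as the image filtration on $E/L$ induced by the weight vector $(\psi_1(p),\ldots,\psi_m(p))$, which is precisely what $\Phi_{\M^\vee(L)}$ computes. The paper argues this tersely at the level of filtrations, checking agreement of diagrams on rays and citing \cite{KM-PL} and Proposition~\ref{prop-matroid-vb-upto-pull-back}; you carry out the same verification at the level of valuations with an explicit max--min formula for $\Phi_\F(p)(\bar e_j)$, which makes the argument self-contained but is otherwise the same computation.
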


\begin{proof}
    This is essentially a special case of \cite[Proposition 4.5]{KM-PL} and Proposition \ref{prop-matroid-vb-upto-pull-back}. Let $\rho \in \Sigma(1)$, then the fiber over the general point of the orbit corresponding to $\rho$ is the associated-graded space of the filtration on $E/L$ induced by the weight $(\psi_1(p), \ldots, \psi_m(p))$.  This is the induced filtration of $\Phi_{\M(L)}(w(\bv_\rho))$ by definition. These are the rows of the diagram of $\F$ and thus determine $\Phi_\F$.
\end{proof}

\begin{proposition}\label{prop-best1}
The tropicalization of the bundle $Q^\circ_L$ is $\EE_{\M^\vee(L)}$.
\end{proposition}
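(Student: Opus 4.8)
The plan is to read off a natural presentation $(L,\Phi_{Q^\circ_L})$ of $Q^\circ_L$ from the defining exact sequence and then identify $\Phi_{Q^\circ_L}$ with the tautological map $\Phi_{\M^\vee(L)}$ by means of Lemma \ref{lem-phiproperty}. First I would describe the general fiber: over the dense torus orbit the split bundle $\bigoplus_{i=1}^m \O_{\delta_i}$ trivializes with fiber $\k^m$ and distinguished basis $e_1,\dots,e_m$, and since $S^\circ_L$ has general fiber $L\subseteq \k^m$, the quotient $Q^\circ_L$ has general fiber $\k^m/L$, spanned by the images $\bar e_1,\dots,\bar e_m$. I would then check that the linear matroid on $\{\bar e_1,\dots,\bar e_m\}$ is exactly $\M^\vee(L)$: a subset $C\subseteq[m]$ is a minimal dependent set of $\{\bar e_i\}$ precisely when there is a vector of $L$ with support equal to $C$ and $C$ minimal with this property, i.e.\ when $C$ is the support of a minimally supported member of $L$, which is the defining property of the circuits of $\M^\vee(L)$. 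Hence the relation ideal of the spanning set $\{\bar e_i\}$ is (identified with) $L$ and its associated matroid is $\M^\vee(L)$, so by Definition \ref{def-tropicalization} the tropicalization of $Q^\circ_L$ is the pair $(\M^\vee(L),\Phi_{Q^\circ_L})$, where $\Phi_{Q^\circ_L}\colon |\Sigma_m|\to\Trop(L)$ is the piecewise linear map classifying $Q^\circ_L$.

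Next I would invoke Lemma \ref{lem-phiproperty} with $\E=\bigoplus_{i=1}^m \O_{\delta_i}$ — whose $i$-th summand corresponds to the globally linear function $\psi_i=\langle\delta_i,-\rangle$ — and $\F=Q^\circ_L$, so that the general fiber is $E/L$. The lemma gives that $\Phi_{Q^\circ_L}$ is the composite of $\Phi_{\M^\vee(L)}$ with the linear map $N\to\Z^m$, $p\mapsto(\psi_1(p),\dots,\psi_m(p))$, which is the coordinate embedding of the cocharacter lattice of $X_m$ into $\R^m=|\GF(\M^\vee(L))|$. Because $\Sigma_m$ refines $\GF(\M^\vee(L))$ (Proposition \ref{prop-perm-fan-Grobner-fan}) and $\Phi_{\M^\vee(L)}$ is the canonical retraction of $\R^m$ onto $\Berg(\M^\vee(L))$ — which, by Lemma \ref{lem-filt-Bergman-fan}, depends only on the flag of flats $\{i : w_i\ge k\}$ cut out by a point $w$ — this composite is again $\Phi_{\M^\vee(L)}$; if needed one uses the idempotence $\Phi_{\M^\vee(L)}\circ\Phi_{\M^\vee(L)}=\Phi_{\M^\vee(L)}$ noted after Definition \ref{def-tautological-mb}. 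Therefore the tropicalization of $Q^\circ_L$ is $(\M^\vee(L),\Phi_{\M^\vee(L)})=\EE_{\M^\vee(L)}$, as claimed.

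The argument has no genuine obstacle beyond Lemma \ref{lem-phiproperty}; the delicate points are bookkeeping of conventions. The one most worth stating carefully is the duality: the quotient bundle $Q^\circ_L$ produces the matroid whose circuits are the minimally supported vectors of $L$, i.e.\ $\M^\vee(L)$ and not $\M(L)$ — consistent with the convention switch made at the start of this subsection and with the fact that $\M(L)$ and $\M^\vee(L)$ are dual. A second point to handle with care is the lattice of the permutahedral variety: since the permutahedron is not full-dimensional the $\delta_i$ are only characters on a suitable torus, or are only well-defined up to the common summand $\delta_{[m]}$, but this does not affect the conclusion because translating all coordinates of a point of $\R^m$ by a constant leaves its image under $\Phi_{\M^\vee(L)}$ unchanged. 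Finally, if $Q^\circ_L$ is a priori presented over a coarser or different fan, one passes to a common refinement by $\Sigma_m$ using Proposition \ref{prop-matroid-vb-upto-pull-back}.
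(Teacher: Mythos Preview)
Your proposal is correct and follows essentially the same approach as the paper: apply Lemma \ref{lem-phiproperty} to the quotient $Q^\circ_L$ of $\bigoplus_i \O_{\delta_i}$, observe that the associated linear map $p\mapsto(\langle\delta_1,p\rangle,\dots,\langle\delta_m,p\rangle)$ is the identity, and conclude that $\Phi_{Q^\circ_L}=\Phi_{\M^\vee(L)}$. The paper's proof is a one-line version of your argument; your additional verification that the linear matroid on the $\bar e_i$ is $\M^\vee(L)$ and your remarks on the lattice conventions are helpful elaborations but not new ideas.
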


\begin{proof}
    By Lemma \ref{lem-phiproperty}, the piecewise linear map $\Phi_{Q^\circ_L}: \Sigma_m \to \Trop(L)$ is $\Phi_{\M^\vee(L)}$ composed with the identity map. 
\end{proof}

Following \cite{BEST}, we obtain new bundles $S_L, Q_L$ by pulling $S^\circ_L, Q^\circ_L$ back along the Cremona transformation:

$$ 0 \to S_L \to \bigoplus_{i = 1}^m \O_{-\delta_i} \to Q_L \to 0.$$

\begin{proposition}\label{prop-best2}
The tropicalization of the bundle $Q_L$ is $\phi_\chi^*\EE_{\M^\vee(L)}$.
\end{proposition}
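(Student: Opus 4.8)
The plan is to mimic the proof of Proposition \ref{prop-best1} verbatim, simply feeding a different split bundle into Lemma \ref{lem-phiproperty}. Recall that, by construction (following \cite{BEST}), $Q_L$ fits into the exact sequence $0 \to S_L \to \bigoplus_{i=1}^m \O_{-\delta_i} \to Q_L \to 0$, so $Q_L$ is a toric vector bundle quotient of the split bundle $\E = \bigoplus_{i=1}^m \O_{-\delta_i}$ over $X_m$, and the general fiber of $Q_L$ is $\k^m/L$, the quotient of the general fiber $E = \k^m$ of $\E$ by the linear subspace $L \subset \k^m$. The piecewise-linear functions attached to the summands $\O_{-\delta_i}$ are the \emph{linear} functions $\psi_i = \langle -\delta_i, \cdot\rangle \in \PL(\Sigma_m, \Z)$.

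With these data in hand, I would apply Lemma \ref{lem-phiproperty}: it tells us that the piecewise-linear map $\Phi_{Q_L}: |\Sigma_m| \to \Trop(L)$ is the composition of $\Phi_{\M^\vee(L)}$ with the linear map $N \to \Z^m$ given by $p \mapsto (\psi_1(p), \ldots, \psi_m(p)) = (\langle -\delta_1, p\rangle, \ldots, \langle -\delta_m, p\rangle)$. But this linear map is precisely $\chi \colon N \to N$, the map induced by torus inversion (recall from Section \ref{subsec-cremona} that $\chi(\delta_S) = -\delta_S$, i.e. $\chi = -\mathrm{id}$ on $N$, which after the standard identification of $N$ with $\Z^m$ is exactly negation of coordinate functions). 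Hence $\Phi_{Q_L} = \Phi_{\M^\vee(L)} \circ \chi$.

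Finally I would compare this with Definition \ref{def-tautological-mb} and the discussion of the Cremona transformation in Section \ref{subsec-cremona} (in particular the proof of Proposition \ref{prop-chipullback}), where $\phi_\chi^*\EE_{\M^\vee(L)}$ is \emph{defined} to be the tropical toric vector bundle given by the piecewise-linear map $\Phi_{\M^\vee(L)} \circ \chi$. Together with Definition \ref{def-tropicalization}, this identifies the tropicalization $(\M(L), \Phi_{Q_L})$ of $Q_L$ with $\phi_\chi^*\EE_{\M^\vee(L)}$, which is the claim. (Alternatively, one can argue more conceptually: $Q_L = \phi_\chi^* Q^\circ_L$, and tropicalization commutes with pullback along toric morphisms since the general fiber, hence $L$ and $\M(L)$, is unchanged while the associated piecewise-linear map is precomposed with the induced map on cocharacter lattices, by the functoriality behind Theorem \ref{th-Klyachko-plm}; then Proposition \ref{prop-best1} gives the result.) The only point requiring care — and it is pure bookkeeping — is confirming that the linear function contributed by the summand $\O_{-\delta_i}$ is $\langle -\delta_i, \cdot\rangle$ and that the resulting linear map is genuinely $\chi$ rather than, say, its transpose; this is where the normalization $\chi(\delta_S) = -\delta_S$ is invoked. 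Everything else is immediate from the cited results, so I expect no real obstacle.
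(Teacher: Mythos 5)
Your proposal is correct. You actually give two proofs. Your primary argument applies Lemma \ref{lem-phiproperty} directly to the quotient $\bigoplus_{i=1}^m \O_{-\delta_i} \twoheadrightarrow Q_L$: the resulting linear map $p \mapsto (\psi_1(p),\ldots,\psi_m(p)) = (-\langle\delta_1,p\rangle,\ldots,-\langle\delta_m,p\rangle)$ is $\chi$ (by the paper's normalization $\chi(\delta_S)=-\delta_S$), so $\Phi_{Q_L} = \Phi_{\M^\vee(L)}\circ\chi$, which is exactly how the proof of Proposition \ref{prop-chipullback} describes the defining piecewise-linear map of $\phi_\chi^*\EE_{\M^\vee(L)}$. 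Your parenthetical ``alternative'' argument — $Q_L = \phi_\chi^* Q^\circ_L$, tropicalization commutes with pullback along the toric morphism $\phi_\chi$, and then invoke Proposition \ref{prop-best1} — is precisely the paper's proof, which is a one-line reduction to \ref{prop-best1}. The direct computation is a bit longer but has the virtue of not implicitly relying on the compatibility of tropicalization with toric pullback; the paper's route is shorter but delegates that bookkeeping to the reader. Both are sound, and the hinge in each case is the normalization $\chi = -\mathrm{id}$ on $N$, which you correctly identify and verify.
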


\begin{proof}
    This is a consequence of Proposition \ref{prop-best1}
\end{proof}

As a reminder, the basis of $\phi_\chi^*\EE_{\M^\vee(L)}$ at $\sigma$ is the revlex/minimal basis $\B_{\sigma^\vee}(\M^\vee(L))$ of $\M^\vee(L)$ with respect to $\sigma$.  This is the complement of the lex-first/maximal basis $\B_\sigma(\M(L))$ of $\M(L)$, and its characters are the negative indicator vectors.

Finally, we get dual bundles $S^\vee_L, Q^\vee_L$, and a corresponding exact sequence:

$$0 \leftarrow S_L^\vee \leftarrow \bigoplus_{i =1}^m \O_{\delta_i} \leftarrow Q_L^\vee \leftarrow 0.$$

\begin{proposition}
The tropicalization of the bundle $S_L^\vee$ is $\EE_{\M(L)}$. 
\end{proposition}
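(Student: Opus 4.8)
The plan is to deduce this from Lemma \ref{lem-phiproperty} together with matroid duality, exactly in the spirit of the proof of Proposition \ref{prop-best1}. First I would read off the general fiber of $S_L^\vee$ from the exact sequence $0 \to Q_L^\vee \to \bigoplus_{i=1}^m \O_{\delta_i} \to S_L^\vee \to 0$ obtained by dualizing the defining sequence $0 \to S_L \to \bigoplus_{i=1}^m \O_{-\delta_i} \to Q_L \to 0$. On general fibers this is $0 \to (\k^m/L)^* \to (\k^m)^* \to L^* \to 0$, so under the standard identification $(\k^m)^* \cong \k^m$ the bundle $S_L^\vee$ is realized as a quotient of the split bundle $\bigoplus_{i=1}^m \O_{\delta_i}$ with general fiber $\k^m/L^\perp$, where $L^\perp \subset \k^m$ is the annihilator of $L$ (of dimension $m-r$).

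Next I would apply Lemma \ref{lem-phiproperty} with $\E = \bigoplus_{i=1}^m \O_{\delta_i}$, so that each $\psi_i$ is the coordinate function $\delta_i$, with $\F = S_L^\vee$, $E = \k^m$, $L' = L^\perp$, and $F = E/L' = \k^m/L^\perp$. The lemma then identifies $\Phi_{S_L^\vee} \colon |\Sigma_m| \to \Trop(L^\perp)$ with $\Phi_{\M^\vee(L^\perp)}$ precomposed with the linear map $p \mapsto (\psi_1(p), \dots, \psi_m(p))$, which here is the identity, just as the composition was the identity in the proof of Proposition \ref{prop-best1}. Hence the tropicalization of $S_L^\vee$ is the pair $(\M^\vee(L^\perp), \Phi_{\M^\vee(L^\perp)})$.

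It then remains to identify $\M^\vee(L^\perp)$ with $\M(L)$. Recall that $\M^\vee(V)$ denotes the matroid on $\{1,\dots,m\}$ whose circuits are the minimally supported vectors of a linear subspace $V \subseteq \k^m$. The basic fact here is that, for a linear subspace $V$, the min-support matroids of $V$ and of $V^\perp$ are dual to one another (if $V = \ker A$ then the former is the column matroid $M[A]$ and the latter is $M[A]^*$). Applying this with $V = L$ gives $\M^\vee(L^\perp) = (\M^\vee(L))^*$, and since $\M(L)$ and $\M^\vee(L)$ are dual matroids (as recalled just above the statement), we conclude $\M^\vee(L^\perp) = \M(L)$. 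Therefore the tropicalization of $S_L^\vee$ is $(\M(L), \Phi_{\M(L)}) = \EE_{\M(L)}$, which is the claim.

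The one place requiring care — the main obstacle — is the bookkeeping of which of the two dual matroids appears at each step: tracking that the general fiber of $S_L^\vee$ is the quotient of $\k^m$ by $L^\perp$ rather than by $L$, and that $\M^\vee(L^\perp)$, defined via circuits of $L^\perp$, is precisely the dual of $\M^\vee(L)$ and hence equals $\M(L)$. Once the conventions are pinned down, everything else is an immediate invocation of Lemma \ref{lem-phiproperty} and the setup of this subsection.
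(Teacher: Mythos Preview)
Your proof is correct and follows essentially the same approach as the paper's own proof: both dualize the defining sequence to realize $S_L^\vee$ as a quotient of $\bigoplus_{i=1}^m \O_{\delta_i}$ by $L^\perp$, then apply Lemma~\ref{lem-phiproperty} with the identity map. The paper compresses the matroid-duality step (your identification $\M^\vee(L^\perp) = \M(L)$) into the single phrase ``a quotient by a subspace whose matroid is $\M^\vee(L)$,'' whereas you spell it out explicitly; otherwise the arguments are the same.
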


\begin{proof}
    The general fiber of $S_L^\vee$ is a quotient by a subspace whose matroid is $\M^\vee(L)$. Now by Lemma \ref{lem-phiproperty} the piecewise linear function $\Phi_{S^\vee_L}$ is $\Phi_{\M(L)}$ composed with the identity map.  
\end{proof}

As a reminder, the basis of $\EE_{\M(L)}$ at $\sigma$ is the lex-first/maximal basis $\B_\sigma(\M(L))$, and its characters are the indicator vectors.


\section{Matroid extensions and tropical toric vector bundles}
\label{sec-matroid-extension}
In Section \ref{subsec-tvb-PL-map-trop} we discussed how to construct toric vector bundles from tropical points on a linear ideal $L$. Ultimately, this construction only depends on the vector space $E = \k^n/L$.  In particular, if we extend the underlying matroid $\M(L)$ by adding vectors from $E$ the resulting toric vector bundle does not change.  This is important as it is possible for a toric vector bundle $\E$ to be defined by a matroid $\M_1$ yet have behavior which is only seen in an extension $F: \M_1 \to \M_2$.  We explore the consequences of this flexibility in the nonrepresentable case. 

\subsection{Matroid extensions}
A matroid extension $\phi: \M_1 \to \M_2$ is a one-to-one map on the underlying ground sets such that the matroid induced on $\phi(\M_1)$ by $\M_2$ is $\M_1$.  We only consider extensions for matroids of equal rank.  The following is straightforward. 

\begin{proposition}\label{prop-extensions}
    Let $\M_1$ and $\M_2$ be matroids of rank $r$, then the following are equivalent. 

    \begin{enumerate}
        \item $\phi: \M_1 \to \M_2$ is an extension.
        \item $C \subset \M_1$ is a circuit if and only if $\phi C \subset \M_2$ is a circuit.
        \item $\B \subset \M_1$ is a basis if and only if $\phi \B \subset \M_2$ is a basis. 
    \end{enumerate}
\end{proposition}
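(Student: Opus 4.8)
The plan is to phrase everything in terms of the restriction matroid $N:=\M_2|_{\phi(\M_1)}$ (the matroid induced on $\phi(\M_1)$ by $\M_2$) and the matroid $\phi_*\M_1$ on $\phi(\M_1)$ obtained by transporting $\M_1$ along the bijection $\phi\colon\M_1\to\phi(\M_1)$; by definition $\phi$ is an extension exactly when $N=\phi_*\M_1$. I would first record the two standard facts about restriction that do all the work: the circuits of $N$ are precisely the circuits of $\M_2$ that lie inside $\phi(\M_1)$, and the rank function of $N$ is $\rank_{\M_2}$ restricted to subsets of $\phi(\M_1)$. Together with the fact that a matroid is determined by its set of circuits, and also by its set of bases, this turns the proposition into a short bookkeeping argument, and I would prove it as the two equivalences $(1)\Leftrightarrow(2)$ and $(1)\Leftrightarrow(3)$.

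For $(1)\Leftrightarrow(2)$ no hypothesis on ranks is needed. Since $\phi$ is a bijection onto $\phi(\M_1)$, every subset of $\phi(\M_1)$ is $\phi C$ for a unique $C\subseteq\M_1$, so a circuit of $\M_2$ contained in $\phi(\M_1)$ is automatically of the form $\phi C$ with $C\subseteq\M_1$. Using that matroids are determined by their circuits, $N=\phi_*\M_1$ is the assertion that for all $C\subseteq\M_1$, $\phi C$ is a circuit of $N$ iff $C$ is a circuit of $\M_1$; and by the first recorded fact this is word for word the statement that $\phi C$ is a circuit of $\M_2$ iff $C$ is a circuit of $\M_1$, i.e.\ $(2)$.

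For $(1)\Leftrightarrow(3)$ the equal-rank hypothesis genuinely intervenes, and this is the one point that needs care: a restriction does not in general send bases to bases — indeed $(3)$ is false without $\rank(\M_1)=\rank(\M_2)$ — so one cannot simply copy the previous argument with ``basis'' in place of ``circuit''. The observation that repairs this is that each of $(1)$ and $(3)$ forces $\phi(\M_1)$ to be spanning in $\M_2$: under $(1)$, $N\cong\M_1$ has rank $r=\rank(\M_2)$; under $(3)$, a basis $\B_0$ of $\M_1$ has image $\phi\B_0$ a basis of $\M_2$, whence $\rank_{\M_2}(\phi(\M_1))\geq|\B_0|=r$. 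Once $\phi(\M_1)$ spans $\M_2$, every maximal independent subset of $\phi(\M_1)$ has size $r=\rank(\M_2)$ and is therefore a basis of $\M_2$, and conversely every basis of $\M_2$ lying in $\phi(\M_1)$ is such a maximal independent subset; so the bases of $N$ are exactly the bases of $\M_2$ contained in $\phi(\M_1)$. Now $N=\phi_*\M_1$, expressed via bases, says that for $\B\subseteq\M_1$, $\phi\B$ is a basis of $N$ iff $\B$ is a basis of $\M_1$; substituting the identification just made (and noting $\phi\B\subseteq\phi(\M_1)$ is automatic) this becomes precisely $(3)$.

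The main obstacle is really only that last point — realizing that one may pass between ``basis of the restriction $N$'' and ``basis of $\M_2$ contained in $\phi(\M_1)$'', which is exactly where the hypothesis $\rank(\M_1)=\rank(\M_2)$ is used and without which $(3)$ would fail. Everything else is formal manipulation with the circuit and basis axioms, so I expect the final write-up to be quite short.
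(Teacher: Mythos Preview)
Your argument is correct and cleanly organized; the only subtlety, as you rightly emphasize, is that the equal-rank hypothesis is what makes ``basis of the restriction $N$'' coincide with ``basis of $\M_2$ contained in $\phi(\M_1)$'', and you handle this properly by first checking that each of $(1)$ and $(3)$ forces $\phi(\M_1)$ to be spanning. The paper itself simply declares the proposition ``straightforward'' and gives no proof, so there is nothing further to compare against.
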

We show that if $\EE$ is a tropical toric vector bundle with matroid $\M_1$, and $\phi:\M_1 \to \M_2$ is an extension, then $\EE$ defines a matroidal vector bundle with matroid $\M_2$. In keeping with the vector bundle case, we expect that these bundles should be regarded as isomorphic. 

Let $\phi: \M_1 \to \M_2$ be an extension, and let $F \subset \M_1$ be a flat.  We obtain a flat $\phi_*F \subset \M_2$ by taking the span of $\phi(F)$.  We can extend this operation to weighted flags of flats, in other words points on the Bergman fan $\Berg(\M_1)$.

\begin{proposition}
    Let $\phi: \M_1 \to \M_2$ be an extension, and let $w \in \Berg(\M_1)$, then the flats $F^{\phi_*w}_r = \phi_*F^w_r$ form an integral filtration of $\M_2$ by flats. The induced map $\phi_*: \Berg(\M_1) \to \Berg(\M_2)$ is a piecewise-linear isomorphism onto the union of those apartments in $\Berg(\M_2)$ coming from bases of $\M_1$.
\end{proposition}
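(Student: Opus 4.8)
The plan is to realize $\phi_*$ apartment by apartment. I would begin by isolating the only matroid-theoretic input needed: the hypothesis that $\M_2$ induces $\M_1$ on $\phi(\M_1)$ is equivalent to the statement that, for every $S\subseteq\M_1$ and $i\in\M_1$,
\[
i\in\Span_{\M_1}(S)\ \Longleftrightarrow\ \phi(i)\in\Span_{\M_2}(\phi(S)),
\]
equivalently, for each flat $F$ of $\M_1$ the set $\phi_*F:=\Span_{\M_2}(\phi(F))$ is a flat of $\M_2$ meeting $\phi(\M_1)$ in exactly $\phi(F)$ (so $\phi_*$ adds only ``new'' ground-set elements). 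Granting this, monotonicity of closure shows $(\phi_*F^w_r)_{r\in\R}$ is decreasing; since $F^w_r=\M_1$ for $r\ll 0$ while $\phi(\M_1)$ spans $\M_2$ (it contains a basis of $\M_1$, which is a basis of $\M_2$ by Proposition~\ref{prop-extensions}), and $F^w_r=\emptyset$ for $r\gg 0$ while $\M_2$ is loop-free, the hypotheses of Lemma~\ref{lem-filt-Bergman-fan} are met. Hence $(\phi_*F^w_r)_r$ is a filtration of $\M_2$ by flats and determines a point $\phi_*w\in\Berg(\M_2)$; it is integral whenever $w$ is, since every coordinate of $\phi_*w$ will be seen below to be either a coordinate of $w$ or a minimum of coordinates of $w$.

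Next I would compute $\phi_*$ on the apartment $A_B=\Berg(\M_1)\cap\sigma_B$ attached to a basis $B$ of $\M_1$ (so $\phi(B)$ is a basis of $\M_2$, by Proposition~\ref{prop-extensions}). For $w\in A_B$ one has $w=\phi_B(\pi_B(w))$ with $\pi_B$ the coordinate projection onto $B$ (Proposition~\ref{prop-apartment-matroid}); reading off~\eqref{equ-phi-B}, every $F^w_r$ equals the $\M_1$-span of $B_r:=F^w_r\cap B$, so $\phi_*F^w_r=\Span_{\M_2}(\phi(B_r))$ by the displayed equivalence. Using the displayed equivalence again gives $(\phi_*w)_{\phi(i)}=w_i$ for every $i\in\M_1$, and for a new element $j\in\M_2\setminus\phi(\M_1)$, with $C_j$ its fundamental circuit relative to the basis $\phi(B)$, the relation $j\in\Span_{\M_2}(\phi(B_r))\iff C_j\setminus\{j\}\subseteq\phi(B_r)$ unwinds to $(\phi_*w)_j=\min\{\,w_b\mid b\in\phi^{-1}(C_j\setminus\{j\})\,\}$. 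Comparing this pair of formulas with~\eqref{equ-phi-B} applied to $\M_2$ and $\phi(B)$ shows that, after the relabelling $\R^B\cong\R^{\phi(B)}$ induced by $\phi$,
\[
\phi_*\big|_{A_B}=\phi_{\phi(B)}\circ\pi_B .
\]
Since $\pi_B$ is linear, $\phi_{\phi(B)}$ is a piecewise-linear bijection onto $A_{\phi(B)}$ which is linear on each permutahedral cone, and $\pi_B$ carries each Bergman cone contained in $A_B$ onto a single permutahedral cone (Proposition~\ref{prop-apartment-matroid}), this exhibits $\phi_*\big|_{A_B}$ as a piecewise-linear bijection $A_B\to A_{\phi(B)}$ that is linear on each cone of $\Berg(\M_1)$ contained in $A_B$.

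Finally I would patch these local descriptions together. Every cone $\sigma_{\mathcal{G}}$ of $\Berg(\M_1)$ lies in some $A_B$ (take $B$ adapted to the flag $\mathcal{G}$), so $\Berg(\M_1)=\bigcup_B A_B$; and since $\phi_*w$ was defined intrinsically through the pushforward flag, the formulas on different $A_B$ agree on overlaps and glue to a single map $\phi_*\colon\Berg(\M_1)\to\Berg(\M_2)$, piecewise-linear and linear on each cone, respecting the cone structures ($\phi_*(\sigma_{\mathcal{G}})=\sigma_{\phi_*\mathcal{G}}$), with image $\bigcup_B A_{\phi(B)}$ --- exactly the union of the apartments of $\Berg(\M_2)$ indexed by bases of $\M_2$ lying in $\phi(\M_1)$, i.e.\ by bases of $\M_1$ (Proposition~\ref{prop-extensions}). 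Injectivity follows from $(\phi_*w)_{\phi(i)}=w_i$ and surjectivity onto that union from $\phi_*(A_B)=A_{\phi(B)}$, so $\phi_*$ is the asserted piecewise-linear isomorphism. I expect the only subtle point to be the bookkeeping in the middle paragraph --- checking that the pushforward flag restricted to $\phi(\M_1)$ reproduces $w$ and that new elements receive the correct minimum --- but this is precisely what the displayed equivalence and~\eqref{equ-phi-B} encode, so it should not present a genuine obstacle beyond keeping the identifications straight.
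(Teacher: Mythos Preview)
Your proof is correct and follows essentially the same approach as the paper's: both arguments verify the filtration conditions directly and then compute $\phi_*$ apartment by apartment, observing that the coordinate of $\phi_*w$ on a new element $e\in\M_2\setminus\phi(\M_1)$ is the minimum of the $w$-weights appearing in the fundamental circuit of $e$ relative to $\phi(B)$, so that $\phi_*$ carries $A_B$ piecewise-linearly onto $A_{\phi(B)}$. Your version is considerably more detailed --- you make explicit the identity $\phi_*\big|_{A_B}=\phi_{\phi(B)}\circ\pi_B$, the gluing across apartments, and the injectivity/surjectivity verifications --- whereas the paper compresses all of this into a two-sentence sketch, but the underlying argument is the same.
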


\begin{proof}
    It is clear that $F^{\phi_*w}_r \supset F^{\phi_*w}_{r+1}$.  Moreover, for some $s$ we have $F^w_s = \M_1$, so $F^{\phi_*w}_s = \M_2$ as the latter contains a basis.  For a basis $\B \subset \M_1$ and a point $w \in A_\B$, the component of $\phi_*w$ on $e \in \M_2 \setminus \phi \M_1$ is computed by taking the minimum weight appearing in the circuit expressing $e$ in terms of $\phi\B$. As a consequence, $\phi_*$ takes $A_\B$ piecewise-linearly isomorphically onto $A_{\phi \B}$. 
\end{proof}

\begin{proposition}\label{prop-extensionbundle}
Let $\Phi_1: |\Sigma| \to \Berg(\M_1)$ satisfy Definition \ref{def-matroid-vb-1}, and let $\phi: \M_1 \to \M_2$ be an extension, then we have the following:
\begin{itemize}
\item[(a)] $\Phi_2= \phi\circ \Phi_1$ satisfies Definition \ref{def-matroid-vb-1},
\item[(b)] The equivariant characteristic classes of $\Phi_1$ coincide with those of $\Phi_2$. 
\end{itemize}
\end{proposition}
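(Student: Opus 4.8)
The plan is to deduce both parts from the structure of the induced map $\phi_*\colon\Berg(\M_1)\to\Berg(\M_2)$ established in the preceding proposition, together with Lemma \ref{lem-universal-function}. (Throughout, $\Phi_2$ is to be read as $\phi_*\circ\Phi_1$, the composition with the induced map on Bergman fans.) The one genuine point to verify is that $\phi_*$ is compatible with the canonical identifications $A_B\cong\R^r$ of Proposition \ref{prop-apartment-matroid}(b); everything else is bookkeeping.

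For part (a), fix a cone $\sigma\in\Sigma$ and a basis $B_\sigma\subset\M_1$ with $\Phi_1(\sigma)\subset A_{B_\sigma}$ and with $\Phi_1|_\sigma$ the restriction of an integral linear map $\Span\sigma\to A_{B_\sigma}\cong\R^r$. By Proposition \ref{prop-extensions}, $\phi B_\sigma$ is a basis of $\M_2$, and by the preceding proposition $\phi_*$ restricts to a piecewise-linear isomorphism $A_{B_\sigma}\xrightarrow{\ \sim\ }A_{\phi B_\sigma}$ sending lattice points to lattice points; hence $\Phi_2(\sigma)=\phi_*(\Phi_1(\sigma))\subset A_{\phi B_\sigma}$. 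To see that $\Phi_2|_\sigma$ is integral linear under $A_{\phi B_\sigma}\cong\R^r$, note that, identifying $B_\sigma$ with $\phi B_\sigma$ via $\phi$, the description of $\phi_*$ on an apartment gives $\pi_{\phi B_\sigma}\circ\phi_*=\pi_{B_\sigma}$ on $A_{B_\sigma}$: the $\phi B_\sigma$-coordinates of $\phi_*w$ are exactly the $B_\sigma$-coordinates of $w$, while the remaining coordinates are the forced circuit-minima. Since the identification $A_B\cong\R^r$ is precisely $\pi_B$, this says $\phi_*|_{A_{B_\sigma}}$ is the identity of $\R^r$ under these identifications, so $\Phi_2|_\sigma$ equals $\Phi_1|_\sigma$ as a map $\sigma\to\R^r$ and is therefore integral linear. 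This proves (a).

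For part (b), recall that the equivariant $K$-class and the $i$-th equivariant Chern class of $\Phi_j$ are $\exp_{\M_j}\circ\Phi_j$ and $\epsilon_{\M_j,i}\circ\Phi_j$, where by Lemma \ref{lem-universal-function} the universal functions $\exp_{\M_j}$ and $\epsilon_{\M_j,i}$ restrict on each apartment $A_B$ to $\exp$, resp.\ $\epsilon_i$, on $\R^r$ under $\phi_B\colon\R^r\to A_B$. By the coordinate compatibility established in part (a), for every basis $B$ of $\M_1$ the map $\phi_*$ carries $A_B$ to $A_{\phi B}$ compatibly with these identifications with $\R^r$; hence $\exp_{\M_2}\circ\phi_*$ and $\exp_{\M_1}$ agree on every apartment of $\Berg(\M_1)$, and since the apartments cover $\Berg(\M_1)$ we get $\exp_{\M_2}\circ\phi_*=\exp_{\M_1}$, and likewise $\epsilon_{\M_2,i}\circ\phi_*=\epsilon_{\M_1,i}$ for all $i$. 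Composing with $\Phi_1$ gives
$$[\Phi_2]=\exp_{\M_2}\circ\phi_*\circ\Phi_1=\exp_{\M_1}\circ\Phi_1=[\Phi_1],$$
and similarly $c_i^T(\Phi_2)=c_i^T(\Phi_1)$ for all $i$; the equality of Chern characters then follows as in Section \ref{subsec-equiv-Chern-char}.

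The \emph{only} real content, and the step I would present most carefully, is the coordinate compatibility $\pi_{\phi B}\circ\phi_*=\pi_B$ on $A_B$ — i.e.\ that $\phi_*$ preserves the $B$-coordinates of a point of an apartment. This is essentially already contained in the proof of the preceding proposition, so I anticipate no real obstacle; I would isolate it as a one-line lemma (or a displayed observation inside this proof) so that parts (a) and (b) both follow in a couple of lines. I do not expect any difficulty beyond keeping track of which apartment and which $\R^r$-identification is in play.
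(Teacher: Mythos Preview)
Your proof is correct and follows the same approach as the paper's, which is extremely terse: part (a) is dismissed as ``immediate by definition of $\phi_*$'' and part (b) is the single observation $g_{\M_1} = g_{\M_2}\circ\phi_*$ for any universal symmetric function $g$. Your write-up makes explicit the coordinate compatibility $\pi_{\phi B}\circ\phi_* = \pi_B$ that the paper leaves implicit, and this is exactly the right detail to unpack --- it is the common content of both parts.
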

\begin{proof}
The first statement is immediate by definition of $\phi_*$. For the second statement we observe that 
for any symmetric function $g$ with associated $g_\M: \Berg(\M) \to \R$ we have $g_{\M_1} = \phi_* \circ g_{\M_2}$. 
\end{proof}

\begin{definition}[Extension class of a tropical toric vector bundle I] \label{def-tmbec1}
Let $\Phi_1$ and $\Phi_2$ be as in Proposition \ref{prop-extensionbundle}, then we say that $\Phi_1$ is equivalent to $\Phi_2$.  
\end{definition}

\begin{definition}[Extension class of a tropical toric vector bundle II] \label{def-tmbec2}
We say that $\vv_1:\M_1 \to \O_{|\Sigma|}$ is equivalent to $\vv_2: \M_2 \to \O_{|\Sigma|}$ if the associated $\Phi_1: |\Sigma| \to \Berg(\M_1)$ is equivalent to $\Phi_2: |\Sigma| \to \Berg(M_2)$. We write $[(\M, \Phi)]$, $[\EE]$, or $[(\M, \vv)]$ for the equivalence class of a tropical toric vector bundle. 
\end{definition}

\subsection{Klyachko flats}
For any extension $\phi: \M_1 \to \M_2$ and flats $F_1, F_2 \subset \M_1$ we have:
$$ \rank(\phi_*(F_1 \cap F_2)) \leq \rank(\phi_*F_1 \cap \phi_* F_2).$$
It is possible that this inequality is strict. Taking this into account, the rank of the Klyachko flat associated to $\br \in \Z^r$ can be defined as the maximum of the ranks obtained on representatives in an extension class.  

\begin{definition}[DJS matroid of an extension class]\label{def-DJSmatroid}
    Fix a tropical toric vector bundle class $[\EE]$, then the matroid $\M$ of a representative $(\M, \Phi)$ is said to be a DiRocco-Jabbusch-Smith matroid for $[\EE]$ if  the ranks of the Klyachko flats in the tropical toric vector bundle determined by $(\M, \Phi)$ are the maximums of those obtained in the class, and $\M$ has no equivalent submatroid with this property. 
\end{definition}

In the representable case, the ranks of the Klyachko flats of a DJS matroid coincide with the vector space dimension of the corresponding subspaces of $E$.  As a consequence, the rank function becomes modular when restricted to the lattice generated by the Klyachko flats of a DJS matroid.  It is not clear that a DJS matroid always exists for a general tropical toric vector bundle $[\EE]$, and it is the case that for certain matroids, the resulting ranks of the Klyachko flats could not become modular (see Theorem \ref{thm-vamos} below).  We refer the reader to \cite{Hochstattler-Wilhelmi, Bonin}, where various aspects of extensions and the notion of matroid \emph{amalgams} are explored.   One approach to constructing a DJS matroid is to fix a representative $(\M, \Phi)$, and then for each $\br \in \Z^n$ find an extension $(\M_\br, \Phi_\br)$ which maximizes the rank of the corresponding Klyachko space.  The induced bundle on an amalgam $\bigcup_\M \M_\br$ would then simultaneously maximize all Klyachko ranks.  Unfortunately, amalgams do not always exist.  In fact, the main result of \cite{Bonin} shows that they always exist if and only if $\M$ is a \emph{modular} matroid. 

\begin{definition}\label{def-modular}
    The \emph{submodular defect} of two flats $F_1, F_2 \subset \M$ is defined to be:
    $$d(F_1, F_2) = \rank(F_1) + \rank(F_2) - \rank(\Span(F_1 \cup F_2)) - \rank(F_1 \cap F_2).$$
    A matroid $\M$ is said to be \emph{modular} if $d(F_1, F_2) = 0$ for every pair of flats $F_1, F_2 \subseteq \M$. 
\end{definition}

\begin{proposition}\label{prop-mod-ext-djs}
    Suppose $\M$ can be extended to a (possibly infinite) modular matroid $\mathcal{N}$, then for any $\Phi$, the tropical toric vector bundle class $[(\M, \Phi)]$ has a DJS matroid. 
\end{proposition}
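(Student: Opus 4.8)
The plan is to produce a DJS matroid for $[(\M,\Phi)]$ as a suitable finite restriction of the modular extension $\mathcal N$, once one knows that $\mathcal N$ already realizes the maximal Klyachko ranks.

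Fix an extension $\phi\colon\M\hookrightarrow\mathcal N$ with $\mathcal N$ modular. The proof of Proposition~\ref{prop-extensionbundle}(a) goes through when $\mathcal N$ is infinite, so $\Phi_{\mathcal N}:=\phi_*\circ\Phi\colon|\Sigma|\to\Berg(\mathcal N)$ again satisfies Definition~\ref{def-matroid-vb-1} and represents the same class. Its Klyachko flats are $\widehat F^{\rho}_r=\phi_*F^{\rho}_r$ and their intersections $\widehat G_{\br}=\widehat F^{\rho_1}_{r_1}\cap\cdots\cap\widehat F^{\rho_n}_{r_n}$ for $\br\in\Z^n$; only finitely many are distinct, since each filtration $(F^{\rho}_r)_r$ equals $\M$ for $r\ll 0$ and $\emptyset$ for $r\gg 0$.

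The central step is the following claim: for every representative $(\M',\Phi')$ of $[(\M,\Phi)]$ and every $\br$, the rank of the $\br$-th Klyachko flat of $(\M',\Phi')$ is at most $\rank_{\mathcal N}(\widehat G_{\br})$. Two facts get this started. First, passing to an extension never lowers the rank of a Klyachko flat (this is the inequality $\rank(\phi_*(F_1\cap F_2))\le\rank(\phi_*F_1\cap\phi_*F_2)$, iterated). Second, for an extension $\M\hookrightarrow\M''$ and flats $F_{i_1},\dots,F_{i_k}$ of $\M$, the join $\phi''_*F_{i_1}\vee\cdots\vee\phi''_*F_{i_k}$ equals the push-forward of the $\M$-closure of $F_{i_1}\cup\cdots\cup F_{i_k}$, so its rank is a quantity depending only on $\M$ and $\Phi$, not on $\M''$. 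Given these, in any matroid one bounds $\rank(\bigcap_i\phi''_*F_{i})$ by iterating the submodular inequality $\rank(X\wedge Y)\le\rank X+\rank Y-\rank(X\vee Y)$; since $\mathcal N$ is modular, each such inequality is an equality there, and one checks that the resulting bound is attained by $\rank_{\mathcal N}(\widehat G_{\br})$. (This is the same phenomenon underlying the amalgam results of \cite{Bonin,Hochstattler-Wilhelmi}: a modular extension simultaneously maximizes all intersection ranks.) I expect this comparison to be the main obstacle: the naive induction on the number of flats does not close, because the join of an intersection with a further flat is not itself a push-forward from $\M$, so one must argue inside the modular lattice generated by the $\widehat F^{\rho_i}_{r_i}$ in $\mathcal N$, or import the amalgam input; the directedness of the equivalence relation (one reaches a general representative through a zigzag of extensions, using the first fact above) also needs a short argument.

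Granting the claim, descend to a finite matroid: choose a finite $S\subseteq\mathcal N$ containing $\phi(\M)$ together with a basis of each of the finitely many flats $\widehat G_{\br}$. Then $\M\hookrightarrow\mathcal N|_S$ is an extension, the induced map $\Phi_S$ defines a tropical toric vector bundle in the class, and since restriction preserves the rank of every subset of $S$ while the $\br$-th Klyachko flat of $(\mathcal N|_S,\Phi_S)$ is $\widehat G_{\br}\cap S$ (which contains a basis of $\widehat G_{\br}$), the Klyachko ranks of $(\mathcal N|_S,\Phi_S)$ coincide with those of $(\mathcal N,\Phi_{\mathcal N})$, hence are maximal over the class. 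Finally, among finite representatives of the class with maximal Klyachko ranks, start from $(\mathcal N|_S,\Phi_S)$ and repeatedly pass to a proper restriction whenever that restriction still lies in the class and still has maximal Klyachko ranks; the ground set strictly shrinks, so the process terminates at a representative with no such proper restriction, which is a DJS matroid for $[(\M,\Phi)]$ in the sense of Definition~\ref{def-DJSmatroid}.
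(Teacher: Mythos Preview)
Your approach is essentially the same as the paper's, but worked out in far greater detail: the paper's entire proof is two sentences, asserting that since the induced flats in $\mathcal{N}$ have submodular defect $0$, the Klyachko ranks are maximized there. You have unpacked what this actually requires---showing that \emph{every} representative in the equivalence class (reached through a zigzag of extensions, not just a single extension of $\M$) has Klyachko ranks bounded by those in $\mathcal{N}$---and you have correctly identified that the na\"{\i}ve induction on the number of intersected flats does not close, because the join of an intermediate intersection with a further pushed-forward flat is not itself a push-forward from $\M$. The paper does not address this, nor does it address the descent to a finite representative or the minimization step, both of which are genuinely needed for Definition~\ref{def-DJSmatroid} and which you supply. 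So your proposal is not so much an alternative route as a more honest version of the paper's argument; the obstacle you flag is real and is precisely where one must lean on the amalgam results of \cite{Bonin,Hochstattler-Wilhelmi} (or an equivalent lattice-theoretic argument inside the modular flat lattice of $\mathcal{N}$), as you suggest.
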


\begin{proof}
    For all flats appearing in the Klyachko filtrations, the induced flats in $\mathcal{N}$ have submodular defect $0$. It follows that the ranks of the Klyachko flats are maximized in $\mathcal{N}$. 
\end{proof}
Observe that the conditions of Proposition \ref{prop-mod-ext-djs} hold for any representable matroid.  However, there are matroids which cannot be extended to a modular matroid; this is a consequence of the following proposition, which is ultimately a different perspective on (a special case of) \cite[Theorem 8]{Hochstattler-Wilhelmi}.

\begin{figure}[ht]
\begin{tikzpicture}
\begin{scope}

\draw[fill=red,opacity=0.13] (-4,0,-3) -- (-4,0,3) -- (4,0,3) -- (4,0,-3) -- cycle;
\draw[fill=red,opacity=0.1] (-4,0,-3) -- (-4,0,3) -- (0,3,3) -- (0,3,-3) -- cycle;
\draw[fill=red,opacity=0.1] (4,0,-3) -- (4,0,3) -- (0,3,3) -- (0,3,-3) -- cycle;
\draw[fill=red,opacity=0.1] (-4,0,-3) -- (-4,0,3) -- (0,-3,3) -- (0,-3,-3) -- cycle;
\draw[fill=red,opacity=0.1] (4,0,-3) -- (4,0,3) -- (0,-3,3) -- (0,-3,-3) -- cycle;

\filldraw[black] (-4,0,-3) circle (2pt) node[anchor=east]{$h_1$};
\filldraw[black] (-4,0,3) circle (2pt) node[anchor=east]{$h_2$};
\filldraw[black] (4,0,3) circle (2pt) node[anchor=west]{$f_2$};
\filldraw[black] (4,0,-3) circle (2pt) node[anchor=west]{$f_1$};

\filldraw[black] (0,3,3) circle (2pt) node[anchor=east]{$p$};
\filldraw[black] (0,-3,3) circle (2pt) node[anchor=east]{$q$};
\filldraw[black] (0,3,-3) circle (2pt) node[anchor=east]{$e$};
\filldraw[black] (0,-3,-3) circle (2pt) node[anchor=east]{$g$};

\end{scope}
\end{tikzpicture}
\caption{The Vamos matroid: shaded $4$-gons and any set of size greater than or equal to $5$ is dependent.} \label{fig-Vamos}
\end{figure}
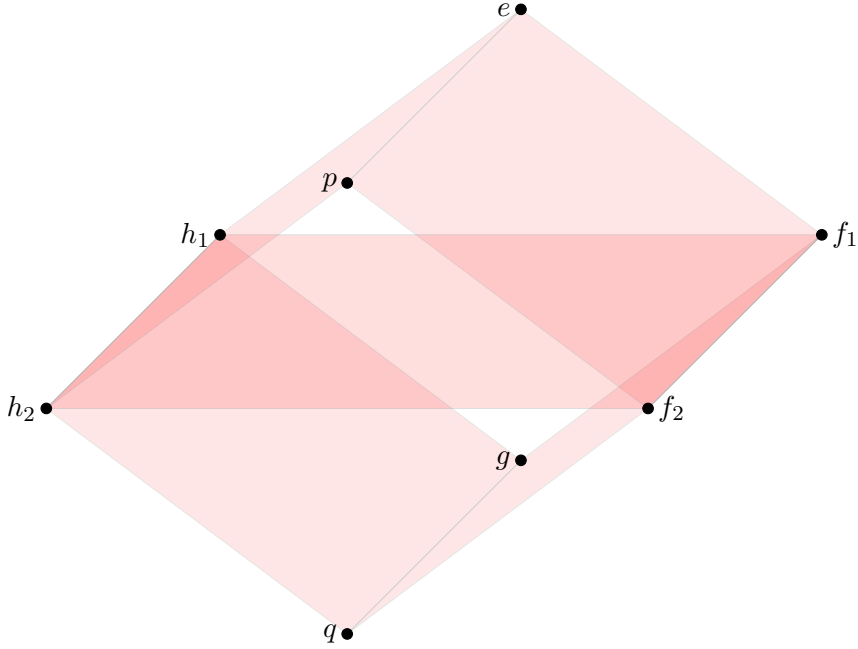

\begin{theorem}\label{thm-vamos}
    Let $V$ be the Vamos matroid.  Then there are flats $F, H \subset V$ such that for any extension $\phi: V \to \M$, the modular defect $\delta(\phi_*F, \phi_*H)$ is not $0$.
\end{theorem}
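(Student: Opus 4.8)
Label the ground set of the Vamos matroid $V$ as in Figure \ref{fig-Vamos} and write $\ell_1 = \{h_1,h_2\}$, $\ell_2 = \{f_1,f_2\}$, $\ell_3 = \{p,e\}$, $\ell_4 = \{q,g\}$, four rank-$2$ flats of $V$. The only feature of $V$ the argument uses is the characteristic asymmetry of the Vamos matroid: among the six sets $\ell_i \cup \ell_j$, the five with $\{i,j\} \neq \{3,4\}$ are circuit-hyperplanes (hence rank-$3$ flats), while $\ell_3 \cup \ell_4 = \{p,e,q,g\}$ is independent, i.e.\ a basis. I will prove the theorem with $F = \ell_1$ and $H = \ell_2$. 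By Proposition \ref{prop-extensions}, a subset of $V$ is independent if and only if its image under $\phi$ is, so $\rank_\M(\phi(S)) = \rank_V(S)$ for every $S \subseteq V$; in particular $\rank(\phi_*\ell_i) = 2$ for all $i$, and $\phi_*\ell_1 \vee \phi_*\ell_2 = \phi_*(\ell_1 \cup \ell_2)$ has rank $3$. Hence (with $d$ the submodular defect of Definition \ref{def-modular}, written $\delta$ in the statement) $d(\phi_*\ell_1, \phi_*\ell_2) = 2 + 2 - 3 - \rank(\phi_*\ell_1 \cap \phi_*\ell_2) = 1 - \rank(\phi_*\ell_1 \cap \phi_*\ell_2)$, so it suffices to show $\phi_*\ell_1 \cap \phi_*\ell_2$ has rank $0$.

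\textbf{Forcing a common point onto all four lines.} Suppose to the contrary that $\phi_*\ell_1 \cap \phi_*\ell_2$ has rank $\geq 1$; then it contains a rank-$1$ flat $P$ of $\M$, so $P$ lies on both lines $\phi_*\ell_1$ and $\phi_*\ell_2$. Consider the rank-$3$ flats $B = \phi_*(\ell_1 \cup \ell_3)$ and $C = \phi_*(\ell_2 \cup \ell_3)$ (rank $3$ because $\ell_1\cup\ell_3$ and $\ell_2\cup\ell_3$ are circuit-hyperplanes of $V$). Since $P \subseteq \phi_*\ell_1 \subseteq B$ and $P \subseteq \phi_*\ell_2 \subseteq C$, we get $P \subseteq B \cap C$. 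Now $B \vee C = \Span\,\phi(\ell_1 \cup \ell_2 \cup \ell_3)$ has rank $4$, as it contains the image of the basis $\{h_1,f_2,p,e\}$ of $V$; submodularity of the rank function of $\M$ then forces $\rank(B \cap C) \leq 3+3-4 = 2$, while $\phi_*\ell_3 \subseteq B \cap C$ already has rank $2$. Therefore $B \cap C = \phi_*\ell_3$, and in particular $P \subseteq \phi_*\ell_3$. Running the identical computation with $\ell_4$ in place of $\ell_3$ — using the circuit-hyperplanes $\ell_1\cup\ell_4$, $\ell_2\cup\ell_4$ and the basis $\{h_1,f_2,q,g\}$ of $V$ — yields $P \subseteq \phi_*\ell_4$.

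\textbf{Contradiction via submodularity.} Now $P \subseteq \phi_*\ell_3 \cap \phi_*\ell_4$, so $\rank(\phi_*\ell_3 \cap \phi_*\ell_4) \geq 1$; but $\phi_*\ell_3 \vee \phi_*\ell_4 = \Span\,\phi(\ell_3 \cup \ell_4)$ has rank $4$ precisely because $\ell_3 \cup \ell_4 = \{p,e,q,g\}$ is a basis of $V$. Hence $d(\phi_*\ell_3, \phi_*\ell_4) = 2 + 2 - 4 - \rank(\phi_*\ell_3 \cap \phi_*\ell_4) \leq -1$, contradicting the non-negativity of the submodular defect. Therefore $\rank(\phi_*\ell_1 \cap \phi_*\ell_2) = 0$ and $d(\phi_*F, \phi_*H) = 1 \neq 0$ for every extension $\phi \colon V \to \M$, which is what we wanted.

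\textbf{Main obstacle.} The heart of the matter is the ``forced meet'' identity $B \cap C = \phi_*\ell_3$ (and its analogue for $\ell_4$) in an arbitrary, possibly infinite, extension; this is the step that replaces the classical projective-geometry proof that $V$ is non-representable, which genuinely fails for general matroids (three pairwise-meeting lines need be neither coplanar nor concurrent). It survives here only because ranks of subsets of $\phi(V)$ are unchanged: this pins $\rank(B \vee C)$ to $4$ via an explicit non-circuit-hyperplane $4$-subset of $V$, after which submodularity squeezes $\rank(B \cap C)$ down to $2$ from above while the obvious rank-$2$ flat $\phi_*\ell_3$ sits inside from below. The single Vamos-specific input — and the reason nothing analogous happens for a modular matroid — is the asymmetry that $\ell_3 \cup \ell_4$ is independent while the other five pairs are dependent.
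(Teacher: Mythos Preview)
Your proof is correct and follows essentially the same route as the paper's: the paper's $F,H,D_1,D_2,T_1,T_2,B_1,B_2$ are your $\ell_2,\ell_1,\phi_*\ell_3,\phi_*\ell_4$ and the four rank-$3$ flats $\phi_*(\ell_i\cup\ell_j)$, and both arguments force the hypothetical common point into $\phi_*\ell_3\cap\phi_*\ell_4$ and then contradict submodularity using that $\ell_3\cup\ell_4$ is a basis. Your write-up is in fact a bit more explicit about the key squeeze $\rank(B\cap C)\le 2$ via submodularity, which the paper's proof uses implicitly when asserting $(T_1)_N\cap(T_2)_N = D_1$.
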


\begin{proof}
We immitate the proof of \cite[Theorem 8]{Hochstattler-Wilhelmi} while referring to Figure \ref{fig-Vamos}. We have $F = \{f_1, f_2\}$, $H = \{h_1, h_2\}$, $T_1 = H\cup\{e,p\}$, $T_2 = F \cup\{e,p\}$, $B_1 = H\cup\{g,q\}$, $B_2 = F \cup\{g,q\}$. We have $d(F,H) = \rank(F) + \rank(H) - \rank(F \cap H) - \rank(\Span(F \cup H)) = 2 + 2 - 0 - 3 = 1$. 

The defects of $T_1, T_2$ and $B_1, B_2$ are $\rank(T_1) + \rank(T_2) - \rank(T_1\cap T_2) - \rank(\Span(T_1 \cup T_2)) = 3 + 3 - 2 - 4 = 0$. 

Fix an extension $V \subset N$. For a flat $G \subset V$ we let $G_N$ denote the span of $G$ in $N$.  Suppose that $F_N$ and $H_N$ form a modular pair in $N$, then $\rank(F_N \cap H_N) = \rank(F_N) + \rank(H_N) - \rank(\Span(F_N \cup H_N)) = 2 + 2 - 3 = 1$. 

Let $D_1 = \Span(p, e)_N, D_2 = \Span(q, g)_N$, then $D_1 = (T_1 \cap T_2)_N = (T_1)_N \cap (T_2)_N$, and similarly $D_2 = (B_1)_N \cap (B_2)_N$. 

Moreover, $\rank(\Span(F_N \cap H_N \cup D_1)) = \rank((\Span F\cup D_1)_N \cap \Span(H \cup D_1)_N) = \rank((T_1)_N\cap(T_2)_N) = \rank(T_1 \cap T_2) = 2 = \rank(D_1)$.  This means $F_N \cap H_N \subset D_1$.  Similarly we have $F_N \cap H_N \subset D_2$. 

As a result, $F_N \cap H_N \subset D_1 \cap D_2$, and $1 \geq \rank(D_1 \cap D_2)$.

But also $\rank(\Span(D_1 \cup D_2)) = \rank(\Span(p,q,e,g)_N) = 4$, so $r(D_1) + r(D_2) = 4 < 4 +1 = \rank(\Span(D_1 \cup D_2)) + \rank(D_1 \cap D_2)$. This violates submodularity. 
\end{proof}

\begin{remark}\label{rem-coxunderextension}
For an extension $\phi: \M_1 \to \M_2$ and equivalent bundles $\Phi_1, \Phi_2 = \phi\circ \Phi_1$, there is a natural extension of both global section matroids and Cox matroids: $\phi_*: H^0(X_\Sigma, \EE_1) \to H^0(X_\Sigma, \EE_2)$, $\phi_*: M(\EE_1) \to M(\EE_2)$ induced by the corresponding extensions of Klyachko flats.   In particular, for a fixed $\br \in \Z^n$, the size the corresponding Klyachko flat could go up under an extension, as could the rank.   

With appropriate modularity assumptions (for example, assuming $\M_2$ can be extended to a possibly infinite modular matroid), one can find an extension where all ranks of Klyachko flats are simultaneously maximized.  More generally one could consider the maximum rank obtained over all extensions in the class. 
\end{remark}




\section{Splitting of tropical toric vector bundles and ampleness}\label{sec-splitting}
A toric vector bundle $\E$ is said to be \emph{split} if it is isomorphic to a direct sum of toric line bundles.  When phrased in terms of the data of a piecewise-linear function $\Phi_\E: |\Sigma| \to \tilde\B(E)$, the existence of a splitting is equivalent to the condition that the image of $\Phi_\E$ lies in a single apartment $\tilde A \subset \tilde\B(E)$. Equivalently, there is a basis $B \subset E$ such that the valuation $\vv_\E: E \to \PL(N, \Z)$ can be computed by the rule $\vv_\E(\sum C_i b_i) = \min\{\vv_\E(b_i) \mid C_i \neq 0\}$.  With these conditions in mind, we make the following definition. 

\begin{definition}\label{def-split}
    A tropical toric vector bundle $(\M, \Phi)$ is \emph{split} if the image of $\Phi$ lies in a single apartment of $\Berg(\M)$.  Equivalently, there is a basis $B \subset \M$ such that $\vv(e) = \min\{\vv(c) \mid c \in C\cap B\}$, where $C \subset \M$ is the unique circuit with $C \setminus \{e\} \subset B$. 
\end{definition}

Strictly speaking, a split tropical toric vector bundle is not isomorphic to a sum of toric line bundles.  Bringing in the notion of tropical toric vector bundle extension class allows us to remake the connection between these two concepts. 

\begin{proposition}[Splitting of a tropical toric vector bundle extension class]
The following are equivalent. 
\begin{itemize}
    \item[(a)] The class $[(\M, \vv)]$ contains a member of the form $(B, \vv)$, where $B$ is a single basis.
    \item[(b)] The class $[(\M, \vv)]$ contains the pair associated to a direct sum of toric line bundles. 
    \item[(c)] The class $[(\M, \vv)]$ contains a pair $(M', \vv')$ which is split. 
\end{itemize}
\end{proposition}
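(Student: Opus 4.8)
The plan is to prove the cycle of implications $(a)\Rightarrow(b)\Rightarrow(c)\Rightarrow(a)$, using the dictionary between the piecewise linear map $\Phi$, the $\PL(N,\Z)$-valued tropical point $\vv$, and apartments of $\Berg(\M)$ that was set up in Sections~\ref{subsec-def-matroid-vb} and~\ref{sec-matroid-extension}. First I would observe that $(a)\Rightarrow(b)$ is essentially a matter of unwinding definitions: if $B$ is a single basis, then $\M=B$ is a free matroid of rank $r$, so $\Berg(B)$ consists of a single apartment $A_B$ piecewise-linearly identified with $\R^r$ (Proposition~\ref{prop-apartment-matroid}), and the map $\Phi\colon|\Sigma|\to A_B\cong\R^r$ is just an $r$-tuple $(\vv(b_1),\dots,\vv(b_r))$ of functions in $\PL(N,\Z)$; these are exactly the piecewise linear functions defining $T$-linearized line bundles $\mathcal L_{\vv(b_i)}$ on (a refinement of) $X_\Sigma$, and the pair $(B,\vv)$ is by construction the one associated to $\bigoplus_{i=1}^r\mathcal L_{\vv(b_i)}$ under the correspondence of Theorem~\ref{th-tvb-PL-valuation} and its tropical reformulation. (Here one uses that the matroid of a direct sum of line bundles, presented via this natural basis, is the free matroid on $r$ elements.)

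For $(b)\Rightarrow(c)$ I would argue that the pair associated to a direct sum of toric line bundles is already split in the sense of Definition~\ref{def-split}: a direct sum of line bundles is presented by the free matroid $B$ on $r=\rank$ elements, whose Bergman fan is a single apartment, so trivially the image of $\Phi$ lies in one apartment. Thus one may simply take $(M',\vv')=(B,\vv)$. This step is the shortest of the three.

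The main content is $(c)\Rightarrow(a)$, which I expect to be the principal obstacle. Suppose $(M',\vv')$ is a split representative, so the image of the associated $\Phi'\colon|\Sigma|\to\Berg(M')$ lies in a single apartment $A_{B'}$ for some basis $B'\subset M'$. The goal is to produce, within the same extension class, a representative whose matroid is exactly $B'$. The natural candidate is the restriction/contraction to $B'$: I would check that the inclusion $B'\hookrightarrow M'$ is a \emph{matroid extension} in the sense of Section~\ref{sec-matroid-extension} — equivalently (Proposition~\ref{prop-extensions}) that a subset of $B'$ is a basis of $B'$ iff it is a basis of $M'$, which holds because $B'$ is a basis of $M'$ and hence spans, so the matroid induced by $M'$ on $B'$ is the free matroid on $B'$. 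Then by Proposition~\ref{prop-extensionbundle} the extension map $\phi_*\colon\Berg(B')\to\Berg(M')$ is a piecewise-linear isomorphism onto the union of apartments of $\Berg(M')$ coming from bases of $B'$, in particular onto $A_{B'}$ itself; since $\Phi'$ already factors through $A_{B'}$, we can pull it back along $\phi_*^{-1}$ to get $\Phi''\colon|\Sigma|\to\Berg(B')$ with $\phi_*\circ\Phi''=\Phi'$, so $(B',\vv'')$ is equivalent to $(M',\vv')$ and hence to $(\M,\vv)$. This gives a member of the form $(B,\vv)$ with $B=B'$ a single basis, establishing $(a)$. The delicate point to verify carefully is that $\Phi''$ genuinely satisfies Definition~\ref{def-matroid-vb-1} (the per-cone apartment and linearity conditions) — but this is automatic because $\Berg(B')$ is a single apartment and $\Phi''$ is linear on each cone, being the composite of $\Phi'|_\sigma$ (linear into $A_{B'}$) with the piecewise-linear identification $\phi_*^{-1}$, which on the single relevant apartment is linear in the sense of Proposition~\ref{prop-apartment-matroid}(b). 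Finally I would note that the two characterizations of "split" in Definition~\ref{def-split} (image in one apartment vs.\ the circuit formula $\vv(e)=\min\{\vv(c)\mid c\in C\cap B\}$) are equivalent by the description of apartments in Definition~\ref{def-apartment-Bergman-fan} together with the formula~\eqref{equ-phi-B} for $\phi_B$, so nothing extra is needed to match conditions $(c)$ and $(a)$ through that description.
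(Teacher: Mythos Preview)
Your proposal is correct and follows essentially the same approach as the paper's own proof, which is extremely terse (two sentences): the paper observes that pairs $(B,\vv)$ with $B$ a single basis are precisely the data of a direct sum of toric line bundles (giving $(a)\Leftrightarrow(b)$), and then asserts that Definition~\ref{def-split} together with the definition of extension yields $(a)\Leftrightarrow(c)$. Your cycle $(a)\Rightarrow(b)\Rightarrow(c)\Rightarrow(a)$ unpacks exactly these identifications, and your $(c)\Rightarrow(a)$ argument --- restricting along the extension $B'\hookrightarrow M'$ and pulling $\Phi'$ back through $\phi_*^{-1}$ onto $\Berg(B')$ --- is the natural way to fill in what the paper leaves implicit.

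One small caution: in your verification that $\Phi''$ satisfies Definition~\ref{def-matroid-vb-1}, you write that $\Phi'|_\sigma$ is ``linear into $A_{B'}$,'' but a priori Definition~\ref{def-matroid-vb-1} only guarantees linearity via some $B_\sigma$, not via $B'$. The passage from linearity via $B_\sigma$ to linearity via $B'$ on the intersection $A_{B_\sigma}\cap A_{B'}$ does hold (the argument in the proof of Lemma~\ref{lem-universal-function}, iterated along basis exchanges, shows the projections $\pi_{B_\sigma}$ and $\pi_{B'}$ agree on this intersection up to relabeling), but strictly speaking this needs a line of justification. The paper's proof does not address this point either, so your treatment is at least as complete as the original.
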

\begin{proof}
The pairs of the form $(B, \vv)$ as in $(a)$ are precisely the data of a direct sum of toric line bundles, so $(a)$ is equivalent to $(b)$.  Definition \ref{def-split} and the definition of extension then impies that $(a)$ is equivalent to $(c)$. 
\end{proof}



Starting with a tropical toric vector bundle $\EE$ corresponding to the data $(\M, \Phi)$, a splitting of $\EE$ is a chain of extensions of the form $\M \to \M_1 \leftarrow \cdots \leftarrow U^r_r$ or $\M \leftarrow \M_1 \to \cdots \leftarrow U^r_r$, compatible with corresponding piecewise linear maps $\Phi, \Phi_1, \ldots$.  One can think about the final entry in such a chain as a generalized apartment in $\Berg(\M)$. 

\subsection{Splitting over $\P^1$}

Over $\P^1$ the compatibility condition amounts to knowing that the two rays in $\Berg(\M)$ defined by $\Phi$ lie in one of these generalized apartments. That this can always be done for toric vector bundles is the combinatorial version of Grothendieck's famous splitting theorem.

\begin{corollary}  \label{cor-split-P1-modular}
    Suppose $\M$ can be extended to a (possibly infinite) modular matroid $\mathcal{N}$, then any bundle class $[(\M, \Phi)]$ over $\P^1$ splits. 
\end{corollary}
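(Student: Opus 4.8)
The statement to prove is: if $\M$ can be extended to a (possibly infinite) modular matroid $\mathcal{N}$, then any bundle class $[(\M,\Phi)]$ over $\P^1$ splits. The plan is to reduce to a purely combinatorial claim about $\Berg(\mathcal{N})$ and then invoke modularity. First I would pass to the extension: by Proposition \ref{prop-extensionbundle}(a), the extension $\phi:\M\to\mathcal{N}$ produces an equivalent bundle $\Phi'=\phi\circ\Phi:|\Sigma|\to\Berg(\mathcal{N})$, where $\Sigma$ is the fan of $\P^1$ (two rays $\rho_+,\rho_-$, and the three maximal cones are the two rays and the origin). Since $[(\M,\Phi)]=[(\mathcal{N},\Phi')]$, it suffices to show $(\mathcal{N},\Phi')$ is split, i.e. that the two images $w_+ = \Phi'(\bv_{\rho_+})$ and $w_- = \Phi'(\bv_{\rho_-})$ in $\Berg(\mathcal{N})$ lie in a common apartment $A_B$ for some basis $B\subset\mathcal{N}$. (If $\mathcal{N}$ is infinite one works with the finitely many flats actually appearing in the two Klyachko filtrations and the finite submatroid they generate, which inherits modularity on that sublattice — this is the content of Proposition \ref{prop-mod-ext-djs}, and I would cite it to stay inside a finite modular matroid.)

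The core combinatorial claim is then: \emph{in a modular matroid, any two points $w_+,w_-\in\Berg(\mathcal{N})$ lie in a common apartment.} By Lemma \ref{lem-filt-Bergman-fan} each $w_\pm$ is encoded by a decreasing filtration by flats, $F^\pm_\bullet$, equivalently by the two flags of flats $\F^+: F^+_1\subsetneq\cdots\subsetneq F^+_k=\mathcal{N}$ and $\F^-: F^-_1\subsetneq\cdots\subsetneq F^-_\ell=\mathcal{N}$ together with the weight data. To produce a common apartment I need a single basis $B$ adapted to both flags simultaneously — meaning every flat in $\F^+$ and every flat in $\F^-$ is spanned by a subset of $B$. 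This is exactly the matroid analogue of the linear-algebra fact that two flags in a vector space admit a common adapted basis, and it is precisely where modularity enters: I would build $B$ by a greedy/inductive argument over the common refinement lattice generated by $\{F^+_i\}\cup\{F^-_j\}$, repeatedly choosing basis elements inside the smallest unfilled ``intersection cell'' $F^+_i\cap F^-_j$, using modularity $\rank(F^+_i)+\rank(F^-_j)=\rank(F^+_i\vee F^-_j)+\rank(F^+_i\cap F^-_j)$ to guarantee that these cells have the expected ranks and that a basis of $\mathcal{N}$ compatible with the whole sublattice exists. Concretely: modularity makes the sublattice of $\mathcal{N}$ generated by the $F^\pm$'s a modular lattice, hence the rank function is valuative on it, and one can select bases of successive flats extending each other — this yields $B$ adapted to both flags. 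Then $w_+\in A_B$ and $w_-\in A_B$ by the explicit description of apartments (Definition \ref{def-apartment-Bergman-fan}, Proposition \ref{prop-apartment-matroid}), since an apartment $A_B$ consists exactly of the points whose associated flags are adapted to $B$.

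Having placed both ray images in $A_B$, I need to check $(\mathcal{N},\Phi')$ satisfies Definition \ref{def-matroid-vb-1}: for the cone $\{0\}$ any basis works; for each ray $\rho_\pm$ the basis $B$ works and $\Phi'|_{\rho_\pm}$ is linear because $\Phi'|_{\rho_\pm}=\Phi\circ$ (linear) composed with the piecewise-linear identification $\phi_B$, and restricted to a single ray inside one cone of the permutahedral fan in $A_B$ it is genuinely linear. Thus $\Phi'(|\Sigma|)\subset A_B$, so by Definition \ref{def-split} the bundle $(\mathcal{N},\Phi')$ is split, and therefore $[(\M,\Phi)]$ splits.

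The main obstacle is the core combinatorial claim — proving that modularity of $\mathcal{N}$ forces the existence of a basis adapted to two given flags of flats. For representable matroids this is immediate from linear algebra (the building axiom for $\GL$), but in the abstract modular case it requires the structure theory of modular matroids/modular lattices: the key input is that in a modular geometric lattice the rank function satisfies the exchange-friendly identity on every pair, which lets the greedy selection of basis elements go through without obstruction. I expect to cite Bonin's characterization (\cite{Bonin}) and the amalgam existence it provides, exactly as foreshadowed in the discussion around Proposition \ref{prop-mod-ext-djs}, rather than reprove it; the amalgam/modularity machinery is precisely what guarantees the two local pictures (the two apartments suggested by the two flags) can be glued inside one apartment. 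If one wanted to avoid quoting Bonin, one would instead give a direct induction on $\rank(\mathcal{N})$ picking, at each step, a rank-one flat (point) lying in the intersection of the bottom members of whichever of the two residual flags is nonempty, using modularity to ensure such a point can always be extended to a full basis compatible with both contracted flags — but citing the established result is cleaner.
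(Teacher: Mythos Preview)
Your reduction is exactly right and matches the paper: pass to the extension $\mathcal{N}$, then argue that any two flags of flats in a modular matroid admit a common adapted basis, which places both Klyachko filtrations in a single apartment. The paper proves this core claim by a direct induction on $r=\rank(\mathcal{N})$: restrict to the hyperplane $F_{r-1}$ (which is again modular), apply the inductive hypothesis to the pair of flags $(F_\bullet,\, F_{r-1}\cap G_\bullet)$ inside $F_{r-1}$ to get an adapted basis $\B_0$, then add a single element $b\in G_k\setminus F_{r-1}$ (for the first $k$ with $\rank(F_{r-1}\cap G_k)<k$) and use the modular identity $\rank(G_m\cap F_{r-1})=m-1$ to check that $\B_0\cup\{b\}$ is adapted to all of $G_\bullet$.

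Two small corrections to your plan. First, the detour through a finite submatroid is unnecessary and slightly dangerous --- submatroids of modular matroids need not be modular --- and since $\mathcal{N}$ has finite rank $r$ the paper's induction runs directly in $\mathcal{N}$ regardless of the size of the ground set. Second, Bonin's amalgam result is not the right citation here: amalgam existence concerns gluing two extensions of a common restriction, not producing a common adapted basis for two flags, so invoking it would not actually close the argument. Your fallback ``direct induction on rank'' is the correct route and is precisely what the paper carries out, though the paper inducts top-down via $F_{r-1}$ rather than bottom-up from rank-one flats as you sketch.
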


\begin{proof}
    It suffices to show that if $\mathcal{N}$ is a modular matroid then any pair of complete flags $\{F_i\}, \{G_j\}$ has a common adapted basis.  Let $r = rank(\mathcal{N})$.  For any flat $F$ of $\mathcal{N}$, the matroid induced by restricting the rank function of $\mathcal{N}$ to elements of $F$ is also modular. The statement clearly holds for any matroid of rank $1$, so we suppose that the statement holds for any modular matroid of rank $r-1$.  This means that we can find a basis $\B_0$ of $F_{r-1}$ adapted to both of the flags $F_{r-1} \supset \cdots \supset F_1$, $F_{r-1}\cap G_{r-1} \supset \cdots \supset F_{r-1} \cap G_1$ in $F_{r-1}$. We will extend $\B_0$ to a basis of $\mathcal{N}$ which is adapted to $G_{r-1} \supset \cdots \supset G_1$. 

    Let $1 \leq k \leq r-1$ be the first step where $\rank(F_{r-1}\cap G_k) < \rank(G_k)$.  Observe that $\rank(F_{r-1}\cap G_k) = \rank(F_{r-1} \cap G_{k-1}) = k-1$. Pick $b \in G_k \setminus F_{r-1}$, then $\B = \B_0 \cup\{b\}$ is a basis of $\mathcal{N}$; we claim $\B$ is adapted to all of $\{G_j\}$.  Let $m \geq k$.  The fact that $\mathcal{N}$ is modular implies:
    \[\rank(G_m \cap F_{r-1}) = m + (r-1) - r = m - 1.\]
    Now $\rank(G_m) = m$, and $b \in G_m \setminus F_{r-1}$, so we must have that $\B \cap G_m$ spans $G_m$.  
\end{proof}


From the discussion on submodular defects and the DJS matroid of a tropical toric vector bundle, one expects that the Vamos matroid provides interesting abberant behavior compared to the case of a modular matroid.

\begin{corollary}  \label{cor-tmb-P1-no-split}
    There is a bundle $(V, \Phi)$ over $\P^1$ which cannot be extended to a split bundle. 
\end{corollary}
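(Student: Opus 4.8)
The plan is to build the bundle from the Vamos matroid $V$ together with the pair of rank-$2$ flats produced by Theorem~\ref{thm-vamos}, using the fact that a single apartment of a Bergman fan is, combinatorially, the set of weightings adapted to a fixed basis. Recall the fan $\Sigma$ of $\P^1$ has two rays with opposite primitive generators $\bv_1,\bv_2$ (so $\bv_1+\bv_2=0$), each ray being a maximal cone. Let $F,H\subset V$ be the rank-$2$ flats of Theorem~\ref{thm-vamos}, so that $F\cap H=\emptyset$, $\rank\Span(F\cup H)=3$, and $d(\phi_*F,\phi_*H)\ge 1$ for every extension $\phi\colon V\to\M$, where $d(\cdot,\cdot)$ is the submodular defect of Definition~\ref{def-modular}. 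The decreasing chains of flats $V\supsetneq F\supsetneq\emptyset$ and $V\supsetneq H\supsetneq\emptyset$ determine, via Lemma~\ref{lem-filt-Bergman-fan}, points $w_1=\delta_F$ and $w_2=\delta_H$ of $\Berg(V)$. First I would define $\Phi\colon|\Sigma|\to\Berg(V)$ by $\Phi(t\bv_i)=tw_i$ for $t\ge0$ and check Definition~\ref{def-matroid-vb-1}: the image of each ray $\rho_i$ is $\R_{\ge0}w_i$, and since the only nonempty flats in the filtration of $w_1$ (resp.\ $w_2$) are $F$ and $V$ (resp.\ $H$ and $V$), every basis $B$ of $V$ with $B\cap F$ (resp.\ $B\cap H$) a basis of $F$ (resp.\ $H$) satisfies $w_i\in A_B$, hence $\R_{\ge0}w_i\subseteq A_B$ because $A_B$ is a union of cones; such bases exist. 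Thus $(V,\Phi)$ is a genuine integral tropical toric vector bundle over $\P^1$.

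The second ingredient is an elementary lemma: if $B$ is a basis of a matroid $\M$ and $F',H'\subseteq\M$ are flats with $B\cap F'$ a basis of $F'$ and $B\cap H'$ a basis of $H'$, then $d(F',H')=0$; equivalently, two flats admitting a common adapted basis form a modular pair. Indeed, writing $a=|B\cap F'|$, $b=|B\cap H'|$, $c=|B\cap F'\cap H'|$, one has $\rank F'=a$, $\rank H'=b$, $\rank(F'\cap H')\ge c$, and $\rank\Span(F'\cup H')\ge|B\cap(F'\cup H')|=a+b-c$, so
\[
d(F',H')=\rank F'+\rank H'-\rank\Span(F'\cup H')-\rank(F'\cap H')\le a+b-(a+b-c)-c=0,
\]
while $d(F',H')\ge0$ by submodularity of the matroid rank function.

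Now I would argue by contradiction: suppose $(V,\Phi)$ can be extended to a split bundle, i.e.\ there is an extension $\phi\colon V\to\M$ such that $(\M,\phi\circ\Phi)$ is split, so $\phi_*w_1$ and $\phi_*w_2$ both lie in one apartment $A_B$ of $\Berg(\M)$. By the proposition of Section~\ref{sec-matroid-extension} describing $\phi_*$ on Bergman fans, the filtration of $\phi_*w_i$ by flats is obtained by applying $\phi_*$ to that of $w_i$, so $\phi_*F$ is a flat in the filtration of $\phi_*w_1$ and $\phi_*H$ one in the filtration of $\phi_*w_2$, with $\rank\phi_*F=\rank F=2$ and $\rank\phi_*H=\rank H=2$. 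Since $\phi_*w_i\in A_B$ exactly when $B\cap G$ is a basis of $G$ for every flat $G$ in the filtration of $\phi_*w_i$ (the description of apartments in Proposition~\ref{prop-apartment-matroid}, cf.\ the proof of Lemma~\ref{lem-gradedbasisadapted}), we conclude that $B\cap\phi_*F$ is a basis of $\phi_*F$ and $B\cap\phi_*H$ is a basis of $\phi_*H$. The lemma then forces $d(\phi_*F,\phi_*H)=0$, contradicting Theorem~\ref{thm-vamos}. Hence $(V,\Phi)$ is the desired bundle.

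The bulk of the work is already done by Theorem~\ref{thm-vamos}, so the arguments above are routine; the point requiring the most care is the meaning of ``extended to a split bundle'', since Section~\ref{sec-splitting} permits a splitting to proceed through a chain of extensions in either direction. The argument above treats direct extensions $V\to\M$; to address the full extension class one also observes that if $\M_1\to V$ is an extension and $(\M_1,\Phi_1)$ satisfies $\phi\circ\Phi_1=\Phi$, then $\Phi_1$ factors through $\Berg(\M_1)$ in such a way that the chains $V\supsetneq F\supsetneq\emptyset$ and $V\supsetneq H\supsetneq\emptyset$ pull back to chains of $\M_1$-flats of the same ranks, so the modular-defect obstruction of Theorem~\ref{thm-vamos} recurs at every stage and no member of the extension class of $(V,\Phi)$ can be split.
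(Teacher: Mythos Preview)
Your main argument is correct and follows exactly the paper's approach, only with the details spelled out: the paper constructs $(V,\Phi)$ by sending the two rays of $\P^1$ to the filtrations $V\supset F\supset\emptyset$ and $V\supset H\supset\emptyset$, and then asserts in one line that splitting after an extension $\phi\colon V\to\M$ would force $d(\phi_*F,\phi_*H)=0$, contradicting Theorem~\ref{thm-vamos}. Your lemma that a common adapted basis for two flats forces their submodular defect to vanish is precisely the missing justification for that one-line claim, and your verification that $(V,\Phi)$ really satisfies Definition~\ref{def-matroid-vb-1} is a welcome addition.

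One caution about your final paragraph. The paper reads ``extended to a split bundle'' in the narrow sense of a single direct extension $V\to\M$; the sentence immediately following the proof in the paper explicitly leaves open whether some member of the full extension class of $(V,\Phi)$, reached through a zig-zag of extensions and de-extensions, might nevertheless split. Your attempt to rule this out is not needed for the corollary as stated, and the argument you sketch is not convincing: Theorem~\ref{thm-vamos} is a statement about the Vamos matroid specifically, and the assertion that ``the modular-defect obstruction recurs at every stage'' for an arbitrary de-extension $\M_1\to V$ does not follow, since the flats you pull back to $\M_1$ need not sit inside a copy of the Vamos configuration. You should simply drop that paragraph.
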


\begin{proof}
    We design $(V, \Phi)$ so that the two rays of the fan of $\P^1$ go to the filtrations $V \supset F_i \supset \emptyset$ for $i = 1, 2$.  The induced bundle splits if and only if we can find an extension of $V$ with $\delta(\phi_*F_1, \phi_*F_2) = 0$, which contradicts Theorem \ref{thm-vamos}.
\end{proof}

We remark that any of the matroids constructed in \cite[Theorem 8]{Hochstattler-Wilhelmi} can also be used to provide examples.  In particular, for any matroid $\M$ with a flat $F$ and a hyperplane $H$ such that $F\cap H = \emptyset$ and $d(F, H) \neq 0$ there is an extension $\phi: \M \to \mathcal{N}$ such that there are bundles built on $\mathcal{N}$ over $\P^1$ with this property.  It is still possible that a member of the class of $(V, \Phi)$ obtained through extensions and "de-extensions" admits a splitting over $\P^1$.

\subsection{Ample and nef tropical toric vector bundles}
Now we turn our attention to one of the motivations for studying splitting over $\P^1$.  We would like to have a sensible definition of what it means for a tropical toric vector bundle to be ample or nef.  In \cite[Theorem 2.1]{HMP}, Hering, Musta\c{t}\u{a}, and Payne show that both ampleness and nefness for a toric vector bundle can be checked on the restrictions to toric curves.  

\begin{theorem}[Hering-Musta\c{t}\u{a}-Payne]
    Let $\E$ be a toric vector bundle over a smooth toric variety $X_\Sigma$ of dimension $d$, then $\E$ is ample (respectively nef) if and only if $\E|_{C_\tau}$ is ample (respectively nef) for all irreducible toric curves $C_\tau$ for $\tau \in \Sigma(d-1)$. 
\end{theorem}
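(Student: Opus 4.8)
This is the Hering--Musta\c{t}\u{a}--Payne theorem; we sketch the argument, assuming (as there) that $X_\Sigma$ is complete. The plan is to pass to the projectivized bundle $\pi\colon\P(\E)\to X_\Sigma$. By definition $\E$ is nef (respectively ample) if and only if the tautological bundle $\mathcal{O}_{\P(\E)}(1)$ is nef (respectively ample) on the smooth complete variety $\P(\E)$; and by Kleiman's criterion a line bundle on a complete variety is nef if and only if it is nonnegative on the Mori cone $\overline{\mathrm{NE}}$, and ample if and only if it is strictly positive on $\overline{\mathrm{NE}}\setminus\{0\}$. So everything reduces to understanding $\overline{\mathrm{NE}}(\P(\E))$ --- concretely, to showing that $\mathcal{O}_{\P(\E)}(1)$ can be tested on curves lying over the toric curves $C_\tau$, $\tau\in\Sigma(d-1)$, of $X_\Sigma$.

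The implication ``$\E$ nef (ample) $\Rightarrow\E|_{C_\tau}$ nef (ample)'' is immediate, since the restriction of a nef (ample) bundle to a closed subvariety is again nef (ample). For the converse we first describe the $T$-invariant curves of $\P(\E)$. By Klyachko's equivariant triviality of toric vector bundles over affine toric varieties (recalled above), $\P(\E)|_{U_\sigma}\cong U_\sigma\times\P^{r-1}$ with a linear $T$-action for every maximal cone $\sigma$, so $\P(\E)$ carries only finitely many irreducible $T$-invariant curves; under $\pi$ each maps onto a $T$-invariant irreducible subvariety of $X_\Sigma$, hence either (i) onto a torus fixed point $x_\sigma$, where it is a line in the fiber $\P(\E_{x_\sigma})\cong\P^{r-1}$ so that $\mathcal{O}_{\P(\E)}(1)$ has degree $1$ on it, or (ii) onto a toric curve $C_\tau$, $\tau\in\Sigma(d-1)$, where it lies in $\P(\E|_{C_\tau})\subseteq\P(\E)$ and $\mathcal{O}_{\P(\E)}(1)$ restricts to $\mathcal{O}_{\P(\E|_{C_\tau})}(1)$. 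On $\P(\E|_{C_\tau})$, a $\P^{r-1}$-bundle over $C_\tau\cong\P^1$, Grothendieck's theorem gives $\E|_{C_\tau}\cong\bigoplus_i\mathcal{O}_{\P^1}(a_i)$; the invariant curves there are the fiber lines together with the sections $s_1,\dots,s_r$ cut out by the summands, their classes generate $\overline{\mathrm{NE}}(\P(\E|_{C_\tau}))$, and (in the standard normalization) the $\mathcal{O}_{\P(\E|_{C_\tau})}(1)$-degree of $s_i$ is $a_i$. Hence $\mathcal{O}_{\P(\E)}(1)$ is nonnegative (respectively positive) on all invariant curves lying over $C_\tau$ precisely when $a_i\ge 0$ (respectively $a_i>0$) for all $i$, i.e.\ precisely when $\E|_{C_\tau}$ is nef (respectively ample).

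The remaining, and main, point is that $\overline{\mathrm{NE}}(\P(\E))$ is generated by the classes of its $T$-invariant curves; this is the step we expect to be the principal obstacle. One approach is to lift the classical toric fact that $\overline{\mathrm{NE}}(X_\Sigma)$ is polyhedral and generated by the $C_\tau$: given an irreducible curve $C\subseteq\P(\E)$ with image $B=\pi(C)$, degenerate $B$ in $X_\Sigma$ to an effective combination of the $C_\tau$ and lift this degeneration to $\P(\E)$; as $\mathcal{O}_{\P(\E)}(1)$ is $\pi$-ample, any fiber components produced in the limit are $\mathcal{O}_{\P(\E)}(1)$-nonnegative, and the remaining limit cycle is supported on curves over individual $C_\tau$. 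An equivalent but cleaner-to-state version is to form $\lim_{t\to 0}\lambda(t)\cdot[C]$ in the proper Chow variety of $\P(\E)$ for a generic one-parameter subgroup $\lambda$ of $T$, obtaining an effective $1$-cycle numerically equivalent to $[C]$ and supported on invariant curves; the subtlety, which Hering--Musta\c{t}\u{a}--Payne address, is to ensure the components of this limit are genuinely controlled, and this is exactly where the finiteness and the local product structure from the previous paragraph enter (finiteness also makes $\overline{\mathrm{NE}}(\P(\E))$ polyhedral, which is needed for the ample case). Granting this, $\mathcal{O}_{\P(\E)}(1)$ is nef (respectively ample) if and only if it is nonnegative (respectively positive) on every $T$-invariant curve of $\P(\E)$; by the previous paragraph type (i) curves impose no condition and type (ii) curves are governed by the splitting type of $\E|_{C_\tau}$, so this holds if and only if $\E|_{C_\tau}$ is nef (respectively ample) for all $\tau\in\Sigma(d-1)$. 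Combined with the first paragraph, this is the asserted equivalence.
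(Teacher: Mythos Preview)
The paper does not prove this statement: it is quoted verbatim as \cite[Theorem 2.1]{HMP} and used as motivation for Definition \ref{def-ample-nef-tmb}, with no argument supplied. So there is no ``paper's own proof'' to compare against.

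That said, your sketch is essentially the original Hering--Musta\c{t}\u{a}--Payne argument: pass to $\P(\E)$, reduce to $\mathcal{O}_{\P(\E)}(1)$, and show that $\overline{\mathrm{NE}}(\P(\E))$ is spanned by classes of $T$-invariant curves via a one-parameter subgroup limit in the Chow variety. One small inaccuracy: it is not true in general that $\P(\E)$ has only finitely many $T$-invariant curves --- if the characters $u_{\sigma,1},\ldots,u_{\sigma,r}$ on a fiber $\P(\E_{x_\sigma})$ are not pairwise distinct, the fixed locus in that fiber is positive-dimensional and there are continuous families of invariant lines. What is true, and what the argument actually needs, is that there are finitely many \emph{numerical classes} of $T$-invariant curves, and that every effective curve class is a nonnegative combination of these; your Chow-limit sketch gives exactly this. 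With that correction your outline is sound.
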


\begin{remark}
In \cite[Section 2.2]{Kaveh-Manon-TVBs-valuation}, the above characterization of ample and nef toric vector bundles is rephrased in terms of convexity properties of piecewise-linear maps to an (extended) spherical building. 
\end{remark}

Any restriction $\E|_{C_\tau}$, one must consider to apply this criterion, can be regarded as a toric vector bundle over $\P^1$.  In particular, $\E|_{C_\tau} \cong \bigoplus_{i = 1}^r\O(n_i)$.  Then, $\E|_{C_\tau}$ is ample (respectively nef) if and only if $n_i > 0$ (respectively $\geq 0$) for all $1 \leq i \leq r$.  As this criterion implicitly assumes that $\E|_{C_\tau}$ splits, it is not immediately clear that it generalizes to tropical toric vector bundles.  If every restriction of a tropical toric vector bundle to the toric curves is equivalent to a split bundle over $\P^1$, then these conditions do make sense.  This occurs for tautological bundles which is expected as we have also shown them to be globally generated (Corollary \ref{cor-tautologicalGG}). 

\begin{definition}[Ample and nef tropical toric vector bundle]  \label{def-ample-nef-tmb}
We say that a tropical toric vector bundle $\EE$ on a smooth toric variety $X_\Sigma$ is \emph{ample} (respectively \emph{nef}) if its restriction to each toric curve $C_\tau$ in $X_\Sigma$ splits and is ample (respectively nef).    
\end{definition}

\begin{theorem}\label{thm-neftautological}
    Let $\M$ be a matroid with $m$ elements, and let $\EE_\M$ be tautological bundle over $X_m$, then $\EE_\M$ is nef in the sense that for all toric curves $C_\tau \subset X_m$, the restriction $\EE_\M|_{C_\tau}$ is equivalent to the bundle $\O(1)\oplus \O_{\P^1} \oplus \ldots \oplus \O_{\P^1}$ or $\O_{\P^1} \oplus \ldots \oplus \O_{\P^1}$ over $\P^1$. 
\end{theorem}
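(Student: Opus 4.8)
The strategy is to reduce the statement to an explicit computation on each toric curve $C_\tau$, using the diagram $D(\EE_\M)$ determined in Corollary \ref{cor-tautologicalGG}. Recall that $X_m$ is the permutahedral variety with fan $\Sigma_m$, whose rays are indexed by nonempty proper subsets $\emptyset \subsetneq S \subsetneq [m]$ with primitive generator $\delta_S$, and whose maximal cones $\sigma_\prec$ correspond to permutations $\prec$ of $[m]$. A toric curve $C_\tau \subset X_m$ corresponds to a wall $\tau \in \Sigma_m(m-2)$: such a $\tau$ is the common facet of exactly two maximal cones $\sigma_\prec, \sigma_{\prec'}$, where $\prec'$ differs from $\prec$ by transposing two adjacent elements in the permutation. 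The restriction $\EE_\M|_{C_\tau}$ is the tropical toric vector bundle over $\P^1$ whose two ``rays'' are $\Phi_\M$ evaluated at the two ray generators of $\Sigma_m/\tau$; concretely, if the two adjacent flats in the chain determined by $\sigma_\prec$ that get swapped are at positions giving subsets $S$ and the complementary choice, then the relevant rays in $\Berg(\M)$ are $\Phi_\M(\delta_S)$ and $\Phi_\M(\delta_{S'})$ for the two rays of $\Sigma_m$ exposed when passing through $\tau$.

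\textbf{Key steps.} First I would recall from Corollary \ref{cor-tautologicalGG} that over a maximal cone $\sigma_\prec \in \Sigma_m$ the adapted basis is $\B_{\sigma_\prec}$ (the greedy/lex-maximal basis), and that the character of $e_i \in \B_{\sigma_\prec}$ is $\delta_i$, so that along this cone $\Phi_\M$ is the linear map $x \mapsto (\langle \delta_1, x\rangle, \dots)$ on the basis coordinates. Next, for a wall $\tau$ separating $\sigma_\prec$ and $\sigma_{\prec'}$, note that the two adapted bases $\B_{\sigma_\prec}$ and $\B_{\sigma_{\prec'}}$ either coincide or differ by a single basis exchange $x \leftrightarrow y$ governed by a circuit $C = \{x,y\}\cup I$ with $I \subset \B_{\sigma_\prec}\cap\B_{\sigma_{\prec'}}$ — this is exactly the situation analyzed in the proof of Lemma \ref{lem-universal-function}. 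In the case the bases coincide, $\Phi_\M|_{C_\tau}$ already lands in the single apartment $A_{\B_{\sigma_\prec}}$, so the restriction is split, and reading off the characters (which are $\delta_i$'s, hence evaluating to $0$ or $1$ against the curve class $\delta_S - \delta_{[m]}\cdot(\text{something})$ — more precisely against the primitive generator of the ray dual to $\tau$ inside each half) shows each summand is $\O_{\P^1}$ or $\O(1)$. In the exchange case, one checks directly that the two flags in $\Berg(\M)$ determined by $\Phi_\M(\delta_S)$ and $\Phi_\M(\delta_{S'})$ together lie in the single apartment $A_{\B}$ where $\B = I \cup (\B_{\sigma_\prec}\cap\B_{\sigma_{\prec'}})$ augmented appropriately — this is because along the wall the two weight vectors agree on all coordinates of $I\cup(\text{common part})$ and the exchanged element gets the min-value forced by the circuit, exactly as in the well-definedness argument of Lemma \ref{lem-universal-function}. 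Hence $\EE_\M|_{C_\tau}$ is equivalent to a split bundle.

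\textbf{Computing the summands.} Once splitting is established, compute the degrees: the curve $C_\tau$ has a natural $T$-action and $\EE_\M|_{C_\tau} \cong \bigoplus_i \O(n_i)$ where $n_i = \langle u^{\sigma_{\prec'}}_i - u^{\sigma_\prec}_i, \text{(primitive generator)}\rangle$ comparing the characters on the two sides. Since all characters involved are standard basis vectors $\delta_j$ (by Corollary \ref{cor-tautologicalGG}), and the two maximal cones differ by swapping adjacent positions in a permutation, at most one basis element changes its character, and when it does the character changes from $\delta_x$ to $\delta_y$ where $\{x,y\} \subset C$. The difference $\delta_y - \delta_x$ paired against the relevant wall normal evaluates to exactly $1$ in absolute value (or $0$ if $x=y$, i.e. no exchange), because the permutahedral fan is smooth and the wall-crossing moves the weight by a primitive vector. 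All other summands are unchanged, giving $n_i = 0$. Therefore $\EE_\M|_{C_\tau}$ is equivalent to $\O(1)\oplus\O_{\P^1}^{\oplus(r-1)}$ (when an exchange occurs) or $\O_{\P^1}^{\oplus r}$ (when it does not). In particular every $n_i \geq 0$, so $\EE_\M$ is nef by Definition \ref{def-ample-nef-tmb}.

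\textbf{Main obstacle.} The delicate point is verifying that in the basis-exchange case the \emph{two} flags of flats (not just the weight vectors of the ray generators, but the full images $\Phi_\M(\delta_S)$, $\Phi_\M(\delta_{S'})$ including where the piecewise-linear extension sends the interior of $C_\tau$) genuinely lie in a common apartment of $\Berg(\M)$, and to pin down which basis that apartment corresponds to. This requires carefully tracking how the greedy basis changes across a wall of the permutahedral fan and confirming that the exchanged element's weight is forced to equal its partner's weight on the wall — the argument is essentially the one already given in Lemma \ref{lem-universal-function}, but it must be applied to the specific pair of rays coming from a codimension-two cone of $\Sigma_m$, and one should double-check that adjacent transpositions of the permutation indeed correspond to single basis exchanges for the matroid $\M$ (this uses that $\Sigma_m$ refines $\GF(\M)$, Proposition \ref{prop-perm-fan-Grobner-fan}, together with Lemma \ref{lem-bases}). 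A secondary technical point is bookkeeping the orientation/sign so that the degree comes out $+1$ rather than $-1$; this is automatic because the characters are genuine (not virtual) and the parliament polytopes $P_{\vv(e_j)}$ have $\delta_j$ as a vertex in the $\sigma$-direction by Corollary \ref{cor-tautologicalGG}, so global generation already forces the degrees to be nonnegative.
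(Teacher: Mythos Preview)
Your plan is essentially the paper's own argument: pass to two adjacent maximal cones $\sigma,\sigma'$ of $\Sigma_m$ differing by a single transposition, observe via Lemma \ref{lem-bases} that the greedy bases $\B_\sigma,\B_{\sigma'}$ either agree or differ by one exchange $x\leftrightarrow y$, and then read off the degrees from the characters $\delta_j$.

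The one place where your route diverges from the paper is the splitting step in the exchange case. You try to locate a single apartment $A_\B\subset\Berg(\M)$ containing both $\Phi_\M(\sigma)$ and $\Phi_\M(\sigma')$, appealing to the overlap computation in Lemma \ref{lem-universal-function}; but that lemma only shows $a_x=a_y$ on $A_{\B_\sigma}\cap A_{\B_{\sigma'}}$, i.e.\ on the wall, and your candidate ``$\B=I\cup(\B_\sigma\cap\B_{\sigma'})$ augmented appropriately'' is not pinned down (augmenting by $x$ gives $\B_\sigma$, by $y$ gives $\B_{\sigma'}$, and neither apartment obviously contains the other cone's image). The paper sidesteps this by \emph{de-extending}: the restriction $\EE_\M|_{C_\tau}$ is equivalent to the bundle on the submatroid induced on $\B_\sigma\cup\B_{\sigma'}=\{b_1,\dots,b_{r-1},x,y\}$, which has a single circuit and is therefore representable; Grothendieck's theorem (equivalently Corollary \ref{cor-split-P1-modular}) then gives splitting in the extension class, with the new basis element $z$ carrying characters induced from $\delta_i,\delta_{i+1}$. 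Your degree computation after that point is the same as the paper's, and your appeal to global generation to fix the sign is a legitimate alternative to the paper's direct identification $\eta_\tau=\delta_i-\delta_{i+1}$.
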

\begin{proof}
    Let $\sigma,\sigma' \in \Sigma_m(m)$ be faces with $\tau = \sigma \cap \sigma' \in \Sigma_m(m-1)$. Using the symmetric group action, we can assume that $\sigma$ corresponds to the standard ordering on $[m]$, and that $\sigma'$ corresponds to the ordering where $i$ and $i+1$ are exchanged for some $i \in [m]$. The greedy bases $\B_\sigma$ and $\B_{\sigma'}$ are either the same basis, or they differ by exchanging $\e_i$ with $\e_{i+1}$.   The restriction $\EE_\M|_{C_\tau}$ is then equivalent to a tropical toric vector bundle defined on the matroid induced on $\B_\sigma \cup \B_{\sigma'}$.  The characters corresponding to the elements of this matroid on either face of the fan of $\P^1$ are induced by the indicator vectors of the elements. In the case that $\B_\sigma = \B_{\sigma'}$, the bundle is already split, and the characters associated to each element agree on either face of the fan of $\P^1$. If $\B_\sigma$ and $\B_{\sigma'}$ differ, the resulting matroid is of the form $\{b_1, \ldots, b_{r-1},x, y\}$, where a single circuit links $x, y$, and a collection of the $b_j$.  Any such matroid is representable, and so  $\EE_\M|_{C_\tau}$ is equivalent to a bundle on a basis $\{b_1, \ldots, b_{r-1}, z\}$ for some an element $z$. The characters for $b_1, \ldots, b_{r-1}$ are induced from the indicator vector of some $\e_j$, $j \neq i, i +1$ on either face of the fan of $\P^1$, while $z$ has weights induced from $\delta_{i}, \delta_{i+1}$ on the two rays. The former correspond to copies of $\O_{\P^1}$. For the latter case, we assume that $\e_i \in \B_\sigma$, then the induced summand is $\O(a)$, where $\delta_{i} - \delta_{i+1} = a \eta_\tau$ for $\eta_\tau$ the primitive generator of $\tau^\perp$ which is positive on $\sigma$.  The character $\eta_\tau$ is $\delta_i - \delta_{i+1}$, so $a = 1$.
    \end{proof}

\end{document}